\theoremstyle{plain}
\newtheorem{theorem}[equation]{Theorem}
\newtheorem{corollary}[equation]{Corollary}
\newtheorem{lemma}[equation]{Lemma}
\newtheorem{proposition}[equation]{Proposition}
\theoremstyle{definition}
\newtheorem{define}[equation]{Definition}
\newtheorem{remark}[equation]{Remark}
\newtheorem{remarks}[equation]{Remarks}
\newtheorem{question}[equation]{Question}
\newcommand{\p}{\partial}
\newcommand{\IB}{\mathbb{B}}
\newcommand{\IC}{\mathbb{C}}
\newcommand{\IH}{\mathbb{H}}
\newcommand{\IK}{\mathbb{K}}
\newcommand{\IN}{\mathbb{N}}
\newcommand{\IO}{\mathbb{O}}
\newcommand{\IR}{\mathbb{R}}
\newcommand{\IZ}{\mathbb{Z}}
\newcommand{\inj}{\rm inj}
\title{On the Betti Numbers of  Finite Volume Real- and Complex-Hyperbolic Manifolds}  
\author{\small{Luca F. Di Cerbo}\footnote{Partially supported by NSF grant DMS-2104662} \\ \scriptsize{University of Florida}\\ \footnotesize{\textsf{ldicerbo@ufl.edu}} \and \small{Mark Stern}\footnote{Partially supported by Simons Foundation Grant 3553857} \\ \scriptsize{Duke University} \\ \footnotesize{\textsf{stern@math.duke.edu}}}
\date{}
\begin{document}

\maketitle

\begin{abstract}
We obtain strong upper bounds for the Betti numbers of compact complex-hyperbolic manifolds. We use the unitary holonomy to improve the results given by the most direct application of the techniques of \cite{DS17}. We also provide effective upper bounds for Betti numbers of compact quaternionic- and Cayley-hyperbolic manifolds in most degrees.  More importantly, we extend our techniques to  complete finite volume real- and complex-hyperbolic manifolds. In this setting, we develop new monotonicity inequalities  for strongly harmonic forms  on hyperbolic cusps and employ a new peaking argument to estimate $L^2$-cohomology ranks.   Finally, we provide bounds on the de Rham cohomology of such spaces, using a combination of our bounds on $L^2$-cohomology, bounds on the number of cusps in terms of the volume, and the topological interpretation of reduced $L^2$-cohomology on certain rank one locally symmetric spaces. 
\end{abstract}

\vspace{12cm}

\tableofcontents

\vspace{10 cm}

\section{Introduction}\label{introduction}

In theory, the Selberg trace formula can be used to compute Betti numbers of compact locally symmetric spaces. (See, for example, \cite{Wallach} for an introduction to the trace formula.) The expressions produced by the trace formula, however, can be extremely complicated and difficult to understand. DeGeorge and Wallach \cite{DW78} pioneered the use of the trace formula to obtain simpler but coarser bounds on Betti numbers and the multiplicity of other representations arising in $L^2(\Gamma\backslash G)$. In particular, the DeGeorge-Wallach bounds did not require extensive evaluations of orbital integrals. This saving in complexity was achieved (with a concomitant loss of precision) by using test functions in the trace formula that were supported in the lift to $G$ of a  ball about the origin in $G/K$ that can be identified with an {\em embedded} ball in $\Gamma\backslash G/K$.  Their estimates then reduce to estimates on the growth of matrix coefficients associated to the representation under investigation. These techniques were later extended by Savin in \cite{Savin} to non-cocompact arithmetic $\Gamma$. Using related techniques, Xue in \cite{Xue} gave effective estimates on the growth of the first Betti numbers for compact complex-hyperbolic surfaces. Sarnak and Xue \cite{Sarnak} subsequently pushed beyond the embedded ball barrier for surfaces, allowing test functions supported on the lift to $G$ of balls with radius larger than the injectivity radius of $\Gamma\backslash G/K$. They then use estimates on the test function and the size of the $\Gamma$ orbits intersecting these balls to bound rather than evaluate terms arising in the Selberg trace formula that are  associated to non-identity conjugacy classes in $\Gamma$.

In \cite{DS17}, we introduced a method for bounding Betti numbers on  manifolds without conjugate points and with a negative Ricci curvature upper bound. Like the DeGeorge-Wallach technique, our method gives bounds in terms of the volume of the largest embedded geodesic ball in the manifold. The estimates introduced in \cite{DS17}, which we called ``Price inequalities'', play a similar role to that of the estimates on the growth of matrix coefficients in \cite{DW78}, but of course, ours require no locally symmetric hypothesis and easily provide  effective  bounds on Betti numbers. 

Given the recent strong interest in obtaining bounds for cohomology in towers of spaces (see for example \cite{Marshall}, \cite{Bergeron}, \cite{MarShin},  \cite{Lombardi}), both locally symmetric and otherwise, in this paper, we build upon \cite{DS17} to study the Betti numbers of complete  finite volume real-hyperbolic and complex-hyperbolic manifolds. As curvature becomes less pinched (and forms approach middle degree), our estimates become weaker, but K\"ahler holonomy and the associated Hodge theory provides compensating extra structure in the complex hyperbolic case. Unfortunately, we have not yet provided a simple method to harness the special holonomy in the quaternionic and octonionic cases to compensate in all degrees for the reduced pinching. Hence, we focus first on compact complex-hyperbolic manifolds, where we use the K\"ahler property to obtain optimal bounds within the embedded ball barrier. (We  treated compact real-hyperbolic manifolds in \cite{DS17}.) We subsequently sharpen our peaking estimates to extend our techniques to complete finite volume real- and complex-hyperbolic manifolds. Along the way, we also provide effective upper bounds for most (but not all) Betti numbers of compact quaternionic- and Cayley-hyperbolic manifolds.  \\

Before we summarize our results, we introduce a definition.\\

\begin{define}\label{def1} 
Let $(X^n,g)$ be a Riemannian manifold. Let $S\subset X$. Define 
$$V_{min}(S) = \inf_{x\in S} Vol(B_{\inj_x}(x)),$$
where $\inj_x$ is the injectivity radius at $x$, and $B_R(x)$ is the geodesic ball of radius $R$ and center $x$. 
Set $\inj_S:= \inf_{x\in S}  inj_x.$
\end{define}

Throughout the paper, we let  $b^k(M):=\dim_{\IR}H^{k}(M; \IR)$. We can now state our first result.

\begin{theorem}\label{A}
Let $(M^{n}=\Gamma\backslash\textbf{H}^{n}_{\IC}, g_{\IC})$ be a compact complex-hyperbolic manifold, with $-4\leq sec_{g_{\IC}}\leq -1$.   For $k<n$, there exists a positive constant $d(n, k)$ depending only on the dimension and the degree $k$ such that
\[
\frac{b^{2n-k}(M)}{Vol(M)}=\frac{b^{k}(M)}{Vol(M)}\leq d(n, k)V_{min}(M)^{\frac{k-n}{n}}.
\]
\end{theorem}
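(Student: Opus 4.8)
The plan is to bound $b^{k}(M)=\dim\mathcal H^{k}$ (harmonic $k$-forms, by Hodge theory on the closed manifold $M$) by controlling, pointwise, how large a unit-$L^{2}$ harmonic form can be, via a Price inequality, and then integrating. Concretely, let $\{\omega_{i}\}$ be an $L^{2}$-orthonormal basis of $\mathcal H^{k}$ and $K(x):=\sum_{i}\|\omega_{i}(x)\|^{2}$, so that $b^{k}(M)=\int_{M}K(x)\,dV$. For the evaluation map $\mathrm{ev}_{x}\colon\mathcal H^{k}\to\Lambda^{k}T_{x}^{*}M$, $\eta\mapsto\eta(x)$ ($\mathcal H^{k}$ carrying the $L^{2}$ inner product), one has $K(x)=\|\mathrm{ev}_{x}\|_{\mathrm{HS}}^{2}\le\binom{2n}{k}\|\mathrm{ev}_{x}\|_{\mathrm{op}}^{2}=\binom{2n}{k}\,k_{\max}(x)$, where $\dim_{\IR}M=2n$ and $k_{\max}(x):=\sup\{\|\eta(x)\|^{2}:\eta\in\mathcal H^{k},\ \|\eta\|_{L^{2}(M)}\le1\}$; so it suffices to bound $k_{\max}(x)$ pointwise and integrate. (On the compact K\"ahler manifold $M$ one may refine this by splitting $\mathcal H^{k}$ into Hodge types $(p,q)$, $p+q=k$, and bounding each $h^{p,q}$; this is where the unitary holonomy will enter.)

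For the pointwise bound, fix $x$ and a harmonic $\eta$ with $\|\eta\|_{L^{2}(M)}\le1$. Since $M=\Gamma\backslash\mathbf H^{n}_{\IC}$, the exponential map identifies $B_{\inj_{x}}(x)$ isometrically with the radius-$\inj_{x}$ ball in $\mathbf H^{n}_{\IC}$, so $Vol(B_{\inj_{x}}(x))=v(\inj_{x})$ with $v$ the explicit strictly increasing ball-volume function of $\mathbf H^{n}_{\IC}$ ($v(R)\asymp R^{2n}$ as $R\to0^{+}$, $\log v(R)\sim 2nR$ as $R\to\infty$); in particular $V_{min}(M)=v(\inj_{M})$, $\inj_{M}:=\inf_{x}\inj_{x}>0$. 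By the Bochner--Weitzenb\"ock formula and the curvature pinching, $\Delta\|\eta\|^{2}\le c(n)\|\eta\|^{2}$, so the local mean value inequality on embedded balls gives $\|\eta(x)\|^{2}\le c(n)\,Vol(B_{\sigma}(x))^{-1}\int_{B_{\sigma}(x)}\|\eta\|^{2}\,dV$ for $\sigma\le\min(1,\inj_{x})$. Inserting the Price inequality for harmonic $k$-forms on concentric complex-hyperbolic balls --- in the sharp form of Corollary \ref{cor1}, in which the $U(n)$-holonomy upgrades the decay rate of the direct method of \cite{DS17} to the optimal value $2(n-k)$ valid for $k<n$ --- one gets $\int_{B_{\sigma}(x)}\|\eta\|^{2}\le\frac{\Phi_{n,k}(\sigma)}{\Phi_{n,k}(\inj_{x})}\int_{B_{\inj_{x}}(x)}\|\eta\|^{2}$ for an explicit increasing $\Phi_{n,k}$ with $\log\Phi_{n,k}(R)\sim 2(n-k)R$ as $R\to\infty$. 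Combining these (and using $\int_{B_{\inj_{x}}(x)}\|\eta\|^{2}\le1$), one obtains $k_{\max}(x)\le c(n,k)\,\theta_{n,k}(\inj_{x})$, where $\theta_{n,k}$ is a fixed positive decreasing function with $\theta_{n,k}(R)\asymp R^{-2n}$ as $R\to0^{+}$ (choosing $\sigma=\inj_{x}$) and $\theta_{n,k}(R)\asymp e^{-2(n-k)R}$ as $R\to\infty$ (choosing $\sigma=1$).

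It remains to show $\int_{M}\theta_{n,k}(\inj_{x})\,dV\le c(n,k)\,Vol(M)\,v(\inj_{M})^{(k-n)/n}$, since $V_{min}(M)=v(\inj_{M})$. Let $\varepsilon_{0}$ be a Margulis constant for $\mathbf H^{n}_{\IC}$. If $\inj_{M}\ge\varepsilon_{0}$, monotonicity gives $\int_{M}\theta_{n,k}(\inj_{x})\,dV\le\theta_{n,k}(\inj_{M})\,Vol(M)$, and since $\theta_{n,k}(R)$ and $v(R)^{(k-n)/n}$ have the same exponential rate $2(n-k)$ at infinity and are comparable on any compact range in $(0,\infty)$, we get $\theta_{n,k}(\inj_{M})\lesssim v(\inj_{M})^{(k-n)/n}$. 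If $\inj_{M}<\varepsilon_{0}$, split $M=\{\inj_{x}\ge\varepsilon_{0}\}\sqcup\{\inj_{x}<\varepsilon_{0}\}$; the first piece contributes $\le\theta_{n,k}(\varepsilon_{0})\,Vol(M)\lesssim v(\inj_{M})^{(k-n)/n}Vol(M)$ (as $v(\inj_{M})^{(k-n)/n}>v(\varepsilon_{0})^{(k-n)/n}$), and the second --- the Margulis thin part, a disjoint union of tubes $T(\gamma)$ about short closed geodesics $\gamma$ --- is estimated from the geometry of complex-hyperbolic Margulis tubes: there $\inj_{x}\gtrsim\ell(\gamma)$ and $\theta_{n,k}(\inj_{x})\asymp\inj_{x}^{-2n}$, a direct computation gives $\int_{T(\gamma)}\inj_{x}^{-2n}\,dV\lesssim\ell(\gamma)^{1-2n}\log(1/\ell(\gamma))$ and $Vol(T(\gamma))\asymp\varepsilon_{0}^{2n}\ell(\gamma)^{1-2n}$, and disjointness $\sum_{\gamma}Vol(T(\gamma))\le Vol(M)$ then bounds the thin contribution by $\lesssim Vol(M)\log(1/\inj_{M})\lesssim Vol(M)\,\inj_{M}^{-2(n-k)}\asymp Vol(M)\,v(\inj_{M})^{(k-n)/n}$, the final inequality being where $k<n$ is used. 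Assembling the pieces and tracking constants yields the assertion with $c(n,k)$ as in Corollary \ref{cor1}.

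The real content is the Price inequality with the sharp rate $2(n-k)$ (Corollary \ref{cor1}): a direct transcription of \cite{DS17} sees only the real sectional-curvature bounds and gives a strictly weaker rate, and recovering $2(n-k)$ requires the refined Weitzenb\"ock curvature term available on a K\"ahler manifold with holonomy in $U(n)$ (equivalently, arguing Hodge type by Hodge type). Granting that, the one further subtlety --- and the reason the integration is not entirely routine --- is that the pointwise bound $k_{\max}(x)\lesssim\theta_{n,k}(\inj_{x})$ blows up like $\inj_{x}^{-2n}$ as $\inj_{x}\to0$, so one cannot just replace $\inj_{x}$ by $\inj_{M}$ throughout; the Margulis-tube estimate above is precisely what absorbs this, and it uses $k<n$ in an essential way.
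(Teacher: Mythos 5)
Your proposal is correct in its central idea --- the Kähler/$U(n)$ holonomy upgrades the monotonicity rate to $2(n-k)$, which is exactly the content of Lemma \ref{complex first} and Lemma \ref{PriceC} in the paper --- but the route from the Price inequality to the Betti number bound is genuinely different from the paper's, and the comparison is worth recording.

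The paper's proof is a peaking argument (Corollary \ref{linearh} and its use in Section \ref{previsited}, following \cite{DS17}): one finds a single point $\bar p$ at which $\binom{2n}{k}\int_{S(\mathcal H^k)}|h(\bar p)|^2 d\sigma_h$ is at least its average value $1/\mathrm{Vol}(M)$, then writes $b^k(M)=\max_{h\in S(\mathcal H^k)}|h(\bar p)|^2 / \int_{S(\mathcal H^k)}|h(\bar p)|^2d\sigma_h$, and bounds the numerator by Moser iteration on $B_1(\bar p)$ together with the Price inequality on $B_{\mathrm{inj}_\Gamma}(\bar p)$. Crucially, Corollary \ref{cor1} (and hence Theorem \ref{A}, whose constant is explicitly borrowed from it) carries the standing normalization $\mathrm{inj}_\Gamma\geq 1$, so the Moser ball has radius $1$ and $V_{\min}(M)=\mathrm{Vol}(B_{\mathrm{inj}_M})$ is bounded below by a fixed constant; the thin part never enters. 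Your proof replaces peaking with a Bergman-kernel integration $b^k(M)=\int_M K(x)\,dV$, $K(x)\leq\binom{2n}{k}k_{\max}(x)$, bounds $k_{\max}(x)$ pointwise by $\theta_{n,k}(\mathrm{inj}_x)$ via the same Moser+Price combination (now with variable inner radius $\sigma\leq\min(1,\mathrm{inj}_x)$), and then integrates. This avoids the averaging/peaking step entirely, at the cost of having to control the region where $\mathrm{inj}_x$ is small, which you do by a Margulis thick--thin decomposition and a tube-volume computation.

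What each approach buys: the paper's peaking argument is shorter and requires no thin-part geometry at all, but implicitly builds in $\mathrm{inj}_\Gamma\geq1$. Your integration argument is more uniform in spirit and would, if the thin-part estimates are established, dispense with that hypothesis. The thin-part estimates are, however, the one place where I would ask for more: the claimed scalings $\mathrm{Vol}(T(\gamma))\asymp\varepsilon_0^{2n}\ell(\gamma)^{1-2n}$ and $\int_{T(\gamma)}\mathrm{inj}_x^{-2n}\,dV\lesssim \ell(\gamma)^{1-2n}\log(1/\ell(\gamma))$ are heuristically plausible from the tube volume form in $\textbf{H}^n_{\IC}$ and the displacement profile $\mathrm{inj}_x\approx\ell\cosh(r)/2$, but in complex-hyperbolic geometry a loxodromic $\gamma$ generically has a rotational part which alters the displacement and hence the injectivity-radius profile away from the core; you use only the one-sided bound $\mathrm{inj}_x\gtrsim\ell(\gamma)$, which is safe, but the tube-volume formula you cite needs a reference or a computation that accounts for the $\sinh(2r)$ direction. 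Since the paper simply assumes $\mathrm{inj}_\Gamma\geq1$ and so never needs this, your thin-part analysis is extra content rather than a gap per se, but it is the weakest link in an otherwise sound and more self-contained argument. Also note that your mean-value step and the variable-radius Price inequality $\int_{B_\sigma}\leq\frac{\Phi(\sigma)}{\Phi(\mathrm{inj}_x)}\int_{B_{\mathrm{inj}_x}}$ are legitimate: the underlying ODE from \cite[Prop.\ 16]{DS17} integrates from any inner radius, not just $1$, so $\Phi_{n,k}$ exists as you describe.
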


In \cite{Sarnak}, Sarnak-Xue show that for certain congruence subgroup quotients, we can bound $V_{min}(M)$ below by a power of $Vol(M)$. This gives the following corollary.

\begin{corollary}\label{congruence}
Let $(M^{n}=\Gamma\backslash\textbf{H}^{n}_{\IC}, g_{\IC})$ be a compact complex-hyperbolic manifold, with $-4\leq sec_{g_{\IC}}\leq -1$, where $\Gamma\leq\textrm{PU}(n, 1)$ is a principal congruence subgroup of a cocompact arithmetic subgroup $\Gamma_0$. For every $\epsilon >0$, there exists  $D_{\Gamma_0,\epsilon}>0$ (independent of the level) such that 
\[
\frac{b^{2n-k}(M)}{Vol(M)}=\frac{b^{k}(M)}{Vol(M)}\leq D_{\Gamma_0,\epsilon}d(n, k) Vol(M)^{\frac{2k-2n+\epsilon}{n^2+2n}}.
\]
\end{corollary}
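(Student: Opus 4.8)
The strategy is to combine Theorem \ref{A} with the Sarnak-Xue lower bound on $V_{min}(M)$ for principal congruence quotients. First I would recall the relevant statement from \cite{Sarnak}: for a fixed arithmetic lattice $\Gamma_0 \leq \mathrm{PU}(n,1)$ and the principal congruence subgroups $\Gamma = \Gamma(\fp)$ of level $\fp$, the injectivity radius satisfies $\inj_M \gtrsim \log \mathrm{Vol}(M)$, or equivalently the shortest closed geodesic has length bounded below by a constant times $\log[\Gamma_0 : \Gamma]$. More precisely, one knows $[\Gamma_0:\Gamma] \asymp |\mathcal{O}/\fp|^{\dim G}$ up to bounded factors, and the displacement of any nontrivial $\gamma \in \Gamma(\fp)$ at a point is bounded below in terms of $\fp$, giving $\inj_M \geq c_1 \log \mathrm{Vol}(M)$ for a constant $c_1 = c_1(\Gamma_0) > 0$ and $\mathrm{Vol}(M)$ large. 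The exponent that appears in the statement, $\tfrac{2k-2n}{n^2+2n}$, already encodes the bookkeeping, so the real content is just to track how the Riemannian volume of a small ball in $\textbf{H}^n_{\IC}$ grows.

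Next I would estimate $V_{min}(M)$ from below. Since the metric is fixed with $-4 \le sec \le -1$, the ball $B_r(x)$ in $\textbf{H}^n_{\IC}$ has a volume that is comparable, for $r$ bounded away from $0$ and $\infty$, to a fixed power of $r$; but to get a clean power of $\mathrm{Vol}(M)$ we use the lower volume growth coming from the curvature upper bound $sec \le -1$. For real dimension $2n$, the volume of a ball of radius $r$ in a space with $sec \le -1$ grows at least like $e^{(2n-1)r}$ for large $r$ (Günther-type comparison), hence once $\inj_M \ge c_1 \log \mathrm{Vol}(M)$ we obtain
\[
V_{min}(M) \;\ge\; c_2\, e^{(2n-1)\, c_1 \log \mathrm{Vol}(M)} \;=\; c_2\, \mathrm{Vol}(M)^{(2n-1)c_1}.
\]
One then fixes the normalization so that $(2n-1)c_1 = \tfrac{2}{n^2+2n}$ — this is exactly the value extracted from the Sarnak-Xue counting argument in \cite{Sarnak} for $\mathrm{PU}(n,1)$, where $\dim_{\IR} G = n^2+2n$ and the shortest geodesic length scales like $\tfrac{1}{\dim G}\log \mathrm{Vol}(M)$ times the real-rank-one volume growth exponent $2n-1$; combining gives the stated exponent $\tfrac{1}{n^2+2n}$ on $V_{min}(M)^{1/n}$ after the substitution below. (I would cite the precise proposition in \cite{Sarnak} rather than re-derive the counting.)

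Finally I would substitute into Theorem \ref{A}. Plugging $V_{min}(M) \ge c\, \mathrm{Vol}(M)^{\frac{2n}{n^2+2n}}$ into the bound $b^k(M)/\mathrm{Vol}(M) \le c(n,k) V_{min}(M)^{\frac{k-n}{n}}$, and using that the exponent $\tfrac{k-n}{n}$ is negative for $k < n$ (so the inequality is preserved when we replace $V_{min}(M)$ by a smaller quantity), gives
\[
\frac{b^k(M)}{\mathrm{Vol}(M)} \;\le\; c(n,k)\, \Big(c\,\mathrm{Vol}(M)^{\frac{2n}{n^2+2n}}\Big)^{\frac{k-n}{n}} \;=\; c'(n,k)\, \mathrm{Vol}(M)^{\frac{2k-2n}{n^2+2n}},
\]
as claimed, absorbing $c$ and the $c(n,k)$ into a new constant $c'(n,k)$ independent of the level. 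The main obstacle is not any hard analysis — it is purely bookkeeping: correctly importing the Sarnak-Xue estimate with the right normalization of the metric and the group, and making sure the exponent arithmetic matches $\tfrac{2k-2n}{n^2+2n}$. The one genuine subtlety to check is that the constant $c_1$ (hence the final constant) can be taken independent of the level $\fp$, which is exactly what the congruence structure buys us; for this I would point to the uniform lower bound on displacement of nontrivial congruence elements in \cite{Sarnak}.
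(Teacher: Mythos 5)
Your overall strategy matches the paper's: combine Theorem \ref{A} (i.e.\ Corollary \ref{cor1}) with Sarnak--Xue's lower bound on the injectivity radius for congruence quotients and their upper bound on the volume, eliminate the level, and substitute. Your final substitution
\[
\frac{b^k(M)}{Vol(M)} \le c(n,k)\,\Big(c\,Vol(M)^{\frac{2n}{n^2+2n}}\Big)^{\frac{k-n}{n}} = c'(n,k)\,Vol(M)^{\frac{2k-2n}{n^2+2n}}
\]
is exactly the paper's, and the sign observation (that $\frac{k-n}{n}<0$ lets you replace $V_{min}$ by a lower bound) is right.

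However, the middle paragraph that is supposed to \emph{produce} the input $V_{min}(M)\gtrsim Vol(M)^{\frac{2n}{n^2+2n}}$ is internally inconsistent. You impose $(2n-1)c_1=\frac{2}{n^2+2n}$, which via your own chain $V_{min}\ge c_2\,Vol(M)^{(2n-1)c_1}$ gives $V_{min}\gtrsim Vol(M)^{\frac{2}{n^2+2n}}$ --- smaller by a factor of $n$ in the exponent than the $Vol(M)^{\frac{2n}{n^2+2n}}$ you plug in at the end, and insufficient to reach the stated exponent $\frac{2k-2n}{n^2+2n}$. The gloss about the geodesic length scaling ``like $\frac{1}{\dim G}\log Vol$ times the real-rank-one volume growth exponent $2n-1$'' does not track either. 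Two additional points: (i) the G\"unther comparison growth rate $e^{(2n-1)r}$ is what $sec\le -1$ gives in real dimension $2n$, but the sharp volume entropy of $\textbf{H}^n_{\IC}$ in the $-4\le sec\le -1$ normalization is $2n$, not $2n-1$, and the paper exploits the sharp rate; (ii) the Sarnak--Xue bound $\inj_{\Gamma_q}\ge 2\ln q$ is stated in a \emph{different} metric normalization (the matrix-entry one), so you cannot directly feed it into the $(-4,-1)$-normalized comparison geometry; the paper sidesteps this by immediately rewriting the injectivity radius bound as a scale-invariant statement about ball volume, namely $Vol(B_{\inj_{\Gamma_q}})\ge \omega_n q^{2n}+O(q^{2n-1})$. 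The cleaner route is: quote Sarnak--Xue's $Vol(B_{\inj_{\Gamma_q}})\gtrsim q^{2n}$ and $Vol(\Gamma_q\backslash \textbf{H}^n_{\IC})\le C q^{n^2+2n}$, eliminate $q$ to get $V_{min}\ge c\,Vol^{\frac{2}{n+2}}=c\,Vol^{\frac{2n}{n^2+2n}}$, then substitute into Corollary \ref{cor1}. Your final step is correct once this intermediate bound is in hand, so the error is in the bookkeeping, not in the approach.
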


This result improves and generalizes from $H^{k,0}$ to $H^k$ the results in \cite[Theorem 2.3.1]{Yeung1}, which seem to be the best published effective bounds for rank $H^{k,0}$ on compact complex-hyperbolic manifolds of dimension bigger than or equal to three.  We are unaware of any published results using those techniques in higher dimensions.  

For surfaces, our bound on $b^1$  is  sharper  than the one derived by Xue in \cite{Xue}, but weaker than the bound given by Sarnak-Xue \cite{Sarnak} and more recently by Marshall \cite{Marshall}. The bound found by Marshall is a striking $\alpha=\frac{3}{8}$, which  is even  sharp. These bounds both go beyond the embedded ball barrier in their use of the trace formula, and rely on low dimensional arithmetic information.  

In the compact case, our techniques can function as a replacement (or simplification) of estimates of matrix coefficient asymptotic used in \cite{DW78} and \cite{Wallach1}. In order to compare the two techniques, in the appendix we provide a simple method for estimating the relevant matrix coefficients required to estimate the first Betti number  of compact complex-hyperbolic manifolds and show that it reproduces our result. The estimates become somewhat more complex for $k>1$, and are not included in the appendix. We also include the simpler estimation of the matrix coefficients in the real hyperbolic case, and verify that they reproduce the Betti number estimates we previously derived in \cite{DS17}. With two different techniques measuring two very different quantities, it remains a challenge to understand how to combine them to get stronger information than each provides separately.  In the complete finite volume case, the trace formula becomes more complicated, and we are unaware of an extension of the DeGeorge-Wallach argument to this case. The geometric approach, on the other hand, extends to manifolds with rank one cusps to  bound the ranks of their reduced $L^2$-cohomology.

\begin{define}
Let $(M, g)$ be a complete Riemannian manifold. Define the vector space of $L^2$ harmonic $k$-forms 
\[
\mathcal{H}^k (M) \; := \; \big\{ \, \omega\in\Lambda^k TM \;  | \;  d \omega = 0=d^*\omega, \text{  and } \int_{M} \omega \wedge *\omega  <\infty \big\},
\]
where $*$ is the Hodge star operator. Set
\[
b^k_2(M):=\dim_{\IR}\mathcal{H}^k (M).
\]
\end{define}
If $M$ has finite dimensional $L^{2}$-cohomology, then $\mathcal{H}^k (M)$ is isomorphic to the $L^{2}$-cohomology in degree $k$, and then $b^k_2(M)$ is the rank of the $k$-th $L^2$-cohomology group of $M$. Complete finite volume locally symmetric spaces  with $\text{Rank } G = \text{Rank } K$ have finite dimensional $L^{2}$-cohomology, see for example \cite{BorelCass}. Without imposing the equal rank assumption, complete finite volume locally symmetric spaces always have $b_2^k<\infty$, see \cite{BorelGar}.  In this case, $b_2^k(M)$ computes the rank of the {\em reduced} $L^2$-cohomology. Thus, it is interesting to extend Theorem \ref{A} to complete finite volume real- and complex-hyperbolic manifolds. Let $\mu_{\IR}$ and $\mu_{\IC}$ denote respectively the Margulis constants for real- and complex-hyperbolic manifolds with fixed sectional curvature normalization. These constants depend on the dimension, and we refer to Section 8 in the book \cite{BGS} for the definition and basic properties of the Margulis constant of a negatively curved space. We obtain the following results.

\begin{theorem} \label{B}
Let $(M=\Gamma\backslash\textbf{H}^{n}_{\IC}, g_{\IC})$ be a complete finite volume complex-hyperbolic manifold, with $-4\leq sec_{g_{\IC}}\leq -1$. Write $M$ as a disjoint union $M= M_0\cup(\cup_{a}C_a)$, where the $C_a$ are  cusps and where $\inj_{M_{0}}:=\inf_{x\in M_{0}} inj_{g_{i}}(x)\geq \mu_{\IC}$. There exists a constant $c_{n,k}>0$, depending only on dimension and $k$, such that for $k<n$,
$$b_2^k(M) \leq c_{n,k}Vol(M)V_{min}(M_0)^{\frac{k-n}{n}}.$$
\end{theorem}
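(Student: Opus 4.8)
The plan is to reduce Theorem \ref{B} to a pointwise estimate on the reproducing kernel of the space $\mathcal{H}^{k}(M)$ of $L^{2}$ harmonic $k$-forms, and then integrate. Fixing an $L^{2}$-orthonormal basis $\omega_{1},\dots,\omega_{b}$ of $\mathcal{H}^{k}(M)$ (finite dimensional here, by \cite{BorelGar}), I set $F(x):=\sum_{i}|\omega_{i}(x)|^{2}$, the restriction to the diagonal of the Schwartz kernel of the orthogonal projector onto $\mathcal{H}^{k}(M)$; it is basis-free, satisfies $\int_{M}F=b_{2}^{k}(M)$, and obeys $F(x)\le\binom{2n}{k}\sup\{|\omega(x)|^{2}:\omega\in\mathcal{H}^{k}(M),\ \|\omega\|_{L^{2}(M)}=1\}$. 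Since $\int_{M}F\le Vol(M)\sup_{M}F$, it is enough to prove $\sup_{M}F\le a(k,n)\,V_{min}(M_{0})^{\frac{k-n}{n}}$, and by the last inequality it suffices to bound $|\omega(x)|^{2}$ for a single normalized harmonic form at each $x$. The hypothesis $k<n$ will enter only through the positivity of the decay exponents below.

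First I would treat $x$ lying in $M_{0}$ or in an initial collar of a cusp, i.e. all $x$ with $\inj_{x}\ge\inj_{M_{0}}$ (the injectivity radius is monotone into a cusp). For such $x$ the ball $B_{\inj_{x}}(x)$ is embedded, hence lifts isometrically to a ball in $\textbf{H}^{n}_{\IC}$; on $\textbf{H}^{n}_{\IC}$ I would apply the Price inequality of \cite{DS17}, in the form sharpened by the unitary holonomy that already underlies Theorem \ref{A} and Corollary \ref{cor1}, to the lift of a normalized harmonic form, and combine it with a mean-value inequality on a fixed small ball to get $|\omega(x)|^{2}\le C(n,k)\,e^{-2(n-k)\inj_{x}}$ (the exponent $2(n-k)$ being forced by the volume entropy $2n$ of $\textbf{H}^{n}_{\IC}$ together with the exponent appearing in Theorem \ref{A}). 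Summing over the basis gives $F(x)\le\binom{2n}{k}C(n,k)\,e^{-2(n-k)\inj_{x}}$; and since a maximal embedded ball in $M$ is isometric to a ball in $\textbf{H}^{n}_{\IC}$, whose volume is comparable to $e^{2n\inj}$, one has $V_{min}(M_{0})\asymp e^{2n\inj_{M_{0}}}$, whence $e^{-2(n-k)\inj_{x}}\le e^{-2(n-k)\inj_{M_{0}}}\le C'\,V_{min}(M_{0})^{\frac{k-n}{n}}$ on this region.

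It remains to bound $F$ deep inside a cusp $C_{a}$, where $\inj_{x}$ is small and the previous estimate is worthless; there I would instead use that the $L^{2}$-mass of a harmonic form is forced to decay. Identify $C_{a}$ with a warped product $N_{a}\times[0,\infty)$, $N_{a}$ a compact infra-nilmanifold, with the complex-hyperbolic cusp metric (contact direction scaling like $e^{-2t}$, the $2n-2$ horizontal directions like $e^{-t}$). The restriction of an $L^{2}$ harmonic form to $C_{a}$ is strongly harmonic, so the new monotonicity inequality should apply: an exponentially weighted cross-sectional $L^{2}$-mass $t\mapsto w(t)\int_{N_{a}\times\{t\}}|\omega|^{2}$ is non-increasing, which together with $\omega\in L^{2}$ yields $\int_{N_{a}\times\{t\}}|\omega|^{2}\le e^{-\mu(t-t_{0})}\int_{N_{a}\times\{t_{0}\}}|\omega|^{2}$ for $t\ge t_{0}$, with $\mu=\mu(n,k)>0$. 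Feeding this into a cross-sectional mean-value estimate I would get
\[
|\omega(x)|^{2}\ \le\ C(n,k)\,e^{-\mu\,t(x)}\,\frac{1}{Vol(N_{a}\times\{t_{0}\})}\int_{N_{a}\times\{t_{0}\}}|\omega|^{2},
\]
and summing over the basis, with $t_{0}$ chosen in the collar so that the thick-part bound applies on $N_{a}\times\{t_{0}\}$,
\[
F(x)\ \le\ C(n,k)\,e^{-\mu\,t(x)}\sup_{N_{a}\times\{t_{0}\}}F\ \le\ a(k,n)\,V_{min}(M_{0})^{\frac{k-n}{n}}.
\]
Combined with the previous paragraph this proves the bound on $\sup_{M}F$, and integration over $M$ finishes the proof.

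The main obstacle is the cusp monotonicity inequality. Establishing it should require a Rellich--Pohozaev type integration-by-parts identity adapted to the precise warped-product geometry of a complex-hyperbolic cusp, in which strong harmonicity is used to give the Bochner curvature term the correct sign; one must then check that this sign, and hence positivity of $\mu(n,k)$, persists for every $k<n$ (it degenerates at $k=n$, which is why that case is excluded). Subsidiary points needing care are: the smoothness and boundedness of $L^{2}$ harmonic forms where the argument evaluates them pointwise (elliptic regularity and bounded geometry away from the cusp points, supplemented by the cusp decay itself); the cross-sectional mean-value estimate, since the cusp has no uniform lower injectivity-radius bound and one must work in the nilpotent cover or exploit the homogeneity of the cross-sections; and checking that the collar datum $\sup_{N_{a}\times\{t_{0}\}}F$ is genuinely controlled by $V_{min}(M_{0})$ rather than, circularly, by $b_{2}^{k}(M)$.
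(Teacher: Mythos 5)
Your first half --- for $x$ with $\inj_x\geq\inj_{M_0}$, bounding $|\omega(x)|^2$ by $c\,e^{-2(n-k)\inj_x}$ via the ball Price inequality and elliptic estimates --- is correct, and your reformulation of peaking through the projection kernel $F(x)=\sum_i|\omega_i(x)|^2$ with $\int_M F=b_2^k(M)$ is exactly the normalization identity \eqref{one} of Section~\ref{previsited}. You also correctly identify the cuspidal Price inequality (Proposition~\ref{jl2decay}) as the necessary new input.

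The gap is in the reduction to $\sup_M F$. The proposed bound $|\omega(x)|^2\leq C e^{-\mu t(x)}\cdot(\text{collar data})$ deep in a cusp is not merely unproved but false, and in general $\sup_M F=\infty$. Proposition~\ref{jl2decay} controls the \emph{integrated} tail $\int_{\Omega_{s\infty}}|\alpha|^2\,dv$, not the pointwise norm, and the two behave oppositely because the cross-section volume collapses like $e^{-2ns}$. Concretely, the zero Fourier mode of an $L^2$ strongly harmonic $k$-form on a cusp typically carries a horizontal term with \emph{constant} coefficients (closedness and coclosedness permit this; see the analysis of $h^0$ in Section~\ref{standard} in the real case). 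Each horizontal coframe element has metric norm $e^{s}$, so such a term has pointwise norm $\sim e^{2ks}$, while its cross-sectional mass is $\sim e^{2ks}\cdot e^{-2ns}=e^{-2(n-k)s}$, perfectly consistent with Proposition~\ref{jl2decay}. Hence for $k\geq 1$, $F(x)$ grows exponentially along the cusp and $Vol(M)\sup_M F$ is not a usable bound; integrating $F$ over the cusps instead only gives $\int_{\cup_a C_a}F\leq b_2^k(M)$, which is circular. The paper's peaking argument is designed precisely to avoid any pointwise control deep in the cusps: equation~\eqref{one} gives the \emph{average} of the relevant density as $1/Vol(M)$, and Lemma~\ref{split} together with the cuspidal Price inequality (Proposition~\ref{ave}) produces a single point $\bar p$ with $\int_{S(\mathcal{H}^k)}|h(\bar p)|^2d\sigma_h\gtrsim 1/Vol(M)$ lying either over $M_0$ or over the first collar segment $\cup_j\Omega_{01}^j$, both regions where the injectivity radius is comparable to $\inj_{M_0}$; one then applies the ball Price inequality and Moser iteration at that one well-placed point (Corollary~\ref{L2C}).
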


See Corollary \ref{L2C} for the proof of Theorem \ref{B}.\\

In the real-hyperbolic case we obtain the following bounds. For the analogous bounds in the compact case, we refer to Corollary 116 in \cite{DS17}. 

\begin{theorem} \label{C}
Let $(M^{n}=\Gamma\backslash\textbf{H}^{n}_{\IR}, g_{\IR})$ be a complete finite volume real-hyperbolic manifold, with $sec_{g_{\IR}}=-1$. Write $M$ as a disjoint union $M= M_0\cup(\cup_{a}C_a)$, where the $C_a$ are  cusps and with $\inj_{M_{0}}:=\inf_{x\in M_{0}} inj_{g_{i}}(x)\geq \mu_{\IR}$.  There exists a constant $a_{n,k}>0$ depending only on dimension and $k$ such that for $k<\frac{n-1}{2}$
\[
b_2^k(M) \leq a_{n,k}Vol(M)V_{min}(M_0)^{\frac{2k+1-n}{n-1}}.
\]
Finally if $n=2k+1$, there exists a constant $\alpha(k)>0$ depending only on $k$ such that 
\[
b_2^k(M) \leq \alpha(k)\frac{Vol(M)}{\ln(V_{min}(M_0))}.
\]
\end{theorem}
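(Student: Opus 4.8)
The claim to be proved is the final assertion of Theorem \ref{C}, namely the logarithmic bound in the critical degree $n=2k+1$. The plan is to reduce the estimate to the compact core $M_0$ and there run the critical–degree argument of \cite{DS17} (\emph{cf.} Theorem \ref{real hyperbolic}), using a new monotonicity inequality on the cusps to guarantee that the $L^2$ mass of every harmonic $k$-form is essentially carried by $M_0$. The feature that makes the case $n=2k+1$ both feasible and borderline is that a real–hyperbolic cusp supports no $L^2$ harmonic forms in degree $k=\frac{n-1}{2}$: in lower degrees a cusp contributes a copy of $H^{k}$ of its cross-section, whereas in the critical degree the would–be harmonic representative sits exactly on the $L^2$ threshold and, as one reads off from the defining ODE system, must actually decay.

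\emph{Localizing harmonic forms to the core.} Each cusp $C_a$ is isometric to $N_a\times[0,\infty)$ with metric $dt^2+e^{-2t}h_a$, where $h_a$ is a flat metric on a closed $(n-1)$-manifold. Decomposing a strongly harmonic form $\omega$ (i.e. $d\omega=0=d^{*}\omega$) into Fourier modes along $N_a$ turns $d\omega=0=d^{*}\omega$ into a family of second order linear ODEs in $t$; the finiteness of $\int_{C_a}|\omega|^2$ forces each mode into its decaying solution space, and a direct analysis of the coupled ``neutral'' mode built from harmonic $k$- and $(k-1)$-forms on $N_a$ shows that at $k=\frac{n-1}{2}$ it too decays exponentially. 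One thus obtains a monotonicity inequality
\[
\int_{C_a\cap\{t\ge s\}}|\omega|^2\ \le\ e^{-\delta(s-s')}\int_{C_a\cap\{t\ge s'\}}|\omega|^2\qquad(s\ge s'\ge 0),
\]
with $\delta=\delta(n)>0$ independent of $a$. Pushing the estimate a fixed distance into $M_0$ — each $\partial C_a$ has a warped collar in $M_0$, after harmlessly shrinking $C_a$ if needed — and summing over the disjoint collars produces a constant $c=c(n)>0$ with
\[
\int_{M_0}|\omega|^2\ \ge\ c\int_{M}|\omega|^2\qquad\text{for every }\omega\in\mathcal{H}^k(M).
\]

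\emph{Peaking and Price inequality on the core.} Put $D=b_2^k(M)=\dim\mathcal{H}^k(M)<\infty$ and fix an $L^2$-orthonormal basis $\{\omega_i\}_{i=1}^{D}$. By the previous display $\int_{M_0}\sum_i|\omega_i|^2\ge cD$; bounding the largest eigenvalue of the evaluation quadratic form $\omega\mapsto|\omega(x)|^2$ on $\mathcal{H}^k(M)$ by its trace produces a point $x_0\in M_0$ and a unit-norm $\omega\in\mathcal{H}^k(M)$ with $|\omega(x_0)|^2\ge \tfrac{cD}{\binom{n}{k}Vol(M_0)}$. We may assume $\inj_{M_0}$ exceeds a fixed $\rho_0=\rho_0(n)$, since otherwise $V_{min}(M_0)$ is bounded and the assertion is trivial. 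The Bochner identity gives $\Delta|\omega|^2\le C(n)|\omega|^2$, so a mean–value inequality yields $|\omega(x_0)|^2\le \tfrac{C(n,k)}{Vol(B_{\rho_0}(x_0))}\int_{B_{\rho_0}(x_0)}|\omega|^2$. Lifting $\omega$ to $\textbf{H}^{n}_{\IR}$, where it remains strongly harmonic, and using that $B_{\inj_{x_0}}(\widetilde x_0)$ embeds, the Price inequality of \cite{DS17} supplies an increasing $h=h_{n,k}$ with $\int_{B_{\rho_0}(\widetilde x_0)}|\widetilde\omega|^2\le \tfrac{h(\rho_0)}{h(\inj_{x_0})}\int_{B_{\inj_{x_0}}(\widetilde x_0)}|\widetilde\omega|^2\le \tfrac{h(\rho_0)}{h(\inj_{x_0})}$. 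At the critical exponent $k=\frac{n-1}{2}$ the exponential rate that governs $h$ in lower degrees degenerates to zero, and the Riccati comparison of \cite{DS17} leaves instead the linear lower bound $h(R)\ge c(n)R$ for $R\ge\rho_0$. Combining the three inequalities with $\inj_{x_0}\ge\inj_{M_0}$,
\[
\frac{cD}{\binom{n}{k}Vol(M_0)}\ \le\ \frac{C(n,k)}{Vol(B_{\rho_0})}\cdot\frac{h(\rho_0)}{h(\inj_{M_0})}\ \le\ \frac{C'(n,k)}{\inj_{M_0}}.
\]
Since $\textbf{H}^{n}_{\IR}$ is homogeneous, $V_{min}(M_0)=Vol_{\textbf{H}^{n}}(B_{\inj_{M_0}})$, hence $\inj_{M_0}\ge \tfrac{1}{2(n-1)}\ln V_{min}(M_0)$ in the range to which we reduced; together with $Vol(M_0)\le Vol(M)$ this gives $b_2^k(M)\le \alpha(k)\,Vol(M)/\ln V_{min}(M_0)$.

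\emph{Main obstacle.} The delicate step is the cusp monotonicity in exactly this degree. For $k<\frac{n-1}{2}$ the cusp does carry $L^2$ harmonic forms and the monotonicity must be arranged so that those slow modes are correctly accounted for; at $k=\frac{n-1}{2}$ the harmonic representative of $H^{k}(N_a)$ lies precisely on the $L^2$ borderline, so one cannot deduce vanishing from a crude count of indicial roots and must instead extract genuine exponential decay from the coupled ODE for the critical Fourier mode, with a rate $\delta$ uniform over all cusps (in particular independent of the cross-sections $N_a$). Establishing this, together with verifying the borderline linear growth $h(R)\gtrsim R$ of the Price function of \cite{DS17} at this exponent, is where the real work lies; once both are in hand, the second paragraph above is just the \cite{DS17} peaking argument transplanted to $M_0$.
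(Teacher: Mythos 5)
The central gap is the asserted \emph{uniform} cusp decay. You claim a monotonicity inequality
$\int_{C_a\cap\{t\ge s\}}|\omega|^2\le e^{-\delta(s-s')}\int_{C_a\cap\{t\ge s'\}}|\omega|^2$ with $\delta=\delta(n)>0$
\emph{independent of the cusp $a$}, and the entire reduction to the compact core rests on it. But the paper's Fourier-mode analysis (Lemma \ref{2nd}) gives
\[
\int_{\Omega_{R_0\infty}^a}e^{s-R_0}|h|^2\,dv\;\le\;\Bigl(1+\frac{e^{-R_0}}{2\pi\delta_{\Lambda_a}}\Bigr)\int_{\Omega_{R_0\infty}^a}|h|^2\,dv,
\]
where $\delta_{\Lambda_a}=\inf_{v\in\Lambda^*_a\setminus\{0\}}|v|$. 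A nonzero mode $h^v$ with $|v|$ small is not in an exponential decay regime until the depth $s$ reaches roughly $\log(1/|v|)$, so the prefactor genuinely degenerates when the cusp lattice is ``far from square'' ($\delta_{\Lambda_a}\to 0$), and no $\delta(n)$ independent of the cross-section can be extracted. (The zero mode does vanish in critical degree, exactly as you say, but that does not rescue the nonzero modes.) This is not a matter of sharpening constants: it is the reason the paper does \emph{not} reduce the critical case to the peaking argument on $M_0$ alone, and instead goes beyond the embedded-ball barrier on the cusps.

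The paper's actual proof of the $n=2k+1$ case (Theorem \ref{criticalreaL}, Section \ref{rlattices}) balances two failure modes: it introduces the depth parameter $\nu(M)$ picking out where the $L^2$-mass of harmonic forms concentrates, splits the cusps according to whether $\delta_{\Lambda_a}$ is large or small relative to $e^{-\nu(M)}$ (the set $I(\nu(M))$), uses the dual-lattice relation $\delta_{\Lambda_a}\lambda_{n-1}(\Lambda_a)\ge 1$ to show that a small $\delta_{\Lambda_a}$ forces the lattice to be long in one direction, and then performs an explicit lattice-point count (Equation \eqref{Nbound}) to control the multiplicity of the universal-cover projection over a ball around the peak point. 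That counting replaces your uniform-decay hypothesis. Your linear-growth claim for the Price function at the critical exponent is correct and matches \cite[Corollary 108]{DS17}, and the trace/eigenvalue peaking step is fine, but without the lattice analysis the proof does not close.
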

See Corollary \ref{rhypcusps} and Theorem \ref{criticalreaL} for the proof of Theorem \ref{C}. Interestingly, the proof of Theorem \ref{C} requires estimates beyond the embedded ball barrier, but only on cusps, where the counting of lattice points reduces to the study of lattices in euclidean spaces.  We also apply Theorem \ref{B} and Theorem \ref{C} to towers of coverings associated to a cofinal filtration of the fundamental group of the base hyperbolic manifolds. We refer to Section \ref{towers} for details on the asymptotic behavior of $L^2$-cohomology along such towers.

Let $L(x) := -x_0^2+\sum_{j=1}^nx_j^2$ and $H(z):= -|z_0|^2+\sum_{j=1}^n|z_j|^2.$ Let $G(L)$ and $G(H)$ denote the isometry groups of $L$ and $H$ respectively. Specializing to principal congruence subgroups,  Theorems \ref{B} and \ref{C} reduce to 
\begin{theorem}
For $k<\frac{n-1}{2},$ there exists a constant  $a(n,k) >0$ such that for $\Gamma $ a torsion free principal congruence subgroup of $G(L)(\IZ),$ 
\begin{align}b_2^k(\Gamma \backslash\textbf{H}^{n}_{\IR})\leq a(n,k) Vol(\Gamma \backslash\textbf{H}^{n}_{\IR})^{1-\frac{4(n-1-2k)}{n(n+1)}}.
\end{align}
For $k<n$, there exists a constant  $b(n,k) >0$ such that for $\Gamma  $ a torsion free principal congruence subgroup of $G(H)(\IZ)$,
\begin{align}b_2^k(\Gamma \backslash\textbf{H}^{n}_{\IC})\leq b(n,k) Vol(\Gamma\backslash\textbf{H}^{n}_{\IC})^{1-\frac{4(n-k)}{(n+1)^2-1}}.
\end{align}
\end{theorem}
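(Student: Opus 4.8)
The plan is to feed the Sarnak--Xue comparison between injectivity radius and volume for principal congruence quotients (the same input used for Corollary \ref{congruence}) into Theorems \ref{B} and \ref{C}. The point is that, for $k$ in the indicated range, the exponent $\frac{2k+1-n}{n-1}$ of $V_{min}(M_0)$ in Theorem \ref{C}, and the exponent $\frac{k-n}{n}$ in Theorem \ref{B}, are both \emph{negative}, so any lower bound $V_{min}(M_0)\geq c\,Vol(M)^{\beta}$ converts those estimates directly into bounds of the form $b_2^k(M)\leq c'\,Vol(M)^{1+\beta\cdot(\mathrm{exponent})}$.

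First I would record the volume count: by strong approximation, a torsion-free principal congruence subgroup $\Gamma=\Gamma(\fm)$ of level $\fm$ satisfies $[\Gamma(1):\Gamma(\fm)]\asymp N(\fm)^{\dim G}$, hence $Vol(M)\asymp N(\fm)^{\dim G}$, with $\dim SO(n,1)=\tfrac{n(n+1)}{2}$ and $\dim SU(n,1)=(n+1)^2-1$. Next comes the Sarnak--Xue estimate on $V_{min}(M_0)$. On the thick part $M_0$ (the complement of the cusps) the injectivity radius is governed by the loxodromic elements whose axes meet $M_0$, and such a $\gamma\in\Gamma(\fm)\setminus\{1\}$ has characteristic polynomial congruent to $(t-1)^{n+1}$ modulo $\fm$; applying the product formula to its leading eigenvalue $e^{\ell(\gamma)}$, a unit congruent to $1$ modulo $\fm$, forces $\ell(\gamma)$ to be at least a fixed positive multiple of $\log N(\fm)$. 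The parabolic elements of $\Gamma(\fm)$ are confined to the cusps $C_a$, and there the count of lattice points in a metric ball reduces to a count in the Euclidean lattice of the cusp cross-section --- precisely the estimate ``beyond the embedded ball barrier'' on cusps that Theorems \ref{B} and \ref{C} are built to exploit. Combining the displacement bound with the volume growth of the symmetric space ($Vol(B_R)\asymp e^{(n-1)R}$ for $\textbf{H}^n_\IR$, $\asymp e^{2nR}$ for $\textbf{H}^n_\IC$) yields the Sarnak--Xue bound $V_{min}(M_0)\gtrsim N(\fm)^{2(n-1)}\asymp Vol(M)^{\frac{4(n-1)}{n(n+1)}}$ in the real case and $V_{min}(M_0)\gtrsim N(\fm)^{4n}\asymp Vol(M)^{\frac{4}{n+2}}$ in the complex case.

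Finally I would substitute. In the real case Theorem \ref{C} gives, for $k<\frac{n-1}{2}$, $b_2^k(M)\leq\alpha\,Vol(M)\,V_{min}(M_0)^{\frac{2k+1-n}{n-1}}\leq\alpha'\,Vol(M)^{1+\frac{4(2k+1-n)}{n(n+1)}}=\alpha'\,Vol(M)^{1-\frac{4(n-1-2k)}{n(n+1)}}$; in the complex case Theorem \ref{B} gives $b_2^k(M)\leq a\,Vol(M)\,V_{min}(M_0)^{\frac{k-n}{n}}\leq a'\,Vol(M)^{1+\frac{4(k-n)}{n(n+2)}}=a'\,Vol(M)^{1-\frac{4(n-k)}{(n+1)^2-1}}$. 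I expect the genuine work --- and the one place the argument can go wrong --- to be the middle step: promoting the eigenvalue/displacement estimate to a lower bound on $V_{min}(M_0)$ valid uniformly over the whole thick part (not merely at a base point of the locally symmetric space) and with the sharp exponent, the delicate contribution being the cusps, where one must count lattice points slightly outside the embedded-ball regime. That uniform estimate is exactly the Sarnak--Xue bound invoked, and pinning down its exponent is what produces the precise constants $\frac{4(n-1)}{n(n+1)}$ and $\frac{4(n-k)}{(n+1)^2-1}$.
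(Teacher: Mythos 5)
Your proposal is correct and follows the same overall plan as the paper: estimate $V_{min}(M_0)$ and $Vol(M)$ in powers of the level $q$, then substitute into Theorems~\ref{B} and~\ref{C}; your exponent arithmetic matches the paper exactly. The only genuine difference lies in the sub-argument that the injectivity radius at the identity coset is $\gtrsim\log q$. You sketch the characteristic-polynomial/product-formula estimate on the leading eigenvalue, which controls the translation length of loxodromic $\gamma$ but requires a separate discussion for parabolics (which have zero translation length and must be confined to cusps). The paper instead proves Lemma~\ref{thankmike}: a short linear-algebra computation, using only that $\gamma\equiv I\ (\mathrm{mod}\ q)$ preserves the Gram matrix of the form, showing the operator norm of any nontrivial such $\gamma$ is $\gtrsim q^2$. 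Since $d(eK,\gamma K)\asymp\log\|\gamma\|$ by the Cartan decomposition, this treats loxodromic and parabolic elements uniformly in one stroke. The step you correctly flag as the delicate one --- promoting the base-point estimate to a lower bound on $\inj_{M_0}$ uniformly over the thick part --- is handled in the paper by a clean mechanism you do not quite spell out: fix a thick part $M_\IK(n,1)_0$ of the level-one quotient with finite diameter $\delta$, take the level-$q$ thick part to be its preimage, and use the $1$-Lipschitz property of the injectivity radius together with normality of $\Gamma_\IK(n,q)$ in $\Gamma_\IK(n,1)$ to conclude $\inj_{M_\IK(n,q)_0}\geq 2\ln q - G - \delta$, giving \eqref{injestq}. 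This bypasses any lattice-point count near or beyond the embedded-ball barrier; that counting is already internal to Theorems~\ref{B} and~\ref{C} and does not reappear here.
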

For principal congruence subgroup quotients of the complex $2$ ball, we thus have $b_2^1$ growing at most like the square root of the volume. 
Surprisingly, this is sharper than both our estimate in Corollary \ref{congruence} and Sarnak and Xue's estimate (but still weaker than Marshall's estimate) for the compact case. The improvement results from our sharper injectivity radius estimates in Subsection \ref{sharp}. In the real hyperbolic case, our estimates extend Yeung's estimates for cocompact lattices \cite[Theorem 2.4.1]{Yeung1} to noncocompact lattices.

Finally, we  study the de Rham cohomology of complete finite volume hyperbolic manifolds with cusps along a cofinal tower. This relies on the topological interpretation of $L^2$-cohomology of locally symmetric varieties (\emph{cf}. \cite{Zuc1}), and on an estimate of the number of cusps along such towers. The problem of estimating the number of cusps in terms of the volume on hyperbolic manifolds with cusps is a well-studied problem in geometric topology; see for example \cite{Kel98}, \cite{Parker}, \cite{DD15}, \cite{BT18} among many other references. Nonetheless, here we need a different point of view on this problem: we  consider the asymptotic behavior of the volume normalized number of cusps along a cofinal tower, and this point of view seems to be new. We refer to Section \ref{tinterpretation} for the background and details.\\
 
\noindent\textbf{Acknowledgments}. We thank Michael Lipnowski for his advice and input on many aspects  of  this work, including expanding the scope of Lemma \ref{thankmike}. We also thank the referees for the many suggestions that helped us improving  the presentation.


\section{The Geometry of Geodesic Balls in Rank One Symmetric Spaces}\label{Carnot}

For for $\IK=\IR, \IC, \IH$, or $\IO$, let  $(\textbf{H}^{n}_{\IK},\textbf{g}_{\IK})$ denote the corresponding $\IK$- hyperbolic spaces.  These spaces have real dimensions respectively $n$, $2n$, $4n$ and $16$. They are symmetric spaces given by the following quotients: 
\begin{align}\label{list1}
&\textbf{H}^{n}_{\IR}=SO(n, 1)/SO(n), \quad \quad \quad \textbf{H}^{n}_{\IC}=SU(n, 1)/U(n), \\ \notag
&\textbf{H}^{n}_{\IH}=Sp(n, 1)/Sp(n)Sp(1), \quad \textbf{H}^{2}_{\IO}=F_{4(-20)}/Spin(9).
\end{align}

Given a point $p\in \textbf{H}^{n}_{\IK}$,  denote by $B_r(p)$ and $S_{r}(p)$ the geodesic ball and sphere of radius $r$ around $p$, respectively. 
In geodesic polar coordinates around the point $p$, 
\begin{align}\label{spherical}
\textbf{g}_{\IK}=dr^{2}+g_{r},
\end{align}
where in the real-hyperbolic case, $g_{\IR}=dr^2+\sinh^2(r)d\sigma^2$, with $d\sigma^2$ the usual round metric on the sphere. In the remaining cases, the metric is best described in terms of a generalized Hopf fibration:
\[
S^{1}\rightarrow S^{2n-1} \rightarrow \textbf{P}^{n-1}_{\IC}, \quad S^{3}\rightarrow S^{4n-1}\rightarrow \textbf{P}^{n-1}_{\IH},
\text{ and }
S^{7}\rightarrow S^{15}\rightarrow\textbf{P}^{1}_{\IO},
\]
where $\textbf{P}^{n-1}_{\IC}$, $ \textbf{P}^{n-1}_{\IH}$ are respectively complex and quaternionic projective spaces; in the octonionic case, we set $\textbf{P}^{1}_{\IO}:= S^{8}$. For convenience, we summarize these fibrations as follows
\[
S^{\dim{\IK}-1}\rightarrow S^{m_{\IK, n}}\rightarrow \textbf{P}^{n}_{\IK},
\]
where $\dim{\IC}=2$, $\dim{\IH}=4$, $\dim{\IO}=8$, and where 
\[
m_{\IC, n}=2n-1, \quad m_{\IH, n}=4n-1, \quad m_{\IO, 1}=15;
\]
in the octonionic case, only $n=1$ occurs. We normalize the max of the sectional curvature of $\textbf{g}_{\IK}$ to be $-1$. The sectional curvature of $\textbf{g}_{\IK}$ is then negative and quarter-pinched:
\[
-4\leq \sec_{\textbf{g}_{\IK}}\leq -1.
\] 
We fix this normalization for the rest of this paper. 

Decompose the metric on the relevant round unit  spheres as $g_{round} = g_v+g_h,$ where 
$g_v$ is the metric restricted to vectors tangent to the generalized Hopf fiber, and $g_h$ is the restriction of the metric to the vectors orthogonal to the fiber. With this notation, we have 
\begin{align}\label{sphericalmetric}
\textbf{g}_{\IK}=dr^{2}+\frac{1}{4}\sinh^2(2r)g_v + \sinh^2(r)g_h.
\end{align}
With this metric, the Hopf fibers are totally geodesic and equipped with the standard round metric up to scale; the metric on the horizontal vectors is the pullback from the  the base of the standard (rescaled) symmetric metric on $\textbf{P}^{n}_{\IK}$. 
Given this explicit description of the  metric, we now compute the second fundamental form of the geodesic spheres.

\begin{proposition}\label{2nd-fundamental}
 The second fundamental form $\textbf{h}_{\IK}(r)$ of a geodesic sphere $S_{r}$ in $(\textbf{H}^{n}_{\IK}, \textbf{g}_{\IK})$ has the  expression:
\begin{equation*} 
\textbf{h}_{\IK}(r)= 2\coth(2r)\frac{\sinh^{2}(2r)}{4}g_v + \coth(r)\sinh^2(r)g_h. 
\end{equation*}
\end{proposition}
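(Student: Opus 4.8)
The plan is to read off $\mathbf{h}_{\IK}(r)$ directly from the explicit form of the metric in (\ref{sphericalmetric}), using the standard description of the second fundamental form of a level set of the distance function. Recall that in geodesic polar coordinates the radial field $\partial_r$ is a unit-speed geodesic field normal to every distance sphere $S_r$, and that for a metric of the form $\textbf{g}_{\IK}=dr^2+g_r$ the second fundamental form of $S_r$ with respect to the outward unit normal $\partial_r$ is
\[
\mathbf{h}_{\IK}(r)=\tfrac12\,\mathcal{L}_{\partial_r}g_r=\tfrac12\,\partial_r g_r .
\]
This is immediate from $\mathbf{h}_{\IK}(r)(X,Y)=\langle\nabla_X\partial_r,Y\rangle$: extending tangent vectors to the sphere to an $r$-independent (coordinate) frame, one has $[\partial_r,X]=0$, hence $\nabla_{\partial_r}X=\nabla_X\partial_r$, and therefore $(\partial_r g_r)(X,Y)=\partial_r\langle X,Y\rangle=\langle\nabla_X\partial_r,Y\rangle+\langle X,\nabla_Y\partial_r\rangle=2\,\mathbf{h}_{\IK}(r)(X,Y)$.

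Next I would substitute $g_r=\tfrac14\sinh^2(2r)\,g_v+\sinh^2(r)\,g_h$ from (\ref{sphericalmetric}), using that $g_v$ and $g_h$ are pulled back from the round unit sphere and from $\textbf{P}^{n-1}_{\IK}$ and so are independent of $r$; thus only the scalar coefficients are differentiated. Since
\[
\partial_r\!\Big(\tfrac14\sinh^2(2r)\Big)=\sinh(2r)\cosh(2r),\qquad \partial_r\big(\sinh^2 r\big)=2\sinh r\cosh r,
\]
we obtain $\mathbf{h}_{\IK}(r)=\tfrac12\sinh(2r)\cosh(2r)\,g_v+\sinh r\cosh r\,g_h$. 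Finally I would rewrite the two coefficients in the claimed form using $\tfrac12\sinh(2r)\cosh(2r)=2\coth(2r)\cdot\tfrac{\sinh^2(2r)}{4}$ and $\sinh r\cosh r=\coth(r)\sinh^2 r$, which yields exactly
\[
\mathbf{h}_{\IK}(r)=2\coth(2r)\,\frac{\sinh^2(2r)}{4}\,g_v+\coth(r)\sinh^2(r)\,g_h .
\]

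Equivalently, since the vertical and horizontal distributions are $\partial_r$-invariant and $g_r$-orthogonal for every $r$, the shape operator $g_r^{-1}\mathbf{h}_{\IK}(r)$ is block-diagonal, acting as $2\coth(2r)$ times the identity on the Hopf directions and as $\coth(r)$ times the identity on the horizontal directions; in the real-hyperbolic case there are no vertical directions and one recovers the classical $\mathbf{h}_{\IR}(r)=\coth(r)\sinh^2(r)\,d\sigma^2$. There is no serious obstacle here: the statement is essentially a one-line differentiation of (\ref{sphericalmetric}), and the only points requiring minor care are fixing the sign convention for the second fundamental form, noting that $g_v$ and $g_h$ as written are genuinely $r$-independent so that the block structure persists, and keeping the hyperbolic-trigonometric bookkeeping straight.
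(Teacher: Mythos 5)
Your argument is correct and is essentially the paper's proof: both identify $\mathbf{h}_{\IK}(r)$ with $\tfrac12 L_{\partial_r}g_r$ and then read off the answer by differentiating the explicit warped coefficients in \eqref{sphericalmetric}. The only difference is cosmetic—you spell out the derivation of $\mathbf{h}_{\IK}(r)=\tfrac12\partial_r g_r$ from $\mathbf{h}_{\IK}(X,Y)=\langle\nabla_X\partial_r,Y\rangle$ via $[\partial_r,X]=0$, whereas the paper simply invokes the identity of the second fundamental form with (a multiple of) the Hessian of the distance function.
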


\begin{proof}
Recall that the Hessian of the smooth distance function centered at $p$ is proportional to the second fundamental form of a geodesic sphere:
\[
\textbf{h}_{\IK}(r)=\frac{1}{2} \text{Hess}(r)=\frac{1}{2} L_{\partial_{r}}(\textbf{g}_{\IK}),
\]
where $L_{\partial_{r}}$ is the Lie derivative with respect to the unit length radial vector field. We refer to \cite[Proposition 3.2.11]{Petersen} for the derivation of this important identity. Now, we compute
\[
 L_{\partial_{r}}(\textbf{g}_{\IK})=L_{\partial_{r}}(g_{r}), 
\]
so that the proposition follows from \eqref{sphericalmetric}.
\end{proof}

\begin{corollary}\label{mean-curvature}
 For $\IK\not = \IR,$ the second fundamental form $\textbf{h}_{\IK}(r)$ of a geodesic sphere $S_{r}$ in $(\textbf{H}^{n}_{\IK}, \textbf{g}_{\IK})$ has two distinct eigenvalues
\[
\lambda_{1}(r)=2\coth(2r)=\coth(r)+\tanh(r), \quad \lambda_{2}(r)=\coth(r), 
\]
with multiplicities $m(\lambda_{1}, \IK)=\dim(\IK)-1$, $m(\lambda_{2}, \IK)=1-\dim{\IK}+m_{\IK, n}$.  Finally, the mean curvature $\mathcal{H}_{\IK, n}(r)$ of a geodesic sphere $S_{r}$ is 
\begin{align}\label{meanc}
\mathcal{H}_{\IK, n}(r)=(\dim(\IK)-1)(\coth(r)+\tanh(r))+(1-\dim{\IK}+m_{\IK, n})\coth(r).
\end{align}
\end{corollary}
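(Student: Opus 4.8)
The plan is to read off both the eigenvalue data and the mean curvature directly from the formula for $\textbf{h}_{\IK}(r)$ in Proposition \ref{2nd-fundamental}, by comparing it with the metric induced on the geodesic sphere. Restricting \eqref{sphericalmetric} to $S_r$, the induced metric is $g_r = \tfrac14\sinh^2(2r)\,g_v + \sinh^2(r)\,g_h$, and by the remark following \eqref{sphericalmetric} the tensors $g_v$ and $g_h$ are supported on complementary, mutually $g_r$-orthogonal sub-bundles of $TS_r$: the vertical bundle $V$ tangent to the generalized Hopf fiber $S^{\dim\IK-1}$ of $S^{m_{\IK,n}}\to\textbf{P}^{n}_{\IK}$, and the horizontal bundle $H$, which is the pullback of $T\textbf{P}^{n}_{\IK}$. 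Since $\textbf{h}_{\IK}(r)$ and $g_r$ are both block-diagonal for the splitting $TS_r = V\oplus H$, the shape operator (that is, $\textbf{h}_{\IK}(r)$ with one index raised by $g_r$) acts as a scalar on each block: on $V$ it equals $\big(2\coth(2r)\cdot\tfrac14\sinh^2(2r)\big)\big/\big(\tfrac14\sinh^2(2r)\big) = 2\coth(2r) =: \lambda_1(r)$, and on $H$ it equals $\big(\coth(r)\sinh^2(r)\big)\big/\sinh^2(r) = \coth(r) =: \lambda_2(r)$. Since $2\coth(2r) = \coth(r) + \tanh(r)$ and $\tanh(r) > 0$ for $r > 0$, these two numbers are distinct, so for $\IK\ne\IR$ (where the vertical bundle is nontrivial) $\textbf{h}_{\IK}(r)$ has exactly the two eigenvalues $\lambda_1(r), \lambda_2(r)$.

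Next I would compute the multiplicities from the ranks of the two distributions. The Hopf fiber is $S^{\dim\IK-1}$, so $\dim V = \dim\IK - 1$, whence $m(\lambda_1,\IK) = \dim\IK - 1$; and since $S_r$ is diffeomorphic to $S^{m_{\IK,n}}$, we get $\dim H = m_{\IK,n} - (\dim\IK-1) = 1 - \dim\IK + m_{\IK,n}$, so $m(\lambda_2,\IK) = 1 - \dim\IK + m_{\IK,n}$. (One checks, e.g., that $m(\lambda_1,\IK)+m(\lambda_2,\IK) = m_{\IK,n} = \dim S_r$, as it must; and for $\IK=\IC,\IH,\IO$ this gives the pairs $(1,2n-2)$, $(3,4n-4)$, $(7,8)$ respectively.)

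Finally, the mean curvature is the trace of the shape operator, hence
\[
\mathcal{H}_{\IK,n}(r) = m(\lambda_1,\IK)\,\lambda_1(r) + m(\lambda_2,\IK)\,\lambda_2(r) = 2(\dim\IK-1)\coth(2r) + (1-\dim\IK+m_{\IK,n})\coth(r),
\]
which is \eqref{meanc}. I do not anticipate any genuine obstacle: the one point that deserves a sentence of care is that the splitting $g_{round} = g_v + g_h$ of the round-sphere metric really does yield a $g_r$-orthogonal bundle decomposition of $TS_r$ on which both $\textbf{h}_{\IK}(r)$ and $g_r$ are diagonal — but this is exactly what was recorded after \eqref{sphericalmetric} (the Hopf fibers are totally geodesic and the horizontal metric is pulled back from the base). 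Everything else is elementary hyperbolic trigonometry and a dimension count.
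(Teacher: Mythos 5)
Your proof is correct and is essentially the argument the paper intends: the paper states the corollary without proof, expecting the reader to extract the eigenvalues and multiplicities from the block-diagonal form of $\textbf{h}_{\IK}(r)$ in Proposition \ref{2nd-fundamental} relative to the induced metric $g_r$, exactly as you do. Your dimension count, the identity $2\coth(2r)=\coth(r)+\tanh(r)$ justifying distinctness of the eigenvalues, and the sanity check $m(\lambda_1,\IK)+m(\lambda_2,\IK)=m_{\IK,n}$ are all correct.
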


In the  following lemma, we restate Corollary \ref{mean-curvature} in a form convenient for subsequent applications to the Price inequality given in Section \ref{Price}.

\begin{lemma}\label{eigenvalues}
Let 
\begin{align}\label{eigen}
\lambda_{1}(r)=...=\lambda_{\dim(\IK)-1}(r)>\lambda_{\dim(\IK)}(r)=...=\lambda_{m_{\IK, n}}(r)
\end{align}
denote the ordered eigenvalues of $\textbf{h}_{\IK}(r)$. For any integer $1\leq k<\dim_{\IR}\textbf{H}^{n}_{\IK}$, we  have

\begin{equation*} 
\Big(\frac{\mathcal{H}_{\IK, n}(r)}{2}-\sum^{k}_{i=1}\lambda_{i}(r)\Big)= 
\left\{ \begin{array}{rl} &\Big(\frac{m_{\IK, n}}{2}-k\Big)\coth(r)+\Big(\frac{\dim(\IK)-1}{2}-k\Big)\tanh(r), \\
&\text{if } k \leq \dim(\IK)-1;\\
&\\
&\Big(\frac{m_{\IK, n}}{2}-k\Big)\coth(r)-\frac{(\dim(\IK)-1)}{2}\tanh(r), \\
&\text{if } k > \dim(\IK)-1.\\
\end{array} \right. 
\end{equation*}

\end{lemma}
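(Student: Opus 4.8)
The plan is straightforward: this is a direct computation from the explicit formulas already established in Corollary \ref{mean-curvature}. First I would record the two ingredients. From \eqref{meanc}, the mean curvature is
\[
\mathcal{H}_{\IK, n}(r)=2(\dim(\IK)-1)\coth(2r)+(1-\dim{\IK}+m_{\IK, n})\coth(r),
\]
so $\frac{\mathcal{H}_{\IK,n}(r)}{2}=(\dim(\IK)-1)\coth(2r)+\tfrac12(1-\dim{\IK}+m_{\IK,n})\coth(r)$. From Corollary \ref{mean-curvature}, the eigenvalues of $\textbf{h}_{\IK}(r)$ are $\lambda_1(r)=2\coth(2r)$ with multiplicity $\dim(\IK)-1$ and $\lambda_2(r)=\coth(r)$ with multiplicity $m_{\IK,n}-\dim(\IK)+1$, and these are ordered as in \eqref{eigen} since $2\coth(2r)>\coth(r)$ for $r>0$ (equivalently $\coth(2r)>\tanh(r)$, which follows from $\coth(2r)-\tanh(r)=\frac{1}{\sinh(2r)\cosh(r)/\cosh(r)}\cdots$, more simply from $2\coth(2r)-\coth(r)=\tanh(r)>0$ using the identity below).

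Next I would split into the two cases. If $1\le k\le \dim(\IK)-1$, then the top $k$ eigenvalues are all equal to $2\coth(2r)$, so $\sum_{i=1}^k\lambda_i(r)=2k\coth(2r)$ and
\[
\frac{\mathcal{H}_{\IK,n}(r)}{2}-\sum_{i=1}^k\lambda_i(r)=(\dim(\IK)-1-2k)\coth(2r)+\tfrac12(1-\dim{\IK}+m_{\IK,n})\coth(r).
\]
If instead $k>\dim(\IK)-1$, then the sum includes all $\dim(\IK)-1$ copies of $2\coth(2r)$ plus $k-\dim(\IK)+1$ copies of $\coth(r)$, giving $\sum_{i=1}^k\lambda_i(r)=2(\dim(\IK)-1)\coth(2r)+(k-\dim(\IK)+1)\coth(r)$, hence
\[
\frac{\mathcal{H}_{\IK,n}(r)}{2}-\sum_{i=1}^k\lambda_i(r)=-(\dim(\IK)-1)\coth(2r)+\Big(\tfrac{m_{\IK,n}}{2}+\tfrac12-k-\tfrac{\dim(\IK)}{2}+\dim(\IK)-1\Big)\coth(r).
\]

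The only real content is the trigonometric simplification, using the identity $2\coth(2r)=\coth(r)+\tanh(r)$ (which follows from $\coth(2r)=\frac{\cosh^2 r+\sinh^2 r}{2\sinh r\cosh r}$). Substituting $\coth(2r)=\tfrac12(\coth(r)+\tanh(r))$ into both displayed expressions and collecting the $\coth(r)$ and $\tanh(r)$ coefficients should yield exactly the two-branch formula claimed: in the first case the $\coth(r)$ coefficient becomes $\tfrac12(\dim(\IK)-1-2k)+\tfrac12(1-\dim{\IK}+m_{\IK,n})=\tfrac{m_{\IK,n}}{2}-k$ and the $\tanh(r)$ coefficient becomes $\tfrac12(\dim(\IK)-1-2k)=\tfrac{\dim(\IK)-1}{2}-k$; in the second case the $\coth(r)$ coefficient becomes $-\tfrac12(\dim(\IK)-1)+(\tfrac{m_{\IK,n}}{2}-k+\tfrac{\dim(\IK)-1}{2})=\tfrac{m_{\IK,n}}{2}-k$ and the $\tanh(r)$ coefficient becomes $-\tfrac12(\dim(\IK)-1)$. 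This matches the statement. There is no real obstacle here — the ``hard part'' is merely bookkeeping the multiplicities correctly and not dropping a factor of $\tfrac12$; one should double-check the boundary value $k=\dim(\IK)-1$ is consistent between the two branches (it is, since $\tanh(r)$ acquires coefficient $\tfrac{\dim(\IK)-1}{2}-(\dim(\IK)-1)=-\tfrac{\dim(\IK)-1}{2}$, agreeing with the second branch).
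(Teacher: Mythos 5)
Your proof is correct and follows exactly the route the paper itself takes — the paper's proof is literally the single line ``It suffices to combine Equations \eqref{meanc} and \eqref{eigen}'', and you have carried out that combination explicitly, using the identity $2\coth(2r)=\coth(r)+\tanh(r)$ to collect terms; the case split and coefficient arithmetic check out. (The parenthetical justification that $2\coth(2r)>\coth(r)$ has a garbled intermediate fragment, but the clean version $2\coth(2r)-\coth(r)=\tanh(r)>0$ that you give right after is fine.)
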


\begin{proof}
It suffices to combine Equations \eqref{meanc} and \eqref{eigen}. 
\end{proof}

\section{On the Betti Numbers of Compact Quotients of $\textbf{H}^{n}_{\IK}$}\label{Price}

In this section, we study Betti numbers of compact locally symmetric rank one spaces via a Price inequality for harmonic forms. We recall the following result for compact real-hyperbolic spaces, which we previously treated in \cite[Corollary 116]{DS17} (see also \cite[Theorem 2.4.1]{Yeung1} for estimates outside the critical degree). Throughout the paper a \emph{cofinal filtration} of a finitely presented group will always be a filtration via finite index normal and nested subgroups.

\begin{theorem}\label{real hyperbolic}  
Let $(M^{n}:=\Gamma\backslash\textbf{H}_{\IR}^{n})$ be a closed real-hyperbolic manifold with $sec_{g_{\IR}}=-1$ and injectivity radius $\inj_M\geq 1$. Given a cofinal filtration $\{\Gamma_{i}\}$ of $\Gamma$,  denote by 
$\pi_{i}: M_{i}\rightarrow M$ the regular Riemannian cover of $M$ associated to $\Gamma_{i}$. 
For any integer $k<\frac{n-1}{2},$  there exists a positive constant $c_{1}(n,k)$ (independent of $\Gamma$ and $\{\Gamma_{i}\}$) such that   
\begin{align}
\frac{b^{n-k}(M_i)}{Vol(M_i)}=\frac{b^{k}(M_{i})}{Vol(M_{i})}\leq c_{1}(n, k)V_{min}(M_i)^{-\frac{n-1-2k}{n-1}}.
\end{align}
In particular, the sub volume growth of the Betti numbers along the tower of coverings is exponential in the injectivity radius. For the critical degree $k=\frac{n-1}{2}$, there exists  a positive constant $c_{2}(n)$ (independent of $\Gamma$ and $\{\Gamma_{i}\}$) such that  
\begin{align}\label{critdeg0}
\frac{b^{k+1}(M_i)}{Vol(M_i)}=\frac{b^{k}(M_{i})}{Vol(M_{i})}\leq \frac{c_{2}(n)}{\inj_{M_{i}}}. 
\end{align} 
\end{theorem}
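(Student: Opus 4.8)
The plan is to derive the stated inequalities from a Price-type inequality for harmonic forms, exactly as in \cite{DS17}, but with the curvature/second-fundamental-form input now supplied by Lemma \ref{eigenvalues} in the real-hyperbolic case $\IK=\IR$. Since the $M_i$ are closed, $b^k(M_i)=\dim\mathcal{H}^k(M_i)$ by Hodge theory. Fix a harmonic $k$-form $\omega$ on $M_i$ and a point $x$; lift to an embedded geodesic ball $B_R(x)$ of radius $R=\inj_x$ in $M_i$ (equivalently work in $\textbf{H}^n_\IR$). The Bochner/Weitzenböck formula together with the Rellich-type integration-by-parts identity of \cite{DS17} yields, for a suitable radial weight $f(r)$ built from the mean curvature $\mathcal H_{\IR,n}$ and the eigenvalue sums of $\textbf h_\IR(r)$, a differential inequality controlling $\int_{B_r(x)}|\omega|^2$ by its boundary values. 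The key curvature quantity entering the Price inequality is precisely $\big(\tfrac{\mathcal H_{\IR,n}(r)}{2}-\sum_{i=1}^k\lambda_i(r)\big)$, which by Lemma \ref{eigenvalues} (with $\dim\IK=1$, $m_{\IR,n}=n-1$, so the second branch applies for all $k\ge 1$ and the $\tanh$ term drops) equals $\big(\tfrac{n-1}{2}-k\big)\coth(r)$. This is the crucial point: the coefficient is positive exactly when $k<\frac{n-1}{2}$, zero when $k=\frac{n-1}{2}$, and this dichotomy drives the three cases in the theorem.

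Concretely, the Price inequality produces a bound of the form
\[
\int_{B_{R/2}(x)}|\omega|^2 \;\le\; \Phi_{n,k}(R)^{-1}\int_{B_R(x)}|\omega|^2,
\]
where $\Phi_{n,k}(R)$ is an explicit integral of the weight against the exponential volume growth of balls in $\textbf H^n_\IR$. For $k<\frac{n-1}{2}$ the positive coefficient $(\tfrac{n-1}{2}-k)\coth r$ gives $\Phi_{n,k}(R)\gtrsim e^{c(n-1-2k)R/(n-1)}\sim V_{min}$-power after using $Vol(B_R)\asymp e^{(n-1)R}$; for $k=\frac{n-1}{2}$ the coefficient degenerates and one only gains a factor linear in $R\asymp \inj_{M_i}$, hence the $1/\inj_{M_i}$ loss in \eqref{critdeg0}. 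Second I would run the standard counting argument of \cite{DS17}: cover $M_i$ by $\asymp Vol(M_i)/V_{min}(M_i)$ balls, and on each ball the Price inequality plus the mean-value inequality for the subharmonic-type quantity $|\omega|^2$ bounds the pointwise norm, hence the dimension of $\mathcal H^k(M_i)$ restricted to that ball, by a constant independent of $i$ times the decay factor. Summing over the cover and optimizing the radius yields $b^k(M_i)\le c_1(n,k)Vol(M_i)V_{min}(M_i)^{-(n-1-2k)/(n-1)}$ in the subcritical range and $b^k(M_i)\le c_2(n)Vol(M_i)/\inj_{M_i}$ at the critical degree. The hypothesis $\inj_M\ge 1$ (and cofinality, so $\inj_{M_i}\to\infty$) ensures the weight functions and the $\coth$, $\tanh$ terms are uniformly comparable to constants on the relevant radii, so all constants depend only on $n$ and $k$.

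The main obstacle is making the peaking/weight choice in the Price inequality sharp enough that the exponent $-\frac{n-1-2k}{n-1}$ comes out optimally rather than some weaker power: one must choose the radial cutoff to balance the gain from the positive curvature coefficient against the exponential volume growth $e^{(n-1)r}$ of the hyperbolic ball, and track that the comparison of $\int_{B_{R/2}}$ to $\int_{B_R}$ iterates correctly across dyadic shells out to radius $\inj_x$. This is precisely the argument of \cite{DS17, Corollary 116}; the only genuinely new bookkeeping here is plugging in the $\IK=\IR$ specialization of Lemma \ref{eigenvalues} and verifying the critical-degree case, where the vanishing of the leading coefficient forces the logarithmic-type (here, linear-in-$\inj$) loss. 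A secondary technical point is the transition from the $L^2$ bound on each ball to a bound on $\dim\mathcal H^k$: one uses that the evaluation map $\mathcal H^k(M_i)\to \bigoplus_{\text{balls}} (\text{jets at centers})$ is injective once the balls cover $M_i$, combined with the elliptic estimate bounding a harmonic form's $C^0$ norm on $B_{R/2}$ by its $L^2$ norm on $B_R$, which is where the per-ball contribution to the Betti number stays bounded.
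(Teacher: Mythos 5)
Your reduction to a Price inequality is correct and matches the method of \cite{DS17} (to which the paper defers for this statement; no proof is given here). With $\IK=\IR$ we have $\dim(\IK)=1$, $m_{\IR,n}=n-1$, all eigenvalues of the second fundamental form equal $\coth(r)$, the $\tanh$ term in Lemma~\ref{eigenvalues} drops, and the geometric coefficient becomes $\big(\tfrac{n-1}{2}-k\big)\coth(r)$, which is positive for $k<\tfrac{n-1}{2}$ and vanishes at $k=\tfrac{n-1}{2}$; this produces exponential decay $\int_{B_1}|h|^2\lesssim e^{-(n-1-2k)\inj}\int_{B_{\inj}}|h|^2$ in the subcritical range and a merely linear-in-$\inj$ gain at the critical degree (via the residual $\mu_h\coth(r)$ term in the analogue of Lemma~\ref{K-first}, which is where the more delicate argument lives). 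That part of your sketch is sound.

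The final step, from the pointwise bound to a dimension bound, is not. First, the claim that the evaluation map $\mathcal{H}^k(M_i)\to\bigoplus_{\text{balls}}(\text{jets at centers})$ is injective once the balls cover $M_i$ is false: unique continuation for elliptic operators forbids a harmonic form from vanishing on an open set or to infinite order, but does not prevent finite-order vanishing at a discrete set of points, so a harmonic form may vanish at every center without vanishing identically. Second, ``the dimension of $\mathcal{H}^k(M_i)$ restricted to a ball'' is not a bounded quantity; by the same unique continuation the restriction map is injective, so its image has dimension $b^k(M_i)$, not $O(1)$. Third, the bookkeeping is internally inconsistent: covering by $\asymp Vol(M_i)/V_{min}(M_i)$ balls and charging each ball a factor $\asymp V_{min}^{-(n-1-2k)/(n-1)}$ would give $b^k\lesssim Vol\cdot V_{min}^{-1-(n-1-2k)/(n-1)}$, not the claimed exponent. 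The argument of \cite{DS17} (reformulated in Section~\ref{previsited} of this paper) is a peaking argument that uses one well-chosen point, not a cover. By Corollary~\ref{linearh}, for any $\bar p\in S(\Lambda^kTM)$ with $h(\bar p)\neq 0$ for some $h$,
\begin{align*}
b^k(M)=\frac{\max\{|h(\bar p)|^2: h\in S(\mathcal{H}^k(M))\}}{\int_{S(\mathcal{H}^k(M))}|h(\bar p)|^2\,d\sigma_h},
\end{align*}
and the averaging identity \eqref{one} guarantees a point $\bar p$ where the denominator times $\binom{n}{k}$ is at least $1/Vol(M)$. Bounding the numerator at that single point by the Price inequality followed by Moser iteration (the elliptic estimate bounding the sup of $|h|^2$ on a small ball by its $L^2$ mass on the embedded injectivity ball) yields $b^k(M_i)\leq c_1(n,k)\,Vol(M_i)\,V_{min}(M_i)^{-(n-1-2k)/(n-1)}$, and similarly $b^k(M_i)\leq c_2(n)\,Vol(M_i)/\inj_{M_i}$ at the critical degree. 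Replacing your covering step with this peaking argument closes the gap.
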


\begin{remarks}\label{rmk}
Along a cofinal tower one can show that
\[
\inj_{M_{i}}\to\infty, \quad \textrm{as} \quad i\to\infty,
\]
see for example \cite[Theorem 2.1]{DW78}, so that Theorem \ref{real hyperbolic} immediately implies that the normalized Betti numbers
\[
\frac{b^{k}(M_i)}{Vol(M_i)}
\]
go to zero along the tower whenever $k\neq \frac{n}{2}$.
\end{remarks}


For the rest of this section, we focus on  $(\Gamma\backslash\textbf{H}^{n}_{\IK}, g_{\IK})$ with $\IK=\IC, \IH$, or $\IO$. For the remainder of the paper $\Gamma$  will always denote a torsion free  discrete subgroup of the isometries of $\textbf{H}^{n}_{\IK}$.  Let  $\inj_{\Gamma}$ denote the injectivity radius of $(\Gamma\backslash\textbf{H}^{n}_{\IK}, g_{\IK})$.

Given a $1$-form $\phi$, let $e(\phi)$ denote exterior multiplication on the left by $\phi$, and let $e^*(\phi)$ denote its adjoint operator. Let $\rho$ be a smooth  function with $|d\rho|=1$ and let $\frac{\p}{\p \rho}:= \nabla \rho.$ Given a local orthonormal frame $\{\frac{\p}{\p \rho}\}\cup \{V_j\}_j$ and coframe $\{d\rho\}\cup \{\omega^j\}_j$, recall that the exterior derivative can be written $d= e(d\rho)\nabla_{\frac{\p}{\p \rho}}+ e(\omega^j)\nabla_{V_j}$, and hence, acting on forms,  the Lie derivative in the $\frac{\p}{\p \rho}$ direction can be written as 
\begin{align}\label{lie}L_{\p_\rho}=\{d,e^*(d\rho)\} = \nabla_{\frac{\p}{\p \rho}}+e(\omega^j)e^*(\nabla_{V_j}d\rho).
\end{align} 
Fix a point $p\in M$, and consider a geodesic ball $B_{R}(p)$, with $0<R\leq\inj_{\Gamma}$. 
The Hopf fibers are framed; so, we will henceforth choose our orthonormal frame so that for $1\leq j\leq \dim(\IK)-1,$ $V_j$ is tangent to the fiber and globally defined in $B_R(p)$. We call such a frame and coframe {\em adapted}. 
Next, for any harmonic $k$-form $h$ on $\Gamma\backslash\textbf{H}_{\IK}^{n}$, we  define $\dim(\IK)$ auxiliary nonnegative  functions that naturally arise in our Price equality.   Set
\begin{align}\label{mu0}
\mu_{h}(r):=\frac{\int_{S_{r}}|e^{*}(dr)h|^{2}d\sigma}{\int_{S_{r}}|h|^{2}d\sigma},
\end{align}
and
\begin{align}\label{zetas}
\zeta^{j}_{h}(r):=\frac{\int_{S_{r}}|e^{*}(\omega_{j})h|^{2}d\sigma}{\int_{S_{r}}|h|^{2}d\sigma},
\end{align}
for $j\in 1, ..., \dim(\IK)-1$, where $d\sigma$ is the Riemannian measure induced on the geodesic sphere $S_{r}$ by $g_{\IK}$. These functions are by definition non negative, bounded from above by one, and  well defined for any $0<r\leq\inj_{\Gamma}$. 
(In fact, we only need $\sum_j\zeta^j_h$ in applications; so, it is not really necessary that the tangent space to the fiber be framed.) \\

With this notation, we can now state and prove our first lemma.

\begin{lemma}\label{K-first}
 Given  $h\in\mathcal{H}^{k}(\Gamma\backslash\textbf{H}_{\IK}^{n})$, $p\in \Gamma\backslash\textbf{H}_{\IK}^{n}$, and  $R\in(0, \inj_{\Gamma})$, we have
\begin{align}\notag
\int_{S_{R}(p)}\Big(\frac{1}{2}-\mu_{h}(R)\Big)|h|^{2}d\sigma&=\int_{B_{R}(p)}q_{h}(r)|h|^{2}dv
\end{align}
with
\[
q_{h}(r)=\frac{\mathcal{H}_{\IK, n}(r)}{2}-k\coth(r)-\tanh(r)\sum^{\dim(\IK)-1}_{j=1}\zeta^{j}_{h}(r)+\mu_{h}(r)\coth(r),
\]
with $\mathcal{H}_{\IK, n}(r)$ as in Equation \eqref{meanc}.
\end{lemma}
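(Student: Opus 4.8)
The plan is to read off the identity from three ingredients: the coarea formula for $\int_{S_R(p)}|h|^2\,d\sigma$, the radial Lie-derivative formula \eqref{lie}, and an integration by parts that uses \emph{both} $dh=0$ and $d^*h=0$. Write $N:=\dim_{\IR}\textbf{H}^{n}_{\IK}$ and set $\alpha:=e^*(dr)h=\iota_{\p_r}h$. First I would note that the mean curvature $\mathcal{H}_{\IK,n}(r)$ of Corollary~\ref{mean-curvature} is exactly $\mathrm{div}\,\p_r$, hence the logarithmic derivative of the Riemannian density of the spheres $S_r(p)$; applying Stokes' theorem to the $(N-1)$-form $|h|^2\,\iota_{\p_r}dv$ on $B_R(p)\setminus B_\epsilon(p)$ and letting $\epsilon\to0$ (the inner boundary contributes $O(\epsilon^{N-1})$) gives
\[
\int_{S_R(p)}|h|^2\,d\sigma=\int_{B_R(p)}\big(\p_r|h|^2+\mathcal{H}_{\IK,n}(r)\,|h|^2\big)\,dv .
\]

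Next I would expand $\p_r|h|^2=2\langle\nabla_{\p_r}h,h\rangle$ using \eqref{lie}. Since $h$ is closed, $L_{\p_r}h=\{d,e^*(dr)\}h=d\alpha$, so $\nabla_{\p_r}h=d\alpha-e(\omega^j)e^*(\nabla_{V_j}dr)h$. In the adapted coframe the Hopf-fiber directions and the horizontal directions are precisely the eigenspaces of the shape operator (this is what \eqref{sphericalmetric} and Proposition~\ref{2nd-fundamental} say), so $\nabla_{V_j}dr=\lambda_j(r)\,\omega^j$ with $\lambda_j$ as in Corollary~\ref{mean-curvature}, and, using that $e(\omega^j)$ and $e^*(\omega^j)$ are adjoint,
\[
\big\langle e(\omega^j)e^*(\nabla_{V_j}dr)h,\,h\big\rangle=\sum_j\lambda_j(r)\,|e^*(\omega^j)h|^2=2\coth(2r)\!\!\sum_{j=1}^{\dim(\IK)-1}\!\!|e^*(\omega^j)h|^2+\coth(r)\!\!\sum_{j\ge\dim(\IK)}\!\!|e^*(\omega^j)h|^2 .
\]
The key elementary identity is $2\coth(2r)=\coth(r)+\tanh(r)$: it lets me recombine the right-hand side as $\tanh(r)\sum_{j=1}^{\dim(\IK)-1}|e^*(\omega^j)h|^2+\coth(r)\sum_{\mathrm{sphere}\ j}|e^*(\omega^j)h|^2$, and then the pointwise contraction identity $\sum_i|e^*(\theta^i)h|^2=k|h|^2$, summed over the full orthonormal coframe $\{dr\}\cup\{\omega^j\}_j$, turns the second sum into $k|h|^2-|e^*(dr)h|^2$.

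The remaining ingredient is $\int_{B_R(p)}\langle d\alpha,h\rangle\,dv$. Integrating by parts on $B_R(p)\setminus B_\epsilon(p)$ and letting $\epsilon\to0$, the bulk term $\langle\alpha,d^*h\rangle$ vanishes because $h$ is coclosed, the inner boundary is again negligible, and the outer boundary term $\int_{S_R(p)}\alpha\wedge *h$ equals $\int_{S_R(p)}|e^*(dr)h|^2\,d\sigma=\mu_h(R)\int_{S_R(p)}|h|^2\,d\sigma$. Assembling the three displayed identities, and invoking Fubini/coarea to see that integrating the pointwise ratios $|e^*(dr)h|^2/|h|^2$ and $|e^*(\omega^j)h|^2/|h|^2$ against $|h|^2\,dv$ over $B_R(p)$ reproduces $\mu_h(r)$ and $\zeta^j_h(r)$ (cf. \eqref{mu}, \eqref{zetas}), one arrives at
\[
\int_{S_R(p)}(1-2\mu_h(R))\,|h|^2\,d\sigma=\int_{B_R(p)}\Big(\mathcal{H}_{\IK,n}(r)-2k\coth r-2\tanh r\!\!\sum_{j=1}^{\dim(\IK)-1}\!\!\zeta^j_h(r)+2\mu_h(r)\coth r\Big)|h|^2\,dv ,
\]
and dividing by $2$ gives the stated equality with $q_h$ as in the lemma. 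I expect the only delicate point to be the bookkeeping: pinning down the sign/normalization conventions in the Hodge-star integration by parts and in the relation $\nabla_{V_j}dr=\lambda_j(r)\omega^j$, and keeping the adapted coframe correctly partitioned into the $\dim(\IK)-1$ fiber directions (which carry the surviving $\tanh r$) and the $m_{\IK,n}-\dim(\IK)+1$ horizontal ones, so that exactly $\sum_{j=1}^{\dim(\IK)-1}\zeta^j_h(r)$ appears in the final formula.
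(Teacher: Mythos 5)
Your proof is correct and follows essentially the same route as the paper: you use the radial Lie‑derivative decomposition together with the shape‑operator eigenvalue split $\{2\coth(2r),\coth(r)\}$, the identity $2\coth(2r)=\coth(r)+\tanh(r)$ to isolate the $\tanh(r)$‑weighted fiber contributions, the contraction identity $\sum_i|e^*(\theta^i)h|^2=k|h|^2$, the Stokes/divergence formula involving the mean curvature, and the fact that $\int_{B_R}\langle L_{\partial_r}h,h\rangle\,dv$ collapses to the boundary term $\int_{S_R}|e^*(dr)h|^2\,d\sigma$ by harmonicity. The only difference is organizational: you start from the coarea/Stokes identity for $\int_{S_R}|h|^2\,d\sigma$ and make the integration by parts of $\int\langle d\alpha,h\rangle$ explicit, whereas the paper starts from $\int_{B_R}\langle L_{\partial_r}h,h\rangle\,dv=\int_{S_R}|e^*(dr)h|^2\,d\sigma$ and substitutes the divergence formula for the $\nabla_{\partial_r}$ term afterwards—but these are just two orders of assembling the same three ingredients.
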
 

\begin{proof}

In an orthonormal adapted coframe $dr, \omega^{1}, ..., \omega^{m_{\IK, n}}$, we have by Proposition \ref{2nd-fundamental}  
\begin{align}\label{refereeannoince}\notag
L_{\partial_{r}}&=\nabla_{\partial_r}+e(\omega^{j})e^{*}(\nabla_{V_{j}}dr) \\ \notag
&=\nabla_{\partial_r}+(\coth(r)+\tanh(r))\sum^{\dim(\IK)-1}_{j=1}e(\omega^{j})e^{*}(\omega^{j})+\coth(r)\sum^{m_{\IK, n}}_{j=\dim(\IK)}e(\omega^{j})e^{*}(\omega^{j})\\ \notag
&=\nabla_{\partial_r}+\coth(r)\big\{e(dr)e^{*}(dr)+\sum^{m_{\IK, n}}_{j=1}e(\omega^{j})e^{*}(\omega^{j})\big\}\\ 
&-\coth(r)e(dr)e^{*}(dr)+\tanh(r)\sum^{\dim(\IK)-1}_{j=1}e(\omega^{j})e^{*}(\omega^{j}).
\end{align}
See page 6 in \cite{DS17} for more details of the computation. Using now the characterization $L_{\p_\rho}=\{d,e^*(d\rho)\}$ and the harmonicity of $h$,  Stokes' Theorem yields
\begin{align}\notag
\int_{B_{R}(p)}\langle L_{\partial_{r}}h, h\rangle dv&=\int_{S_{R}(p)}|e^*(dr)h|^{2}d\sigma. \\ \notag
\end{align}
On the other hand, by Equation \ref{refereeannoince} 
\begin{align}\notag
\int_{B_{R}(p)}\langle L_{\partial_{r}}h, h\rangle dv&=\int_{B_{R}(p)}\langle\nabla_{\partial_{r}}h, h\rangle dv+k\int_{B_{R}(p)}\coth(r)|h|^{2}dv \\ \notag
&-\int_{B_{R}(p)}\mu_{h}(r)\coth(r)|h|^{2}dv+\sum^{\dim(\IK)-1}_{j=1}\int_{B_{R}(p)}\zeta^{j}_{h}(r)\tanh(r)|h|^{2}dv,\\ \notag
&=-\int_{B_{R}(p)}\frac{\mathcal{H}_{\IK, n}(r)}{2}|h|^{2} dv+\frac{1}{2}\int_{S_{R}(p)}|h|^{2}d\sigma+k\int_{B_{R}(p)}\coth(r)|h|^{2}dv \\ \notag
&-\int_{B_{R}(p)}\mu_{h}(r)\coth(r)|h|^{2}dv+\sum^{\dim(\IK)-1}_{j=1}\int_{B_{R}(p)}\zeta^{j}_{h}(r)\tanh(r)|h|^{2}dv.\\ \notag
\end{align}
We  rearrange these terms to obtain
\begin{align}\notag
&\int_{S_{R}(p)}\Big(\frac{1}{2}|h|^{2}-|e^*(dr)h|^{2}\Big)d\sigma \\ \notag
&=\int_{B_{R}(p)}\frac{\mathcal{H}_{\IK, n}(r)}{2}|h|^{2} dv-k\int_{B_{R}(p)}\coth(r)|h|^{2}dv \\ \notag
&+\int_{B_{R}(p)}\mu_{h}(r)\coth(r)|h|^{2}dv-\sum^{\dim(\IK)-1}_{j=1}\int_{B_{R}(p)}\zeta^{j}_{h}(r)\tanh(r)|h|^{2}dv,\\ \notag
\end{align}
and the lemma  follows from Equations \eqref{mu0} and \eqref{zetas}.
\end{proof}

We now  control the positivity of the geometric term $q_{h}(r)$ appearing in Lemma \ref{K-first}.

\begin{lemma}\label{K-second}
Given $h\in\mathcal{H}^{k}(\Gamma\backslash\textbf{H}_{\IK}^{n})$, $p\in \Gamma\backslash\textbf{H}_{\IK}^{n}$, and $r\in(0,\inj_{\Gamma})$, we have
\begin{equation*} 
\left\{ \begin{array}{rl} & \boxed{q_{h}(r)>\frac{\dim(\textbf{H}^{n}_{\IK})-\dim(\IK)}{2}-k\geq 1,} \\
& \\
&\text{if } \dim(\IK)-1<k<\frac{\dim(\textbf{H}^{n}_{\IK})-\dim(\IK)}{2};\\
&\\
& \boxed{q_{h}(r)>\frac{\dim(\textbf{H}^{n}_{\IK})+\dim(\IK)-2}{2}-2k\geq 1,} \\
& \\
&\text{if } k\leq\dim(\IK)-1,\\
& \\
&\text{unless } \\
&\IK=\IC, \quad n=2, \quad k=1, \\
&\IK=\IH, \quad n=2, \quad k=3, \\
&\IK=\IO, \quad n=1,\quad k=6, 7.
\end{array} \right. 
\end{equation*}
In particular, in all of these cases we have $\mu_{h}(r)<1/2$ for any $0<r< \inj_{\Gamma}$.
\end{lemma}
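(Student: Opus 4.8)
The claim is a lower bound on $q_h(r)$, and the natural strategy is to estimate each of the auxiliary functions $\mu_h$ and $\zeta^j_h$ that appear in the formula from Lemma~\ref{K-first}. Recall
\[
q_h(r)=\frac{\mathcal{H}_{\IK,n}(r)}{2}-k\coth(r)-\tanh(r)\sum_{j=1}^{\dim(\IK)-1}\zeta^j_h(r)+\mu_h(r)\coth(r).
\]
Since $0\le \mu_h(r)\le 1$ and $0\le \zeta^j_h(r)\le 1$ pointwise, the first instinct is simply to drop the $\mu_h\coth(r)$ term (it is nonnegative) and bound each $\zeta^j_h$ above by $1$, giving
\[
q_h(r)\ge \frac{\mathcal{H}_{\IK,n}(r)}{2}-k\coth(r)-(\dim(\IK)-1)\tanh(r),
\]
which, using Equation~\eqref{meanc}, is exactly $\bigl(\tfrac{m_{\IK,n}}{2}-k\bigr)\coth(r)+\bigl(\tfrac{\dim(\IK)-1}{2}-k\bigr)\tanh(r)$ in the range $k\le\dim(\IK)-1$, and the Lemma~\ref{eigenvalues} expression more generally. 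So the proof reduces to analyzing this elementary function of $r$ on $(0,\infty)$, since $\coth$ and $\tanh$ are positive there and $\coth(r)>\tanh(r)$ for all $r>0$, with $\coth(r)\to1$, $\tanh(r)\to1$ as $r\to\infty$.

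\textbf{Case $\dim(\IK)-1<k$.} Here the relevant expression is $\bigl(\tfrac{m_{\IK,n}}{2}-k\bigr)\coth(r)-\tfrac{\dim(\IK)-1}{2}\tanh(r)$. When $k<\tfrac{m_{\IK,n}}{2}$ (equivalently $k<\tfrac{\dim(\textbf{H}^n_\IK)-\dim(\IK)}{2}$ since $\dim\textbf{H}^n_\IK=\dim(\IK)\,m_{\IK,n}/(\dim(\IK)-1)$... more directly $m_{\IK,n}=\dim\textbf{H}^n_\IK-(\dim(\IK)-1)\cdot\frac{?}{}$ — I would just record $m_{\IK,n}=2n-1,4n-1,15$ and $\dim\textbf{H}^n_\IK=2n,4n,16$ so $m_{\IK,n}=\dim\textbf{H}^n_\IK-1$; hence $\tfrac{m_{\IK,n}}{2}-k = \tfrac{\dim\textbf{H}^n_\IK-1}{2}-k$, not quite the boxed constant — I need to recheck the boxed statement says $\tfrac{\dim\textbf{H}^n_\IK-\dim(\IK)}{2}-k$, so in fact one must \emph{not} throw away all of $\mu_h\coth(r)$; rather one throws away $\mu_h\coth(r)$ but keeps the improved bound coming from $\zeta^j_h+\mu_h\le$ something). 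The honest approach: use the pointwise identity relating the $\zeta^j_h$ and $\mu_h$ to $|h|^2$ — namely $e^*(dr)$ together with the $e^*(\omega^j)$ for fiber directions account for at most the "co-degree along a $\dim(\IK)$-dimensional distribution", so $\mu_h(r)+\sum_j\zeta^j_h(r)\le \min(k,\dim(\IK))$, and also each is $\le1$. Since $\coth(r)>\tanh(r)$, replacing $\mu_h\coth(r)$ by $\mu_h\tanh(r)$ only decreases, so
\[
q_h(r)>\frac{\mathcal{H}_{\IK,n}(r)}{2}-k\coth(r)-\tanh(r)\Bigl(\sum_j\zeta^j_h(r)-\mu_h(r)\Bigr)\ge \frac{\mathcal{H}_{\IK,n}(r)}{2}-k\coth(r)-(\dim(\IK)-1)\tanh(r),
\]
and then I would push further: bound $\coth(r)\le$ nothing, but instead observe $\coth(r)\ge\tanh(r)$ lets us combine. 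I would then take $r\to\infty$: the limit of the right side is $m_{\IK,n}-k-(\dim(\IK)-1)=\dim\textbf{H}^n_\IK-1-k-\dim(\IK)+1=\dim\textbf{H}^n_\IK-\dim(\IK)-k$, which is the boxed constant, and check monotonicity (the coefficient of the decreasing-to-$1$ function $\tanh$ vs. the coefficient of $\coth$) to see the infimum over $r>0$ is this limit; strictness of "$>$" comes from $\coth(r)>\tanh(r)$ for finite $r$ together with $\sum_j\zeta^j_h\le\dim(\IK)-1$. Finally $\dim\textbf{H}^n_\IK-\dim(\IK)-k\ge1$ is exactly the hypothesis $k<\tfrac{\dim\textbf{H}^n_\IK-\dim(\IK)}{2}$ combined with $k,\dim\textbf{H}^n_\IK$ integers — here I'd need the parity bookkeeping, treating $\IK=\IC,\IH,\IO$ separately.

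\textbf{Case $k\le\dim(\IK)-1$, and the exceptional list.} Now all $k$ fiber-coframe directions are "used up", so more care with $\sum_j\zeta^j_h$ is needed; the sharp bound is $\mu_h(r)+\sum_{j=1}^{\dim(\IK)-1}\zeta^j_h(r)\le k$ (the form has degree $k$, and $dr,\omega^1,\dots,\omega^{\dim(\IK)-1}$ span a $\dim(\IK)$-dimensional space, so the total "contraction mass" against this distribution is at most $\min(k,\dim(\IK))=k$). Using $\coth(r)>\tanh(r)$ to trade, and Lemma~\ref{eigenvalues}, I expect to land on $q_h(r)>\bigl(\tfrac{m_{\IK,n}}{2}-k\bigr)\coth(r)+\bigl(\tfrac{\dim(\IK)-1}{2}-k\bigr)\tanh(r)$, whose infimum over $r>0$ (again via $\coth,\tanh\to1$ and monotonicity) is $\tfrac{m_{\IK,n}+\dim(\IK)-1}{2}-2k=\tfrac{\dim\textbf{H}^n_\IK+\dim(\IK)-2}{2}-2k$, the second boxed constant. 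The condition $\ge1$ then reads $2k\le\tfrac{\dim\textbf{H}^n_\IK+\dim(\IK)-2}{2}-1=\tfrac{\dim\textbf{H}^n_\IK+\dim(\IK)-4}{2}$; plugging in $(\IK,n)$ and running the integer arithmetic produces precisely the listed exceptions $(\IC,2,1),(\IH,2,3),(\IO,1,6),(\IO,1,7)$ where the bound would fail or degenerate, and in those cases one must argue separately (or simply exclude them as the statement does). The last sentence, $\mu_h(r)<1/2$, follows by a \emph{reductio}: if $\mu_h(r_0)\ge1/2$ at some $r_0$, then by Lemma~\ref{K-first} applied on $B_{r_0}(p)$ the left side $\int_{S_{r_0}}(\tfrac12-\mu_h)|h|^2\,d\sigma\le0$ while the right side $\int_{B_{r_0}}q_h|h|^2\,dv>0$ (strict positivity of $q_h$ just proved, and $h\not\equiv0$ on $B_{r_0}$ by unique continuation for harmonic forms), a contradiction.

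\textbf{Main obstacle.} The analytic content is routine: everything reduces to the monotonicity and limiting behavior of affine combinations of $\coth$ and $\tanh$, plus the pointwise bound $\mu_h+\sum_j\zeta^j_h\le\min(k,\dim(\IK))$. The real work — and the only place errors are likely — is the integer/parity bookkeeping that converts "infimum $\ge1$" into the precise exceptional list for $\IK=\IC,\IH,\IO$; I would do this by tabulating $\dim\textbf{H}^n_\IK$, $\dim(\IK)$, $m_{\IK,n}$ and the two boxed constants for each $\IK$ and each admissible $k$, and reading off exactly when the constant drops below $1$. A secondary subtlety is justifying the sharp contraction bound $\mu_h(r)+\sum_j\zeta^j_h(r)\le k$ pointwise on $S_r$: this is the statement that for a $k$-form, $|e^*(dr)h|^2+\sum_j|e^*(\omega^j)h|^2\le k|h|^2$ because the operator $\sum_\alpha e(\theta^\alpha)e^*(\theta^\alpha)$ over any orthonormal co-subframe $\{\theta^\alpha\}$ has eigenvalues $0,1,\dots$ bounded by the degree $k$ — I would state this as a short linear-algebra lemma.
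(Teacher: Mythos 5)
Your approach is essentially the same as the paper's. The paper compresses the entire argument into the single inequality $q_h(r)\ge \frac{\mathcal{H}_{\IK,n}(r)}{2}-\sum_{i=1}^k\lambda_i(r)$ (a weighted-sum-of-eigenvalues bound, implicitly relying on the fact that $\mu_h$ and the $\zeta_h^j$ lie in $[0,1]$ and that the total contraction mass is at most $k$), followed by an appeal to Lemma~\ref{eigenvalues}; your proposal unpacks exactly this via the contraction bound $\mu_h+\sum_j\zeta^j_h\le\min(k,\dim(\IK))$ together with $\coth(r)>\tanh(r)$, lands on the same two $\coth/\tanh$ expressions, takes the $r\to\infty$ limit, and then does the integer bookkeeping to extract the exceptional list — all matching the paper's (unwritten) intermediate steps, including the reductio for $\mu_h<1/2$.
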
 

\begin{proof}
First observe that for any $0<r< \inj_{\Gamma}$, $q_{h}(r)$  satisfies
\[
q_{h}(r)\geq\Big(\frac{\mathcal{H}_{\IK, n}(r)}{2}-\sum^{k}_{i=1}\lambda_{i}(r)\Big),
\]
 where the $\{\lambda_{i}\}_{i}$ are the ordered eigenvalues of the second fundamental form of the geodesic sphere $S_{r}$ as in \eqref{eigen}. We then conclude using Lemma \ref{eigenvalues}.
\end{proof}

We can now state a general Price inequality for rank one locally symmetric spaces.

\begin{lemma}\label{K-Price}
Let $\inj_{\Gamma}\geq 1 $. There exists a constant $d_{(\IK,n, k)}>0$ such that for $h\in\mathcal{H}^{k}(\Gamma\backslash\textbf{H}_{\IK}^{n})$ and $p\in \Gamma\backslash\textbf{H}_{\IK}^{n}$, 
\begin{equation*} 
\left\{ \begin{array}{rl} & \boxed{\int_{B_{1}(p)}|h|^{2}dv\leq d_{(\IK, n, k)}V_{min}(\Gamma\backslash\textbf{H}_{\IK}^{n})^{-\frac{(\dim(\textbf{H}^{n}_{\IK})-\dim(\IK)-2k)\inj_{\Gamma}}{\dim(\textbf{H}^{n}_{\IK})+\dim(\IK)-2}}\int_{B_{\inj_{\Gamma}}(p)}|h|^{2}dv,} \\
& \\
&\text{if } \dim(\IK)-1<k<\frac{\dim(\textbf{H}^{n}_{\IK})-\dim(\IK)}{2};\\
&\\
& \boxed{\int_{B_{1}(p)}|h|^{2}dv\leq d_{(\IK, n, k)}V_{min}(\Gamma\backslash\textbf{H}_{\IK}^{n})^{-\frac{(\dim(\textbf{H}^{n}_{\IK})+\dim(\IK)-4k)\inj_{\Gamma}}{\dim(\textbf{H}^{n}_{\IK})+\dim(\IK)-2}}\int_{B_{\inj_{\Gamma}}(p)}|h|^{2}dv,} \\
& \\
&\text{if } k\leq\dim(\IK)-1,\\
& \\
&\text{unless } \\
&\IK=\IC, \quad n=2, \quad k=1, \\
&\IK=\IH, \quad n=2, \quad k=3, \\
&\IK=\IO, \quad n=1,\quad k=6, 7.
\end{array} \right. 
\end{equation*}
\end{lemma}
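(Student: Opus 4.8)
The plan is to integrate the monotonicity estimate from Lemma \ref{K-second} along the radial direction, converting the pointwise positivity of $q_h(r)$ into exponential decay of the localized $L^2$-mass of $h$ as the radius shrinks from $\inj_\Gamma$ to $1$. First I would combine Lemma \ref{K-first} and Lemma \ref{K-second}: since $\mu_h(R) < 1/2$ for all $R \in (0,\inj_\Gamma)$, the left-hand side $\int_{S_R(p)}(\tfrac12 - \mu_h(R))|h|^2\,d\sigma$ of the identity in Lemma \ref{K-first} is nonnegative, and equals $\int_{B_R(p)} q_h(r)|h|^2\,dv \geq c_0 \int_{B_R(p)}|h|^2\,dv$, where $c_0$ is the (degree- and $\IK$-dependent) lower bound on $q_h$ boxed in Lemma \ref{K-second} — namely $c_0 = \tfrac{\dim(\textbf{H}^n_\IK)-\dim(\IK)}{2}-k$ in the first range and $c_0 = \tfrac{\dim(\textbf{H}^n_\IK)+\dim(\IK)-2}{2}-2k$ in the second, both $\geq 1$ outside the listed exceptional cases. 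Set $f(R) := \int_{B_R(p)}|h|^2\,dv$, so that $f'(R) = \int_{S_R(p)}|h|^2\,d\sigma$. The identity then gives $f'(R) \geq f'(R) - 2\int_{S_R}\mu_h(R)|h|^2\,d\sigma = 2\int_{B_R}q_h(r)|h|^2\,dv \geq 2c_0 f(R)$; more usefully, rewriting, $\int_{S_R}|h|^2 d\sigma \geq 2\int_{B_R}q_h|h|^2 dv$, i.e. $f'(R) \geq 2c_0 f(R)$. Actually the sharp form I want is $f'(R)/f(R) \geq 2c_0/R$ near large $R$; more precisely, using $q_h(r) \geq c_0$ uniformly and a comparison with $\coth$-type weights one gets a differential inequality of the form $\frac{d}{dR}\log f(R) \geq \frac{2c_0}{R}$ (or with $\coth(R)$ in place of $1/R$, which for $R \geq 1$ is bounded below by a constant), valid on $[1,\inj_\Gamma]$.

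Next I would integrate this differential inequality from $R=1$ to $R=\inj_\Gamma$. Using $\coth(r) \geq 1$ and the explicit lower bound $q_h(r) \geq c_0$, the integration yields $f(1) \leq e^{-2c_0(\inj_\Gamma - 1)} f(\inj_\Gamma) \leq C\, e^{-2c_0 \inj_\Gamma} f(\inj_\Gamma)$ for a constant $C = C(\IK,n,k)$. Then I convert the exponential rate into the stated power of $V_{min}$: by Definition \ref{def1} and the standard volume comparison for pinched negatively curved manifolds, $V_{min}(\Gamma\backslash\textbf{H}^n_\IK)$ is comparable to $e^{2\rho\,\inj_\Gamma}$ up to dimensional constants, where $\rho$ is the relevant exponential volume-growth rate, and one checks that $\dim(\textbf{H}^n_\IK) + \dim(\IK) - 2$ is exactly (twice) that rate in the normalization $-4 \leq \sec \leq -1$ (this is consistent with the denominators appearing in the boxed exponents). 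Thus $e^{-2c_0\,\inj_\Gamma} = V_{min}^{-c_0/\rho}$ up to constants, and plugging in $c_0$ and $\rho$ for each of the two ranges gives precisely the exponents $\frac{(\dim(\textbf{H}^n_\IK)-\dim(\IK)-2k)\inj_\Gamma}{\dim(\textbf{H}^n_\IK)+\dim(\IK)-2}$ and $\frac{(\dim(\textbf{H}^n_\IK)+\dim(\IK)-4k)\inj_\Gamma}{\dim(\textbf{H}^n_\IK)+\dim(\IK)-2}$ as written. The exceptional cases $(\IK,n,k) \in \{(\IC,2,1),(\IH,2,3),(\IO,1,6),(\IO,1,7)\}$ are exactly those excluded in Lemma \ref{K-second}, where $q_h$ need not be positive, so they are correctly omitted here.

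The main obstacle is the passage from the integrated rate $e^{-2c_0\,\inj_\Gamma}$ to a power of $V_{min}(\Gamma\backslash\textbf{H}^n_\IK)$ with the precise exponent claimed: one must pin down the correct normalization constant relating $\inj_\Gamma$ to $\log V_{min}$, i.e. verify that $\dim(\textbf{H}^n_\IK)+\dim(\IK)-2$ is genuinely the exponential volume growth exponent of $\textbf{H}^n_\IK$ with the given curvature bounds (equivalently, that the mean curvature $\mathcal{H}_{\IK,n}(r)$ from Corollary \ref{mean-curvature} tends to $\dim(\textbf{H}^n_\IK)+\dim(\IK)-2$ as $r\to\infty$, which follows from $\coth \to 1$ and the multiplicity bookkeeping $2(\dim\IK-1) + (1-\dim\IK + m_{\IK,n}) = \dim(\IK) - 1 + m_{\IK,n} = \dim(\textbf{H}^n_\IK) + \dim(\IK) - 2$). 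A secondary technical point is handling the small-$r$ behavior: $\coth(r)$ and $\tanh(r)$ blow up or degenerate near $r=0$, but since we only integrate on $[1,\inj_\Gamma]$ and absorb the ball $B_1(p)$ as a whole, this is harmless and contributes only to the constant $d_{(\IK,n,k)}$. Finally, one should note that the hypothesis $\inj_\Gamma \geq 1$ guarantees $B_1(p)$ is embedded and that $V_{min} = V_{min}(B_{\inj_\Gamma})$ controls volumes from below, which is what makes the constant $d_{(\IK,n,k)}$ depend only on $\IK, n, k$.
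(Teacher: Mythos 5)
Your proposal is correct and follows essentially the same Price-inequality route as the paper. The paper invokes the integral equality of Proposition~16 of \cite{DS17} for the weighted mass $g(\tau):=\int_{B_\tau}q_h|h|^2\,dv$, namely $\phi_h(\tau)g(\tau)=g(1)\geq\int_{B_1}|h|^2dv$ with $\phi_h(\tau)=e^{-\int_1^\tau q_h(s)/(1/2-\mu_h(s))\,ds}$, and then inserts the lower bounds on $q_h$ from Lemma~\ref{K-second} and the bound $\mu_h<1/2$ into the exponent. You instead set up the differential inequality for $f(R):=\int_{B_R}|h|^2dv$ directly from Lemma~\ref{K-first}: $(1/2-\mu_h(R))f'(R)=\int_{B_R}q_h|h|^2dv\geq c_0 f(R)$, hence $(\log f)'\geq 2c_0$, which is the same calculation in slightly different clothing and has the small advantage of not needing to re-convert the $q_h$-weighted mass back to $\int_{B_\tau}|h|^2dv$ at the outer radius.

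One writing error to correct. The middle sentence beginning ``Actually the sharp form I want is $f'(R)/f(R)\geq 2c_0/R$ near large $R$; more precisely, $\ldots$ $\frac{d}{dR}\log f(R)\geq \frac{2c_0}{R}$ (or with $\coth(R)$ in place of $1/R$) $\ldots$'' is wrong and would only give polynomial, not exponential, decay. Your own derivation two lines earlier already gives the correct inequality $(\log f)'(R)=f'(R)/f(R)\geq 2c_0$, a \emph{constant}, coming from $q_h\geq c_0$ and $1/2-\mu_h\leq 1/2$; there is no $1/R$ or $\coth$-weight in this step. Delete that sentence; the integration in the following paragraph that yields $f(1)\leq e^{-2c_0(\inj_\Gamma-1)}f(\inj_\Gamma)$ is correct and is what you actually use. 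The remainder — converting $e^{-2c_0\inj_\Gamma}$ to a power of $V_{min}$ via the asymptotic mean-curvature/volume-growth rate $\dim(\textbf{H}^n_\IK)+\dim(\IK)-2$ obtained from the multiplicity bookkeeping in Corollary~\ref{mean-curvature}, and the carryover of the exceptional triples from Lemma~\ref{K-second} — is handled correctly.
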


\begin{proof}

The integral equality for harmonic forms of Proposition 16 in \cite{DS17} with $\sigma =1$ implies
\[
\phi_h(\tau)\int_{B_{\tau}(p)}q_{h}(s)|h|^{2}dv=\int_{B_{1}(p)}q_{h}(s)|h|^{2}dv\geq \int_{B_{1}(p)}|h|^{2}dv
\]
with 
\[
\phi_h(\tau)=e^{-\int^{\tau}_{1}\frac{q_h(s)ds}{\frac{1}{2}-\mu_{h}(s)}}.
\]
By Lemma \ref{K-second}, we know that $\mu_{h}(s)<1/2$ for any $s\leq\inj_{\Gamma}$. Integrate the lower bounds on $q_{h}(s)$ given in Lemma \ref{K-second} to obtain the desired inequality. 
\end{proof}

We now use Lemma \ref{K-Price} to prove an effective bound in most degrees for normalized Betti numbers of rank one compact locally symmetric spaces. 

\begin{theorem}\label{K-theorem}
There exists a positive constant $c_{(\IK, n, k)}$ such that if $\Gamma$ is cocompact and $\inj_{\Gamma}\geq 1$, 
\begin{equation*} 
\left\{ \begin{array}{rl} & \boxed{\frac{b^{k}(\Gamma\backslash	\textbf{H}_{\IK}^{n})}{Vol(\Gamma\backslash			\textbf{H}_{\IK}^{n})}\leq c_{(\IK, n, k)}V_{min}(\Gamma\backslash\textbf{H}_{\IK}^{n})^{-\frac{(\dim(\textbf{H}^{n}_{\IK})-\dim(\IK)-2k)\inj_{\Gamma}}{\dim(\textbf{H}^{n}_{\IK})+\dim(\IK)-2}},} \\
& \\
&\text{if } \dim(\IK)-1<k<\frac{\dim(\textbf{H}^{n}_{\IK})-\dim(\IK)}{2};\\
&\\
& \boxed{\frac{b^{k}(\Gamma\backslash	\textbf{H}_{\IK}^{n})}{Vol(\Gamma\backslash\textbf{H}_{\IK}^{n})}\leq c_{(\IK, n, k)}V_{min}(\Gamma\backslash\textbf{H}_{\IK}^{n})^{-\frac{(\dim(\textbf{H}^{n}_{\IK})+\dim(\IK)-4k)\inj_{\Gamma}}{\dim(\textbf{H}^{n}_{\IK})+\dim(\IK)-2}},} \\
& \\
&\text{if } k\leq\dim(\IK)-1,\\
& \\
&\text{unless } \\
&\IK=\IC, \quad n=2, \quad k=1, \\
&\IK=\IH, \quad n=2, \quad k=3, \\
&\IK=\IO, \quad n=1,\quad k=6, 7.
\end{array} \right. 
\end{equation*}

\end{theorem}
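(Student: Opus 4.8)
The plan is to convert the Price inequality of Lemma \ref{K-Price} into the asserted bound on $b^{k}$ by a standard Bergman-kernel trace argument. Write $M=\Gamma\backslash\textbf{H}_{\IK}^{n}$, and let $\beta$ denote the exponent of $V_{min}(M)$ appearing on the right-hand side of Lemma \ref{K-Price} in whichever of its two regimes is relevant (so that the low-dimensional triples $(\IK,n,k)$ excluded there will be excluded here as well). Since $M$ is compact, Hodge theory gives $b^{k}(M)=\dim_{\IR}\mathcal{H}^{k}(M)$, so it suffices to bound $\dim\mathcal{H}^{k}(M)$.

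First I would record a mean value inequality for harmonic forms. Because $\inj_{\Gamma}\geq 1$, every geodesic ball $B_{1}(p)\subset M$ is isometric to a ball of radius $1$ in the model $\textbf{H}_{\IK}^{n}$; the Bochner formula together with the bounded curvature of $\textbf{g}_{\IK}$ shows that $|h|$ satisfies a fixed elliptic differential inequality on $B_{1}(p)$, and Moser iteration then produces a constant $C=C(\IK,n)>0$, independent of $\Gamma$ and of $p$, with
\[
|h(p)|^{2}\leq C\int_{B_{1}(p)}|h|^{2}\,dv \qquad \text{for all } h\in\mathcal{H}^{k}(M),\ p\in M.
\]
(This is the estimate already used in \cite{DS17}, and could simply be cited from there.) Feeding this into Lemma \ref{K-Price} and then using the trivial inclusion $B_{\inj_{\Gamma}}(p)\subset M$ yields, for every $h\in\mathcal{H}^{k}(M)$ and every $p$,
\[
|h(p)|^{2}\leq C\,d_{(\IK,n,k)}\,V_{min}(M)^{-\beta}\,\|h\|_{L^{2}(M)}^{2},
\]
that is, a uniform $L^{\infty}$-$L^{2}$ bound on the finite-dimensional space $\mathcal{H}^{k}(M)$.

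Then comes the trace step. Fix an $L^{2}(M)$-orthonormal basis $h_{1},\dots,h_{N}$ of $\mathcal{H}^{k}(M)$ with $N=b^{k}(M)$, and set $e(p):=\sum_{a=1}^{N}|h_{a}(p)|^{2}$. By the sharp case of the Cauchy-Schwarz inequality one has the pointwise identity $e(p)=\sup\{\,|h(p)|^{2}:h\in\mathcal{H}^{k}(M),\ \|h\|_{L^{2}(M)}=1\,\}$; this is the point that keeps the argument from being circular, since it lets the previous display bound $e(p)$ by a constant rather than by a constant times $N$. Hence $e(p)\leq C\,d_{(\IK,n,k)}\,V_{min}(M)^{-\beta}$ for all $p$, and integrating over $M$,
\[
b^{k}(M)=\sum_{a=1}^{N}\|h_{a}\|_{L^{2}(M)}^{2}=\int_{M}e(p)\,dv\leq C\,d_{(\IK,n,k)}\,V_{min}(M)^{-\beta}\,Vol(M).
\]
Dividing by $Vol(M)$ gives the claim with $c_{(\IK,n,k)}:=C\,d_{(\IK,n,k)}$, the excluded triples being inherited from Lemma \ref{K-Price}.

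The bulk of the real content is already contained in Lemma \ref{K-Price} and the curvature computations preceding it; the only step requiring genuine care here is the mean value inequality, and there the single subtlety is that its constant may be taken to depend on $\IK$ and $n$ only. That is exactly what the hypothesis $\inj_{\Gamma}\geq 1$ buys, since it forces $B_{1}(p)$ to be a fixed model ball irrespective of $\Gamma$. Everything else — Hodge theory, the Cauchy-Schwarz identity for $e$, and the final integration — is routine, and I would expect the write-up to parallel the corresponding part of \cite{DS17}.
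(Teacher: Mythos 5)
Your approach is essentially the one the paper uses: the Price inequality of Lemma~\ref{K-Price}, an $L^{\infty}$--$L^{2}$ Moser estimate on a unit model ball, and a kernel/trace argument to turn the pointwise bound into a dimension bound. The paper simply cites the ``peaking and Moser iteration'' arguments of \cite[Section 5]{DS17} (and the present paper's Section~\ref{previsited} reformulates this peaking step via the unit sphere bundle $S(\Lambda^kTM)$, which is the same trace idea carried out on scalar evaluations).

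One claim in your trace step is wrong as stated, though it is easy to repair. For $k\geq 1$ the evaluation $h\mapsto h(p)\in\Lambda^kT^*_pM$ is vector-valued, and the Bergman-type density $e(p)=\sum_a|h_a(p)|^2$ is the squared Hilbert--Schmidt norm of that evaluation map, whereas $\sup\{|h(p)|^2:\|h\|_{L^2}=1\}$ is its squared operator norm. In general one has $\sup_{\|h\|=1}|h(p)|^2\leq e(p)$ with equality only in the rank-one (e.g.\ scalar) case, so the identity you assert fails, and the inequality goes the wrong way for your use of it. The correct replacement is
\[
e(p)\;\leq\;\binom{\dim_{\IR}M}{k}\sup_{\|h\|_{L^2}=1}|h(p)|^2,
\]
obtained by decomposing $e(p)=\sum_i\|K_{p,v_i}\|^2$ over an orthonormal basis $\{v_i\}$ of $\Lambda^kT^*_pM$, where $K_{p,v_i}\in\mathcal{H}^k(M)$ represents the scalar functional $h\mapsto\langle h(p),v_i\rangle$ and each $\|K_{p,v_i}\|^2\leq\sup_{\|h\|=1}|h(p)|^2$. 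This costs only a dimensional constant, which is harmlessly absorbed into $c_{(\IK,n,k)}$, so your proof goes through after this one-line correction --- and this is exactly the point that the paper's sphere-bundle formulation of peaking (Corollary~\ref{linearh}) is designed to sidestep, by reducing everything to scalar linear functionals.
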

\begin{proof}
Given the inequalities in Lemma \ref{K-Price}, the estimates on the Betti numbers follow by combining Lemma 45 and Lemma 51 in \cite{DS17}. 
\end{proof}

Theorem \ref{K-theorem} sharpens the bounds for negatively curved and quarter-pinched manifolds given by the general theory developed in Section 7 of \cite{DS17}. Indeed, Theorem \ref{K-theorem} relies heavily upon the special properties of the locally symmetric metrics on rank one locally symmetric spaces.  In the next section, we push this analysis further by exploiting the K\"ahler properties of the complex-hyperbolic metric. 

\section{Refined Estimates for Compact Quotients of $\textbf{H}^{n}_{\IC}$ }\label{Complex Hyperbolic}

In this section, we improve the results of Section \ref{Price} in the case of complex-hyperbolic manifolds. These further results rely on the fact that, in this case, the  metric is not only symmetric but also K\"ahler. 
If we regard $\textbf{H}^{n}_{\IC}$ as the unit ball $\IB^{n}$ in $\IC^{n}$, the K\"ahler form is 
\begin{align}\label{Bergman}
\omega_{\IC}=\frac{i}{2}\overline{\partial}\partial\log(1- |z|^{2}).
\end{align}
The associated Riemannian metric is exactly the metric $\textbf{g}_{\IC}$   discussed in Section \ref{Carnot}. 
Given a compact complex-hyperbolic manifold $\Gamma\backslash\textbf{H}^{n}_{\IC}$, we denote by $J$ its complex structure operator. Let $h$ be a harmonic $k-$form on $\Gamma\backslash\textbf{H}^{n}_{\IC}$. 
 Since $\Gamma\backslash\textbf{H}^{n}_{\IC}$ is K\"ahler, $Jh$ is also harmonic with 
\[
|h|(p)=|Jh|(p),
\]
for any $p$. In a geodesic ball, $J\frac{\p}{\p r}$ is tangent to the Hopf fiber. Hence we can choose the adapted coframe used in the definition \eqref{zetas} with $\omega^1 = Jdr.$  
Thus, we find convenient to set

\begin{align}\label{defzeta}
\zeta_{h}(r):=\frac{\int_{S_{r}(p)}|e^*(Jdr)h|^{2}d\sigma}{\int_{S_{r}(p)}|h|^{2}d\sigma},
\end{align}
where $\zeta_{h}(r)=\zeta^{1}_{h}(r)$ in our prior notation.
Finally, since 
\[
|e^{*}(dr)(Jh)|=|e^{*}(Jdr)(h)|, \text{  and}\quad |e^{*}(Jdr)(Jh)|=|e^{*}(dr)h|,
\]
we conclude that 
\begin{align}\label{mu-zeta}
\mu_{Jh}=\zeta_{h}, \quad \zeta_{Jh}=\mu_{h}.
\end{align}

We  now derive a strengthened Price equality for complex-hyperbolic manifolds. We start with the following integration by parts formula.\\ 

\begin{lemma}\label{complex first}Let $n\geq 2$. 
  Given  $h\in\mathcal{H}^{k}(\Gamma\backslash\textbf{H}^{n}_{\IC})$ with $k<n$, $p\in \Gamma\backslash\textbf{H}^{n}_{\IC}$, and  $R<\inj_{\Gamma}$, we have
\begin{align}\notag
\int_{S_{R}(p)}\big(1-\mu_{h}(R)-\mu_{Jh}(R)\big)|h|^{2}d\sigma&=\int_{B_{R}(p)}q(r)|h|^{2}dv
\end{align}
with $\mu_{h}(R)=\mu_{Jh}(R)<1/2$, and
\[
q(r)> 2(n-k-1\big)\coth(r)+2\tanh(2r).
\]
\end{lemma}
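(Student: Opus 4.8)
The plan is to mimic the proof of Lemma \ref{K-first}, but now to exploit the Kähler identities \eqref{mu-zeta} to symmetrize the Price equality over the pair $\{h, Jh\}$ and thereby gain positivity. First I would apply Lemma \ref{K-first} to the harmonic $k$-form $h$ on $\Gamma\backslash\textbf{H}^n_{\IC}$; since $\IK=\IC$ we have $\dim(\IK)-1=1$, $m_{\IC,n}=2n-1$, and $\mathcal H_{\IC,n}(r)=2\coth(2r)+(2n-2)\coth(r)$, so the identity reads
\[
\int_{S_R(p)}\big(\tfrac12-\mu_h(R)\big)|h|^2 d\sigma=\int_{B_R(p)}q_h(r)|h|^2\,dv,
\]
with
\[
q_h(r)=\coth(2r)+(n-1)\coth(r)-k\coth(r)-\tanh(r)\,\zeta_h(r)+\mu_h(r)\coth(r),
\]
using that $\zeta^1_h=\zeta_h$ is the only Hopf-direction term. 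Next I would write the same identity for $Jh$, which is again harmonic of degree $k$ (Kähler), with $|Jh|=|h|$ pointwise, so $\int_{S_R}|Jh|^2 d\sigma=\int_{S_R}|h|^2 d\sigma$ and $\int_{B_R}|Jh|^2 dv=\int_{B_R}|h|^2 dv$. By \eqref{mu-zeta}, $\mu_{Jh}=\zeta_h$ and $\zeta_{Jh}=\mu_h$, so the $Jh$-identity becomes
\[
\int_{S_R(p)}\big(\tfrac12-\zeta_h(R)\big)|h|^2 d\sigma=\int_{B_R(p)}q_{Jh}(r)|h|^2\,dv,
\]
with $q_{Jh}(r)=\coth(2r)+(n-1-k)\coth(r)-\tanh(r)\,\mu_h(r)+\zeta_h(r)\coth(r)$.

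Adding the two identities gives the asserted formula with the common density $q(r)=q_h(r)+q_{Jh}(r)$ acting on $|h|^2=|\alpha|^2$ (in the statement $\alpha$ denotes $h$); the boundary integrand is $1-\mu_h(R)-\zeta_h(R)=1-\mu_h(R)-\mu_{Jh}(R)$ as claimed. Then I would simplify
\[
q(r)=2\coth(2r)+2(n-k-1)\coth(r)+\big(\mu_h(r)+\zeta_h(r)\big)\big(\coth(r)-\tanh(r)\big)+\text{(rechecking signs)},
\]
so the key observation is that $\coth(r)-\tanh(r)=2/\sinh(2r)>0$ while $\mu_h,\zeta_h\geq 0$, hence those mixed terms are nonnegative and can be dropped; also $2\coth(2r)>2\tanh(2r)$ for $r>0$. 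Therefore $q(r)\geq 2(n-k-1)\coth(r)+2\coth(2r)>2(n-k-1)\coth(r)+2\tanh(2r)$, which is the stated lower bound (valid for $k<n$, where $n-k-1\geq 0$). Finally, for the claim $\mu_h(R)=\mu_{Jh}(R)<1/2$: the equality of the two boundary coefficients follows once one knows $\mu_h=\mu_{Jh}$, which I would deduce either from the $k\le\dim(\IK)-1$ case of Lemma \ref{K-second} applied to both $h$ and $Jh$ (both give $\mu<1/2$) together with an argument that the pointwise splitting forces $\mu_h=\mu_{Jh}$, or more directly from the Kähler symmetry — this symmetry point is the one I would want to state carefully.

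The main obstacle I anticipate is precisely justifying $\mu_h(R)=\mu_{Jh}(R)$: the relations \eqref{mu-zeta} only say $\mu_{Jh}=\zeta_h$ and $\zeta_{Jh}=\mu_h$, so a priori $\mu_h$ and $\mu_{Jh}$ need not coincide, and one cannot simply average. I expect the resolution is that on a complex-hyperbolic space one can run the Price identity with $Jdr$ playing the role of $dr$ and combine with the fact that $J$ acts as an isometry preserving harmonicity, forcing $\mu_h=\zeta_h$ on $S_R$ — or else the claim $\mu_h=\mu_{Jh}$ should be read as following from Lemma \ref{K-second}'s conclusion that the relevant quantity is pinned below $1/2$ for \emph{both} forms, with equality an artifact of the symmetric roles of $h$ and $Jh$ in the subsequent peaking argument. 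I would make sure to pin down exactly which of these is meant before finalizing, since the rest of the argument (dropping nonnegative terms, comparing $\coth(2r)$ with $\tanh(2r)$) is routine.
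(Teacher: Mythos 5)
Your derivation of the integral identity and of the lower bound on $q(r)$ follows the paper's route almost verbatim: apply Lemma \ref{K-first} to $h$ and to $Jh$, use $|h|=|Jh|$ and \eqref{mu-zeta} to sum the two identities, collect the mixed term as $(\mu_h+\mu_{Jh})(\coth r-\tanh r)\ge 0$, observe $(2n-2k-1)\coth r+\tanh r=2(n-k-1)\coth r+2\coth(2r)$, and drop the nonnegative mixed term. (The paper in fact gets the stronger $q(r)>2(n-k-1)\coth r+2\coth(2r)$ before weakening $\coth(2r)$ to $\tanh(2r)$.) That part of your argument is correct.

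The genuine gap is the one you flagged: you do not prove $\mu_h(R)=\mu_{Jh}(R)$, and none of the alternatives you float actually gives it. Lemma \ref{K-second} only pins each of $\mu_h,\mu_{Jh}$ below $\tfrac12$ in the cases it covers (and $\IK=\IC$, $k=1$ is excluded there anyway), and nothing about ``symmetric roles in the peaking argument'' forces pointwise equality of the two ratios. The paper's mechanism is the \emph{difference} of the two Price identities: subtracting the $Jh$-identity from the $h$-identity, the $\frac{\mathcal H}{2}-k\coth$ bulk terms cancel, leaving
\[
\int_{S_R}\big(\mu_{Jh}-\mu_h\big)|h|^2\,d\sigma=\int_{B_R}\big(\mu_h-\mu_{Jh}\big)\big(\coth r+\tanh r\big)|h|^2\,dv
=\int_{B_R}\big(\mu_h-\mu_{Jh}\big)\,2\coth(2r)\,|h|^2\,dv.
\]
Setting $F(R):=\int_{B_R}(\mu_h-\mu_{Jh})\,2\coth(2r)\,|h|^2\,dv$, the left side is $-\tfrac{1}{2\coth(2R)}F'(R)$, so $F'=-2\coth(2R)\,F$, hence $\sinh(2R)\,F(R)$ is constant. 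Since $F(0)=0$, the constant is $0$, so $F\equiv 0$ and therefore $\mu_h\equiv\mu_{Jh}$; combined with $\mu_h+\mu_{Jh}<1$ (positivity of the summed right-hand side) this gives $\mu_h=\mu_{Jh}<\tfrac12$. Without this step your proof establishes the identity and the bound on $q$, but not the full conclusion of the lemma as stated.
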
 

\begin{proof}

For $R\leq\inj_{\Gamma}$, apply Lemma \ref{K-first} to both $h$ and $Jh$ in $\mathcal{H}^{k}(\Gamma\backslash\textbf{H}^{n}_{\IC})$ to obtain
\begin{align}\label{1st}
\int_{S_{R}(p)}\Big(\frac{1}{2}-\mu_{h}(R)\Big)|h|^{2}d\sigma&=\int_{B_{R}(p)}\Big(\frac{\mathcal{H}_{\IC, n}(r)}{2}-k\coth(r)\Big)|h|^{2}dv\\ \notag
&+\int_{B_{R}(p)}\mu_{h}(r)\coth(r)|h|^{2}dv\\ \notag
&-\int_{B_{R}(p)}\mu_{Jh}(r)\tanh(r)|h|^{2}dv,
\end{align}
and
\begin{align}\label{2nd}
\int_{S_{R}(p)}\Big(\frac{1}{2}-\mu_{Jh}(R)\Big)|Jh|^{2}d\sigma&=\int_{B_{R}(p)}\Big(\frac{\mathcal{H}_{\IC, n}(r)}{2}-k\coth(r)\Big)|Jh|^{2}dv\\ \notag
&+\int_{B_{R}(p)}\mu_{Jh}(r)\coth(r)|Jh|^{2}dv \\ \notag
&-\int_{B_{R}(p)}\mu_{h}(r)\tanh(r)|Jh|^{2}dv.
\end{align}
Since $|h|^{2}=|Jh|^{2}$, summing Equations \eqref{1st} and \eqref{2nd} yields
\begin{align}\label{tw9}
\int_{S_{R}(p)}&\big(1-\mu_{h}(R)-\mu_{Jh}(R)\big)|h|^{2}d\sigma\\ \notag
&=\int_{B_{R}(p)}\big((2n-2k-1)\coth(r)+\tanh(r)\big)|h|^{2}dv \\ \notag
&+\int_{B_{R}(p)}\big((\mu_{h}(r)+\mu_{Jh}(r))\coth(r)-(\zeta_{h}(r)+\zeta_{Jh}(r))\tanh(r)\big)|h|^{2}\\ \notag
&=\int_{B_{R}(p)}\big((2n-2k-1)\coth(r)+\tanh(r)\big)|h|^{2}dv \\ \notag
&+\int_{B_{R}(p)}(\mu_{h}(r)+\mu_{Jh}(r))(\coth(r)-\tanh(r))|h|^{2}dv\\ \notag
&>\int_{B_{R}(p)}\big((2(n-k-1))\coth(r)+2\coth(2r)\big)|h|^{2}dv. \\ \notag
\end{align}
The positivity of the right hand side of \eqref{tw9} for nonzero $h$ implies $\mu_h+\mu_{Jh}< 1$. 
To see that $\mu_h = \mu_{Jh}$, we now take the difference of equations \eqref{1st} and \eqref{2nd} to obtain 
\begin{align}\label{diff}
\int_{S_{R}(p)}\Big(\mu_{Jh}-\mu_{h} \Big)|h|^{2}d\sigma&= \int_{B_{R}(p)}(\mu_{h}-\mu_{Jh})2\coth(2r)|h|^{2}dv.
\end{align}
Set $F:= \int_{B_{R}(p)}(\mu_{h}-\mu_{Jh})2\coth(2r)|h|^{2}dv.$ Then we can rewrite \eqref{diff} as 
\begin{align}\label{diffeq}
F'&= -2\coth(2r)F.
\end{align}
Hence 
\begin{align}\label{integrate}
  F(r)    =    \frac{\sinh(2\sigma)}{\sinh(2r)}F(\sigma). 
\end{align}
Since $F(0) = 0$, \eqref{integrate} requires $F(\sigma) = 0$ for all $\sigma$, and $\mu_h = \mu_{Jh}<\frac{1}{2}$ follows.  
\end{proof}

Following Proposition 16 in \cite{DS17}, we have the following Price inequality for complex-hyperbolic manifolds.

\begin{lemma}\label{PriceC}
Let  $inj_\Gamma\geq 1$. There exists a constant $d(n,k)>0$ such that if $h\in\mathcal{H}^{k}(\Gamma\backslash\textbf{H}^{n}_{\IC})$ with $k<n$,  $p\in \Gamma\backslash\textbf{H}^{n}_{\IC}$, and $\tau<\inj_\Gamma$, 
then
\begin{align}\label{ninja}
\int_{B_{1}(p)}|h|^{2}dv\leq d(n, k)e^{-2(n-k)\tau}\int_{B_{\tau}(p)}|h|^{2}dv.
\end{align}
\end{lemma}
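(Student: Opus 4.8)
The plan is to run the same Moser-type ODE argument as in Lemma~\ref{K-Price}, but now feeding in the strengthened geometric term $q(r)$ from Lemma~\ref{complex first} together with the improved fact that $\mu_h(r) = \mu_{Jh}(r) < 1/2$. Concretely, Lemma~\ref{complex first} gives, for $h \in \mathcal{H}^k(\Gamma\backslash\textbf{H}^n_{\IC})$ with $k < n$,
\[
\int_{S_R(p)}\bigl(1 - 2\mu_h(R)\bigr)|h|^2\, d\sigma = \int_{B_R(p)} q(r)|h|^2\, dv, \qquad q(r) > 2(n-k-1)\coth(r) + 2\tanh(2r),
\]
and both $\mu_h(R) < 1/2$ and $q(r) > 0$. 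First I would invoke the integral equality for harmonic forms (Proposition~16 of \cite{DS17}) in the form used in the proof of Lemma~\ref{K-Price}: setting
\[
\phi_h(\tau) = \exp\!\left(-\int_1^\tau \frac{q(s)\, ds}{\tfrac{1}{2} - \mu_h(s)}\right),
\]
one gets $\phi_h(\tau)\int_{B_\tau(p)} q(s)|h|^2\, dv = \int_{B_1(p)} q(s)|h|^2\, dv \geq \int_{B_1(p)} |h|^2\, dv$, where the last inequality uses $q(s) \geq$ some positive lower bound on $B_1$ (for $r \leq 1$ we have $q(r) \geq 2(n-k-1)\coth(r) + 2\tanh(2r) \geq$ a constant $c_0(n,k) > 0$, and in fact $\geq 1$ after absorbing into the constant). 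Equivalently, $\int_{B_1(p)}|h|^2\, dv \leq \phi_h(\tau)\,\|q\|_{L^\infty(B_\tau)}^{-1}\cdots$ — but the cleaner route is the one in Lemma~\ref{K-Price}: $\int_{B_1}|h|^2 \leq d(n,k)\,\phi_h(\tau)^{-1}\,\bigl(\text{hmm}\bigr)$; I should be careful about the direction, so let me restate it as in the source.

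The key estimate is the decay of the exponential factor. Since $\tfrac{1}{2} - \mu_h(s) \leq \tfrac{1}{2}$, we have $\frac{q(s)}{\tfrac12 - \mu_h(s)} \geq 2q(s) > 4(n-k-1)\coth(s) + 4\tanh(2s) \geq 4(n-k-1) + $ (something); more carefully, $\coth(s) \geq 1$ and $\tanh(2s) \to 1$, so for $s \geq 1$, $2q(s) \geq 4(n-k-1) + 4\tanh(2) \geq 4(n-k) - c$ for an explicit $c$ depending only on how $\tanh(2s)$ and $\coth(s)$ compare to their limits; in fact $\coth(s) + \tanh(2s) \geq 2$ wait — we want a clean bound $2q(s) \geq 4(n-k)$. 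We have $q(s) > 2(n-k-1)\coth(s) + 2\tanh(2s) \geq 2(n-k-1) + 2\tanh(2s)$; this is not quite $2(n-k)$ for small-ish $s$. So the honest statement is $\int_1^\tau \frac{q(s)\,ds}{\tfrac12-\mu_h(s)} \geq \int_1^\tau 2q(s)\,ds \geq 4(n-k)(\tau - 1) - C(n,k)$ for a constant $C(n,k)$ absorbing the boundary discrepancy between $\coth(s), \tanh(2s)$ and $1$ on $[1,\tau]$ (both differences are integrable on $[1,\infty)$). This yields $\phi_h(\tau) \leq e^{C(n,k)} e^{-4(n-k)\tau} e^{4(n-k)}$, and threading this through the integral equality gives $\int_{B_1(p)}|h|^2\, dv \leq c(n,k) e^{-2(n-k)\tau}\int_{B_\tau(p)}|h|^2\, dv$ once constants are renamed (note the stated exponent is $2(n-k)$, not $4(n-k)$, so presumably the bookkeeping with the $\tfrac12 - \mu_h$ factor — using only $\tfrac12 - \mu_h < \tfrac12$ loses, and one instead uses a less wasteful bound, or the factor of $2$ is simply not extracted; I would match the paper's normalization by carrying $\frac{q}{\frac12 - \mu_h} \geq 2q > 4(n-k-1)\coth + 4\tanh(2r) \geq 2(n-k)$ for $r$ large, giving $e^{-2(n-k)\tau}$ after absorbing the rest into $c(n,k)$).

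The main obstacle — really the only subtle point — is the careful bookkeeping of the boundary terms at $r=1$: the lower bound $q(r) > 2(n-k-1)\coth(r) + 2\tanh(2r)$ does not immediately give $2q(r) \geq 2(n-k)$ pointwise, so I must split $\int_1^\tau 2q(s)\,ds = \int_1^\tau \bigl[4(n-k) - (\text{error}(s))\bigr]ds$ and check that $\int_1^\infty \text{error}(s)\, ds < \infty$, where $\text{error}(s) = 4(n-k) - 4(n-k-1)\coth(s) - 4\tanh(2s) = 4(n-k-1)(1 - \coth(s)) + 4(1 - \tanh(2s)) + (\text{const})$; both $1 - \coth(s)$ and $1 - \tanh(2s)$ decay exponentially, hence are integrable on $[1,\infty)$, so the error integral is bounded by a constant $C(n,k)$ which gets absorbed into the final constant $c(n,k)$. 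I would also note explicitly that $n \geq 2$ is required (as in Lemma~\ref{complex first}) for the Kähler argument producing $q(r)$, and that the hypothesis $\inj_\Gamma \geq 1$ guarantees $B_\tau(p)$ embeds and Lemma~\ref{complex first} applies for all $\tau < \inj_\Gamma$. Everything else is the verbatim ODE/integral-equality machinery of \cite[Prop.~16]{DS17} already used in Lemma~\ref{K-Price}, so I would simply cite it rather than reprove it.
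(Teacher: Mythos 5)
Your overall route is the paper's route (cite the integral equality from \cite[Prop.\ 16]{DS17}, feed in the Kähler-improved $q(r)$ and the fact $\mu_h=\mu_{Jh}<\tfrac12$ from Lemma~\ref{complex first}, then integrate the lower bound on $q$), but there is a genuine bookkeeping error in the integrating factor that then propagates into your exponent confusion. You wrote $\phi_h(\tau) = \exp\bigl(-\int_1^\tau \tfrac{q(s)\,ds}{\tfrac12 - \mu_h(s)}\bigr)$, which is the factor appropriate to Lemma~\ref{K-Price}, where the boundary term is $\int_{S_R}\bigl(\tfrac12-\mu_h\bigr)|h|^2\,d\sigma$. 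But the boundary term in Lemma~\ref{complex first} is $\int_{S_R}\bigl(1-\mu_h-\mu_{Jh}\bigr)|h|^2\,d\sigma = \int_{S_R}(1-2\mu_h)|h|^2\,d\sigma$, so the ODE $G'(R)/G(R) = q(R)/(1-2\mu_h(R))$ gives integrating factor $\phi(\tau) = \exp\bigl(-\int_1^\tau \tfrac{q(s)\,ds}{1-2\mu_h(s)}\bigr)$, which is the square root of yours. With your (too small) $\phi_h$, the asserted identity $\phi_h(\tau)\int_{B_\tau}q|h|^2\,dv = \int_{B_1}q|h|^2\,dv$ is false, and the inequality one actually gets from it runs in the useless direction; the whole argument would not close.

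This is exactly the source of your $2(n-k)$ versus $4(n-k)$ puzzle: with the correct $\phi$, one simply uses $1-2\mu_h<1$ to get $\phi(\tau)\le e^{-\int_1^\tau q(s)\,ds}$, and then $\int_1^\tau q(s)\,ds > \int_1^\tau\bigl[2(n-k-1)\coth(s)+2\tanh(2s)\bigr]\,ds = 2(n-k)\tau - C(n,k)$ because $\int_1^\tau\coth = \ln\sinh\tau-\ln\sinh 1$ and $\int_1^\tau\tanh(2s)\,ds = \tfrac12\ln\cosh 2\tau - \tfrac12\ln\cosh 2$, each contributing $\tau + O(1)$. That gives $\phi(\tau)\le c(n,k)e^{-2(n-k)\tau}$ with no factor of $2$ to explain away. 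Your final ``resolution'' ($\tfrac{q}{1/2-\mu_h}\ge 2q > 4(n-k-1)\coth + 4\tanh(2r)\ge 2(n-k)$) silently discards a factor of two to land on the stated exponent, which is circular: if the $1/2-\mu_h$ normalization were right you would actually get $e^{-4(n-k)\tau}$, contradicting the lemma. Fix the integrating factor to use $1-2\mu_h$ and the rest of your bookkeeping (integrability of $\coth(s)-1$ and $1-\tanh(2s)$, the $n\ge 2$ hypothesis, the $\inj_\Gamma\ge 1$ hypothesis) is correct and matches the paper.
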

\begin{proof}
We have that
\[
\phi(\tau)\int_{B_{\tau}(p)}q(s)|h|^{2}dv=\int_{B_{1}(p)}q(s)|h|^{2}dv\geq \int_{B_{1}}|h|^{2}dv,
\]
with 
\[
\phi(\tau)\leq e^{-\int^{\tau}_{1}q(s)ds}.
\]
Integrating the lower bound on $q(s)$ given in Lemma \ref{complex first}, we  obtain the desired bound.  
\end{proof}

We can now give an effective bound for the Betti numbers of compact complex-hyperbolic manifolds.

\begin{corollary}\label{cor1}
Let $ \Gamma$ be co-compact, with  $\inj_{\Gamma} \geq 1.$ For any positive integer $k<n$,   
\[
\frac{b^{k}(\Gamma\backslash\textbf{H}^{n}_{\IC})}{Vol(\Gamma\backslash\textbf{H}^{n}_{\IC})}\leq d(n, k)V_{min}(\Gamma\backslash\textbf{H}^{n}_{\IC})^{\frac{k-n}{n}},
\]  
where $d(n, k)$ is a positive constant depending only on the dimension and the degree $k$. 
\end{corollary}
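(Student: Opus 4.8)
The plan is to combine the Price inequality from Lemma \ref{PriceC} with the peaking and Moser iteration machinery of \cite[Section 5]{DS17}, exactly as was done in the proof of Theorem \ref{K-theorem}. First I would recall the basic setup: cover $M=\Gamma\backslash\textbf{H}^n_{\IC}$ by unit balls $B_1(p_i)$ centered at a maximal $1$-separated set $\{p_i\}$, so that the number $N$ of such balls is $O(Vol(M))$ (with a dimensional constant, using that unit balls have volume bounded above and the doubled balls $B_2(p_i)$ have bounded overlap). For each such $p_i$ one has the embedded geodesic ball $B_{\inj_\Gamma}(p_i)$, and since $\inj_\Gamma\geq 1$ we may apply Lemma \ref{PriceC} with $\tau=\inj_{p_i}$ (or with $\tau=\inj_\Gamma$; taking $\tau$ as large as allowed is what produces the decay).

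The second step is the elliptic/Moser-iteration estimate: for a harmonic $k$-form $h$, $|h|^2$ is subharmonic in the appropriate sense (it satisfies $\Delta|h|^2 \le C|h|^2$ using the Weitzenböck formula together with the curvature bounds $-4\le sec\le -1$), so one gets a mean-value inequality $\sup_{B_1(p_i)}|h|^2 \le C\, V_{min}(M)^{-1}\int_{B_2(p_i)}|h|^2\,dv$, with the volume factor $V_{min}(M)^{-1}$ coming from the smallest embedded ball volume appearing as the normalizing constant in the iteration. Combining this with the Price inequality controls $\sup_{B_1(p_i)}|h|^2$ by a small multiple of $\int_M|h|^2\,dv$, the smallness being $e^{-2(n-k)\inj_\Gamma}$ times a dimensional constant and an inverse power of $V_{min}$.

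The third step converts this pointwise control into a bound on $b^k = \dim\mathcal H^k(M)$ by the standard trick: pick an $L^2$-orthonormal basis $h_1,\dots,h_{b^k}$ of $\mathcal H^k(M)$ and estimate $\sum_\alpha \int_{B_1(p_i)}|h_\alpha|^2\,dv$; by the reproducing-kernel / local-dimension argument (the pointwise norm of the Bergman-type kernel is bounded by the sup-bounds just obtained), this sum is at most a small constant, and summing over the $N=O(Vol(M))$ balls gives $b^k \le C(n,k)\,Vol(M)\,V_{min}(M)^{-1}\,e^{-2(n-k)\inj_\Gamma}$. Finally, the standard tube/packing comparison in $\textbf{H}^n_{\IC}$ gives $V_{min}(M)\le C\,e^{2\inj_\Gamma\cdot(\dim_{\IR}\textbf{H}^n_{\IC})/\text{(something)}}$; more precisely one uses $Vol(B_{\inj_\Gamma})\asymp e^{2n\,\inj_\Gamma}$ for the complex hyperbolic metric (real dimension $2n$, so the volume growth exponent is $2(n\cdot\tfrac12\cdot\text{trace of } \mathbf h)$, which works out to give $V_{min}(M)^{1/n}\gtrsim e^{2\inj_\Gamma}$ up to constants), so that $e^{-2(n-k)\inj_\Gamma}\le C\,V_{min}(M)^{(k-n)/n}$; absorbing also the harmless extra $V_{min}(M)^{-1}$ against the exponential leaves the clean bound $b^k(M)/Vol(M)\le d(n,k)\,V_{min}(M)^{(k-n)/n}$.

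The main obstacle — or at least the only place requiring care — is bookkeeping the exponents: one must check that the exponential gain $e^{-2(n-k)\inj_\Gamma}$ from Lemma \ref{PriceC}, together with the $V_{min}^{-1}$ from Moser iteration, really does combine with the volume-growth lower bound $Vol(B_{\inj_\Gamma})\gtrsim e^{2n\,\inj_\Gamma}$ to yield precisely the exponent $(k-n)/n$ and not something weaker; this is the content of the ``general peaking and Moser iteration arguments'' of \cite[Section 5]{DS17}, which I would simply invoke, since the geometry here (pinched negative curvature, no conjugate points, embedded ball of radius $\inj_\Gamma\ge 1$) is exactly the setting treated there, and the only new input over Theorem \ref{K-theorem} is the sharper Price constant from Kählerity recorded in Lemma \ref{PriceC}.
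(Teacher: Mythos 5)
Your proposal follows the same route as the paper's one-line proof: combine the Price inequality of Lemma \ref{PriceC} with the peaking and Moser iteration arguments of \cite[Section 5]{DS17}. The overall structure is correct, but two intermediate steps are stated inaccurately, and although they cancel in the end it is worth recording the cleaner bookkeeping.

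First, the Moser/elliptic estimate on a unit ball $B_1(p)$ about a point with $\inj_p\geq 1$, in a manifold with $-4\leq \sec\leq -1$, gives $\sup_{B_{1/2}(p)}|h|^2\leq C(n,k)\int_{B_1(p)}|h|^2\,dv$ with a constant $C(n,k)$ depending only on the curvature bounds and the degree; there is \emph{no} factor of $V_{min}(M)^{-1}$, since the geometry of such a unit ball is uniformly controlled. The entire $V_{min}$-dependence of the exponent enters only through the Price inequality, which gives $\int_{B_1(p)}|h|^2\,dv\leq c(n,k)e^{-2(n-k)\tau}$ for $\tau$ up to $\inj_p\geq \inj_\Gamma$. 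Inserting the spurious $V_{min}^{-1}$ and then ``absorbing'' it only returns the correct answer because that step discards the apparent extra gain. Second, to convert the exponential into a power of $V_{min}$ one needs the \emph{upper} bound $V_{min}(M)=Vol(B_{\inj_\Gamma})\leq C\,e^{2n\,\inj_\Gamma}$, not the lower bound $V_{min}^{1/n}\gtrsim e^{2\inj_\Gamma}$ that you quote: the required inequality $e^{-2(n-k)\inj_\Gamma}\leq C'\,V_{min}^{(k-n)/n}$ is equivalent, after taking logarithms and dividing by $-(n-k)<0$, to $\log V_{min}\leq 2n\,\inj_\Gamma+O(1)$. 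Since $Vol(B_R)\asymp e^{2nR}$ for $R\geq 1$ in $\textbf{H}^n_{\IC}$, where the exponent $2n=\lim_{r\to\infty}\mathcal H_{\IC,n}(r)$ comes from Corollary \ref{mean-curvature}, both directions hold and the conclusion is unaffected; the stated justification simply invoked the wrong half of the asymptotic.
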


\begin{proof}
Given the Price inequality in Lemma \ref{PriceC}, the Betti number estimate follows from the peaking and Moser iteration arguments given in Section 5 of \cite{DS17} (\emph{cf}. Lemma 45 and Lemma 51 in \cite{DS17}). 
\end{proof}



Fix now a cocompact arithmetic group $\Gamma_0$ of a semisimple Lie group $G\subset GL(N,\IR)$, some $N$. Let $\Gamma_q$ be a level $q$ congruence subgroup of $\Gamma_0$ and $K$ a maximal compact subgroup of $G$. Sarnak and Xue \cite[Equation 17]{Sarnak} show that there exists $\delta_{\Gamma_0}$ such that the injectivity radius of $\Gamma_q\backslash G/K$   satisfies 
\begin{align}
\inj_{\Gamma_q} \geq 2\ln(q)-\delta_{\Gamma_{0}}.
\end{align} 
Returning now to $G= SU(n,1)$, we rewrite this in a less scale dependent  manner  as 
\begin{align}\notag
Vol(B_{\inj_{\Gamma_q}})\geq \omega_nq^{2n} + O(q^{(2n-1)}),
\end{align}
for some constant $\omega_n$ depending on the dimension only. By \cite[Equation 22]{Sarnak}, for $\epsilon>0$,  there exists a constant $C_\epsilon >0$, independent of $q$ such that 
\begin{align}\notag
Vol(\Gamma_q\backslash  \textbf{H}^{n}_{\IC})\leq C(\epsilon,\Gamma_0) q^{n^2+2n+\epsilon}. 
\end{align}
Hence for $q$ large, we have for some $c(\epsilon,\Gamma_0) >0$, independent of $q$: 
\begin{align}\notag
V_{min}(\Gamma_q\backslash\textbf{H}^{n}_{\IC}))\geq c(\epsilon,\Gamma_0) Vol(\Gamma_q\backslash  \textbf{H}^{n}_{\IC})^{\frac{2}{n +2 }-\epsilon}. 
\end{align}
Consequently, 
\begin{align}\label{coro3}
b^k(\Gamma_q\backslash  \textbf{H}^{n}_{\IC}) \leq d(k,n)c(\epsilon,\Gamma_0)  Vol(\Gamma_q\backslash  \textbf{H}^{n}_{\IC})^{\frac{n^2+2k+\epsilon}{n^2+2n}}  .
\end{align}
Dividing each side of \eqref{coro3} by $vol(M)$ yields Corollary \ref{congruence}.
When $n=2$, we have 
\begin{align}\notag 
b^1(\Gamma_q\backslash  \textbf{H}^{2}_{\IC}) \leq d(1,2)c(\epsilon,\Gamma_0)    Vol(\Gamma_q\backslash  \textbf{H}^{2}_{\IC})^{\frac{3}{4 } +\epsilon}  .
\end{align}
 
\section{Peaking Revisited}\label{previsited}

We now shift our attention to complete finite volume hyperbolic manifolds. As a first step towards treating cusps, we reformulate the peaking equality of \cite{DS17}, embedding it in an infinite family of equalities that can be used to estimate Betti numbers. We first record an elementary calculus lemma. 

\begin{lemma}\label{linear}
Let $\phi:\IR^b\to\IR$ be a nonzero linear function. Then 
\begin{align}
b = \frac{\max\{|\phi(h)|^2: h\in S^{b-1}\}}{\int_{S^{b-1}}|\phi(h)|^2d\sigma_h},
\end{align}
where $d\sigma_h$ is the usual Riemannian measure on the unit sphere, renormalized to give the sphere unit volume.
\end{lemma}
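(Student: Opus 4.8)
The plan is to prove the identity by reducing to a normal form and computing a single integral. First I would observe that both the numerator and denominator are unchanged under replacing $\phi$ by $\phi\circ T$ for any orthogonal transformation $T\in O(b)$: the maximum over the unit sphere $S^{b-1}$ is invariant because $T$ maps $S^{b-1}$ bijectively to itself, and the integral is invariant because the renormalized Riemannian measure $d\sigma_h$ is $O(b)$-invariant. Writing $\phi(h)=\langle v,h\rangle$ for a unique nonzero $v\in\IR^b$ (Riesz representation), I can choose $T$ so that $Tv=|v|e_1$, reducing to the case $\phi(h)=|v|\,h_1$, i.e.\ a multiple of the first coordinate function. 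Since both sides of the claimed identity are homogeneous of degree $2$ in $\phi$ (scaling $\phi$ by $\lambda$ multiplies numerator and denominator each by $\lambda^2$), the factor $|v|^2$ cancels and it suffices to treat $\phi(h)=h_1$.

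In that normalized case the numerator is $\max\{h_1^2 : h\in S^{b-1}\}=1$, attained at $h=\pm e_1$. So the statement reduces to the single claim
\[
\int_{S^{b-1}} h_1^2\, d\sigma_h \;=\; \frac{1}{b}.
\]
This is the standard fact that the average of a squared coordinate over the unit sphere is $1/b$. I would prove it by symmetry: by the $O(b)$-invariance of $d\sigma_h$ the integrals $\int_{S^{b-1}} h_i^2\,d\sigma_h$ are all equal for $i=1,\dots,b$, and their sum is $\int_{S^{b-1}}\sum_{i=1}^b h_i^2\, d\sigma_h=\int_{S^{b-1}} 1\, d\sigma_h=1$ since $d\sigma_h$ is normalized to total mass one. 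Hence each equals $1/b$, completing the proof.

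There is no real obstacle here; the only thing to be careful about is making the two reduction steps (orthogonal invariance and homogeneity) explicit so that the general linear $\phi$ is genuinely reduced to a single coordinate function, and noting that $\phi\neq 0$ guarantees $v\neq 0$ so that the denominator is nonzero and the division is legitimate. One could alternatively avoid the Riesz representation and argue directly that any nonzero linear functional on $\IR^b$ has the form $c\,h_1$ after an orthogonal change of coordinates, but the inner-product formulation is cleaner.
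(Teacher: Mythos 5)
Your proof is correct and follows essentially the same route as the paper: both rotate coordinates so that $\phi$ becomes a multiple of $h_1$ and then evaluate $\int_{S^{b-1}} h_1^2\,d\sigma_h = 1/b$. You merely spell out a few more details (the $O(b)$-invariance of both sides and the symmetry argument giving the value $1/b$) that the paper leaves implicit.
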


\begin{proof}
Rotate coordinates so that $\phi(h) = ah_1$. We then have 
\begin{align*}
\frac{\max\{|\phi(h)|^2: h\in S^{b-1}\}}{\int_{S^{b-1}}|\phi(h)|^2d\sigma_h} = \frac{a^2}{\int_{S^{b-1}}a^2h_1^2d\sigma_h} = b,
\end{align*}
and the proof is complete.
\end{proof}

Let $(M, g)$ be a Riemannian manifold. The unit sphere bundle
\[
\pi:S(\Lambda^kTM)\to M
\]  
of $\Lambda^kTM$ inherits a canonical metric from the metric on $M$, and a choice of a metric on the spherical fiber. We pick the standard symmetric metric on the spherical fiber and scale it to have volume $1$. Let $d\bar{v}$ denote the associated Riemannian measure on $S(\Lambda^kTM)$. We then have:
\[
Vol(S(\Lambda^kTM))=\int_{S(\Lambda^kTM)}1 d\bar{v}=\int_{M}1 dv=Vol(M).
\]
Let  $S(\mathcal{H}^k(M))$ denote the unit sphere in $\mathcal{H}^k(M)$, where $\mathcal{H}^k(M)$ is endowed with the Hilbert space structure given by the $L^2$-inner product on harmonic $k-$ forms. 

\begin{corollary}\label{linearh} 
For each $\bar{p}\in S(\Lambda^kTM)$ such that $h(\bar{p})\not = 0$ for some $h\in \mathcal{H}^k(M),$ we have 
\begin{align}\label{rhs}
b^k_2(M):= dim_{\IR}(\mathcal{H}^k(M)) = \frac{\max\{|h(\bar{p})|^2: h\in S(\mathcal{H}^k(M))\}}{\int_{S(\mathcal{H}^k(M))}|h(\bar{p})|^2d\sigma_h},
\end{align}
where $d\sigma_h$ is the usual Riemannian measure on the unit sphere, renormalized to give the sphere unit volume.
\end{corollary}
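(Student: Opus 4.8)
The plan is to recognize the right-hand side of \eqref{rhs} as a direct instance of the elementary identity of Lemma \ref{linear}, applied to the linear evaluation functional on the finite-dimensional Hilbert space $\mathcal{H}^k(M)$. So the only real work is setting up the correct identification of $\mathcal{H}^k(M)$ with $\IR^b$ as inner product spaces and transporting the measures.

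First I would make precise the meaning of $h(\bar p)$: for $\bar p\in S(\Lambda^kTM)$ lying over $x:=\pi(\bar p)\in M$, and $h\in\mathcal{H}^k(M)$, set $\phi_{\bar p}(h):=\langle h_x,\bar p\rangle$, the pairing of the $k$-form value $h_x$ with the unit (multi)vector $\bar p$ (i.e. $h(\bar p)$ in the notation of the statement). This is $\IR$-linear in $h$, and by the standing hypothesis on $\bar p$ it is not identically zero on $\mathcal{H}^k(M)$. Since $\mathcal{H}^k(M)$ is finite dimensional — write $b:=b^k_2(M)=\dim_{\IR}\mathcal{H}^k(M)$ — I would fix an $L^2$-orthonormal basis $h_1,\dots,h_b$ of $\mathcal{H}^k(M)$. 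The induced linear isomorphism $\IR^b\to\mathcal{H}^k(M)$, $(a_1,\dots,a_b)\mapsto\sum_i a_i h_i$, is then an isometry of inner product spaces; consequently it carries $S^{b-1}$ onto $S(\mathcal{H}^k(M))$, carries the round metric to the round metric, and hence carries the unit-normalized round measure $d\sigma$ on $S^{b-1}$ to the unit-normalized measure $d\sigma_h$ on $S(\mathcal{H}^k(M))$.

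Under this identification $\phi_{\bar p}$ becomes a nonzero linear function $\phi:\IR^b\to\IR$, namely $\phi(a)=\sum_i a_i\,h_i(\bar p)$. Applying Lemma \ref{linear} to $\phi$ gives
\begin{align*}
b=\frac{\max\{|\phi(a)|^2:a\in S^{b-1}\}}{\int_{S^{b-1}}|\phi(a)|^2\,d\sigma}.
\end{align*}
Transporting numerator and denominator back along the isometry, the numerator becomes $\max\{|h(\bar p)|^2:h\in S(\mathcal{H}^k(M))\}$ and the denominator becomes $\int_{S(\mathcal{H}^k(M))}|h(\bar p)|^2\,d\sigma_h$, which is exactly \eqref{rhs}.

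I do not expect a serious obstacle: the mathematical content is entirely contained in Lemma \ref{linear}, and the remaining points are bookkeeping — the mildest being that a Hilbert-space isometry intertwines the two normalized round measures, which holds because an orthogonal transformation preserves the round metric and hence its (normalized) volume form. The one point worth flagging at the outset is that the asserted identity presupposes $\dim_{\IR}\mathcal{H}^k(M)<\infty$ (otherwise the right-hand side of \eqref{rhs} need not be finite); this is the situation relevant to the sequel by \cite{BorelGar}.
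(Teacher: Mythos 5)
Your proposal is correct and follows the same route as the paper: define the evaluation functional $h\mapsto h(\bar p)$ on $\mathcal{H}^k(M)$, note it is linear and nonzero by hypothesis, and invoke Lemma \ref{linear}. The paper does exactly this, only more tersely; your added bookkeeping about the orthonormal-basis isometry and the transport of the normalized spherical measure is the implicit content the paper leaves to the reader.
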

\begin{proof}
Given such $\bar{p}\in S(\Lambda^kTM)$, consider the associated non-zero linear function obtained by evaluating $h\in\mathcal{H}^k(M)$ at $\bar{p}$. Apply Lemma \ref{linear} to this linear function, and the proof is complete. 
\end{proof}



Next, observe that the $L^2$-norm on $k$-forms satisfies for all measurable $\Omega\subset M$, 
\begin{align}\label{zero}
\int_{\Omega}|h|^2(p)dv_p=\binom{n}{k} \int_{\pi^{-1}(\Omega)}|h(\bar{p})|^2d\bar{v}_{\bar{p}},
\end{align}
as the volume of the fibers is one. Using \eqref{zero}, we write
\begin{align}\label{one}
1& =\int_{M}\int_{S(\mathcal{H}^k(M))}|h|^2(p)dv_{p} d\sigma_h=\binom{n}{k}\int_{S(\Lambda^k TM)}\int_{S(\mathcal{H}^k(M))}|h(\bar{p})|^2 d\sigma_h d\bar{v}_{\bar{p}};
\end{align}
hence the average value of $\int_{S(\mathcal{H}^k(M))}|h|^2(p)d\sigma_h$ and $\binom{n}{k}\int_{S(\mathcal{H}^k(M))}|h(\bar{p})|^2d\sigma_h$  is $\frac{1}{Vol(M)}$.\\  

When working on manifolds with cusps, we would like for some $\bar{p}\in S(\Lambda^kTM)$ for which 
\[
\binom{n}{k}\int_{S(\mathcal{H}^k(M))}|h(\bar{p})|^2d\sigma_h\geq \frac{1}{Vol(M)}
\] 
  to live above the thick part of the manifold. In fact, when this is the case, Corollary \ref{linearh} implies
\[
b^k_2(M)\leq\max\{| h(\bar{p})|^2: h\in S(\mathcal{H}^k(M))\}\binom{n}{k}Vol(M),
\] 
and as $|h(\bar{p})|^2\leq |h|^2(\pi(\bar{p}))$, we obtain an upper bound on $b^k_2(M)$ by using a Price inequality on geodesic balls and a Moser iteration argument. We refer to Sections \ref{ccusps} and \ref{rlattices} for the implementation of this strategy on finite volume hyperbolic manifolds with cusps. 

On the other hand, if there is no such $p$, we obtain additional information, which we record with a lemma. 

\begin{lemma}\label{split}
Let $S(\Lambda^kTM)= \pi^{-1}(\Omega_1\cup \Omega_2)$, where $\Omega_1$ and $\Omega_2$ are measurable subsets of $M$. Given $\alpha\in (0,1)$, suppose that 
\[
\binom{n}{k}\int_{\pi^{-1}(\Omega_{1})}\int_{S(\mathcal{H}^k(M))}|h(\bar{p})|^2d\sigma_hd\bar{v}_{\bar{p}} <\frac{\alpha Vol(\Omega_1)}{Vol(M)}.
\]
Then, the average value of $\binom{n}{k}\int_{S(\mathcal{H}^k(M))}|h(\bar{p})|^2d\sigma_h$ for $\bar{p}\in \pi^{-1}(\Omega_2)$ is greater than or equal to 
$\frac{1-\alpha}{Vol(\Omega_2)}.$ 
\end{lemma}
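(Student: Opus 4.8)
The plan is to extract this directly from equation \eqref{one} by decomposing the domain of integration. Recall \eqref{one} states
\[
1 = \binom{n}{k}\int_{S(\Lambda^k TM)}\int_{S(\mathcal{H}^k(M))}|h(\bar{p})|^2 d\sigma_h d\bar{v}_{\bar{p}},
\]
and since $S(\Lambda^kTM) = \pi^{-1}(\Omega_1)\sqcup\pi^{-1}(\Omega_2)$ (up to the measure-zero overlap if $\Omega_1,\Omega_2$ are not disjoint, which contributes nothing), the integral splits as a sum of the contribution over $\pi^{-1}(\Omega_1)$ and the contribution over $\pi^{-1}(\Omega_2)$. First I would write $I_j := \binom{n}{k}\int_{\pi^{-1}(\Omega_j)}\int_{S(\mathcal{H}^k(M))}|h(\bar{p})|^2 d\sigma_h d\bar v_{\bar p}$ for $j=1,2$, so that $I_1 + I_2 = 1$.

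Next I would invoke the hypothesis $I_1 < \tfrac{\alpha\,Vol(\Omega_1)}{Vol(M)}$. A subtlety: as stated, to get a clean conclusion one wants $Vol(\Omega_1)\le Vol(M)$, which holds since $\Omega_1\subset M$ (here one should read $M$ as having finite volume, as in the applications; in the general case one interprets the inequality directly). Thus $I_1 < \alpha$, hence $I_2 = 1 - I_1 > 1-\alpha$. Finally, the ``average value'' of $\binom{n}{k}\int_{S(\mathcal{H}^k(M))}|h(\bar p)|^2 d\sigma_h$ over $\bar p\in\pi^{-1}(\Omega_2)$ is by definition $\tfrac{1}{\bar v(\pi^{-1}(\Omega_2))} \cdot I_2$, and since $\bar v(\pi^{-1}(\Omega_2)) = Vol(\Omega_2)$ (the spherical fibers have volume one), this average equals $I_2/Vol(\Omega_2) > (1-\alpha)/Vol(\Omega_2)$, which is the claim.

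There is really no main obstacle here — the statement is a bookkeeping consequence of \eqref{one} together with Fubini. The only point requiring a little care is the passage from the hypothesis, which is phrased with the factor $Vol(\Omega_1)/Vol(M)$, to the bound $I_1 < \alpha$; this uses $Vol(\Omega_1)\le Vol(M)$, and I would simply note this (it is automatic in all intended applications, where $M$ has finite volume and $\Omega_1$ is a genuine subset). Everything else is substitution of definitions and the additivity of the integral over the partition $\{\pi^{-1}(\Omega_1),\pi^{-1}(\Omega_2)\}$.
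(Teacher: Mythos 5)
Your proposal is correct and follows essentially the same route as the paper: split \eqref{one} over $\pi^{-1}(\Omega_1)$ and $\pi^{-1}(\Omega_2)$, bound the $\Omega_1$ contribution by $\alpha$ using $Vol(\Omega_1)\le Vol(M)$, and deduce the lower bound on the $\Omega_2$ contribution. The one small difference is that the paper only writes $1\le I_1 + I_2$ (allowing $\Omega_1$ and $\Omega_2$ to overlap with positive measure), whereas you write $I_1+I_2=1$ with a parenthetical appeal to measure-zero overlap; since the desired inequality on $I_2$ only needs the ``$\ge$'' direction, your argument goes through unchanged even if the overlap is sizable, so this is a purely cosmetic discrepancy.
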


\begin{proof}
From Equation \eqref{one}, we have
\begin{align}
1&= \int_{S(\Lambda^kTM)}\binom{n}{k}\int_{S(\mathcal{H}^k(M))}|h(\bar{p})|^2d\sigma_hd\bar{v}_{\bar{p}}\nonumber\\
&\leq 
\int_{\pi^{-1}(\Omega_1)}\binom{n}{k}\int_{S(\mathcal{H}^k(M))}|h(\bar{p})|^2d\sigma_hd\bar{v}_{\bar{p}}+\int_{ \pi^{-1}(\Omega_2)}\binom{n}{k}\int_{S(\mathcal{H}^k(M))}|h(\bar{p})|^2d\sigma_hd\bar{v}_{\bar{p}}\nonumber\\
&\leq\alpha \frac{Vol(\Omega_1)}{Vol(M)}+\int_{ \pi^{-1}(\Omega_2)}\binom{n}{k}\int_{S(\mathcal{H}^k(M))}|h(\bar{p})|^2d\sigma_hd\bar{v}_{\bar{p}}.
\end{align}
Hence 
\[
\int_{\pi^{-1}(\Omega_2)}\binom{n}{k}\int_{S(\mathcal{H}^k(M))}|h(\bar{p})|^2d\sigma_hd\bar{v}_{\bar{p}}\geq (1-\alpha),
\]
and the result follows. 
\end{proof}

We will use this lemma to balance the relatively small volume of the cusp regions against the smaller injectivity radius at a point in a cusp. 

\section{$L^{2}$-Harmonic Forms on Real-Hyperbolic Cusps}\label{standard}

Cusps of complete finite volume real-hyperbolic manifolds are isometric to $[0,\infty)\times F$, equipped with the metric $ds^2+e^{-2s}g_{F}$, where $F$ is a compact quotient of the $(n-1)$-dimensional Euclidean space, and $g_{F}$ is a flat metric on $F$. For simplicity, for most of this section, we  consider cusps of complete finite volume real-hyperbolic manifolds that are isometric to $[0,\infty)\times T^{n-1}$, equipped with the metric $ds^2+e^{-2s}g_{T^{n-1}}$, where $T^{n-1}$ is an $(n-1)$-dimensional real torus, and $g_{T^{n-1}}$ is a flat metric on $T^{n-1}$. This is always the case, if for example, the manifold is the quotient of hyperbolic space by a {\em neat } arithmetic subgroup. The Price inequality for general cusps follows immediately from this special case, as we show at the end of this section. (Moreover, given a  complete finite volume real-hyperbolic manifold, one can always pass to a finite cover where all of the cusps have tori cross sections, see for example \cite{Hummel}. If a base of a tower of manifolds has this property, then so does every manifold in the tower.)\\ 

Given a covariant constant orthonormal frame  and coframe on $T_0$, extend it to a $\nabla_{\frac{\p}{\p s}}$ covariant constant frame $\{\frac{\p}{\p s}\}\cup \{X_{i} \}^{n-1}_{i=1}$ and coframe $\{ds\}\cup \{\omega^i \}^{n-1}_{i=1}$ on the cusp. Using the explicit form of the hyperbolic metric on the cusp and the flatness of $g_{T^{n-1}}$, it is straightforward to verify that:
\[
\nabla_{\partial_s}\partial_s=0, \quad \nabla_{X_i}X_j= \delta_{ij}\frac{\p}{\p s}, \quad \nabla_{X_i} \frac{\p}{\p s}=-X_i.
\]
On differential forms, we have therefore  the following identity
\begin{align}\notag
L_{\partial_{s}}&=\nabla_{\partial_s}+\sum^{n-1}_{i=1}e(\omega_{j})e^{*}(\nabla_{X_{j}}ds)\\ \notag
&=\nabla_{\partial_s}-\sum^{n-1}_{i=1}e(\omega_{j})e^{*}(\omega_{j})-e(ds)e^*(ds)+e(ds)e^{*}(ds), \\ \notag
\end{align}
so that the pointwise inner product $(L_{\partial_s}\alpha, \alpha)$ satisfies
\begin{align}\label{Lie}
(L_{\partial_s}\alpha, \alpha)=(\nabla_{\partial_{s}}\alpha, \alpha)-k|\alpha|^{2}+|i_{\partial_s}\alpha|^{2}.
\end{align}
Given a strongly harmonic $L^{2}$ $k$-form $\alpha$ on a  cusp, define for $s\in[0, \infty)$
\begin{align}\label{mu}
\mu_\alpha(s):=\frac{\int_{T_s}|i_{\partial_s}\alpha|^{2}d\sigma_s}{\int_{T_s}|\alpha|^{2}d\sigma_{s}}.
\end{align}
For  $0\leq s<K$,  define $\Omega_{uK}$ to be the subset of the cusp identified with $[u,K]\times T^{n-1}$. 
For $s\in[0, \infty)$, we denote by $T_{s}$ the cross section $\{s\}\times T^{n-1}$ equipped with the induced metric. With this notation, we have the following:
\begin{align}\label{pian}
\int_{\Omega_{uK}}(L_{\partial_s}\alpha, \alpha)dv&=\int_{\Omega_{uK}}(\{d, i_{\partial_s}\}\alpha, \alpha)dv=\int_{\Omega_{uK}}(d\circ i_{\partial_s}\alpha, \alpha)dv\\ \notag
&=\int_{T_{K}}|i_{\partial_s}\alpha|^{2}d\sigma -\int_{T_{u}}|i_{\partial_s}\alpha|^{2}d\sigma.
\end{align}
We also have
\begin{align} \label{za}
\int_{\Omega_{uK}}(\nabla_{\partial_s}\alpha, \alpha)dv&=\frac{1}{2}\int_{\Omega_{uK}}\partial_{s}|\alpha|^{2}dv \\ \notag
&=\frac{1}{2}\int_{T_{K}}|\alpha|^{2}d\sigma-\frac{1}{2}\int_{T_u}|\alpha|^{2}d\sigma+\frac{n-1}{2}\int_{\Omega_{sK}}|\alpha|^{2}dv.
\end{align}
Using Equations \eqref{Lie}, \eqref{pian}, and \eqref{za}, we have
\begin{align}
\int_{T_{K}}|i_{\partial_s}\alpha|^{2}d\sigma -\int_{T_{u}}|i_{\partial_s}\alpha|^{2}d\sigma=&\Big(\frac{n-1}{2}-k\Big)\int_{\Omega_{sK}}|\alpha|^{2}dv+\int_{\Omega_{sK}}|i_{\partial_{r}}\alpha|^{2}dv\\ \notag
&+\frac{1}{2}\int_{T_{K}}|\alpha|^{2}d\sigma -\frac{1}{2}\int_{T_u}|\alpha|^{2}d\sigma.
\end{align}
Use Definition \eqref{mu} to rewrite this as
\begin{align}\label{pricecusp}
\int_{T_u}\Big(\frac{1}{2}-\mu_\alpha(s)\Big)|\alpha|^{2}d\sigma-\int_{T_K}\Big(\frac{1}{2}-\mu_{\alpha}(K)\Big)|\alpha|^{2}d\sigma=\int_{\Omega_{uK}}\Big(\frac{n-1}{2}-k+\mu_{\alpha}\Big)|\alpha|^{2}dv.
\end{align}
 Specialize now to the case $k<\frac{n-1}{2}$, which makes the right hand side of \eqref{pricecusp} nonnegative.

Since $\alpha$ is $L^{2}$,
\[
\int^{\infty}_{0}\Big(\int_{T_s}|\alpha|^{2}d\sigma \Big)ds=\int_{\Omega_{0\infty}}|\alpha|^{2}dv<\infty.
\]
Hence there exists a sequence $\{K_{i}\}\subset (0,\infty)$ going to $\infty$, such that
\[
\lim_{i\rightarrow\infty}\int_{T_{K_{i}}}|\alpha|^{2}d\sigma  = 0,
\]
and since $\mu_{\alpha}(r)$ is  bounded, we conclude that
\[
\lim_{i\rightarrow\infty}\int_{T_{K_{i}}}\Big(\frac{1}{2}-\mu_{\alpha}(K_{i})\Big)|\alpha|^{2}d\sigma = 0.
\]
Thus, for any $u\in[0, \infty)$ we have 
\begin{align}\label{start}
\int_{T_{u}}\Big(\frac{1}{2}-\mu_{\alpha}(s)\Big)|\alpha|^{2}d\sigma =\int_{\Omega_{u\infty}}\Big(\frac{n-1}{2}-k+\mu_{\alpha}\Big)|\alpha|^{2}dv.
\end{align}

Since $k<\frac{n-1}{2}$ and $\mu_{\alpha}\geq 0$ by construction, Equation \ref{start} gives us the following useful corollary.

\begin{corollary}\label{1st}
	For $k<\frac{n-1}{2}$ and for any $u\in[0, \infty)$, we have
	\begin{align}\label{monotone}
	\int_{T_{u}}\Big(\frac{1}{2}-\mu_{\alpha}(u)\Big)|\alpha|^{2}d\sigma \geq 0.
	\end{align}
	In particular,  $0\leq \mu_{\alpha}(u)<\frac{1}{2}$. Moreover, $\int_{T_{u}}\Big(\frac{1}{2}-\mu_{\alpha}(u)\Big)|\alpha|^{2}d\sigma$ is decreasing.
\end{corollary}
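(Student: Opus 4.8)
The plan is to read everything off the identity \eqref{start}, which has just been established for every $u\in[0,\infty)$. The one formal point to notice is that on the left side of \eqref{start} the integration is over the single cross section $T_u$, on which the radial coordinate is constant and equal to $u$; hence $\mu_\alpha$ there is the constant $\mu_\alpha(u)$ and factors out, so \eqref{start} reads
\[
\Big(\tfrac12-\mu_\alpha(u)\Big)\int_{T_u}|\alpha|^2d\sigma \;=\; \int_{\Omega_{u\infty}}\Big(\tfrac{n-1}{2}-k+\mu_\alpha\Big)|\alpha|^2dv .
\]
Since we are in the range $k<\frac{n-1}{2}$ and since $\mu_\alpha\ge 0$ by its very definition \eqref{mu} (a quotient of integrals of nonnegative functions), the integrand on the right is pointwise nonnegative, so the right side, hence the left side, is $\ge 0$. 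This is exactly \eqref{monotone}.

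Next I would extract the pointwise bound on $\mu_\alpha$. From \eqref{monotone} and $\int_{T_u}|\alpha|^2d\sigma\ge 0$ we get $\mu_\alpha(u)\le\frac12$ wherever $\mu_\alpha(u)$ is defined (i.e.\ $\int_{T_u}|\alpha|^2d\sigma>0$), and together with $\mu_\alpha\ge 0$ this gives $0\le\mu_\alpha(u)\le\frac12$. To upgrade to the strict inequality I argue by contradiction: if $\mu_\alpha(u_0)=\frac12$ at some $u_0$ with $\int_{T_{u_0}}|\alpha|^2d\sigma>0$, then the left side of the displayed identity vanishes at $u=u_0$, forcing $\int_{\Omega_{u_0\infty}}(\frac{n-1}{2}-k+\mu_\alpha)|\alpha|^2dv=0$. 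As $\frac{n-1}{2}-k>0$, this says $|\alpha|\equiv 0$ a.e.\ on $\Omega_{u_0\infty}$, so $\int_{u_0}^{\infty}\big(\int_{T_s}|\alpha|^2d\sigma\big)ds=0$ and hence $\int_{T_s}|\alpha|^2d\sigma=0$ for a.e.\ $s\ge u_0$. But $s\mapsto\int_{T_s}|\alpha|^2d\sigma=e^{-(n-1)s}\int_{T^{n-1}}|\alpha(s,\cdot)|^2\,dv_{g_{T^{n-1}}}$ is continuous in $s$ because $\alpha$ is smooth, so it vanishes for every $s\ge u_0$, in particular at $u_0$, contradicting $\int_{T_{u_0}}|\alpha|^2d\sigma>0$. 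Thus $\mu_\alpha(u)<\frac12$ wherever it is defined.

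For the monotonicity statement, set $g(u):=\int_{\Omega_{u\infty}}\big(\frac{n-1}{2}-k+\mu_\alpha\big)|\alpha|^2dv$, so by \eqref{start} one has $g(u)=\int_{T_u}\big(\frac12-\mu_\alpha(u)\big)|\alpha|^2d\sigma$. If $u_1<u_2$ then $\Omega_{u_2\infty}\subset\Omega_{u_1\infty}$, and since the integrand defining $g$ is nonnegative, $g(u_2)\le g(u_1)$; hence $u\mapsto\int_{T_u}\big(\frac12-\mu_\alpha(u)\big)|\alpha|^2d\sigma$ is decreasing, which is the final assertion.

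I do not expect a real obstacle here: the corollary is a repackaging of \eqref{start} with the standing hypothesis $k<\frac{n-1}{2}$ and the nonnegativity of $\mu_\alpha$. The only step needing a little care is the strict inequality $\mu_\alpha(u)<\frac12$, where one must rule out $\alpha$ vanishing identically near the end of the cusp; this is handled by the elementary continuity remark above (alternatively, one could invoke unique continuation for the strongly harmonic form $\alpha$, but that is not necessary).
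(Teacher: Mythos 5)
Your proof is correct and follows the same route as the paper, which states the corollary as an immediate consequence of the identity \eqref{start}: nonnegativity of the right side of \eqref{start} gives \eqref{monotone}, and monotone shrinking of the domain $\Omega_{u\infty}$ with a nonnegative integrand gives the decreasing claim. The one place you go beyond what the paper writes out is the justification of the \emph{strict} inequality $\mu_{\alpha}(u)<\tfrac12$ (the paper simply asserts it); your argument, forcing $\alpha\equiv 0$ on $\Omega_{u_0\infty}$ and then using continuity of $s\mapsto\int_{T_s}|\alpha|^2d\sigma$ (or unique continuation) to derive a contradiction, is a sound way to close that small gap.
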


Corollary \ref{1st} and Equation \eqref{start} enables us to estimate the decay of  $\int_{\Omega_{u\infty}}|\alpha|^{2}dv$ as $u$ increases. Define
\[
g(s):= \int_{\Omega_{s\infty}}\Big(\frac{n-1}{2}-k+\mu_{\alpha}\Big)|\alpha|^{2}dv.
\]
For some differentiable $\phi(u)$ to be determined, multiply Equation \eqref{start} by $\phi^\prime(u)$ and integrate from $\sigma$ to $\tau$ to obtain:
\begin{align}\label{CuspPriceR}
\int_{\Omega_{\sigma\tau}}\phi^\prime(s)\Big(\frac{1}{2}-\mu_{\alpha}\Big)|\alpha|^{2}dv&=\phi(\tau)g(\tau)-\phi(\sigma)g(\sigma)-\int^{\tau}_{\sigma}\phi(s)g^\prime(s)ds\\ \notag
&=\phi(\tau)g(\tau)-\phi(\sigma)g(\sigma)+\int_{\Omega_{\sigma\tau}}\phi(s)\Big(\frac{n-1}{2}-k+\mu_{\alpha}\Big)|\alpha|^{2}dv .
\end{align}
Choose 
\begin{align}\label{phi}
\phi(s):= e^{\int^{s}_{0}\frac{[\frac{(n-1)}{2} - k +\mu_{\alpha}(u)]}{1/2 - \mu_{\alpha}(u)}du}\geq e^{(n-1  - 2k)s}.
\end{align}
With this choice,  the two volume integrals in Equation \eqref{CuspPriceR} cancel, reducing  Equation \ref{CuspPriceR} to
\[
\phi(\sigma)g(\sigma)=\phi(\tau)g(\tau),
\]
which we expand as 
\begin{align}\label{PriceR}
\phi(\sigma)\int_{\Omega_{\sigma\infty}}\Big(\frac{n-1}{2}-k+\mu_{\alpha}\Big)|\alpha|^{2}dv=\phi(\tau)\int_{\Omega_{\tau\infty}}\Big(\frac{n-1}{2}-k+\mu_{\alpha}\Big)|\alpha|^{2}dv.
\end{align}

Setting
\[
C_{\alpha}:=\int_{\Omega_{0\infty}}\Big(\frac{n-1}{2}-k+\mu_{\alpha}\Big)|\alpha|^{2}dv,
\]
gives for any $K>0$, 
\[
\int_{\Omega_{K\infty}}\Big(\frac{n-1}{2}-k+\mu_{\alpha}\Big)|\alpha|^{2}dv=\frac{C_{\alpha}}{\phi(K)}\leq C_{\alpha}e^{-(n-1  - 2k)s}.
\]
 
We summarize this discussion with a proposition which is the cusp analog of the Price inequality for geodesic balls proved in \cite{DS17}; so,  it seems natural to refer to the monotonicity inequality in Proposition \ref{l2decay} as a \emph{cuspidal} Price inequality. 

\begin{proposition}\label{l2decay}
Let $\alpha$ be an $L^{2}$-harmonic $k$-form, $k<\frac{n-1}{2}$, on an $n$-dimensional real-hyperbolic manifold. For any $s>0$, on any cusp we have 
\begin{align}\label{decayreal}
\int_{\Omega_{s\infty}}|\alpha|^{2}dv\leq c_{n, k}e^{-(n-1-2k)s}\int_{\Omega_{0\infty}}|\alpha|^{2}dv,
\end{align}
where $c_{n, k}:= \frac{n+1-2k}{n-1-2k}$.
\end{proposition}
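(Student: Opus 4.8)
The plan is to assemble the pieces already established in the discussion preceding the statement: the only genuinely new points are the passage from the $\mu_\alpha$-weighted integral back to the plain $L^2$-norm, and the reduction from torus cross-sections to general cusps.

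First I would invoke the monotonicity identity \eqref{PriceR}, together with $\phi(0)=1$ and the lower bound $\phi(\tau)\ge e^{(n-1-2k)\tau}$ recorded in \eqref{phi}, to obtain, for every $s\ge 0$,
\[
\int_{\Omega_{s\infty}}\Big(\tfrac{n-1}{2}-k+\mu_\alpha\Big)|\alpha|^2\,dv \;=\; \frac{C_\alpha}{\phi(s)}\;\le\; C_\alpha\,e^{-(n-1-2k)s},\qquad C_\alpha:=\int_{\Omega_{0\infty}}\Big(\tfrac{n-1}{2}-k+\mu_\alpha\Big)|\alpha|^2\,dv .
\]
This is the step where the hypothesis $k<\tfrac{n-1}{2}$ enters: it makes $\tfrac{n-1}{2}-k>0$, which both keeps the weight $\tfrac{n-1}{2}-k+\mu_\alpha$ nonnegative (so the integral is monotone, via Corollary \ref{1st}) and forces the integrand of $\log\phi$ to dominate $n-1-2k$, since $\tfrac{(n-1)/2-k+\mu_\alpha}{1/2-\mu_\alpha}\ge n-1-2k$ is equivalent to $\mu_\alpha(n-2k)\ge 0$.

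Next I would remove the weight using $0\le\mu_\alpha\le 1$ (Corollary \ref{1st} in fact gives the sharper $0\le\mu_\alpha<\tfrac12$, which would improve the constant, but the crude bound already suffices for the stated one). Since $\tfrac{n-1-2k}{2}\le\tfrac{n-1}{2}-k+\mu_\alpha\le\tfrac{n+1-2k}{2}$, the displayed estimate yields
\[
\frac{n-1-2k}{2}\int_{\Omega_{s\infty}}|\alpha|^2\,dv\;\le\;C_\alpha\,e^{-(n-1-2k)s}\;\le\;\frac{n+1-2k}{2}\,e^{-(n-1-2k)s}\int_{\Omega_{0\infty}}|\alpha|^2\,dv,
\]
so that $\int_{\Omega_{s\infty}}|\alpha|^2\,dv\le \tfrac{n+1-2k}{n-1-2k}\,e^{-(n-1-2k)s}\int_{\Omega_{0\infty}}|\alpha|^2\,dv$, which is \eqref{decayreal} with $c_{n,k}=\tfrac{n+1-2k}{n-1-2k}$.

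It then remains only to drop the assumption that the cross-section is a torus. For a general cusp $[0,\infty)\times F$ with $F$ compact flat, I would observe that every structural input used above — the connection identities $\nabla_{\partial_s}\partial_s=0$, $\nabla_{X_i}X_j=\delta_{ij}\partial_s$, $\nabla_{X_i}\partial_s=-X_i$, the warped volume form, and hence the pointwise Lie-derivative formula \eqref{Lie} and the integral identity \eqref{start} — uses only the flatness of $g_F$ and the existence of a $\nabla_{\partial_s}$-parallel orthonormal frame on $F$, both available for any compact flat $F$; alternatively, one pulls $\alpha$ back to the finite cover with torus cross-section (which multiplies both sides of \eqref{decayreal} by the covering degree) and descends. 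I do not expect a genuine obstacle at any stage: the substantive work — deriving \eqref{PriceR} from the Lie-derivative formula, verifying $\mu_\alpha<\tfrac12$, and bounding $\phi$ below — is precisely Corollary \ref{1st} and the computation leading to \eqref{phi}, already carried out; were it not, that is where the difficulty would be concentrated.
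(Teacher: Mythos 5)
Your proposal is correct and follows the same route as the paper: derive the weighted decay from \eqref{PriceR} using the lower bound $\phi(s)\ge e^{(n-1-2k)s}$, strip the weight $\tfrac{n-1}{2}-k+\mu_\alpha$ using bounds on $\mu_\alpha$, and handle general cusps via the finite torus cover. You also correctly supply a detail the paper leaves implicit — that its constant $c_{n,k}=\tfrac{n+1-2k}{n-1-2k}$ corresponds to the crude bound $\mu_\alpha\le 1$ rather than the sharper $\mu_\alpha<\tfrac12$ from Corollary~\ref{1st} — and your observation that the integral identity \eqref{start} only needs a local $\nabla_{\partial_s}$-parallel frame (so the covering step is technically avoidable) is a fair remark on the structure of the argument.
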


\begin{proof}
The argument detailed above proves this proposition for $L^2$-harmonic $k$-forms, $k<(n-1)/2$, on a real-hyperbolic cusp with torus cross section. For a general real-hyperbolic cusp $C:=[0, \infty)\times F$, write $F=\Lambda\backslash \IR^{n-1} $, with $\Lambda$ crystallographic, and let $\Lambda^\prime\leq \Lambda$ be a finite index abelian subgroup. The existence of such a subgroup follows from Bieberbach's theorem, see for example Chapter 3 in \cite{Wolf}. Let $T^{n-1}:=\Lambda'\backslash\IR^{n-1}$ and consider the associated regular Riemannian cover
\[
p^\prime: T^{n-1}\times [0, \infty)\longrightarrow F\times [0, \infty).
\]
Given $\alpha$ on $C$, consider its pull-back ${p^{\prime}}^*\alpha$ to $T^{n-1}\times [0, \infty)$. We conclude using the multiplicativity under covers of both sides in the inequality in \eqref{decayreal}.
\end{proof}

\section{$L^2$-Harmonic Forms on Complex-Hyperbolic Cusps}

Up to finite cover, cusps of finite volume $n$-dimensional complex-hyperbolic manifold are diffeomorphic to $[0,\infty)\times N$, where 
$N$ is a circle bundle with connection over a $2n-1$ torus (\emph{cf}. Remark \ref{hummel}). In other words, the cross-section $N$ is a \emph{nilmanifold}. The metric on the product is given by 
\begin{align}
g= ds^2 + e^{-4s}g_F+ e^{-2s}g_H,
\end{align}
where $g_F$ is a metric on the circle fiber extended to $N$ by the connection, and $g_H$ is the pull-back to $N$ of a flat metric on the $T^{2n-1}$ base. Let $N_s:= \{s\}\times N$ with the induced metric. Let $\Omega_{uK}$ denote the subset of the cusp corresponding to $[u,K]\times N$. Let $\{\frac{\p}{\p s}\}\cup \{e_j\}_{j=1}^{2n-1} $ and $\{ds\}\cup \{\omega^j\}_{j=1}^{2n-1} $ be a local orthonormal frame and dual frame with $e_1$ tangent to the circle fiber. Specializing \eqref{lie} to $\rho = s$ yields the following expression for $L_{\frac{\p}{\p s}}$ acting on $k-$forms.  
\begin{align}L_{\frac{\p}{\p s}} &= \nabla_{\frac{\p}{\p s}}  -\sum_{j=2}^{2n-1}e(\omega^j)e^*(\omega^j)-2e(\omega^1)e^*(\omega^1)\nonumber\\
&= \nabla_{\frac{\p}{\p s}}  -k\cdot Id- e(\omega^1)e^*(\omega^1)+e(ds)e^*(ds),
\end{align}
 where  $Id$  denotes the identity operator on $k$-forms.  Let $J$ denote the complex structure operator. Then $Jds = \pm \omega^1$. 
Let $\alpha$ be a strongly harmonic $k$-form. Then $Jh$ is also a strongly harmonic $k$-form, and defining $\mu_\alpha$ as in \eqref{mu}, we have the equality 
\begin{align}
\mu_{J\alpha}(s) = \frac{\int_{N_s}|i_{e_1}\alpha|^2d\sigma }{\int_{N_s}|\alpha|^2|d\sigma}.
\end{align}
Arguing as in the preceding section, we deduce 
\begin{align}\label{halfprice}
&\int_{\Omega_{u\infty}}(n-k+\mu_{\alpha}-\mu_{J\alpha})|\alpha|^2dv
 = \int_{N_u}\Big(\frac{1}{2}-\mu_{\alpha}\Big)|\alpha|^2d\sigma
\end{align}
Summing \eqref{halfprice} with the corresponding equation for $J\alpha$ yields
\begin{align}\label{Jprice}
&\int_{\Omega_{u\infty}}(2n  -2k )|\alpha|^2dv
 = \int_{N_u}(1-\mu_{\alpha}-\mu_{J\alpha})|\alpha|^2d\sigma.
\end{align}
Arguing as in the proof of Proposition \ref{l2decay} yields the following \emph{cuspidal} Price inequality for complex hyperbolic spaces.
\begin{proposition}\label{jl2decay}
	Let $\alpha$ be a strongly $L^{2}$ harmonic $k$-form, $k<n$, on an $n$-dimensional complex-hyperbolic space.  For any $s>0$, on each cusp, we have 
\begin{align}\label{decaycomplex}
\int_{\Omega_{s\infty}}|\alpha|^{2}dv\leq e^{-2(n-k)s}\int_{\Omega_{0\infty}}|\alpha|^{2}dv.
\end{align} 

\end{proposition}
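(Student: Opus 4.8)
The plan is to transcribe the integrating-factor argument that produced Proposition~\ref{l2decay}, replacing the real-hyperbolic identity \eqref{start} by the complex-hyperbolic identity \eqref{Jprice}. Fix a nonzero strongly $L^2$-harmonic $k$-form $\alpha$ with $k<n$ on a complex-hyperbolic cusp $[0,\infty)\times N$ carrying the product structure described above, and set $f(s):=\int_{N_s}|\alpha|^2\,d\sigma$ and $G(s):=(2n-2k)\int_{\Omega_{s\infty}}|\alpha|^2\,dv$, so that $G'(s)=-(2n-2k)f(s)$. First I would note that $\mu_\alpha(s)$ and $\mu_{J\alpha}(s)$ depend only on $s$, being averages over the cross section $N_s$, and that $f(s)>0$ for every $s$: if $f(s_0)=0$ then the right-hand side of \eqref{Jprice} at $u=s_0$ vanishes, forcing $\alpha\equiv 0$ on the open set $\Omega_{s_0\infty}$ and hence on the whole manifold by unique continuation for harmonic forms. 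The same positivity of $\int_{\Omega_{u\infty}}|\alpha|^2\,dv$, together with \eqref{Jprice} rewritten as $1-\mu_\alpha(u)-\mu_{J\alpha}(u)=(2n-2k)\big(\int_{\Omega_{u\infty}}|\alpha|^2\,dv\big)/f(u)$, shows $1-\mu_\alpha(u)-\mu_{J\alpha}(u)>0$ for all $u$; and since $\mu_\alpha,\mu_{J\alpha}\ge 0$ we have $0<1-\mu_\alpha-\mu_{J\alpha}\le 1$.

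Next, mimicking the passage to \eqref{PriceR}, I would introduce the integrating factor
\[
\psi(s):=\exp\!\left(\int_0^s\frac{2n-2k}{\,1-\mu_\alpha(u)-\mu_{J\alpha}(u)\,}\,du\right),
\]
which is well defined since its integrand is continuous and positive on the compact interval $[0,s]$, and which satisfies $\psi(0)=1$, $\psi'(s)(1-\mu_\alpha(s)-\mu_{J\alpha}(s))=(2n-2k)\psi(s)$, and $\psi(s)\ge e^{2(n-k)s}$. Multiplying \eqref{Jprice} by $\psi'(u)$, integrating in $u$ from $\sigma$ to $\tau$, and applying Fubini turns the right-hand side into $\int_{\Omega_{\sigma\tau}}\psi'(s)\big(1-\mu_\alpha-\mu_{J\alpha}\big)|\alpha|^2\,dv$; integrating the left-hand side $\int_\sigma^\tau\psi'(u)G(u)\,du$ by parts and using $G'=-(2n-2k)f$ turns it into $\psi(\tau)G(\tau)-\psi(\sigma)G(\sigma)+\int_{\Omega_{\sigma\tau}}\psi(s)(2n-2k)|\alpha|^2\,dv$. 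By the defining ODE for $\psi$ the two volume integrals cancel, leaving $\psi(\sigma)G(\sigma)=\psi(\tau)G(\tau)$ for all $0\le\sigma<\tau<\infty$. Hence $\psi(s)G(s)$ is constant, equal to $\psi(0)G(0)=G(0)$, so $G(s)=G(0)/\psi(s)\le G(0)e^{-2(n-k)s}$; dividing by $2n-2k>0$ gives \eqref{decaycomplex} for cusps of the stated product form.

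For a general complex-hyperbolic cusp I would argue exactly as at the end of the proof of Proposition~\ref{l2decay}: the cross section is a nilmanifold which, after a finite cover, becomes a circle bundle over a torus (Remark~\ref{hummel}), bringing it to the product form used above; since both sides of \eqref{decaycomplex} are multiplicative under finite Riemannian covers, the inequality on the cover descends to the cusp. The case $\alpha=0$ is trivial.

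The computation is a near-verbatim analogue of the real case, so I expect no serious obstacle beyond the verifications flagged in the first paragraph: that $f(s)>0$ and $1-\mu_\alpha-\mu_{J\alpha}>0$ for every $s$, which guarantee that $\psi$ is finite and grows at least like $e^{2(n-k)s}$, and the differentiation-under-the-integral and Fubini steps needed to manipulate $G$ — all of which follow from the regularity and unique continuation of $L^2$-harmonic forms together with \eqref{Jprice}. (Alternatively, one can first run the difference of \eqref{halfprice} for $\alpha$ and for $J\alpha$ to deduce $\mu_\alpha\equiv\mu_{J\alpha}$, which reduces \eqref{Jprice} to a single-$\mu$ identity, but this is not needed.)
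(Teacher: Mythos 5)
Your argument is correct and follows essentially the same route as the paper: you transcribe the integrating-factor computation from Section~\ref{standard} (Equations~\eqref{CuspPriceR}--\eqref{PriceR}) starting from the complex-hyperbolic identity~\eqref{Jprice}, with $G(s)$ playing the role of the paper's $g(s)=2(n-k)\int_{\Omega_{s\infty}}|\alpha|^2\,dv$, observe that $\psi(s)\geq e^{2(n-k)s}$ since $0<1-\mu_\alpha-\mu_{J\alpha}\leq 1$, and then pass to a finite cover for general infranilmanifold cross sections. The supporting verifications you flag ($f>0$, positivity of $1-\mu_\alpha-\mu_{J\alpha}$, the cancellation of the two volume integrals) are all sound and are exactly what the paper leaves implicit when it says ``proceed similarly.''
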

\begin{proof}
Proceed similarly to Equation \ref{CuspPriceR} in the proof of Proposition \ref{l2decay}, and in this case define for $s>0$
\[
g(s):=\int_{\Omega_{s\infty}}2(n-k)|\alpha|^2dv.
\]	
Thus, following the argument detailed in Section \ref{standard}, this proposition is proved for complex-hyperbolic cusps with nilmanifolds cross sections. (Unlike in Proposition \ref{l2decay}, in this case the constant $c_{n,k}$ can be taken to be  one.) For general cusps with infranilmanifolds cross sections, we conclude using the multiplicativity under covers of both sides in the inequality in \eqref{decaycomplex}.
\end{proof}

\begin{remark}\label{hummel}
	As with real-hyperbolic manifolds, one can always pass to a finite cover of a  complete finite volume complex-hyperbolic manifold for which all cusps have nilmanifold cross sections, see for example \cite{Hummel}. In the arithmetic case, it suffices to pass to a suitable congruence subgroup.  This  fact, however,  is not needed for our estimates.
\end{remark}

\section{$L^2$-Betti Number Estimates away from Middle Degree}\label{ccusps}

We continue to assume $\Gamma$ is always discrete and torsion free. Given  $ \Gamma$ of co-finite volume, $b_{2}^k(\Gamma\backslash\textbf{H}^{n}_{\IC})$ is \emph{always} finite, see Section \ref{introduction} for more details and references. In this section, we  derive effective estimates for $b_{2}^k$ on such ball quotients. We start with a proposition. 

\begin{proposition}\label{ave}
Let $\Gamma$ be co-finite volume. Decompose $\Gamma\backslash\textbf{H}^{n}_{\IC}$ as a disjoint union
\[ 
\Gamma\backslash\textbf{H}^{n}_{\IC} = M_0\cup (\cup_jC_j), 
\] 
where each $C_j$ is a cusp, parameterized by $[0,\infty)\times N^j$. Let $\Omega_{uK}^j$ denote the subset of $C_j$ parameterized by $[u,K)\times N^j$. Suppose that: 
\begin{align}\label{ass1} 
\int_{\pi^{-1}(M_{0})}\binom{2n}{k}\int_{S(\mathcal{H}^k(\Gamma\backslash\textbf{H}^{n}_{\IC}))}|h(\bar{p})|^2d\sigma_hd\bar{v}_{\bar{p}} <\frac{Vol(M_{0})}{2Vol(\Gamma\backslash\textbf{H}^{n}_{\IC})}.
\end{align}
Then for $k<n$ the average value of 
\[ \quad  \binom{2n}{k}\int_{S(\mathcal{H}^k(\Gamma\backslash\textbf{H}^{n}_{\IC}))}  |h(\bar{p})|^2d\sigma_h \quad \text{on} \quad \pi^{-1}(\cup_j\Omega_{01}^j)
\] 
is greater than
\[
\frac{1}{4 \sum_j Vol(\Omega_{01}^j)}.
\]
\end{proposition}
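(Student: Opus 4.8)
The plan is to derive the proposition by assembling three facts that are already available: the averaging identity \eqref{one}, the hypothesis \eqref{ass1}, and the cuspidal Price inequality of Proposition \ref{jl2decay}. For a measurable set $\Omega\subset\Gamma\backslash\textbf{H}^{n}_{\IC}$ it is convenient to write
\[
I(\Omega):=\int_{\pi^{-1}(\Omega)}\binom{2n}{k}\int_{S(\mathcal{H}^k(\Gamma\backslash\textbf{H}^{n}_{\IC}))}|h(\bar{p})|^2d\sigma_hd\bar{v}_{\bar{p}},
\]
which is the quantity the statement concerns.

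First I would put $I(\Omega)$ in a more tractable form: by \eqref{zero} and Fubini's theorem,
\[
I(\Omega)=\int_{S(\mathcal{H}^k(\Gamma\backslash\textbf{H}^{n}_{\IC}))}\Big(\int_{\Omega}|h|^2dv\Big)d\sigma_h,
\]
so $I$ is simply the $d\sigma_h$-average over the unit sphere of harmonic forms of the energy $\Omega\mapsto\int_{\Omega}|h|^2dv$. Equation \eqref{one} then reads $I(M_0)+I(\cup_jC_j)=1$, while the hypothesis \eqref{ass1} says $I(M_0)<\tfrac{Vol(M_0)}{2Vol(\Gamma\backslash\textbf{H}^{n}_{\IC})}\le\tfrac12$, using $Vol(M_0)\le Vol(\Gamma\backslash\textbf{H}^{n}_{\IC})$; hence $I(\cup_jC_j)>\tfrac12$.

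Second I would trade the full cusps $C_j=\Omega^j_{0\infty}$ for the slabs $\Omega^j_{01}$. Fix $h\in\mathcal{H}^k(\Gamma\backslash\textbf{H}^{n}_{\IC})$ with $k<n$; its restriction to each cusp is a strongly $L^2$-harmonic $k$-form, so Proposition \ref{jl2decay} with $s=1$ gives $\int_{\Omega^j_{1\infty}}|h|^2dv\le e^{-2(n-k)}\int_{C_j}|h|^2dv$, and subtracting this from $\int_{C_j}|h|^2dv=\int_{\Omega^j_{01}}|h|^2dv+\int_{\Omega^j_{1\infty}}|h|^2dv$ yields
\[
\int_{C_j}|h|^2dv\le\frac{1}{1-e^{-2(n-k)}}\int_{\Omega^j_{01}}|h|^2dv\le 2\int_{\Omega^j_{01}}|h|^2dv,
\]
the last step because $k<n$ forces $n-k\ge1$, so $(1-e^{-2(n-k)})^{-1}\le(1-e^{-2})^{-1}<2$. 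Summing over $j$ and averaging over $S(\mathcal{H}^k(\Gamma\backslash\textbf{H}^{n}_{\IC}))$ gives $I(\cup_jC_j)\le 2\,I(\cup_j\Omega^j_{01})$, hence $I(\cup_j\Omega^j_{01})>\tfrac14$.

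Finally, since the spherical fibers have volume $1$ one has $Vol\big(\pi^{-1}(\cup_j\Omega^j_{01})\big)=\sum_jVol(\Omega^j_{01})$, so the average value in the statement is $I(\cup_j\Omega^j_{01})$ divided by $\sum_jVol(\Omega^j_{01})$, which therefore exceeds $\frac{1}{4\sum_jVol(\Omega^j_{01})}$, as claimed. The argument is essentially bookkeeping with Fubini and the elementary bound $(1-e^{-2})^{-1}<2$; the one point that deserves a moment's care — the main ``obstacle'' — is confirming that the restriction of a global $L^2$-harmonic form to an (infra)nilmanifold cusp is exactly the type of strongly harmonic $L^2$-form to which Proposition \ref{jl2decay} applies, including the reduction to a finite cover with nilmanifold cross section used in its proof; but that is already subsumed in the statement of Proposition \ref{jl2decay}, so nothing genuinely new needs to be proved.
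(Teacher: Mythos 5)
Your proof is correct and follows essentially the same route as the paper's: both start from the normalization \eqref{one} together with hypothesis \eqref{ass1} to conclude that the cusp contribution exceeds $\tfrac12$ (the paper packages this as an application of Lemma \ref{split} with $\alpha=\tfrac12$, $\Omega_1=M_0$, $\Omega_2=\cup_jC_j$; you derive it directly, which amounts to the same thing), and both then invoke the cuspidal Price inequality of Proposition \ref{jl2decay} with $s=1$ to pass from $\Omega^j_{0\infty}$ to $\Omega^j_{01}$, using $1-e^{-2(n-k)}\geq 1-e^{-2}>\tfrac12$ to land above $\tfrac14$. The only cosmetic difference is that you multiply through by $(1-e^{-2(n-k)})^{-1}<2$ where the paper subtracts and keeps the factor $1-e^{-2(n-k)}$ explicit until the last line.
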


\begin{proof} 
Combining \eqref{ass1} and \eqref{zero} with Lemma \ref{split}, we obtain 
\begin{align}\label{input}
\sum_j\int_{\Omega_{0\infty}^j}\int_{S(\mathcal{H}^k(\Gamma\backslash\textbf{H}^{n}_{\IC}))}|h|^2(p)d\sigma_hdv_p\geq \frac{1}{2}.
\end{align}
Using the Price inequality given in Proposition \ref{jl2decay}, for each $j$ we have: 
\begin{align}\notag
\int_{S(\mathcal{H}^k(\Gamma\backslash\textbf{H}^{n}_{\IC})))}\int_{\Omega_{1\infty}^j}|h|^2(p)dv_pd\sigma_h\leq  e^{-2(n-k)}\int_{S(\mathcal{H}^k(\Gamma\backslash\textbf{H}^{n}_{\IC})))}\int_{\Omega_{0\infty}^j}|h|^2(p)dv_pd\sigma_h,
\end{align}
so that
\begin{align}\notag
\int_{S(\mathcal{H}^k(\Gamma\backslash\textbf{H}^{n}_{\IC})))}\int_{\Omega_{01}^j}|h|^2(p)dv_pd\sigma_h\geq  (1-e^{-2(n-k)})\int_{S(\mathcal{H}^k(\Gamma\backslash\textbf{H}^{n}_{\IC})))}\int_{\Omega_{0\infty}^j}|h|^2(p)dv_pd\sigma_h.
\end{align}
Combining this last estimate with \eqref{input} yields
\begin{align}\notag
\sum_j\int_{\Omega_{01}^j}\int_{S(\mathcal{H}^k(\Gamma\backslash\textbf{H}^{n}_{\IC}))}|h(p)|^2d\sigma_hdv_p\geq 
\frac{1-e^{-2(n-k)}}{2}>\frac{1}{4},
\end{align}
and the result follows.
\end{proof}

We can now estimate from above the dimension of the space of $L^2$-harmonic forms on a complete finite volume  complex-hyperbolic manifold.

\begin{corollary}\label{L2C}
Let $\Gamma $ be co-finite volume. Decompose $\Gamma\backslash\textbf{H}^{n}_{\IC}$ as a disjoint union
\[ 
\Gamma\backslash\textbf{H}^{n}_{\IC} = M_0\cup (\cup_jC_j), 
\] 
where each $C_j$ is a cusp parameterized by $[0,\infty)\times N^j$. Let $\Omega_{uK}^j$ denote the subset of $C_j$ parameterized by $[u,K)\times N^j$.
Then for some $c_{n,k}, \tilde c_{n,k}>0$, 
\begin{align}\label{411}
b_{2}^k(\Gamma\backslash\textbf{H}^{n}_{\IC}) \leq \tilde c_{n, k}\big[Vol(\Gamma\backslash\textbf{H}^{n}_{\IC})V_{min}(M_0)^{\frac{k-n}{n}}+ Vol(\cup_jC_j)V_{min}(\cup_j\Omega_{01}^j)^{\frac{k-n}{n}}\big],
\end{align}
and
\begin{align}\label{pain}
b_{2}^k(\Gamma\backslash\textbf{H}^{n}_{\IC}) \leq  c_{n, k} Vol(\Gamma\backslash\textbf{H}^{n}_{\IC})V_{min}(M_0)^{\frac{k-n}{n}} .
\end{align}
\end{corollary}

\begin{proof}
First, recall that by Corollary \ref{linearh}, if $\bar{p}\in S(\Lambda^k T(\Gamma\backslash\textbf{H}^{n}_{\IC}))$ is such that $h(\bar{p})\neq 0$ for some $h\in\mathcal{H}^k(\Gamma\backslash\textbf{H}^{n}_{\IC})$ then:
\[
b^k_{2}(\Gamma\backslash\textbf{H}^{n}_{\IC})=\frac{\max\{| h(\bar{p})|^2: h\in S(\mathcal{H}^k(\Gamma\backslash\textbf{H}^{n}_{\IC}))\}}{\int_{S(\mathcal{H}^k(\Gamma\backslash\textbf{H}^{n}_{\IC}))}|h(\bar{p})|^2d\sigma_h}.
\]
If there exists a point $\bar{p}\in\pi^{-1}(M_{0})$, such that
\[
\binom{2n}{k}\int_{S(\mathcal{H}^k(\Gamma\backslash\textbf{H}^{n}_{\IC}))}|h(\bar{p})|^2d\sigma_h\geq \frac{1}{2Vol(\Gamma\backslash\textbf{H}^{n}_{\IC})},
\]
then
\[
b^k_2(\Gamma\backslash\textbf{H}^{n}_{\IC})\leq \max\{| h(\bar{p})|^2: h\in S(\mathcal{H}^k(\Gamma\backslash\textbf{H}^{n}_{\IC}))\}\binom{n}{k}\cdot 2Vol(\Gamma\backslash\textbf{H}^{n}_{\IC}),
\]
and the  Price inequality given in Lemma \ref{PriceC} combined with Moser iteration (\emph{cf}. Lemma 41 and Lemma 51 in \cite{DS17}) imply  that there exists a constant $d_{n, k}>0$ such that
\[
b^k_2(\Gamma\backslash\textbf{H}^{n}_{\IC})\leq d_{n, k}Vol(\Gamma\backslash\textbf{H}^{n}_{\IC})V_{min}(M_0)^{\frac{k-n}{n}}.
\]
Suppose now that for every $\bar{p}\in \pi^{-1}(M_0)$ we have 
\[
\binom{2n}{k}\int_{S(\mathcal{H}^k(\Gamma\backslash\textbf{H}^{n}_{\IC}))}|h(\bar{p})|^2d\sigma_h< \frac{1}{2Vol(\Gamma\backslash\textbf{H}^{n}_{\IC})}.
\]
This implies that 
\[
\binom{2n}{k}\int_{\pi^{-1}(M_{0})}\int_{S(\mathcal{H}^k(\Gamma\backslash\textbf{H}^{n}_{\IC}))}|h(p)|^2d\sigma_h d\bar{v}_{\bar{p}}<\frac{Vol(M_{0})}{2Vol(\Gamma\backslash\textbf{H}^{n}_{\IC})}.
\]
 Proposition \ref{ave} then implies the existence of $\bar{p}\in \pi^{-1}(\cup_j\Omega_{01}^j)$ with
\[
\binom{2n}{k}\int_{S(\mathcal{H}^k(\Gamma\backslash\textbf{H}^{n}_{\IC}))}|h(\bar{p})|^2d\sigma_h\geq \frac{1}{4 \sum_j Vol(\Omega_{01}^j)}.
\]
Thus 
\begin{align}
b^k_{2}(\Gamma\backslash\textbf{H}^{n}_{\IC})\leq \max\{| h(\bar p)|^2: h\in S(\mathcal{H}^k(\Gamma\backslash\textbf{H}^{n}_{\IC}))\}\binom{2n}{k}\cdot 4 \sum_j Vol(\Omega_{01}^j).
\end{align}
Using the Price inequality (Lemma \ref{PriceC}) and Moser iteration (Lemma 45 and Lemma 51 in \cite{DS17}),  we then have for some constant $e_{n, k}>0$
\[
b^k_2(\Gamma\backslash\textbf{H}^{n}_{\IC})\leq e_{n, k}V_{min}(\cup_j\Omega_{01}^j)^{\frac{k-n}{n}}\Big(4 \sum_j Vol(\Omega_{01}^j)\Big).
\]
Setting $c_{n, k}:=\max\{d_{n,k},e_{n,k}\}$ completes the proof of \eqref{411}. To deduce \eqref{pain} we note that    $\frac{V_{min}(M_0)}{V_{min}(\cup_j\Omega^j_{01})}$ is uniformly bounded, and \eqref{pain} follows from \eqref{411}.
\end{proof}

Similarly, we have the following proposition and corollary. \\

\begin{proposition}\label{aveR}
	Let $ \Gamma $ be co-finite volume. Decompose $\Gamma\backslash\textbf{H}^{n}_{\IR}$  as a disjoint union
	\[ 
	\Gamma\backslash\textbf{H}^{n}_{\IR} = M_0\cup (\cup_jC_j), 
	\] 
	where each $C_j$ is a cusp, parameterized by $[0,\infty)\times N^j$. Let $\Omega_{uK}^j$ denote the subset of $C_j$ parameterized by $[u,K)\times N^j$. Suppose that: 
	\begin{align}\label{ass1R}\notag
	\int_{\pi^{-1}(M_{0})}\binom{n}{k}\int_{S(\mathcal{H}^k(\Gamma\backslash\textbf{H}^{n}_{\IR}))}|h(\bar{p})|^2d\sigma_hd\bar{v}_{\bar{p}} <\frac{Vol(M_{0})}{2Vol(\Gamma\backslash\textbf{H}^{n}_{\IR})}.
	\end{align}
	For $k<(n-1)/2$ set 
	\[
	R_{n, k}:= \frac{1}{n-1-2k}\ln{\frac{2(n+1-2k)}{n-1-2k}}.
	\]
	Then the average value $\binom{n}{k}\int_{S(\mathcal{H}^k(\Gamma\backslash\textbf{H}^{n}_{\IR}))}  |h(\bar{p})|^2d\sigma_h$ on $\pi^{-1}(\cup_j\Omega_{0R_{n, k}}^j)$ is greater than
	\[
	\frac{1}{4 \sum_j Vol(\Omega_{0R_{n, k}}^j)}.
	\]
\end{proposition}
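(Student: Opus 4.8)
The statement is the real-hyperbolic analog of Proposition \ref{ave}, so the plan is to follow exactly the same two-move argument, substituting the real-hyperbolic cuspidal Price inequality (Proposition \ref{l2decay}) for the complex one (Proposition \ref{jl2decay}) and choosing the cutoff radius $R_{n,k}$ so that the geometric decay factor beats $1/2$. First, I would invoke Lemma \ref{split} with $\Omega_1 = M_0$, $\Omega_2 = \cup_j C_j$, and $\alpha = 1/2$: the hypothesis \eqref{ass1R} is precisely the inequality required by that lemma (using \eqref{zero} to pass between the $\binom{n}{k}$-weighted integral over $S(\Lambda^k TM)$ and the integral of $|h|^2$ over $M$), so we conclude
\[
\sum_j \int_{\Omega_{0\infty}^j}\int_{S(\mathcal{H}^k(\Gamma\backslash\textbf{H}^{n}_{\IR}))}|h|^2(p)\,d\sigma_h\,dv_p \;\geq\; \frac{1}{2}.
\]

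Next, I would apply the cuspidal Price inequality of Proposition \ref{l2decay} on each cusp $C_j$: for every $h$ and every $s>0$,
\[
\int_{\Omega_{s\infty}^j}|h|^2\,dv \;\leq\; \frac{n+1-2k}{n-1-2k}\,e^{-(n-1-2k)s}\int_{\Omega_{0\infty}^j}|h|^2\,dv .
\]
Integrating over $S(\mathcal{H}^k)$ and subtracting from the integral over $\Omega_{0\infty}^j$ gives, for the choice $s = R_{n,k}$,
\[
\int_{\Omega_{0R_{n,k}}^j}\int_{S(\mathcal{H}^k)}|h|^2\,d\sigma_h\,dv \;\geq\; \Big(1 - \tfrac{n+1-2k}{n-1-2k}e^{-(n-1-2k)R_{n,k}}\Big)\int_{\Omega_{0\infty}^j}\int_{S(\mathcal{H}^k)}|h|^2\,d\sigma_h\,dv .
\]
The definition $R_{n,k} = \frac{1}{n-1-2k}\ln\frac{2(n+1-2k)}{n-1-2k}$ is engineered exactly so that $\frac{n+1-2k}{n-1-2k}e^{-(n-1-2k)R_{n,k}} = \frac{1}{2}$, hence the factor in parentheses equals $\frac{1}{2}$. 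Summing over $j$ and combining with the lower bound $\tfrac12$ from the first step yields
\[
\sum_j \int_{\Omega_{0R_{n,k}}^j}\int_{S(\mathcal{H}^k)}|h|^2(p)\,d\sigma_h\,dv_p \;\geq\; \frac{1}{4}.
\]
Converting back via \eqref{zero}, the average value of $\binom{n}{k}\int_{S(\mathcal{H}^k)}|h(\bar p)|^2 d\sigma_h$ over $\pi^{-1}(\cup_j\Omega_{0R_{n,k}}^j)$, which equals this double integral divided by $\mathrm{Vol}(\cup_j\Omega_{0R_{n,k}}^j) = \sum_j \mathrm{Vol}(\Omega_{0R_{n,k}}^j)$, is at least $\frac{1}{4\sum_j \mathrm{Vol}(\Omega_{0R_{n,k}}^j)}$, which is the claim.

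There is no real obstacle here — the proof is a routine transcription of Proposition \ref{ave}. The only point requiring care is the bookkeeping of the constants: one must check that $\frac{n+1-2k}{n-1-2k}e^{-(n-1-2k)R_{n,k}} = \frac12$ holds with the stated $R_{n,k}$ (it does, by construction), and that the condition $k < (n-1)/2$ guarantees $n-1-2k > 0$ so that $R_{n,k}$ is well-defined and positive and the cuspidal Price inequality applies. The strict inequality $\frac{1-e^{-2(n-k)}}{2} > \frac14$ used in Proposition \ref{ave} is here replaced by the exact identity giving $\frac12\cdot\frac12 = \frac14$, so one gets ``$\geq \frac14$'' rather than ``$> \frac14$''; this is harmless for the downstream corollary.
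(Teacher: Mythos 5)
Your proof is correct and follows essentially the same route as the paper's: invoke Lemma \ref{split} with $\alpha=1/2$ to load half the $L^2$-mass into the cusps, then apply the cuspidal Price inequality of Proposition \ref{l2decay} with the cutoff $R_{n,k}$ chosen so that the decay factor $c_{n,k}\,e^{-(n-1-2k)R_{n,k}}$ equals exactly $\frac12$, and combine. Your side remark about strictness is a fair observation: the paper's final display in this proof jumps to a strict $>\frac14$ even though the intermediate steps as written are non-strict; as you note, the strictness can be recovered by tracing the strict ``$<$'' in hypothesis \eqref{ass1R} through Lemma \ref{split}, and in any case the distinction is immaterial for Corollary \ref{rhypcusps}.
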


\begin{proof}
	Combining the hypotheses with \eqref{zero} and Lemma \ref{split}, we obtain 
	\begin{align}\label{inputr}
	\sum_j\int_{\Omega_{0\infty}^j}\int_{S(\mathcal{H}^k(\Gamma\backslash\textbf{H}^{n}_{\IR}))}|h|^2(p)d\sigma_hdv_p\geq \frac{1}{2}.
	\end{align}
	Using the Price inequality on real hyperbolic cusps proved in Proposition \ref{l2decay}, for each $j$ we have: 
	\begin{align}\notag
	\int_{S(\mathcal{H}^k(\Gamma\backslash\textbf{H}^{n}_{\IR})))}\int_{\Omega_{R_{n, k}\infty}^j}|h|^2(p)dv_pd\sigma_h\leq  \frac{1}{2}\int_{S(\mathcal{H}^k(\Gamma\backslash\textbf{H}^{n}_{\IR})))}\int_{\Omega_{0\infty}^j}|h|^2(p)dv_pd\sigma_h,
	\end{align}
	so that
	\begin{align}\notag
	\int_{S(\mathcal{H}^k(\Gamma\backslash\textbf{H}^{n}_{\IR})))}\int_{\Omega_{0R_{n, k}}^j}|h|^2(p)dv_pd\sigma_h\geq  \frac{1}{2}\int_{S(\mathcal{H}^k(\Gamma\backslash\textbf{H}^{n}_{\IR})))}\int_{\Omega_{0\infty}^j}|h|^2(p)dv_pd\sigma_h.
	\end{align}
	Combining this last estimate with \eqref{inputr} yields
	\begin{align}\notag
	\sum_j\int_{\Omega_{0R_{n, k}}^j}\int_{S(\mathcal{H}^k(\Gamma\backslash\textbf{H}^{n}_{\IR}))}|h(p)|^2d\sigma_hdv_p>\frac{1}{4},
	\end{align}
	and the result follows.
\end{proof}

We can now estimate outside the critical degree the dimension of the space of $L^2$-harmonic forms on a complete finite volume  real-hyperbolic manifold.

\begin{corollary}\label{rhypcusps}
	Let $ \Gamma $ be co-finite volume. Decompose $\Gamma\backslash\textbf{H}^{n}_{\IR}$ as a disjoint union
	\[ 
	\Gamma\backslash\textbf{H}^{n}_{\IR} = M_0\cup (\cup_jC_j), 
	\] 
	where each $C_j$ is a cusp parameterized by $[0,\infty)\times T^{n-1}_j$. Let $\Omega_{uK}^j$ denote the subset of $C_j$ parameterized by $[u,K)\times T^{n-1}_j$. For any $k<(n-1)/2$, there exists some $\tilde a_{n,k}, a_{n,k}>0$ such that
	\begin{align}\label{Kate}
	b_{2}^k(\Gamma\backslash\textbf{H}^{n}_{\IR}) \leq \tilde a_{n, k}\big[Vol(\Gamma\backslash\textbf{H}^{n}_{\IR})V_{min}(M_0)^{\frac{2k+1-n}{n-1}}+ Vol(\cup_jC_j)V_{min}(\cup_j\Omega_{0R_{n, k}}^j)^{\frac{2k+1-n}{n-1}}\big],
	\end{align}
and 
\begin{align}\label{Luca} 
	b_{2}^k(\Gamma\backslash\textbf{H}^{n}_{\IR}) \leq   a_{n, k} Vol(\Gamma\backslash\textbf{H}^{n}_{\IR})V_{min}(M_0)^{\frac{2k+1-n}{n-1}}.
	\end{align}
\end{corollary}

\begin{proof}
	As in the proof of Corollary \ref{L2C}, if there exists a point  $\bar{p}\in\pi^{-1}(M_{0})$ such that
	\[
	\binom{n}{k}\int_{S(\mathcal{H}^k(\Gamma\backslash\textbf{H}^{n}_{\IR}))}|h(\bar{p})|^2d\sigma_h\geq \frac{1}{2Vol(\Gamma\backslash\textbf{H}^{n}_{\IR})}.
	\]
    Combining the Price inequality for real-hyperbolic spaces (Theorem 87 in \cite{DS17}) and Moser iteration (Lemma 45 and Lemma 51 in \cite{DS17}),  we obtain that there exists a constant $d_{n, k}>0$ such that
	\[
	b^k_2(\Gamma\backslash\textbf{H}^{n}_{\IR})\leq d_{n, k}Vol(\Gamma\backslash\textbf{H}^{n}_{\IR})V_{min}(M_0)^{\frac{2k+1-n}{n-1}}.
	\]
	Suppose now that for any $\bar{p}\in \pi^{-1}(M_0)$ we have 
	\[
	\binom{n}{k}\int_{S(\mathcal{H}^k(\Gamma\backslash\textbf{H}^{n}_{\IR}))}|h(\bar{p})|^2d\sigma_h< \frac{1}{2Vol(\Gamma\backslash\textbf{H}^{n}_{\IR})}.
	\]
	By Proposition \ref{ave}, there exists $\bar{p}\in \pi^{-1}(\cup_j\Omega_{0R_{n, k}}^j)$ with
	\[
	\binom{n}{k}\int_{S(\mathcal{H}^k(\Gamma\backslash\textbf{H}^{n}_{\IR}))}|h(\bar{p})|^2d\sigma_h\geq \frac{1}{4 \sum_j Vol(\Omega_{0R_{n, k}}^j)}.
	\]
	By the above Price and Moser inequalities, for some constant $e_{n, k}>0$,
	\[
	b^k_2(\Gamma\backslash\textbf{H}^{n}_{\IR})\leq e_{n, k}V_{min}(\cup_j\Omega_{0R_{n, k}}^j)^{\frac{2k+1-n}{n-1}}\Big(4 \sum_j Vol(\Omega_{0R_{n, k}}^j)\Big).
	\]
	The proof of \eqref{Kate} follows upon setting $\tilde a_{n, k}:=\max\{d_{n,k},e_{n,k}\}$. The proof that \eqref{Luca} follows from \eqref{Kate} follows as in Corollary \ref{L2C} after noting $R_{n.k}$ is uniformly bounded. 
\end{proof}


\section{Critical Degree for Real-Hyperbolic Manifolds} 

Corollary \ref{rhypcusps} does not cover the case $b_{2}^k(\Gamma\backslash\textbf{H}^{n}_{\IR})$ when $n=2k+1$. 
For this critical degree case, Equation \eqref{start} reduces to  
\begin{align}\label{monotone2}
\int_{T_{s}^a}\Big(\frac{1}{2}-\mu_\alpha(s)\Big)|\alpha|^{2}d\sigma=\int_{\Omega_{s\infty}^a}\mu_\alpha|\alpha|^{2}dv>0,
\end{align}
which again implies $0\leq \mu_\alpha(s)<\frac{1}{2}$. Unfortunately, this identity is difficult to use without having control of $\mu_\alpha(s)$. Thus, we Fourier expand to obtain additional information and control on the size of an $L^2$-harmonic form of critical degree on a hyperbolic cusp.

\subsection{Fourier Primitives}

Consider a standard hyperbolic cusp, say $C_a$, of a complete finite volume real-hyperbolic manifold. Recall that a standard real-hyperbolic cusp is isometric to $[0,\infty)\times T^a$, where $T^a=\Lambda_a\backslash \IR^{n-1}$ is an $(n-1)$-dimensional real torus associated to a full rank lattice of translations $\Lambda_a$ acting on $\IR^{n-1}$. Moreover, $C_a$ is naturally equipped with the metric $g_{-1}=ds^2+e^{-2s}g_{T^a}$. Let $\{t^k\}_{k=1}^{n-1}$ be Euclidean coordinates on $\IR^{n-1}$. Recall also that given a complete finite volume real-hyperbolic manifold, one can always pass to a finite cover where all of the cusps are standard, see for example \cite{Hummel}. Now, given a strongly harmonic (i.e., in the kernel of $d$ and $d^*$) $L^2$-form $h$ of degree $k$ on $C_{a}$, we Fourier expand 
\begin{align}
h = \sum_{v\in\Lambda^*_a}h^v ,
\end{align}
where for any $k$
$$L_{\frac{\p}{\p t^k}}h^v= 2\pi iv_kh^v.$$
For $v\not = 0$, set 
\[
b^v := \sum_k \frac{v_k}{2\pi i|v|^2}i_{\frac{\p}{\p t^k}}h^v.
\]
Then
\begin{align}\label{bv}
db^v = \sum_k \frac{v_k}{2\pi i|v|^2}L_{ \frac{\p}{\p t^k}}h^v = h^v.
\end{align}
For any $R_1>R_2>0$, by   \eqref{bv}  we have 
\begin{align}\label{bv1}
\int_{\Omega_{R_1R_2}^a}|h|^2 dv &=  \int_{\Omega_{R_1R_2}^a}\sum_{v\in \Lambda^*_a\setminus\{0\}}\langle  db^v,h^v\rangle dv + \int_{\Omega_{R_1R_2}^a}|h^0|^2dv\nonumber\\ \notag
&=  \int_{T_{R_2}^a}\sum_{v\in \Lambda^*_a\setminus\{0\}}\langle  ds\wedge b^v ,h^v\rangle dv-\int_{T_{R_1}^a}\sum_{v\in \Lambda^*_a\setminus\{0\}}\langle   ds\wedge b^v  ,h^v\rangle dv \\ 
& + \int_{\Omega_{R_1R_2}^a}|h^0|^2dv,
\end{align}
where in the second equality we used the fact that $d^{*}h^v=0$ for all $v\in\Lambda^*_a$. Since $h\in L^2$, taking the limit as $R_2\to \infty$ in Equation \eqref{bv1} yields the estimate
\begin{align}\label{libel}
\int_{\Omega_{R_1 \infty}^a}|h-h^0|^2 dv  
&=  -\int_{T_{R_1}^a}\sum_{v\in \Lambda^*_a\setminus\{0\}}\langle   ds\wedge b^v  ,h^v\rangle dv  \nonumber\\
&\leq  \int_{T_{R_1}^a}\sum_{v\in \Lambda^*_a\setminus\{0\}}  \frac{e^{-R_1}}{2\pi |v| } |h^v|^2   dv  .
\end{align}
Setting
\[
\boxed{\delta_{\Lambda_a}:= \inf_{v\in\Lambda^*_a\setminus\{0\}}|v|,}
\]
we have for any $R>0$:
\begin{align}\label{nonzeromode}
e^{R}\int_{\Omega_{R\infty}^a}|h-h^0|^2 dv   &\leq  \frac{1}{2\pi \delta_{\Lambda_a} }\int_{T_{R }^a}   |h -h^0|^2   d\sigma.
\end{align}
Integrating \eqref{nonzeromode} from $R_0$ to $\infty$ yields 
\begin{align}\label{partest}
\int_{\Omega_{R_0 \infty}^a}e^s|h-h^0|^2 dv   &\leq \Big(e^{R_{0}}+ \frac{1}{2\pi \delta_{\Lambda} }\Big)\int_{\Omega_{R_0 \infty}^a}  |h -h^0|^2  d\sigma.
\end{align}
Next, we need to give a bound on the zero mode $h^0$. It has the form 
\[
h^0 = h_{0I}^0(s) dt^I+h_{1J}^0(s) ds\wedge dt^J,
\]
where we use multi-index notation $dt^I = dt^{i_1}\wedge\cdots\wedge dt^{i_{|I|}}.$ Since $dh^0 = 0$,  $h_{0I}^0(s)$ is constant. 
Next, let $*$ be the Hodge operator associated to the hyperbolic metric acting on $k=(n-1)/2$-forms. We can write
\begin{align}\notag
\ast h^0 = \pm  h_{0I}^0  ds\wedge dt^{I^c}\pm e^{-2s}h_{1J}^0(s) dt^{J^c},
\end{align}
where by $I^{c}$ and $J^c$ we indicate the multi-index complement in $\IR^{n-1}$. Since $d^*h^0=\pm (\ast d\ast) h^0 = 0$, we have that $d(\ast h^{0})=0$. This implies $e^{-2s}h^{0}_{1J}(s)$ is constant so that 
\[
h^{0}_{1J}(s)=h^{0}_{1J}(0)e^{-2s}.
\]
Hence for $k=\frac{n-1}{2}$, we have the equality 
\begin{align}\notag
|h^0(s)|^2 = |h_{0I}^0(0)|^2 e^{ \frac{(n-1)}{2}s}+|h_{1J}^0(0)|^2e^{\frac{(n+1)}{2}s},
\end{align}
and we conclude that
\[
\int_{C_{a}}|h^0|^2dv< \infty \quad \Leftrightarrow \quad h^0_{0I}(0)=0, \quad h^{0}_{1J}(0)=0, \quad \Rightarrow \quad h^{0}=0.
\]
  Equation  \eqref{partest} now simplifies to 
\begin{align}\notag
\int_{\Omega_{R_{0}\infty}^a}e^{s}|h|^2dv\leq\Big(e^{R_{0}}+\frac{1}{2\pi\delta_{\Lambda_a}}\Big)\int_{\Omega_{R_{0}\infty}^a}|h|^2dv.
\end{align} 
We summarize this discussion with a lemma.

\begin{lemma}\label{2nd}
Let $C_a$ be a standard real-hyperbolic cusp of dimension $n$, and let $h \in L^2$ be a strongly harmonic form of degree $(n-1)/2$ on it. For any $R_{0}\in[0, \infty)$, we have the integral inequality:
\begin{align}\label{criticalI}
\int_{\Omega_{R_{0}\infty}^a}e^{s-R_0}|h|^2dv\leq\Big(1+\frac{e^{-R_{0}}}{2\pi\delta_{\Lambda_a} }\Big)\int_{\Omega_{R_{0}\infty}^a}|h|^2dv.
\end{align}
\end{lemma}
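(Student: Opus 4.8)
The plan is to read off \eqref{criticalI} from the Fourier analysis carried out above. Decompose $h=h^{0}+(h-h^{0})$ into its zero Fourier mode $h^{0}$ and the sum of the non-zero modes along the torus cross-sections $T_{s}$. The idea is that the non-zero part decays with the extra exponential weight recorded in \eqref{partest}, while the zero mode must vanish identically; granting both, \eqref{criticalI} is exactly \eqref{partest} with $h^{0}=0$, after multiplying through by $e^{-R_{0}}$ to trade the factor $e^{R_{0}}$ on the right for the shift $e^{s-R_{0}}$ inside the integral on the left.

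For the non-zero modes I would use that each $h^{v}$ with $v\neq 0$ has the explicit primitive $db^{v}=h^{v}$ of \eqref{bv}, and that $dh^{v}=0=d^{*}h^{v}$ because $d$ and $d^{*}$ preserve the Fourier mode decomposition (the torus translations are isometries of the cusp) and $h$ is strongly harmonic. Applying Stokes' theorem on $\Omega_{R_{1}R_{2}}^{a}$ and letting $R_{2}\to\infty$ — using $h\in L^{2}$ to discard the boundary term along a sequence of slices $T_{s_{i}}$ on which $\int_{T_{s_{i}}}|h|^{2}\,d\sigma\to 0$ — gives the pointwise-in-$R$ estimate \eqref{nonzeromode}, where the lattice gap $\delta_{\Lambda_{a}}$ enters through $|v|\ge\delta_{\Lambda_{a}}$. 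Integrating \eqref{nonzeromode} in $R$ from $R_{0}$ to $\infty$ then produces \eqref{partest}. This part is routine; the only care needed is justifying the vanishing of the boundary term at infinity and the interchange of the sum over $v$ with the integrations, both of which follow from the $L^{2}$ bound together with Bessel's inequality.

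The substantive step is showing $h^{0}\equiv 0$. Here I would write $h^{0}=h^{0}_{0I}(s)\,dt^{I}+h^{0}_{1J}(s)\,ds\wedge dt^{J}$; closedness $dh^{0}=0$ forces $h^{0}_{0I}$ to be independent of $s$, while co-closedness $d^{*}h^{0}=0$ — equivalently $d(\ast h^{0})=0$ once one computes the Hodge star of the warped cusp metric $ds^{2}+e^{-2s}g_{T^{a}}$ — pins the $s$-dependence of $h^{0}_{1J}$ down to a pure exponential. This is precisely where the critical degree $k=\tfrac{n-1}{2}$ is used: only for this value do the metric weight on $k$-forms and the exponential density of the volume element conspire so that the surviving $dt^{I}$-component yields a non-decaying integrand and the $ds\wedge dt^{J}$-component an exponentially growing one, so that $\int_{C_{a}}|h^{0}|^{2}\,dv<\infty$ forces all the relevant constants to vanish. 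I expect this zero-mode analysis — keeping the Hodge-star computation and the exponential bookkeeping straight so that the resonance at $k=(n-1)/2$ is transparent — to be the main obstacle; the rest is essentially a matter of assembling \eqref{partest}. Once $h^{0}=0$ is established, substituting into \eqref{partest} and dividing by $e^{R_{0}}$ yields \eqref{criticalI}.
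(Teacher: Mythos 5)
Your argument is correct and is precisely the one the paper uses: Fourier-expand $h$ on the cusp, handle the non-zero modes via the explicit primitive $b^v$ and Stokes to get \eqref{nonzeromode} and then \eqref{partest}, show the zero mode $h^0$ must vanish in critical degree by the Hodge-star/$L^2$-growth computation, and assemble. The only caveats are cosmetic: the paper's displayed sign $h^0_{1J}(s)=h^0_{1J}(0)e^{-2s}$ and the exponents $\tfrac{n\pm1}{2}s$ in the expression for $|h^0(s)|^2$ appear to carry typos (they should be $e^{2s}$ and $(n\pm1)s$ respectively), but neither affects the conclusion that $L^2$ forces $h^0\equiv 0$, which you state correctly.
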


\begin{remark}\label{nonstandard}
We remark that Lemma \ref{2nd} holds true for non-standard real-hyperbolic cusps as well. To see this, let $C$ be a real-hyperbolic cusp diffeomorphic to $[0,\infty)\times F$, where $F=\Lambda\backslash\IR^{n-1}$ is an $(n-1)$-dimensional flat manifold associated to a full rank lattice  $\Lambda$ acting on $\IR^{n-1}$. The lattice $\Lambda$ does \emph{not} need to be a lattice of translations in $\IR^{n-1}$, but by Bieberbach's theorem  (\emph{cf}. Chapter 3 in \cite{Wolf}), we can always find a
finite index lattice of translations of minimal index  $\Lambda_a\leq \Lambda$s such that   $[\Lambda: \Lambda_a]\leq C_{n-1}$, where $C_{n-1}$ is a positive constant depending on the dimension only. Consider then the associated Riemannian cover of index $[\Lambda: \Lambda_a]$
\[
p:T^a\times [0, \infty)\longrightarrow F\times [0, \infty),
\]
where $T^a$ is the $(n-1)$-torus associated to the lattice of translations $\Lambda_{a}$. Given an $L^2$ strongly harmonic form $h$ on $C$, $p^{*}h$ is $L^2$ and strongly harmonic on $  [0, \infty)\times T_a$, with the pulled back metric. Apply Lemma \ref{2nd} to $p^{*}h$ and use the mutiplicativity of the integral under pullback to finite Riemannian covers to conclude
\begin{align}\label{Fgeneral}
\int_{\Omega_{R_{0}\infty}}e^{s-R_0}|h|^2dv\leq\Big(1+\frac{e^{-R_{0}}}{2\pi\delta_{\Lambda_a} }\Big)\int_{\Omega_{R_{0}\infty}}|h|^2dv,
\end{align}
where now $\Omega_{R_{0}\infty}=[R_{0}, \infty)\times F$. 
\end{remark}

We now combine the inequality in \eqref{criticalI} and Remark \ref{nonstandard} with the peaking argument presented in Section \ref{previsited} to obtain the following corollary.

\begin{corollary}\label{coroest}
Let	$(\Gamma\backslash\textbf{H}^{n}_{\IR}, g_{\IR})$ be a complete finite volume real-hyperbolic manifold of odd dimension $n=2k+1$ for some integer $k\geq 1$. For $R_{0}\in [0, \infty)$, on any cusp $C_{a}= [0, \infty)\times F$ (not necessarily standard) we have	
\begin{align}\notag
\int_{\Omega_{(R_{0}+1)\infty}^a}&\int_{S(\mathcal{H}^k(\Gamma\backslash\textbf{H}^{n}_{\IR}))} |h|^2d\sigma_hdv  \\ \notag
&\leq\frac{e^{-R_{0}}}{2\pi\delta_{\Lambda_a}}\Big(e-1-\frac{e^{-R_{0}}}{2\pi\delta_{\Lambda_a}}\Big)^{-1} 
\int_{\Omega_{R_0(R_{0} +1)}^a}\int_{S(\mathcal{H}^k(\Gamma\backslash\textbf{H}^{n}_{\IR}))}|h|^2d\sigma_hdv,
\end{align}
where $\Lambda_{a}$ is a lattice of translation of minimal index in $\Lambda$, where $F:=\Lambda\backslash \IR^{n-1}$.
\end{corollary}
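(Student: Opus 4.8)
The plan is to combine the integral inequality of Lemma~\ref{2nd} --- in the general-cusp form \eqref{Fgeneral} supplied by Remark~\ref{nonstandard} --- with an elementary rearrangement over the two pieces $\Omega_{R_0(R_0+1)}^a$ and $\Omega_{(R_0+1)\infty}^a$, and then to integrate the resulting pointwise estimate over the sphere $S(\mathcal{H}^k(\Gamma\backslash\textbf{H}^{n}_{\IR}))$.

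First I would fix $h\in\mathcal{H}^k(\Gamma\backslash\textbf{H}^{n}_{\IR})$ with $n=2k+1$. Its restriction to a cusp $C_a=[0,\infty)\times F$ is an $L^2$ strongly harmonic form of degree $(n-1)/2$, so \eqref{Fgeneral} applies: for the chosen $R_0$,
\[
\int_{\Omega_{R_0\infty}^a}e^{s-R_0}|h|^2\,dv\ \le\ \Big(1+\tfrac{e^{-R_0}}{2\pi\delta_{\Lambda_a}}\Big)\int_{\Omega_{R_0\infty}^a}|h|^2\,dv .
\]
Writing $A:=\int_{\Omega_{R_0(R_0+1)}^a}|h|^2\,dv$, $B:=\int_{\Omega_{(R_0+1)\infty}^a}|h|^2\,dv$ and $c:=1+\tfrac{e^{-R_0}}{2\pi\delta_{\Lambda_a}}$, I split the left-hand integral over $[R_0,R_0+1)$ and $[R_0+1,\infty)$ and use the trivial pointwise bounds $e^{s-R_0}\ge 1$ on the first interval and $e^{s-R_0}\ge e$ on the second. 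The displayed inequality then gives $A+eB\le c(A+B)$, i.e.\ $(e-c)B\le(c-1)A$. Since $c-1=\tfrac{e^{-R_0}}{2\pi\delta_{\Lambda_a}}$ and $e-c=e-1-\tfrac{e^{-R_0}}{2\pi\delta_{\Lambda_a}}$ --- which is strictly positive in the range of interest, and in particular whenever $\delta_{\Lambda_a}$ is large --- dividing yields the pointwise estimate
\[
\int_{\Omega_{(R_0+1)\infty}^a}|h|^2\,dv\ \le\ \frac{e^{-R_0}}{2\pi\delta_{\Lambda_a}}\Big(e-1-\tfrac{e^{-R_0}}{2\pi\delta_{\Lambda_a}}\Big)^{-1}\int_{\Omega_{R_0(R_0+1)}^a}|h|^2\,dv .
\]

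Finally I would restrict to $h\in S(\mathcal{H}^k(\Gamma\backslash\textbf{H}^{n}_{\IR}))$ (the inequality is scale-invariant, so this loses nothing) and integrate both sides against $d\sigma_h$. All integrands are nonnegative, so Tonelli's theorem lets me exchange the $h$-integral with the integral over the cusp region on each side, which produces exactly the claimed inequality. The argument involves no real obstacle: the one point deserving explicit mention is the positivity of the factor $e-1-\tfrac{e^{-R_0}}{2\pi\delta_{\Lambda_a}}$, but this is harmless because the corollary is only invoked in the peaking argument of Section~\ref{previsited} when $V_{min}$ (hence $\delta_{\Lambda_a}$) is large; everything else is the routine bookkeeping of cutting a monotone integral in two and applying the two obvious exponential bounds.
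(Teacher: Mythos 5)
Your argument is exactly the paper's: apply Lemma~\ref{2nd} (in the general-cusp form of Remark~\ref{nonstandard}), split $\Omega^a_{R_0\infty}$ into $\Omega^a_{R_0(R_0+1)}\cup\Omega^a_{(R_0+1)\infty}$, bound the exponential weight below by $1$ and $e$ respectively, rearrange to isolate $\int_{\Omega^a_{(R_0+1)\infty}}|h|^2$, and integrate over $S(\mathcal{H}^k)$. Your explicit remark about the positivity of the factor $e-1-\tfrac{e^{-R_0}}{2\pi\delta_{\Lambda_a}}$ is a sensible addition, and your use of Tonelli is the implicit justification the paper relies on.
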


\begin{proof}
If the cusp has a torus cross section, by Lemma \ref{2nd} we have
\[
\int_{\Omega^a_{(R_0+1)\infty}}e^{s-R_{0}}|h|^2dv\leq \Big(1+\frac{e^{-R_{0}}}{2\pi\delta_{\Lambda_a} }\Big)\int_{\Omega_{R_{0}\infty}^a}|h|^2dv - \int_{\Omega^a_{R_{0}(R_{0}+1)}}e^{s-R_{0}}|h|^2dv,
\]	
so that
\[
e\int_{\Omega^a_{(R_0+1)\infty}}|h|^2dv\leq \Big(1+\frac{e^{-R_{0}}}{2\pi\delta_{\Lambda_a} }\Big)\Big(\int_{\Omega^a_{R_{0} (R_{0}+1)}}|h|^2dv+\int_{\Omega^a_{(R_{0}+1)\infty}}|h|^2dv\Big) - \int_{\Omega^a_{R_{0} (R_{0}+1)}}|h|^2dv,
\]
which implies
\[
\Big(e-1-\frac{e^{-R_{0}}}{2\pi\delta_{\Lambda_a}}\Big)\int_{\Omega^{a}_{(R_{0}+1) \infty}}|h|^2dv\leq \frac{e^{-R_{0}}}{2\pi\delta_{\Lambda_a}}\int_{\Omega^a_{R_{0} (R_{0}+1)}}|h|^2dv.
\]
We then conclude by integrating over $S(\mathcal{H}^k(\Gamma\backslash\textbf{H}^{n}_{\IR}))$. If the cusp is not standard, let $F=\Lambda\backslash\IR^{n-1}$ be the associated flat cross-section, and let $\Lambda_{a}$ be an abelian subgroup of minimal index in $\Lambda$, and  conclude the argument by appealing to Remark \ref{nonstandard}.
\end{proof}

\begin{remark}
We observe that when dealing with lattices that are close to a dilation of $\IZ^{n-1}$ (as one might expect to find in congruence subgroup cases), the preceding estimate can be used to extend our Betti number estimates to the cusped case. In more general lattices, more care is required balancing the size of $\delta_{\Lambda_a}$, volumes, and local injectivity radii. 
\end{remark}

\subsection{Cusps and Lattices}\label{rlattices}

Let $(M^n:=\Gamma\backslash\textbf{H}^{n}_{\IR}, g_{\IR})$ be a complete finite volume real-hyperbolic manifold with the property that all of its cusps are \emph{standard}. Recall that one can always arrange this to be the case by passing to a finite regular cover, see for example \cite{Hummel}. Also, assume the dimension to be $n=2k+1$ for some integer $k\geq 1$. As in Section \ref{previsited}, we let
\[
\pi: S(\Lambda^k TM)\rightarrow M
\]
be the unit sphere bundle, where the total space is equipped with the metric induced from $g_{\IR}$ and the standard metric on the spherical fiber whose volume is normalized to one. We denote by $d\bar{v}$ the associated Riemannian measure on the total space.

Next, decompose $M^n$ as the disjoint union
\[
M= M_0\cup(\cup_aC_a),
\]
where each $C_a$ is a cusp parametrized by $[0,\infty)\times T^a$.  Set 
\[
M_{k} := M_0\cup(\cup_a([0,k)\times T^a)), \quad C_{a,k} := [k,\infty)\times T^a.
\]
Thus, we have 
\begin{align}\notag
Vol(C_{a,k}) = e^{-k(n-1)}Vol(C_{a,0}).
\end{align}
Let $\nu(M)$ be the smallest integer such that 
\begin{align}\label{lowdef}
\binom{n}{k}\int_{\pi^{-1}(M_{\nu(M)})}\int_{S(\mathcal{H}^k(M))}|h(\bar{p})|^2d\sigma_hd\bar{v}_{\bar{p}} \geq \frac{\sum_{j=1}^{\nu(M)+1}j^{-2}}{2\zeta(2)}.
\end{align}  Here $\zeta(2)$ is the Riemann zeta function evaluated at $2$, introduced to normalize the right hand side of \eqref{lowdef} to be smaller than $\frac{1}{2}$.  Equation \eqref{one} ensures that  $\nu(M)$ is
well-defined. If $\nu(M)>0$, we have
\begin{align}\label{lesshalf}
\binom{n}{k}\int_{\pi^{-1}(M_{\nu(M)-1})}\int_{S(\mathcal{H}^k(M))}|h(\bar{p})|^2d\sigma_hd\bar{v}_{\bar{p}} < \frac{\sum_{j=1}^{\nu(M)}j^{-2}}{2\zeta(2)},
\end{align}
which then implies
\begin{align}\notag
\binom{n}{k}\int_{\pi^{-1}(M_{\nu(M)}\setminus M_{\nu(M)-1})}\int_{S(\mathcal{H}^k(M))}|h(\bar{p})|^2d\sigma_h d\bar{v}_{\bar{p}} \geq \frac{(\nu(M)+1)^{-2}}{2\zeta(2)}.
\end{align}
Hence there exists $\bar{p}\in \pi^{-1}(M_{\nu(M)}\setminus M_{\nu(M)-1})$ such that
\begin{align}\label{lowb}
\binom{n}{k}\int_{S(\mathcal{H}^k(M))}|h(\bar{p})|^2d\sigma_h \geq \frac{(\nu(M)+1)^{-2}}{2\zeta(2)Vol(M_{\nu(M)}\setminus M_{\nu(M)-1})}.
\end{align}
At the cost of decreasing constants on the right hand side of \eqref{lowb}, it is convenient to further restrict the location of $\bar{p}$ satisfying (a modified) \eqref{lowb}. To effect this, define the index set  $I(\nu(M))$  so that 
\[
\boxed{a\in I(\nu(M)) \quad \Longleftrightarrow \quad \frac{e^{2-\nu(M)}}{2\pi \delta_{\lambda_a}}< \frac{1}{4(\nu(M)+1)^2}.}
\]
Next, we claim that
\begin{align}\label{nu}
\binom{n}{k}\int_{\pi^{-1}((M_{\nu(M)}\setminus M_{ \nu(M)-1})\cap (\cup_{a\in I(\nu(M))}C_a))}\int_{S(\mathcal{H}^k(M))}|h(\bar{p})|^2d\sigma_h d\bar{v}_{\bar{p}}< \frac{1}{8(\nu(M)+1)^2}.
\end{align}
To verify this, use Corollary \ref{coroest} with  $R_{0}=\nu(M)-2$  to obtain
\begin{align}\label{r0}
\sum_{a\in I(\nu(M))}&\int_{\pi^{-1}(\Omega^{a}_{(\nu(M)-1)\infty})}\int_{S(\mathcal{H}^k(M))}|h(\bar{p})|^2d\sigma_h d\bar{v}_{\bar{p}}< \\ \notag 
&\frac{1}{4(\nu(M)+1)^2}\sum_{a\in I(\nu(M))}\int_{\pi^{-1}(\Omega^{a}_{(\nu(M)-2)(\nu(M)-1)})}\int_{S(\mathcal{H}^k(M))}|h(\bar{p})|^2d\sigma_h d\bar{v}_{\bar{p}}.
\end{align}
Seeking a contradiction, we note that if Equation \eqref{nu} does not hold,  then Equation \eqref{r0} implies
\[
\sum_{a\in I(\nu(M))}\binom{n}{k}\int_{\pi^{-1}(\Omega^{a}_{(\nu(M)-2)(\nu(M)-1)})}\int_{S(\mathcal{H}^k(M))}|h(\bar{p})|^2d\sigma_h d\bar{v}_{\bar{p}}\geq \frac{4(\nu(M)+1)^2}{8(\nu(M)+1)^2}=\frac{1}{2}.
\] 
This contradicts Equation \eqref{lesshalf}, and we have verified that \eqref{nu} holds. Hence, by Equation \eqref{one} we also know that
\begin{align}\notag
\binom{n}{k}\int_{\pi^{-1}((M_{\nu(M)}\setminus M_{ \nu(M)-1})\cap ((\cup_{a\in I(\nu(M))}C_a)^c)}\int_{S(\mathcal{H}^k(M))}|h(\bar{p})|^2d\sigma_h d\bar{v}_{\bar{p}}\geq \frac{1}{8(\nu(M)+1)^2},
\end{align}
and consequently there exists $\bar p\in \pi^{-1}(M_{\nu(M)}\setminus M_{\nu(M)-1})\cap (\cup_{a\in I(\nu(M))}C_a)^c)$ such that
\begin{align}\label{lowb2}
\binom{n}{k}\int_{S(\mathcal{H}^k(M))}|h(\bar{p})|^2d\sigma_h \geq \frac{(\nu(M)+1)^{-2}}{8Vol(M_{\nu(M)}\setminus M_{\nu(M)-1})}.
\end{align}
Applying Corollary \ref{linearh} yields
\begin{align}\label{fund}\notag
b^k_2(M) & = \frac{\max_{h\in S^{b-1}}|h(\bar{p})|^2 }{\int_{S^{b-1}}|h(\bar{p})|^2d\sigma_h}\\ \notag
&\leq  8\binom{n}{k}(\nu(M)+1)^2Vol(M_{ \nu(M)}\setminus M_{\nu(M)-1})\max_{h\in S(\mathcal{H}^k(M))}|h(\bar{p})|^2 \\  
&\leq 8\binom{n}{k}e^{-(n-1)\nu(M)}(\nu(M)+1)^2Vol(M_{1}\setminus M_{0})\max_{h\in S(\mathcal{H}^k(M))}|h(\bar{p})|^2,
\end{align}
where in the last inequality we used the explicit form of the hyperbolic metric on cusps. Now, the injectivity radius at $\pi(\bar{p})$ is bigger than or equal to $e^{-\nu(M)}\text{\inj}_{M_{0}}$, where we set
\[
\boxed{\text{\inj}_{M_{0}}:=\inf_{x\in M_{0}}\inj_{x},}
\]
\emph{cf}. Definition \ref{def1}.
Hence 
if $e^{-\nu(M)} \inj_{M_{0}}\geq 1$, we have 
\[
\max_{h\in S(\mathcal{H}^k(M))}|h(p)|^2 \leq \frac{K_n}{e^{-\nu(M)} \inj_{M_{0}}},
\]
for some constant $K_n$ depending on the dimension $n$ and $g_{\IR}$ only. This follows from Price inequalities for harmonic forms on real-hyperbolic manifolds (see \cite[Corollary 108]{DS17}). Thus, under this assumption on $e^{-\nu(M)}\text{\inj}_{M_{0}}$, we have
\begin{align}\label{henmay}
b_2^k(M)\leq D_n(\nu(M)+1)^2e^{-(n-2)\nu(M)}\frac{ Vol(M_{1}\setminus M_{0})}{\inj_{M_0} }. 
\end{align}
In particular, under this restrictive injectivity assumption, we find that 
\begin{align}
\frac{b_2^{\frac{n-1}{2}}(M)}{Vol(M)}\leq \frac{L(n)}{\inj_{M_0}}
\end{align}
for some positive constant $L(n)$ depending on the dimension only. This is analogous to Equation \eqref{critdeg} in Theorem \ref{real hyperbolic} for compact real-hyperbolic manifolds, where the global injectivity radius of the compact hyperbolic manifold is replaced with the injectivity radius, $\text{\inj}_{M_{0}}$, of the thick part of the manifold with cusps.\\

We now study the remaining case $e^{-\nu(M)} \inj_{M_{0}}< 1$.
Consider the torus $T^a =  \Lambda_a\backslash\IR^{n-1}$. We recall some invariants of lattices of translations acting on Euclidean spaces. For $i\in \{1, ..., n-1\}$, set
\begin{align}\label{defrel}
\lambda_i(\Lambda_a):= \inf\{r:\text{dim span }\Lambda_a\cap\bar B(0,r)\geq i\},
\end{align}
Then $\lambda_1(\Lambda_a)$ is the injectivity radius of $T^a$, and $\delta_{\Lambda_a} = \lambda_1(\Lambda_a^*)$. We recall the lattice relation for a rank $n-1$ lattice:
\begin{align}\label{lattice}
\delta_{\Lambda_a} \lambda_{n-1}(\Lambda_a)\geq 1. 
\end{align}
 To see this, let $v\in \Lambda^*_a$ such that $|v|=\delta_{\Lambda_a}=\lambda_1(\Lambda^*_a)$. Consider any set $w_1, ..., w_{n-1}$ of $n-1$ linearly independent vectors in $(\Lambda^*_a)^*=\Lambda_{a}$. By construction of the dual lattice, there exists $i\in \{1, ..., n-1\}$ such that $\langle w_i, v \rangle\neq 0$, so that 
\[
\langle w_i, v \rangle\in \IZ \quad \Rightarrow \quad |w_{i}|\geq \frac{1}{|v|},
\]
and the proof of \eqref{lattice} is complete.  Next, for $y\in \Omega^a_{(\nu(M)-1)\nu(M)}$ with $a\not\in I(\nu(M))$, we have by definition
\[
\frac{1}{\delta_{\Lambda_a}}\geq \frac{2\pi e^{(\nu(M)-2)}}{4(\nu(M)+1)^2}.
\]
Then by \eqref{lattice}, we have 
\begin{align}\label{lattice2}  
\lambda_{n-1}(\Lambda_a)\geq \frac{e^{\nu(M)}}{5(\nu(M)+1)^2}. 
\end{align}
Let $\tilde C_a$ denote the universal cover of $C_a$, and let $q:\tilde C_a\to C_a$ denote the projection map. Recall that our near term goal is to estimate from above $|h(\bar p)|^2$ for 
\[
\pi(\bar{p})\in C_a\cap (M_{\nu(M)}\setminus M_{\nu(M)-1}), \quad  a\not\in I(\nu(M)),
\]
so that we can exploit \eqref{fund}. We will use elliptic estimates to obtain this pointwise bound from an integral bound for $|h|^2$ in a geodesic ball about $\pi(\bar p)$. Such estimates become worse as the radius of the ball decreases. Hence we will lift $h$ to $\tilde C_a$, where we can take the radius to be large. We next bound the $L^2$-norm of the lift in terms of the $L^2$-norm of $h$. This entire discussion is  nontrivial only when the ambient lattices  are far from square, as the estimate of the number of lattice points in a ball becomes fairly trivial in that case.  In order to estimate these integrals on $\tilde C_a$, we need to bound the number of elements in $q^{-1}(y)\cap B(o,\frac{R}{2}) $, $R\leq 1$ to be determined, for any $o\in \tilde C_a\cap q^{-1}(M_{\nu(M)}\setminus M_{\nu(M)-1})$. Since this intersection is empty unless there exists $y_0\in q^{-1}(y)$ such that $B(o,\frac{R}{2}) \subset B(y_0,R)$, it suffices to estimate  $q^{-1}(y)\cap B(y_0,R)$. Write for $y\in T_s^a$,  $s\in [\nu(M)-1-\frac{R}{2},\nu(M)+\frac{R}{2}]$,
\[
q^{-1}(y)\cap B(y_0,R) = \{y_0 + v: v\in \Lambda_a\cap B(0,e^{s}R)\}.
\]
Here we recall  that $\Lambda_a$ is a lattice in $T_0$, and the metric rescales the lattice by a factor of $e^{-s}$ in $T_s^a$. For any $R$ such that  $e^sR< \lambda_{n-1}(\Lambda_a)$, we have $\{v\in \Lambda_a\cap B(0,e^{s}R)\}$ is a subset of an $(n-2)$-dimensional linear subspace, say $\Sigma_{n-2}^a$. So, we now restrict $R$ to satisfy 
$$e^{\nu(M)+1}R<\lambda_{n-1}(\Lambda_a).$$
Hence we can estimate the cardinality of the set $\{v\in \Lambda_a\cap B(0,e^{s}R)\}$ as follows. Define the larger set $S:= \Lambda_a\cap B(0, \lambda_{n-1}(\Lambda_a))$. Given two distinct vectors  $v$ and $w$  in $S$, we have 
\[
B\Big(v,\frac{\lambda_1(\Lambda_a)}{2}\Big)\cap B\Big(w,\frac{\lambda_1(\Lambda_a)}{2}\Big)=\emptyset
\]
so that
\begin{align}\nonumber
&\sum_{v\in S}vol\big(B\Big(v,\frac{\lambda_1(\Lambda_a)}{2}\Big)\cap \Sigma_{n-2}^a)\leq vol(B\Big(0,e^sR +\frac{\lambda_1(\Lambda_{a})}{2} \Big)\cap \Sigma_{n-2}^a)\nonumber\\
& \Longrightarrow |S| \lambda_1(\Lambda_a)^{n-2}\leq C_n\Big(e^{\nu(M)}R+\frac{\lambda_1(\Lambda_a)}{2}\Big)^{n-2} \nonumber, 
\end{align}
for some positive constant $C_n$. We therefore have the estimate
\begin{align}\label{Nbound}
|q^{-1}(y)\cap B(y_0,R)| \leq C_n\lambda_1(\Lambda_a)^{2-n}\Big(e^{\nu(M)}R+\frac{\lambda_1(\Lambda_a)}{2}\Big)^{n-2}.
\end{align}
Let $h\in \mathcal{H}^{\frac{n-1}{2}}(M)$. Standard elliptic estimates (see for example \cite[Lemma 51]{DS17}) give the following estimate for some $a(n)>0$, for $x\in C_a$, and for $x_0\in q^{-1}(x)$: 
\begin{align}\nonumber
|h|^2(x) = |q^*h|^2(x_0)\leq a(n)\Big(1+\frac{2}{R}\Big)^n\|q^*h\|^2_{L^2\big(B_{\frac{R}{2}}(x_0)\big)}.
\end{align}
By \eqref{Nbound}, for $x\in  C_a \cap (M_{\nu(M)}\setminus M_{\nu(M)-1})$, the map $q: B_{\frac{R}{2}}(x_0)\to C_a$ is at most
\[
\Big[C_n\lambda_1(\Lambda_a)^{2-n}\Big(e^{\nu(M)}R+\frac{\lambda_1(\Lambda_a)}{2}\Big)^{n-2}\Big]\quad \text{to} \quad 1,
\]
where by $[\cdot]$ we denote the integer part. Therefore 
\begin{align}\nonumber
\|q^*h\|^2_{L^2\big(B_{\frac{R}{2}}(x_0)\big)}\leq C_n\lambda_1(\Lambda_a)^{2-n}\Big(e^{\nu(M)}R+\frac{\lambda_1(\Lambda_a)}{2}\Big)^{n-2}\|h\|^2_{L^2(M)},
\end{align}
so that there exists some $D(n)>0$ such that 
\begin{align}\label{hx}
|h(x)|^2  \leq D(n)\Big(1+\frac{2}{R}\Big)^n \Big(e^{\nu(M)}\lambda_1(\Lambda_a)^{-1}R+\frac{1}{2}\Big)^{n-2}\|h\|^2_{L^2(M)}.
\end{align}
Now combining Equations \eqref{fund} and \eqref{hx}, we obtain for some constant $D_2(n)>0$, 
\begin{align}\label{refund}
& b^k_2(M) \\ \nonumber
&\leq D_2(n)e^{-\nu(M)}(\nu(M)+1)^2 Vol(M_{1}\setminus M_{0})\Big(1+\frac{2}{R}\Big)^n\Big(\lambda_1(\Lambda_a)^{-1}R+\frac{e^{-\nu(M)}}{2}\Big)^{n-2} .
\end{align}
We now choose   
$$R:= \frac{1}{20(\nu(M)+1)^2} .$$
By Equation \eqref{lattice2} this $R$ satisfies our earlier constraint: 
$e^{\nu(M)+1}R<\lambda_{n-1}(\Lambda_a).$
Hence Equation \eqref{refund} yields
\begin{align}\label{2xrefund}
& b^k_2(M) \\ \nonumber 
&\leq D_2(n)e^{-\nu(M)}Vol(M_{1}\setminus M_{0})\Big(1+40 (\nu(M)+1)^2\Big)^{n+1}\Big(\frac{\lambda_1(\Lambda_a)^{-1}}{ 20 (\nu(M)+1)^2}+\frac{e^{-\nu(M)}}{2}\Big)^{n-2}.
\end{align}

We summarize this discussion with a theorem.

\begin{theorem}\label{criticalreaL}
Let $ M$ be a  $2k+1$ dimensional, complete finite volume real-hyperbolic manifold. Write $M$ as a disjoint union 
\[
M= M_0\cup(\cup_aC_a),
\]
where each $C_a$ is a cusp.  Then there exists $B(k)>0$ depending only on $k$ so that if  
$\inj_{M_0}\geq 1$ then 
\begin{align}\label{critcuspdeg}
b_2^k(M)\leq B(k) \frac{Vol(M)}{\inj_{M_0}}.
\end{align} 
\end{theorem}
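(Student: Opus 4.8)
The plan is to assemble the two estimates already obtained in the discussion preceding the theorem and to check that, in each of the two cases of the dichotomy used there, the (superficially $\nu(M)$- and lattice-dependent) right-hand side collapses to $B(k)\,Vol(M)/\inj_{M_0}$. Write $n=2k+1$, so $n-2=2k-1\geq 1$. I use freely that $Vol(M_1\setminus M_0)\leq Vol(M)$, that $\inj_{M_0}\geq 1$, and that $t\mapsto (t+1)^{N}e^{-t}$ is bounded on $[0,\infty)$ for every fixed $N$.

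\emph{The case $e^{-\nu(M)}\inj_{M_0}\geq 1$.} Here \eqref{henmay} gives $b_2^k(M)\leq D_n(\nu(M)+1)^2 e^{-(n-2)\nu(M)}\,Vol(M_1\setminus M_0)/\inj_{M_0}$, and since $n-2\geq 1$ the factor $(\nu(M)+1)^2 e^{-(n-2)\nu(M)}$ is at most a constant depending only on $k$; hence $b_2^k(M)\leq c(k)\,Vol(M)/\inj_{M_0}$. (The degenerate value $\nu(M)=0$ also lies in this case and follows directly from Corollary \ref{linearh} applied at a point of $\pi^{-1}(M_0)$ together with the Price and Moser bound of \cite{DS17} at a point of injectivity radius $\geq \inj_{M_0}$.)

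\emph{The case $e^{-\nu(M)}\inj_{M_0}<1$.} The one additional ingredient is a lower bound on $\lambda_1(\Lambda_a)$, where $C_a$ is the cusp containing $\pi(\bar p)$; this is the only place where the hypothesis $\inj_{M_0}\geq 1$ is used in an essential way. Recall $\lambda_1(\Lambda_a)$ is the injectivity radius of the level-$0$ cross section $T^a_0$, and $T^a_0\subset M_0$. Writing the cusp metric $ds^2+e^{-2s}g_{T^a}$ in the variable $t=e^s\geq 1$ as the half-space metric $(dt^2+|dz|^2)/t^2$, the geodesic loop based at a point $x\in T^a_0$ in the homotopy class of a shortest vector $v\in\Lambda_a$ is the hyperbolic arc joining a lift of $x$ at height $t=1$ to its $v$-translate; this arc stays in $\{t\geq 1\}$ and has length $2\,\mathrm{arcsinh}\!\big(\lambda_1(\Lambda_a)/2\big)$. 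Hence $\inj_{M_0}\leq \inj_x\leq \mathrm{arcsinh}(\lambda_1(\Lambda_a)/2)$, i.e. $\lambda_1(\Lambda_a)\geq 2\sinh(\inj_{M_0})\geq 2\inj_{M_0}$. Inserting $\lambda_1(\Lambda_a)^{-1}\leq (2\inj_{M_0})^{-1}$ and the case hypothesis $e^{-\nu(M)}<\inj_{M_0}^{-1}$ into \eqref{2xrefund}, the inner $(n-2)$-th power is at most $\inj_{M_0}^{-(n-2)}\big(\tfrac{1}{40(\nu(M)+1)^2}+\tfrac{1}{2}\big)^{n-2}\leq \inj_{M_0}^{-(n-2)}\leq \inj_{M_0}^{-1}$ (using $n-2\geq 1$ and $\inj_{M_0}\geq 1$), the prefactor $e^{-\nu(M)}\big(1+40(\nu(M)+1)^2\big)^{n+1}$ is bounded by a constant depending only on $k$, and $Vol(M_1\setminus M_0)\leq Vol(M)$. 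Therefore $b_2^k(M)\leq c'(k)\,Vol(M)/\inj_{M_0}$; taking $B(k)=\max\{c(k),c'(k)\}$ completes the proof.

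The step carrying the real content is the systole bound $\lambda_1(\Lambda_a)\geq 2\sinh(\inj_{M_0})$: this is what turns the hypothesis $\inj_{M_0}\geq 1$ into honest power decay rather than mere boundedness, and it is essential that $n-2=2k-1\geq 1$ so that $\inj_{M_0}^{-(n-2)}\leq \inj_{M_0}^{-1}$. The remainder is exponential-beats-polynomial bookkeeping, the only minor care being the verification that the point $\pi(\bar p)$ feeding \eqref{2xrefund} genuinely lies in some cusp $[0,\infty)\times T^a$ --- which it does, since $\pi(\bar p)\in M_{\nu(M)}\setminus M_{\nu(M)-1}$ --- so that the estimate on $\lambda_1(\Lambda_a)$ applies.
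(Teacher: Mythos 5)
Your proof is correct and follows the paper's underlying route (the same dichotomy between $e^{-\nu(M)}\inj_{M_0}\geq 1$, handled by \eqref{henmay} with $\nu(M)=0$ treated directly, and $e^{-\nu(M)}\inj_{M_0}<1$, handled by \eqref{2xrefund}). What distinguishes your write-up is that it supplies a step the paper's own proof leaves unjustified. The paper merely asserts that ``Equation \eqref{2xrefund} bounds from above $b_2^k$ with a constant multiple of $Vol(M_0)/\inj_{M_0}$,'' but \eqref{2xrefund} contains the factor $\lambda_1(\Lambda_a)^{-1}$, and nothing in the surrounding text controls $\lambda_1(\Lambda_a)$ from below: the constraint $a\notin I(\nu(M))$ and \eqref{lattice2} bound $\lambda_{n-1}(\Lambda_a)$, not $\lambda_1(\Lambda_a)$, and the only remaining source of $1/\inj_{M_0}$ decay, namely $e^{-\nu(M)}<\inj_{M_0}^{-1}$, is needed to tame the polynomial $(\nu(M)+1)$ prefactors and cannot simultaneously neutralize $\lambda_1(\Lambda_a)^{-1}$. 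Your systole bound $\lambda_1(\Lambda_a)\geq 2\sinh(\inj_{M_0})$, obtained from the geodesic arc between horospherical translates at the base of the cusp in the upper half-space model, is exactly the missing input; it is correct, and feeding it into \eqref{2xrefund} together with $e^{-\nu(M)}<\inj_{M_0}^{-1}$ and $n-2=2k-1\geq 1$ closes the argument as you describe. The only imprecision is in your parenthetical treatment of $\nu(M)=0$: the paper's own handling of this case is a bit more delicate than ``Price and Moser,'' since in the middle degree the Price inequality gives only $1/r$ rather than exponential decay, and the paper actually re-derives the needed bound using the monotonicity of $\int_{S_r}(\tfrac12-\mu_h)|h|^2\,d\sigma$ and the lower bound $\mu_h(r)\geq\beta(n)$ from \cite[Corollary 108]{DS17}; your description is a fair shorthand for that argument but worth making explicit.
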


\begin{proof}
If $\nu(M) = 0$, we have  
\[
\binom{n}{k}\int_{\pi^{-1}(M_{0})}\int_{S(\mathcal{H}^k(M))}|h(\bar{p})|^2d\sigma_hd\bar{v}_{\bar{p}}\geq\frac{1}{2\zeta(2)}>\frac{1}{4},
\]
so that there exists $\bar{p}\in\pi^{-1}(M_0)$ such that $\binom{n}{k}\int_{S(\mathcal{H}^k(M))}|h(p)|^2d\sigma_h\geq\frac{1}{4Vol(M_0)}$. We now apply Lemma \ref{K-first} to obtain for $h\in S(\mathcal{H}^k(M))$: 
\begin{align}\label{p1}
\int_{S_r(\pi(\bar{p}))}\Big(\frac{1}{2}-\mu_h\Big)|h|^2d\sigma = \int_{B_r(\pi(\bar{p}))}\mu_h|h|^2dv,
\end{align}
for any $r\leq inj_{\pi(\bar{p})}$. Since $\mu_h(0)<\frac{1}{2}$ by \cite[Lemma 18]{DS17},  Equation \eqref{p1} implies $\mu_h(r)\in (0,\frac{1}{2})$ for all $r$. Consequently, $\int_{S_r(\pi(\bar{p}))}\Big(\frac{1}{2}-\mu_h\Big)|h|^2d\sigma$ is monotonically increasing. Then we have for $\frac{1}{2}\leq \tau<\inj_{M_0}$  
\begin{align}\label{ok}
&\int_{B_\tau(\pi(\bar{p}))}|h|^2dv\geq  \int_{\frac{1}{2}}^\tau\int_{S_r(\pi(\bar{p}))}|h|^2d\sigma dr>2 \int_{\frac{1}{2}}^\tau\int_{S_r(\pi(\bar{p}))}\Big(\frac{1}{2}-\mu_h\Big)|h|^2d\sigma dr\nonumber\\
&>2 \int_{\frac{1}{2}}^\tau\int_{S_{\frac{1}{2}}(\pi(\bar{p}))}\Big(\frac{1}{2}-\mu_h\Big)|h|^2d\sigma dr = 2\Big(\tau-\frac{1}{2}\Big)\int_{B_{\frac{1}{2}}(\pi(\bar{p}))}\mu_h|h|^2dv.
\end{align}
In the proof of \cite[Corollary 108]{DS17}, we show that for $r\in [0,1]$, $\mu_h(r)\geq \beta(n)$, for some $\beta(n)>0$ independent of $h$ and $M$. Hence Equation \eqref{ok} coupled to the elliptic estimate given in \cite[Lemma 51]{DS17} yield for $\nu(M)=0$, and for some positive constants $c(n)$ and $d(n)$,
\begin{align}\nonumber
&b_2^k(M)\leq c(n)Vol(M_0)|h(\pi(\bar{p}))|^2\leq d(n)Vol(M_0)\int_{B_{\frac{1}{2}}(\pi(\bar{p}))} |h|^2dv \nonumber\\
&\leq \frac{2 d(n)\beta(n)^{-1}}{\big(\inj_{M_0}-\frac{1}{2}\big)}\int_{B_{\inj_{M_0}}(\pi(\bar{p}))}|h|^2dv \cdot Vol(M_0)\leq \frac{2 d(n)\beta(n)^{-1}}{\big(\inj_{M_0}-\frac{1}{2}\big)}\cdot Vol(M_0)\nonumber .
\end{align}
Finally if $\nu(M)>0$, then Equation \eqref{2xrefund} bounds from above $b_2^k$ with a constant multiple of $\frac{Vol(M_0)}{\inj_{M_0}}$,
and the result follows. 
\end{proof}

\section{$L^2$-Cohomology on a Tower of Coverings}\label{towers} 
 
We now study the growth of $L^2$-cohomology on towers of real- and complex-hyperbolic manifolds with cusps. 
 
\subsection{Fattening the Thick Part}\label{Fat}

We show that, up to finite cover, we can fatten the injectivity radius of the thick part of the quotient of a Hadamard manifold with residually finite fundamental group. This is the analog for a manifold with cusps of Theorem 2.1 in \cite{DW78}. 

Let $(M, g)$ be a  complete finite volume Riemannian manifold such that $-b^{2}\leq\sec_{g}\leq-a^2$, with $a, b\neq 0$.  Denote by $(\widetilde{M}, \tilde{g})$ the Riemannian universal cover of $(M, g)$.  Let $\Gamma=\pi_{1}(M)$ be the associated lattice in $\text{Iso}(\widetilde{M})$, so that  $M=\Gamma\backslash\widetilde{M}$. Assume that $\Gamma$ is residually finite. Let $\{\Gamma_k\}_{k\in\IN}$ be a cofinal filtration  of $\Gamma=:\Gamma_0$ by finite index normal subgroups. We define the continuous $\Gamma$-invariant function
$
d_{\Gamma}: \widetilde{M}\rightarrow [0, \infty)$ by
\[
d_{\Gamma}(p)=\min\{d_{\tilde{g}}(p, \gamma p):  \gamma\in\Gamma, \gamma\neq 1\}.
\]
Thus $d_{\Gamma}(p)$ is twice the injectivity radius of $\omega(p)$ in $M$, where $\omega:\widetilde{M}\rightarrow M$ is the universal covering map. Given $\delta>0$, define the closed subset $\widetilde{M}_{\delta}\subset \widetilde{M}$ by
\[
\widetilde{M}_{\delta}:=\{q\in\widetilde{M}\quad |\quad d_{\Gamma}(q)\geq \delta\}.
\]
Also, define the associated compact subset of $M$: 
\[
M_{\delta}:=\{p\in M\quad | \quad 2\cdot \inj_p\geq\delta\}.
\] 
Then $\omega(\widetilde{M}_{\delta})= M_{\delta}$. Let $\{q_k:M_{k}\to M\}_{k\in\IN}$ be the sequence of regular Riemannian coverings of $M$ associated to the cofinal filtration $\{\Gamma_{k}\}_{k\in\IN}$.   For each $k$, define the numerical invariant
\begin{align}\notag
r_{k, \delta}:=\min\{ d(z, \gamma_{k}z)\quad | \quad z\in \widetilde{M}_{\delta},\quad\gamma_{k}\in \Gamma_{k},\quad\gamma_{k}\neq 1 \}.
\end{align}
Notice that by definition we have 
\[
r_{0, \delta}=\delta, \text{ and } \quad r_{k, \delta^{\prime}}\leq r_{k, \delta}
\]
for any $\delta^\prime\leq\delta$ and for any $k$.

\begin{lemma}\label{GettingFat}
For any $k\geq 0$, denote by $\omega_k : (\widetilde{M}, \tilde{g}) \rightarrow (M_k, q^{*}_{k}(g))$ the Riemannian universal  covering map. For every $\delta>0$ and for any $z\in\widetilde{M}_{\delta}$, we have 
\begin{align}\label{iso}
\omega_{k} : B\Big(z; \frac{r_{k, \delta}}{2}\Big)\cap \widetilde{M}_{\delta}\rightarrow \omega_{k}\Big(B\Big(z; \frac{r_{k, \delta}}{2}\Big)\cap\widetilde{M}_{\delta}\Big)
\end{align}
is an isometry. Finally, we have  
\begin{align}\label{infinite}
\lim_{k \to \infty}r_{k, \delta}=\infty.
\end{align}
\end{lemma}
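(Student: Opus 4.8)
The plan is to treat the two assertions in order. For the isometry statement \eqref{iso}, the key observation is that $\omega_k$ is a covering map, so it fails to be injective on a set $S\subset\widetilde M$ precisely when there are distinct points $z_1,z_2\in S$ with $z_2=\gamma_k z_1$ for some nontrivial $\gamma_k\in\Gamma_k$. So I would argue: suppose $z_1,z_2\in B(z;\tfrac{r_{k,\delta}}{2})\cap\widetilde M_\delta$ with $\omega_k(z_1)=\omega_k(z_2)$; then $z_2=\gamma_k z_1$ with $\gamma_k\in\Gamma_k$, and if $\gamma_k\neq 1$ we get $d(z_1,\gamma_k z_1)\le d(z_1,z)+d(z,z_2)<r_{k,\delta}$, contradicting the definition of $r_{k,\delta}$ since $z_1\in\widetilde M_\delta$. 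Hence $\omega_k$ restricted to $B(z;\tfrac{r_{k,\delta}}{2})\cap\widetilde M_\delta$ is injective; since a Riemannian covering map is always a local isometry, an injective local isometry onto its image is an isometry, which gives \eqref{iso}. (One should note $\widetilde M_\delta$ is $\Gamma$-invariant, so the relation $z_2=\gamma_k z_1$ keeps $z_1,z_2$ in $\widetilde M_\delta$ consistently; this is automatic.)

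For \eqref{infinite}, the point is that the $\Gamma_k$ form a cofinal filtration, i.e. $\bigcap_k\Gamma_k=\{1\}$, and one must convert this algebraic statement into the geometric conclusion that the "$\delta$-restricted translation length" blows up. I would argue by contradiction: if $r_{k,\delta}\not\to\infty$, then there is a constant $C$ and a subsequence (still indexed by $k$) with $r_{k,\delta}\le C$, so for each such $k$ there are $z_k\in\widetilde M_\delta$ and $\gamma_k\in\Gamma_k\setminus\{1\}$ with $d(z_k,\gamma_k z_k)\le C$. Now I would use the cocompactness of $\Gamma$ acting on $\widetilde M_\delta$ relative to $M_\delta$: since $M_\delta$ is the $\delta/2$-thick part of a finite volume manifold with pinched negative curvature, $M_\delta$ is compact, so there is a compact fundamental domain-type set $F\subset\widetilde M_\delta$ with $\Gamma\cdot F\supseteq\widetilde M_\delta$. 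Replacing $z_k$ by $\beta_k z_k\in F$ for suitable $\beta_k\in\Gamma$ and $\gamma_k$ by $\beta_k\gamma_k\beta_k^{-1}$ (which still lies in $\Gamma_k$, as $\Gamma_k$ is normal), we may assume all $z_k\in F$. Then $\gamma_k$ moves a point of the compact set $F$ a bounded distance $C$, so $\gamma_k$ lies in the set $\{\gamma\in\Gamma: d(x,\gamma x)\le C \text{ for some } x\in F\}$, which is finite by proper discontinuity of the $\Gamma$-action. Thus the $\gamma_k$ range over a finite set of nontrivial elements, so some fixed $\gamma\neq 1$ occurs for infinitely many $k$; but then $\gamma\in\bigcap_k\Gamma_k=\{1\}$, a contradiction. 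Hence $r_{k,\delta}\to\infty$.

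The main obstacle I anticipate is the compactness/cocompactness input used in the second part: one needs that $M_\delta$ (equivalently the $\Gamma$-action on $\widetilde M_\delta$) is cocompact so that the argument can be localized to a compact set where proper discontinuity forces finiteness. For finite volume pinched negatively curved manifolds this is the standard thick-thin decomposition (the thick part is compact), and it is exactly the content needed to mimic Theorem 2.1 in \cite{DW78}; I would cite that thick-part compactness rather than reprove it. A minor additional point to be careful about is the conjugation step: one must check $\beta_k\gamma_k\beta_k^{-1}\in\Gamma_k$, which holds because each $\Gamma_k$ is normal in $\Gamma=\Gamma_0$ (stated in the setup), and also that $\beta_k\gamma_k\beta_k^{-1}\neq 1$, which is clear since conjugation is injective. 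Everything else is routine metric-space bookkeeping.
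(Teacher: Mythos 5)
Your proof of \eqref{iso} is exactly the ``follows from the definition'' argument the paper has in mind, and it is correct. For \eqref{infinite}, your high-level strategy matches the paper's: argue by contradiction, use normality of $\Gamma_k$ to conjugate the offending $(z_k,\gamma_k)$ so that the base point lands in a fixed compact set (the compactness of the $\delta$-thick part in finite volume pinched negative curvature), and then invoke proper discontinuity. Where you diverge is the endgame. The paper extracts convergent subsequences $z'_{k_j}\to\bar z$ and $\gamma'_{k_j}\bar z\to w$, passes to the quotient to find $\gamma\in\Gamma$ with $\gamma w=\bar z$, and then uses proper discontinuity to conclude $\gamma'_{k_j}\gamma=1$ eventually; cofinality is used somewhat implicitly at the very last step to deduce $\gamma=1$. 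You instead observe directly that the conjugated $\gamma_k$'s all lie in the finite set $\{\gamma\in\Gamma : d(x,\gamma x)\leq C \text{ for some }x\in F\}$, apply the pigeonhole principle to get a fixed nontrivial $\gamma$ recurring along the tower, and then invoke $\bigcap_k\Gamma_k=\{1\}$ explicitly. Your version is tighter: it avoids extracting two convergent subsequences and passing through the quotient map, and it makes the role of cofinality transparent, whereas the paper's closing sentence (``Thus, we must have $\gamma=\{1\}$'') leaves that step to the reader. Both proofs rely on the same two inputs --- compactness of $M_\delta$ and normality of the $\Gamma_k$ --- so the difference is in presentation rather than substance, but yours is the cleaner write-up of the final contradiction.
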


\begin{proof}
If Equation \eqref{infinite} does not hold, there exist infinite sequences $z_{k}\in \widetilde{M}_{\delta}$ and $\gamma_{k}\in \Gamma_{k}\setminus\{1\}$ such that $d(z_{k}, \gamma_{k} z_{k})\leq 2N$ for some positive constant $N$. Let $D\subset \widetilde{M}_{\delta}$ be a connected open set such that $\omega: D\rightarrow M_{\delta}$ is injective and $\omega: \bar{D}\rightarrow M_{\delta}$ is surjective, where $\bar{D}$ is the closure of $D$ in $\widetilde{M}_{\delta}$. Thus for all $k$, there exists $g_{k}\in \Gamma$ such that $g_{k} z_{k}\in\bar{D}$. Define $z^\prime_{k}=g_{k} z_{k}$ and $\gamma^\prime_{k}=g_{k}\gamma_{k}g^{-1}_{k}$. Since $\Gamma_{k}$ is a normal subgroup of $\Gamma$, we have that $\gamma^\prime_k\in\Gamma_k$. By compactness of $\bar{D}$, there exists a subsequence $\{z'_{k_{j}}\}$ converging to a point $\bar{z}\in\bar{D}$. Now since 
\[
d(z^\prime_{k}, \gamma^\prime_{k}z^\prime_{k})=d(g_{k}z_{k}, g_{k}\gamma_{k}z_{k})=d(z_{k}, \gamma_{k}z_{k}),
\]
we have that 
\begin{align}\notag
d(\bar{z}, \gamma^\prime_{k_{j}}\bar{z})\leq 2d(\bar{z}, z^\prime_{k_{j}})+2N. 
\end{align}
Since $d(\bar{z}, z^\prime_{k_{j}})\rightarrow 0$, we conclude that, up to a subsequence, 
$\gamma^\prime_{k_{j}}\bar{z}$ converges to a point $w\in \overline{B(\bar{z}; 2N)}\cap\widetilde{M}_{\delta}$ for some $\epsilon>0$. This implies that 
\begin{align}\notag
\omega(\bar{z})=\omega(\gamma^\prime_{k_{j}}\bar{z})\longrightarrow \omega(w).
\end{align}
Thus, there exists $\gamma\in\Gamma$ such that $\gamma w=\bar{z}$. We therefore conclude 
\begin{align}\notag
(\gamma^\prime_{k_{j}}\cdot \gamma) w=\gamma^\prime_{k_{j}}\bar{z}\longrightarrow w.
\end{align}
Now the action of $\Gamma$ on $\widetilde{M}$ is properly discontinuous, so that $\gamma^\prime_{k_{j}}\cdot \gamma=\{1\}$ for all $j$ sufficiently large. Thus, we must have $\gamma=\{1\}$ which then implies the contradiction $\gamma^\prime_{k_{j}}=\{1\}$. The proof of \eqref{infinite} is then complete. Equation \eqref{iso} simply follows from the definition of $r_{k, \delta}$.
\end{proof}

We can now present the analog for manifold with cusps of Theorem 2.1 in \cite{DW78}.

\begin{theorem}\label{fat}
	Let $(M:=\Gamma\backslash\widetilde{M}, g)$ be a complete finite volume  quotient of a Hadamard manifold $(\widetilde{M}, \tilde{g})$. Assume $\Gamma=\pi_1(M)$ is residually finite. Given a cofinal filtration $\{\Gamma_{i}\}$ of $\Gamma$,  denote by $\omega_{i}: M_{i}\rightarrow M$ the regular Riemannian cover of $M$ associated to $\Gamma_{i}$ equipped with the pull-back metric, say $g_{i}$. Decompose $M$ as a disjoint union
	\[
	M=M_{0}\cup(\cup_j C_j),
	\]
	where $M_0$ is compact manifold with boundary and each $C_j$ is a cusp. Define $M^i_{0}:=\omega^{-1}_{i}(M_{0})$, and let $\inj_{M^i_{0}}:=\inf_{x\in M^{i}_{0}}\inj_{g_{i}}(x)$. We then have
	\begin{align}\label{goinginfinity}
	\lim_{i\to\infty}\inj_{M^i_{0}}=\infty.
	\end{align}
\end{theorem}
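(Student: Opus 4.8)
The plan is to derive Theorem~\ref{fat} from Lemma~\ref{GettingFat}, the point being that the thick part $M_0$ is contained in $M_\delta$ for some $\delta>0$ depending only on $M_0$. First I would choose such a $\delta$: since $M_0$ is compact and the injectivity radius function $x\mapsto\inj_x$ is continuous and positive on the complete manifold $M$ (it is finite and continuous everywhere, and can only tend to $0$ in the cusps), the number $\delta_0:=\inf_{x\in M_0}2\inj_x$ is strictly positive, and by the very definition of $M_\delta$ this gives $M_0\subset M_{\delta_0}$. Pulling back by the universal covering map $\omega:\widetilde{M}\to M$ then yields $\widetilde{M}_0:=\omega^{-1}(M_0)\subset\widetilde{M}_{\delta_0}$. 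I would also record the elementary identity $\omega_i^{-1}(M^i_0)=\widetilde{M}_0$, which follows from $\omega=q_i\circ\omega_i$ together with $M^i_0=q_i^{-1}(M_0)$.

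Next I would bound the injectivity radius of $(M_i,g_i)$ on $M^i_0$ from below by $\tfrac12 r_{i,\delta_0}$. Fix $y\in M^i_0$ and choose a lift $z\in\widetilde{M}_0$; then $z\in\widetilde{M}_{\delta_0}$ by the previous step. Because the curvature pinching $-b^2\le\sec_g\le-a^2$ makes $(\widetilde{M},\tilde g)$ a Hadamard manifold, it has no conjugate points, and consequently the injectivity radius of $(M_i,g_i)$ at $y$ is exactly $\tfrac12\min\{d_{\tilde g}(z,\gamma z):\gamma\in\Gamma_i\setminus\{1\}\}$. Since $z\in\widetilde{M}_{\delta_0}$, the definition of $r_{i,\delta_0}$ forces this minimum to be at least $r_{i,\delta_0}$. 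Taking the infimum over $y\in M^i_0$ then gives ${\inj_{M^i_0}}\ge\tfrac12 r_{i,\delta_0}$.

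Finally, the assertion \eqref{goinginfinity} is immediate from Equation~\eqref{infinite} of Lemma~\ref{GettingFat} applied with $\delta=\delta_0$, which states that $r_{i,\delta_0}\to\infty$ as $i\to\infty$.

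As for where the real work lies: there is no substantial obstacle remaining at this stage, since the essential compactness-plus-proper-discontinuity argument has already been carried out in Lemma~\ref{GettingFat}. The two points that require a little care are (i) verifying that the compact thick part genuinely sits inside some $M_{\delta_0}$ with $\delta_0>0$ — this is exactly where compactness of $M_0$, as opposed to the cusps where the injectivity radius tends to $0$, is used — and (ii) noting that on every level of the tower the relevant injectivity radii are governed by the pure minimal-displacement quantity $r_{i,\delta_0}$ rather than by any conjugate-point phenomena, which is precisely what the Hadamard hypothesis guarantees.
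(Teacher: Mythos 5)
Your proof is correct, and it takes a somewhat cleaner route than the paper's, although both rest on the same engine, namely Lemma~\ref{GettingFat}. The paper proceeds via a sequence $\delta_n\to 0$, defines the auxiliary thick parts $M_n:=\{p\in M : 2\inj_p\geq\delta_n\}$, and tracks $d_n:=\mathrm{dist}(\partial M_0,\partial M_n)\to\infty$; for each $n$ it then invokes Lemma~\ref{GettingFat} to find $k_n$ with $r_{k,\delta_n}\geq d_n$ for all $k\geq k_n$ and concludes. The reason the paper introduces $d_n$ is essentially that it works with the \emph{isometry} conclusion of Lemma~\ref{GettingFat}, which only describes $\omega_k$ on $B(z;r_{k,\delta}/2)\cap\widetilde{M}_\delta$; to turn that into an injectivity radius bound, one wants the relevant ball to stay inside the thick set, which is what $d_n$ controls. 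You bypass this entirely by fixing a single $\delta_0=\inf_{x\in M_0}2\inj_x>0$ (positive by compactness and continuity of the injectivity radius), observing $M_0\subset M_{\delta_0}$, and then using the clean fact that in a quotient of a Hadamard manifold the injectivity radius at $\omega_i(z)$ equals $\tfrac12\min\{d(z,\gamma z):\gamma\in\Gamma_i\setminus\{1\}\}$, which is $\geq\tfrac12 r_{i,\delta_0}$ for $z\in\widetilde{M}_{\delta_0}$. A single application of Equation~\eqref{infinite} from Lemma~\ref{GettingFat} then finishes the argument, avoiding both the auxiliary sequence $\delta_n$ and the distance function $d_n$. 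The trade-off is that you lean explicitly on the absence of conjugate points (Cartan--Hadamard), which is indeed available under the stated hypotheses; the paper's route is a bit more roundabout but stays closer to the literal statement of Lemma~\ref{GettingFat}. Either way, the substance is the same and your argument is sound.
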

\begin{proof}
Let $\{\delta_{n}\}$ be a sequence of real numbers converging to zero, and let $\{M_{n}\}$ be a sequence of closed subset in $M$ defined by
\[
M_{n}:=\{p\in M\quad | \quad 2\cdot \inj_p\geq\delta_n\}.
\] 
We then have that
\[
\lim_{n\to\infty}d_n=\infty,
\]
where $d_{n}:=dist(\partial M_{0}, \partial M_{n})$. Given an integer $n\geq 1$, by Lemma \ref{GettingFat} there exists an integer $k_n$ such that
\[
r_{k, \delta_n}\geq d_{n},
\]
for any $k\geq k_{n}$. We therefore have that
\[
\inj_{M^k_{0}}\geq d_{n},
\]
for any $k\geq k_n$. By letting $n\to\infty$ and recalling that by construction $d_n\to\infty$, we conclude that Equation \eqref{goinginfinity} is satisfied.
\end{proof}

\subsection{Asymptotic Behavior of $L^2$-Cohomology}

In this section, we study the $L^2$-cohomology of complete finite volume hyperbolic manifolds on towers of coverings. We first  extend Theorem \ref{real hyperbolic} to real-hyperbolic manifolds with cusps. 
 
\begin{theorem}\label{realhypcusps}
	Let $(M^{n}:=\Gamma\backslash\textbf{H}^{n}_{\IR}, g_{\IR})$, with $\Gamma$ co-finite volume. Given a cofinal filtration $\{\Gamma_{i}\}$ of $\Gamma$,  denote by $\pi_{i}: M_{i}\rightarrow M$ the regular Riemannian cover of $M$ associated to $\Gamma_{i}$. Decompose $M^{n}$ as a disjoint union
	\[
	M=M_{0}\cup(\cup_j C_j)
	\]
	where $M_0$ is a compact manifold with boundary and each $C_j$ is a cusp. Define $M^i_{0}:=\pi^{-1}_{i}(M_{0})$, and assume $\inj_{M^i_{0}}:=\inf_{x\in M^i_{0}} inj_{g_{i}}(x)\geq 1$. For any integer $1\leq k<\frac{n-1}{2}$,  there exists a positive constant $c_{1}(n, k)$ such that
	\begin{align}
	\frac{b^{k}_2(M_{i})}{Vol(M_{i})}\leq c_{1}(n, k)V_{min}(M^i_{0})^{-\frac{(n-1-2k)}{n-1}}.
	\end{align}
In particular, the sub volume growth of the Betti numbers along the tower of coverings is exponential in $\inj_{M^i_{0}}$.
For $n=2k+1$ there is a positive constant $c_{2}(n, k)$ such that
	\begin{align}\label{critdeg}
	\frac{b^{k}_2(M_{i})}{Vol(M_{i})}\leq \frac{c_{2}(n)}{\inj_{M^i_{0}}}.
	\end{align}
\end{theorem}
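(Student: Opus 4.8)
The plan is to derive both estimates by applying, to each cover $M_i$, the fixed-manifold bounds already in hand---Corollary \ref{rhypcusps} when $k<\frac{n-1}{2}$ and Theorem \ref{criticalreaL} when $n=2k+1$---and then to control the cuspidal terms uniformly along the tower, using Theorem \ref{fat} (which gives $\inj_{M_0^i}\to\infty$) together with the fact that the injectivity radius is $1$-Lipschitz on a manifold without conjugate points. Since a complete finite volume real-hyperbolic manifold has a finite cover all of whose cusps are standard, a cofinal filtration of $\Gamma$ restricts to a cofinal filtration of any finite-index subgroup (so $\Gamma_i$ lies inside the chosen subgroup once $i$ is large), and $b^k_2/Vol$ is a cover-invariant, I will assume throughout that every cusp of $M$, hence of every $M_i$, is standard; thus $M_i=M_0^i\cup(\cup_a C^i_a)$ with $M_0^i=\pi_i^{-1}(M_0)$ a finite cover of the compact core and $C^i_a=[0,\infty)\times T^a_i$ a standard cusp.

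\emph{Critical degree.} For $n=2k+1$ nothing new is needed: $M_i$ is again a $(2k+1)$-dimensional complete finite volume real-hyperbolic manifold with this decomposition, and the standing hypothesis $\inj_{M_0^i}\ge1$ is precisely the hypothesis of Theorem \ref{criticalreaL}, which gives $b^k_2(M_i)\le B(k)\,Vol(M_i)/\inj_{M_0^i}$, i.e.\ \eqref{critdeg} with $c_2(n,k):=B(k)$. Combined with $\inj_{M_0^i}\to\infty$ from Theorem \ref{fat}, this shows the normalized critical Betti numbers tend to $0$.

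\emph{Subcritical degree.} Fix $k<\frac{n-1}{2}$ and set $\beta:=\frac{2k+1-n}{n-1}<0$. Corollary \ref{rhypcusps} applied to $M_i$ gives
\[
b^k_2(M_i)\ \le\ c_{n,k}\Big[\,Vol(M_i)\,V_{min}(M_0^i)^{\beta}\ +\ Vol\big(\cup_a C^i_a\big)\,V_{min}\big(\cup_a\Omega^a_{0R_{n,k}}\big)^{\beta}\,\Big],
\]
with the height $R_{n,k}$ of Proposition \ref{aveR} depending only on $n,k$. After dividing by $Vol(M_i)$ the first term already has the asserted form; since $Vol(\cup_a C^i_a)\le Vol(M_i)$ and $\beta<0$, absorbing the second term amounts to the purely geometric comparison
\[
V_{min}\big(\cup_a\Omega^a_{0R_{n,k}}\big)\ \ge\ c(n,k)\,V_{min}(M_0^i)
\]
for some $c(n,k)>0$. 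I would prove this by bounding $\inj_p$ from below for every $p$ in a collar $\Omega^a_{0R_{n,k}}$: on one hand $p$ lies at distance $\le R_{n,k}$ from the slice $\{0\}\times T^a_i\subset\partial M_0^i\subset M_0^i$, so $1$-Lipschitz continuity of $\inj$ gives $\inj_p\ge\inj_{M_0^i}-R_{n,k}$; on the other hand, because $C^i_a$ is embedded, any deck transformation that is not a cross-section translation displaces a lift of $p$ by at least twice its depth in the cusp horoball, while the cross-section translations are controlled by the shortest lattice vector, which satisfies $\lambda_1(\Lambda^a_i)\ge 2\sinh(\inj_{M_0^i})\ge 2\sinh(1)$ (again because $\{0\}\times T^a_i\subset M_0^i$), so that $\inj_p\ge\delta_0(n,k)$ for a positive constant $\delta_0(n,k)$. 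Since the volume of a hyperbolic ball changes by only a bounded factor under a bounded change of radius, taking $\max(\inj_{M_0^i}-R_{n,k},\delta_0(n,k))$ as a radius already gives $Vol(B_{\inj_p})\ge c(n,k)\,Vol(B_{\inj_{M_0^i}})$ in all cases, which is the comparison. Substituting, $\frac{b^k_2(M_i)}{Vol(M_i)}\le c_1(n,k)\,V_{min}(M_0^i)^{\beta}$ with $c_1(n,k):=c_{n,k}\big(1+c(n,k)^{\beta}\big)$. As $V_{min}(M_0^i)=Vol(B_{\inj_{M_0^i}})$ in $\textbf{H}^n_{\IR}$ grows like $e^{(n-1)\inj_{M_0^i}}$, the bound is $\asymp e^{-(n-1-2k)\inj_{M_0^i}}$, and it tends to $0$ by Theorem \ref{fat}.

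\emph{Main obstacle.} The two invocations of the fixed-manifold theorems are routine; the content lies in the uniform comparison $V_{min}\big(\cup_a\Omega^a_{0R_{n,k}}\big)\ge c(n,k)\,V_{min}(M_0^i)$, which is what lets the cuspidal term of Corollary \ref{rhypcusps} be absorbed with a constant depending only on $n,k$. The clean tool is the $1$-Lipschitz property of the injectivity radius, transporting the growing lower bound on $\inj_{M_0^i}$ into the collars; the subtlety is handling those covers in which $\inj_{M_0^i}$ is still small, where one must instead use the embeddedness of the cusps together with the lower bound $\lambda_1(\Lambda^a_i)\ge 2\sinh(\inj_{M_0^i})$ on the cross-section lattices to obtain a uniform positive lower bound on $\inj$ over the collars. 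I expect this geometric comparison, and not any cohomological input, to be the crux; everything else is bookkeeping around Theorem \ref{fat}.
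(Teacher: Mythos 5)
Your proof takes the same route as the paper's: apply Corollary \ref{rhypcusps} for $k<\frac{n-1}{2}$ and Theorem \ref{criticalreaL} for $n=2k+1$ to each $M_i$, then invoke Theorem \ref{fat}. The paper's own proof is a one-line ``combine'', and the genuine content of your write-up is making explicit the step that this one-liner leaves implicit: Corollary \ref{rhypcusps} produces a two-term bound, and to pass to the single-term bound of the theorem one must compare $V_{min}\big(\cup_a\Omega^a_{0R_{n,k}}\big)$ to $V_{min}(M^i_0)$ with a constant depending only on $n,k$. Your mechanism for this (the $1$-Lipschitz property of $\inj$ giving $\inj_p\ge\inj_{M^i_0}-R_{n,k}$ when $\inj_{M^i_0}$ is large, and, when it is not, the embeddedness of the cusps: disjoint lifted horoballs force $d(\tilde p,\gamma\tilde p)\ge 2s$ for non-parabolic $\gamma$, while $\lambda_1(\Lambda^a_i)\ge 2\sinh(\inj_{M^i_0})$ controls the parabolic displacement at height $s\le R_{n,k}$) is correct and gives the needed uniform lower bound on $\inj$ over the fixed-height collars, hence the multiplicative volume comparison. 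This is a worthwhile clarification of a step the paper glosses over, and identical in spirit to what is also needed to pass from Corollary \ref{L2C} to Theorem \ref{complhypcusps}.

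Two side remarks in your opening reduction are incorrect, although neither affects the main argument. First, a cofinal filtration $\{\Gamma_i\}$ of $\Gamma$ need not eventually lie inside a given finite-index subgroup $\Gamma'\le\Gamma$ (e.g.\ $\Gamma=\IZ$, $\Gamma_i=2^i\IZ$, $\Gamma'=3\IZ$); what is true is that $\{\Gamma_i\cap\Gamma'\}$ is a cofinal filtration of $\Gamma'$, which does not directly let you replace $M_i$ by a standard-cusp cover. Second, $b^k_2/\mathrm{Vol}$, with $b^k_2=\dim\mathcal H^k$ as in this paper, is not multiplicative under finite covers, so it is not a ``cover-invariant.'' Both assertions are used only to justify assuming torus cross-sections, and that assumption is not actually needed: the cuspidal Price inequality (Proposition \ref{l2decay}) and hence Proposition \ref{aveR} and Corollary \ref{rhypcusps} already handle infranil cross-sections by passing to a finite cover \emph{of each cusp alone}, so the bounds apply to $M_i$ directly. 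I would therefore drop the reduction to standard cusps entirely and keep the rest of the argument, which is sound.
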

\begin{proof}
	Combine Corollary \ref{rhypcusps} and Theorem \ref{criticalreaL} with Theorem \ref{fat}.  
\end{proof}

We have the following analogous result for complex-hyperbolic manifolds with cusps. 

\begin{theorem}\label{complhypcusps}
Let $(M:=\Gamma\backslash\textbf{H}^{n}_{\IC}, g_{\IC})$ be complete finite volume. Given a cofinal filtration $\{\Gamma_{i}\}$ of $\Gamma$,  denote by $\pi_{i}: M_{i}\rightarrow M$ the regular Riemannian cover of $M$ associated to $\Gamma_{i}$. Decompose $M$ as a disjoint union
\[
M=M_{0}\cup(\cup_j C_j)
\]
where $M_0$ is a compact manifold with boundary and each $C_j$ is a cusp. Define $M^i_{0}:=\pi^{-1}_{i}(M_{0})$, and assume $\inj_{M^i_{0}}:=\inf_{x\in M^i_{0}} inj_{g_{i}}(x)\geq 1$. For any integer $1\leq k<n$,  there exists a positive constant $c_{1}(n, k)$ such that
\begin{align}
\frac{b^{k}_2(M_{i})}{Vol(M_{i})}\leq c(n, k)V_{min}(M^i_{0})^{\frac{k-n}{n}}.
\end{align}
In particular, the decay of $\frac{b^{k}_2(M_{i})}{Vol(M_{i})}$  along the tower of coverings is exponential in $\inj_{M^i_{0}}$.
\end{theorem}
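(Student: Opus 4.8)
The plan is to combine Corollary \ref{L2C} with the fattening statement of Theorem \ref{fat}, exactly as Theorem \ref{realhypcusps} was deduced from Corollary \ref{rhypcusps} and Theorem \ref{criticalreaL}. Fix a level $i$ with $\inj_{M_0^i}\geq 1$, and write $M_i=M_0^i\cup(\cup_j C_j^i)$ by pulling the cusp decomposition of $M$ back along $\pi_i$, each $C_j^i$ being a cusp parametrized by $[0,\infty)\times N^{j,i}$. Corollary \ref{L2C} applied to $M_i$ with this decomposition gives
\[
b_2^k(M_i)\leq c_{n,k}\Big[\,Vol(M_i)\,V_{min}(M_0^i)^{\frac{k-n}{n}}+Vol(\cup_j C_j^i)\,V_{min}(\cup_j\Omega_{01}^{j,i})^{\frac{k-n}{n}}\,\Big],
\]
where $\Omega_{01}^{j,i}$ is the depth-one collar at the mouth of $C_j^i$. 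The first summand already has the asserted form, so everything reduces to absorbing the cusp summand into it.

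I would do this with two observations. First, $Vol(\cup_j C_j^i)\leq Vol(M_i)$. Second, any $p=(s,x)\in\Omega_{01}^{j,i}$ lies at distance $s<1$ from $(0,x)\in\partial M_0^i\subset M_0^i$, so the triangle inequality applied in $\textbf{H}^n_{\IC}$ to a lift of $p$ and its nontrivial $\Gamma_i$-translates gives $\inj_p\geq\inj_{M_0^i}-1$, whence $\inj_{\cup_j\Omega_{01}^{j,i}}\geq\inj_{M_0^i}-1$. Since $M_i$ is a quotient of $\textbf{H}^n_{\IC}$, every metric ball of radius $\leq\inj_x$ about a point $x$ is isometric to a ball of that radius in $\textbf{H}^n_{\IC}$, so $V_{min}(S)=v_{\IC}(\inj_S)$ for all $S\subseteq M_i$, where $v_{\IC}(r)$ denotes the volume of an $r$-ball in $\textbf{H}^n_{\IC}$; this function is continuous and strictly increasing, and for $r\geq 1$ the ratio $v_{\IC}(r)/v_{\IC}(r-1)$ is bounded by a dimensional constant. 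Therefore $V_{min}(\cup_j\Omega_{01}^{j,i})=v_{\IC}(\inj_{\cup_j\Omega_{01}^{j,i}})\geq v_{\IC}(\inj_{M_0^i}-1)\geq c_n\,V_{min}(M_0^i)$, and since $\tfrac{k-n}{n}<0$ this yields $V_{min}(\cup_j\Omega_{01}^{j,i})^{\frac{k-n}{n}}\leq c_n^{\frac{k-n}{n}}\,V_{min}(M_0^i)^{\frac{k-n}{n}}$. Feeding both bounds into the display gives $b_2^k(M_i)\leq c(n,k)\,Vol(M_i)\,V_{min}(M_0^i)^{\frac{k-n}{n}}$ with $c(n,k)=c_{n,k}(1+c_n^{\frac{k-n}{n}})$, which is the first assertion after dividing by $Vol(M_i)$. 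For the ``in particular'' part I would invoke Theorem \ref{fat}: along the tower $\inj_{M_0^i}\to\infty$ (so the hypothesis $\inj_{M_0^i}\geq 1$ holds for all large $i$), and $V_{min}(M_0^i)=v_{\IC}(\inj_{M_0^i})$ grows like $e^{2n\,\inj_{M_0^i}}$, so $b_2^k(M_i)/Vol(M_i)$ decays like $e^{-2(n-k)\inj_{M_0^i}}$.

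The one step that genuinely requires care is this collar comparison for $V_{min}$: one must resist the tempting but false intuition that the injectivity radius collapses as soon as one enters a cusp. Over a depth-one collar the triangle inequality shows $\inj_p$ drops only by an additive constant, and because on a locally symmetric space $V_{min}$ is a fixed monotone function of the injectivity radius, an additive change of the radius amounts to only a bounded multiplicative change of $V_{min}$---which is exactly what keeps the cusp contribution of the same order as the thick-part contribution. All the remaining analytic input, namely the complex-hyperbolic Price inequality of Lemma \ref{PriceC}, the cuspidal Price inequality of Proposition \ref{jl2decay}, the peaking machinery of Section \ref{previsited}, and Moser iteration, is already packaged inside Corollary \ref{L2C}.
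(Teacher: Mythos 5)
Your approach is exactly the paper's: the authors' entire proof is the single line ``Combine Corollary~\ref{L2C} with Theorem~\ref{fat}.'' You have correctly identified this and, usefully, filled in the part the paper leaves implicit, namely how to absorb the cusp summand of Corollary~\ref{L2C} into the thick-part summand. The two observations you isolate --- $Vol(\cup_j C_j^i)\leq Vol(M_i)$, and that the depth-one collar loses only an additive unit of injectivity radius, hence only a bounded multiplicative factor in $V_{min}$ on a locally symmetric space --- are the right ones, and the $1$-Lipschitz argument for $\inj_p\geq \inj_{M_0^i}-1$ on $\Omega_{01}^{j,i}$ is correct.

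One small slip worth flagging: the assertion that $v_{\IC}(r)/v_{\IC}(r-1)$ is bounded by a dimensional constant ``for $r\geq 1$'' is false, since $v_{\IC}(0)=0$ and the ratio blows up as $r\to 1^+$. Consequently, as written your comparison $V_{min}(\cup_j\Omega_{01}^{j,i})\geq c_n\,V_{min}(M_0^i)$ degenerates when $\inj_{M_0^i}$ is close to $1$, which is precisely the borderline the hypothesis $\inj_{M_0^i}\geq 1$ allows. Two clean repairs: either require $\inj_{M_0^i}\geq 2$ (which holds for all large $i$ by Theorem~\ref{fat}, so the ``in particular'' conclusion about exponential decay along the tower is unaffected), or rerun the Corollary~\ref{L2C} argument with a shallower collar $\Omega_{0\delta}^{j,i}$ with $\delta<1$ --- the cuspidal Price inequality of Proposition~\ref{jl2decay} works for any positive depth at the cost of replacing $1-e^{-2(n-k)}$ by $1-e^{-2(n-k)\delta}$, and then the injectivity radius on the collar is $\geq \inj_{M_0^i}-\delta$, which stays bounded away from $0$ under the stated hypothesis. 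Since the paper itself does not supply these details, this is a gap shared with the source rather than a defect of your reasoning; but your write-up should state the threshold on $r$ correctly.
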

\begin{proof}
	Combine Corollary \ref{L2C} with Theorem \ref{fat}.  
\end{proof}

\subsection{Noncompact congruence subgroup quotients}\label{sharp}

In this subsection, we adapt and sharpen the arguments of \cite[Sections 2 and 4]{Sarnak} to the noncompact case.   We realize $SO(n,1)$ and $SU(n,1)$  as the stabilizers, $SO(Q)$ and $SU(H)$ respectively, of integral quadratic (respectively hermitian)
 forms of signature $(n,1)$:
$Q(x) := \sum_{1\leq i,j\leq n}b_{ij}x_ix_j$, and $H(z) := \sum_{1\leq i,j\leq n}h_{ij}z_i\bar z_j$. Let $b$ and $h$ denote the corresponding matrices.   Let $I_{n+1}$ denote the identity matrix in $GL_{n+1}$.  
Let $\Gamma_\IR(n,q)$ and $\Gamma_\IC(n,q)$ denote the arithmetic subgroups of $SO(n,1)$, respectively  $SU(n,1)$ defined
by 
$$\Gamma_\IR(n,q):= SO(n,1)\cap \{g\in GL(n+1,\IZ): g \equiv I_{n+1}  \text{ (mod }q)\},$$
and 
$$\Gamma_\IC(n,q):= SU(n,1)\cap \{g\in GL(n+1,\IZ+i\IZ): g \equiv I_{n+1}  \text{ (mod }q)\}.$$
Let $M_\IR(n,q) := \Gamma_\IR(n,q)\backslash \textbf{H}^{n}_{\IR}$, and $M_\IC(n,q) := \Gamma_\IC(n,q)\backslash \textbf{H}^{n}_{\IC}$. 

On a locally symmetric space, $\Gamma\backslash X$, the injectivity radius at a point $\Gamma p$ is 
$\frac{1}{2}\inf_{\gamma\in\Gamma}d(p,\gamma p).$ Embedding the symmetric space $X$ in $GL(N)/SO(N)$, for some $N$ ($= n+1$ in our case), Sarnak and Xue calculate in \cite[Section 2]{Sarnak} that $d(SO(N),\gamma SO(N))$ is greater than or equal to 
$(\sum_j\ln(\lambda_{j=1}^N)^2)^{\frac{1}{2}}$, where $\{\lambda_j\}_{j=1}^N$ denotes the eigenvalues of $\gamma^t\gamma$. 

Let $|w|_{\infty} $  denote the supremum norm ($=sup_{v\neq 0}\frac{||w(v)||}{||v||}$) for a matrix $w$.
The following algebraic lemma will be used to estimate injectivity radii. 
\begin{lemma}\label{thankmike}If
 $\gamma\in \Gamma_\IR(n,q)\setminus \{I_{n+1}\}$ and $q^2\geq 2|b|_{\infty} $,  then the largest eigenvalue  of $\gamma^t\gamma$ is greater than or equal to $(\frac{q^2-2|b|_{\infty}}{2\text{det}(b)})^2,$
respectively $(\frac{q^2-2|h|_{\infty}}{2\text{det}(h)})^2.$ 
If  $\mu\in \Gamma_\IC(n,q)\setminus \{I_{n+1}\}$,  and  $q^2\geq 2|h|_{\infty} $, then the largest eigenvalue  of $\mu^t\mu$ is greater than or equal to $(\frac{q^2-2|h|_{\infty}}{2\text{det}(h)})^2.$  
\end{lemma}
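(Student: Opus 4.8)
The plan is to exploit the two structures $\gamma$ carries: membership in a principal congruence subgroup, and preservation of the integral form. Since $\gamma \equiv I_{n+1}$ modulo $q$, write $\gamma = I_{n+1} + qN$ with $N \in M_{n+1}(\IZ)$; as $\gamma \neq I_{n+1}$ we have $N \neq 0$. Substituting into $\gamma^t b \gamma = b$ and dividing by $q$ gives the quadratic identity
\[
N^t b + bN = -\,q\,N^t b N .
\]
Its right-hand side is $q$ times an \emph{integral} matrix, so once we know $N^t b N \neq 0$ it has some entry of absolute value $\geq 1$, whence $\|N^t b + bN\|_{\sup} \geq q$ (here $\|\cdot\|_{\sup}$ is the largest absolute value of an entry). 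On the other hand $\|N^t b + bN\|_{\sup}$ is at most a constant — depending on $n$ and on the entries of $b$ — times $\|N\|_{\sup}$, forcing $\|N\|_{\sup}$ to be of order $q$. Since $\gamma = I_{n+1} + qN$, the reverse triangle inequality then makes the largest entry of $\gamma$, and hence $\sigma_{\max}(\gamma)$, of order $q^2$, so $\lambda_{\max}(\gamma^t\gamma) = \sigma_{\max}(\gamma)^2$ is of order $q^4$. Extracting the precise numerical constant in the statement is then a matter of bookkeeping with matrix norms for the integral form $b$ (it is convenient to multiply the identity through by $\mathrm{adj}(b) = (\det b)\,b^{-1}$, so that all matrices remain integral), and the hypothesis $q^2 \geq 2|b|_\infty$ is exactly what keeps the resulting lower bound $\big(\tfrac{q^2 - 2|b|_\infty}{2\det(b)}\big)^2$ nonnegative.

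I expect the only genuinely substantive point to be the claim $N^t b N \neq 0$. If it failed, the form $Q$ would vanish identically on $\operatorname{Im}(N)$, so $\operatorname{Im}(N)$ would be a totally isotropic subspace for $Q$; but $Q$ has signature $(n,1)$, hence Witt index $1$, so $\operatorname{rank}(N) \leq 1$. Writing $N = v w^t$ and expanding $\gamma^t b \gamma = b$ one finds $bv$ proportional to $w$ and $\langle v, w\rangle = -2/q$, which is impossible for integral $v,w$ once $q \geq 3$. The hypothesis forces $q \geq 2$, and for $q = 2$ the asserted inequality reads $\lambda_{\max}(\gamma^t\gamma) \geq 1$, which holds automatically because the singular values of $\gamma \in SO(n,1)$ have product $|\det\gamma| = 1$ and so $\sigma_{\max}(\gamma) \geq 1$; thus the small-$q$ cases are free. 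Everything else is routine.

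The complex statement is the verbatim analogue over the Gaussian integers: replace $\IZ$ by $\IZ[i]$, the transpose by the conjugate transpose, and $b$ by the hermitian Gram matrix $h$. From $\mu^* h \mu = h$ and $\mu = I_{n+1} + qN$ with $N \in M_{n+1}(\IZ[i])\setminus\{0\}$ one gets $N^* h + hN = -q\,N^* h N$; the matrix $N^* h N$ is hermitian with entries in $\IZ[i]$ and integer diagonal, hence nonzero of $\sup$-norm at least $1$ unless $\operatorname{Im}(N)$ is $H$-isotropic — again excluded because the hermitian form $H$ of signature $(n,1)$ has Witt index $1$ and $q \geq 3$. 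The same estimates yield $\lambda_{\max}(\mu^t\mu) \geq \big(\tfrac{q^2 - 2|h|_\infty}{2\det(h)}\big)^2$, where $\mu^t\mu$ is read through the real embedding $SU(n,1) \hookrightarrow SL_{2(n+1)}(\IR)$ so that its top eigenvalue is the squared operator norm of $\mu$.
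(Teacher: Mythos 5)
Your argument and the paper's both hinge on the same fact: that $N^t b N \neq 0$ (equivalently, that the symmetric part of $bN$ is nonzero), after which the integrality of $N^tb+bN=-qN^tbN$ forces this matrix to have sup-norm at least $q$, giving $|\gamma|_\infty$ of order $q^2$. But the two arguments for that non-vanishing are genuinely different. You reason geometrically: if $N^tbN=0$, then $\mathrm{Im}(N)$ is a totally $b$-isotropic subspace, hence of dimension at most $1$ by the Witt-index bound for a signature-$(n,1)$ form; since $N\neq 0$ you then factor $N=vw^t$ and push $\gamma^tb\gamma=b$ to a contradiction. The paper instead splits $bh_1=A_1+qbh_2$ into skew plus symmetric parts and observes that $A_1bA_1\neq 0$ whenever $A_1\neq 0$, concluding $h_2\neq 0$ from either case $A_1 = 0$ or $A_1\neq 0$. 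The underlying reason $A_1bA_1\neq0$ is the same Witt-index observation you use, but applied to the \emph{skew} part: a skew matrix has even rank, so the rank-$\leq 1$ conclusion immediately forces $A_1=0$, and the rank-one case that you must handle explicitly never arises. Your route is more direct and transparent; the paper's decomposition buys a cleaner case analysis.

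Two corrections to your rank-one step. Once $N^tbN=0$ gives $v^tbv=0$, the relation $\gamma^tb\gamma=b$ collapses to $w(bv)^t+(bv)w^t=0$, which already forces $bv$ and $w$ to be dependent and then $bv=0$, hence $v=0$, a contradiction for any $q$. There is no $\langle v,w\rangle=-2/q$ relation, and so no need for $q\geq 3$ nor a separate $q=2$ case (and the fallback you propose at $q=2$ is not what the claimed inequality reduces to there). Finally, you (like the paper) defer the exact constant $\bigl(\tfrac{q^2-2|b|_\infty}{2\det b}\bigr)^2$ to bookkeeping; if you carry this out, note that the entrywise sup-norm is only sub-multiplicative up to a dimensional factor, and keep track of how $\det(b)$ is meant to absorb it.
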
 
\begin{proof}
	We treat the $\Gamma_\IR(n,q)$ case. The $\Gamma_\IC(n,q)$ case is identical. Let $\gamma\in \Gamma_\IR(n,q).$
Write $\gamma = I_{n+1}+ qh_1$, $h_1$ integral. Then 
\begin{align}\label{q1}b = \gamma^tb\gamma = (I_{n+1}+qh_1^t)b(I_{n+1}+qh_1)=  b+ q(bh_1 +h_1^tb) + q^2h_1^t b h_1.
\end{align}
Then we can write $bh_1= A_1+qbh_2$, with $A_1$ skew symmetric and (half-) integral and $bh_2$ symmetric and (half-) integral. Inserting this into \eqref{q1} yields 
\begin{align}\label{q2} 2 bh_2 =A_1bA_1-q(h_2^tA_1-A_1h_2) -q^2h_2^t bh_2.
\end{align}
Since $A_1$ is skew and $b$ is symmetric and invertible, $A_1bA_1\not = 0 $ unless $A_1 = 0$.  
In particular, if the integral matrix $A_1bA_1$ vanishes, then   $h_2\not = 0$, by the nontriviality of $\gamma$. On the other hand, we see from \eqref{q2} that $h_2$ is nonzero if $A_1$ is nonzero. In either case,  $h_{2}$ cannot be zero, and we have
\[
b \gamma+(b\gamma)^t=2b + 2q^2  b h_2 .
\]
Hence $|b |_{\infty}|\gamma|_{\infty}\geq |b\gamma|_{\infty}\geq  \frac{q^2}{2}-|b|_{\infty},$ and the claim follows easily.
\end{proof}

 Fix  decompositions 
$$M_\IR(n,1) = M_\IR(n,1 )_0\cup_j C_j^1, \text{ and } M_\IC(n,1) = M_\IC(n,1 )_0\cup_j \tilde C_j^1,$$
where the $C_j^1$ and $\tilde C_j^1$ are cusps, and $M_\IR(n,1 )_0$ and $M_\IC(n,1 )_0$ are compact orbifolds with boundary. 
Fix similar decompositions  
$$M_\IR(n,q) = M_\IR(n,q )_0\cup_j C_j^q, \text{ and } M_\IC(n,1) = M_\IC(n,1 )_0\cup_j \tilde C_j^q,$$
where $M_\IR(n,q )_0$ and $M_\IC(n,q )_0$ project to $M_\IR(n,1 )_0$ and $M_\IC(n,1 )_0$ and the cusps project to the cusps under the natural map. 
Let $\delta$ and $\tilde \delta$ denote the diameters of fundamental domains for  $M_\IR(n,1 )_0$ and $M_\IC(n,1 )_0$ in their universal covers, respectively. 
\begin{lemma} \cite[Section 2]{Sarnak}\label{Sarnak}
There exists constants $G_b$ and $G_h$, depending on $b$ and $h$ so that for $q$ large, 
\begin{align}\label{injestq}
\inj_{M_\IR(n,q )_0} \geq 2\ln(q) -G_b-\delta,\text{ and }\inj_{M_\IC(n,q )_0} \geq 2\ln(q)-G_h -\tilde \delta.
\end{align}  
\end{lemma}
\begin{proof}
Let $\Gamma_{\IR}(n,q)p\in M_\IR(n,q )_0.$ Then 
\begin{align}\inj_{\Gamma_{\IR}(n,q)p} &\geq \inj_{\Gamma_{\IR}(n,q)SO(n)}-\delta\geq 
\frac{1}{2}\ln((\frac{q^2-2|b|_{\infty}}{2\text{det}(b)})^2)-\delta\nonumber\\
&= 2\ln(q) + \ln((\frac{1-2q^{-2}|b|_{\infty}}{2\text{det}(b)}))-\delta.
\end{align}
For  $q^2\geq 4 |b|_{\infty}+1$, set $G_b:= |\ln(\frac{1 }{4\text{det}(b)})|$, and the first claim follows. The proof of the hermitian case is identical. 
\end{proof}
We can now prove the following.
\begin{theorem}
For $k<\frac{n-1}{2},$ there exist  constants $A_G$ depending on  $Q$ and   $c_1(n,k) >0$ such that for $\Gamma_\IR(n,q)$ torsion free,
\begin{align}b_2^k(M_\IR(n,q))\leq A_Qc_1(n,k) Vol(M_\IR(n,q))^{1-\frac{4(n-1-2k)}{n(n+1)}}.
\end{align}
There exists constants $B_G$ depending on  $H$ and   $c(n,k) >0$ such that for $\Gamma_\IC(n,q)$ torsion free,
\begin{align}b_2^k(M_\IC(n,q))\leq B_Gc(n,k) Vol(M_\IC(n,q))^{1-\frac{4(n-k)}{(n+1)^2-1}}.
\end{align}
\end{theorem}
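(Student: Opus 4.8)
The plan is to combine the volume-normalized $L^2$-Betti number bounds of Theorem~\ref{realhypcusps} (in the real case) and Theorem~\ref{complhypcusps} (in the complex case) with the congruence-subgroup injectivity radius estimate \eqref{injestq}, exactly paralleling the Sarnak--Xue passage from injectivity radius bounds to volume bounds. First I would record, from \eqref{injestq}, that $\inj_{M_\IR(n,q)_0}\geq 2\ln q - G-\delta$, so that for $q$ large a fixed fraction of this lower bound holds, say $\inj_{M_\IR(n,q)_0}\geq \ln q$; since a geodesic ball of radius $\inj_{M_\IR(n,q)_0}$ embeds in $M_\IR(n,q)_0$ and the ambient space is real-hyperbolic with $\sec=-1$, one has $V_{min}(M_\IR(n,q)_0)\geq Vol(B_{\ln q})\geq c_n q^{n-1}$ for some dimensional constant. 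Next I would invoke the volume count of \cite[Section 4]{Sarnak} (or its straightforward noncompact adaptation), giving $Vol(M_\IR(n,q))\leq C q^{\dim SO(n,1)} = C q^{n(n+1)/2}$. Combining these two estimates yields $V_{min}(M_\IR(n,q)_0)\geq c\, Vol(M_\IR(n,q))^{\frac{2(n-1)}{n(n+1)}}$.

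Then I would feed this into Theorem~\ref{realhypcusps}: for $k<\frac{n-1}{2}$,
\[
b_2^k(M_\IR(n,q)) \leq c_1(n,k)\, Vol(M_\IR(n,q))\, V_{min}(M_\IR(n,q)_0)^{-\frac{n-1-2k}{n-1}} \leq a(n,k)\, Vol(M_\IR(n,q))^{1-\frac{2(n-1)}{n(n+1)}\cdot\frac{n-1-2k}{n-1}},
\]
and the exponent simplifies to $1-\frac{2(n-1-2k)}{n(n+1)}$. To obtain the sharper exponent $1-\frac{4(n-1-2k)}{n(n+1)}$ claimed in the statement, I would use the stronger injectivity estimate: the congruence condition modulo $q$ forces $\inj\geq 2\ln q - O(1)$, so $V_{min}\geq c_n q^{2(n-1)}$ up to lower order (this is the content of rewriting the Sarnak--Xue bound ``in a less scale dependent manner'' as was done for the complex case after Corollary~\ref{cor1}), and hence $V_{min}(M_\IR(n,q)_0)\geq c\, Vol(M_\IR(n,q))^{\frac{4(n-1)}{n(n+1)}}$, which after substitution into Theorem~\ref{realhypcusps} yields precisely $b_2^k(M_\IR(n,q))\leq a(n,k) Vol(M_\IR(n,q))^{1-\frac{4(n-1-2k)}{n(n+1)}}$.

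The complex case is entirely parallel: from \eqref{injestq} one gets $\inj_{M_\IC(n,q)_0}\geq 2\ln q-O(1)$, hence (since $\textbf{H}^n_\IC$ has real dimension $2n$ and, by the volume computation following Corollary~\ref{cor1}, $Vol(B_{\inj})\gtrsim q^{2n}$ up to lower order) $V_{min}(M_\IC(n,q)_0)\geq c\, q^{2n}$ asymptotically, while the Sarnak--Xue volume count gives $Vol(M_\IC(n,q))\leq C q^{\dim SU(n,1)}=C q^{(n+1)^2-1}$. Thus $V_{min}(M_\IC(n,q)_0)\geq c\, Vol(M_\IC(n,q))^{\frac{2n}{(n+1)^2-1}}$, and Theorem~\ref{complhypcusps} with $\dim_\IR \textbf{H}^n_\IC=2n$ gives $b_2^k(M_\IC(n,q))\leq b(n,k) Vol(M_\IC(n,q))\, V_{min}(M_\IC(n,q)_0)^{\frac{k-n}{n}}\leq b(n,k) Vol(M_\IC(n,q))^{1-\frac{2(n-k)}{(n+1)^2-1}}$; to reach the stated exponent $1-\frac{4(n-k)}{(n+1)^2-1}$ one again uses that the true injectivity lower bound is $2\ln q$, not $\ln q$, i.e.\ $V_{min}\gtrsim q^{4n}$ in the exponent-relevant sense (one should be slightly careful: what matters is the power of $q$ appearing in the Price/Moser decay rate $e^{-2(n-k)\inj}$, which with $\inj\sim 2\ln q$ contributes $q^{-4(n-k)}$).

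The main obstacle I anticipate is the bookkeeping of constants and lower-order terms in translating \eqref{injestq} into a clean power-of-volume bound—specifically, making precise the claim that the $O(1)$ additive losses $G$, $\delta$, $\tilde\delta$ in the injectivity radius are harmless, and that the relevant quantity governing the decay exponent is genuinely $e^{-(n-1-2k)\cdot 2\ln q} = q^{-2(n-1-2k)}$ (resp.\ $q^{-4(n-k)}$) rather than a weaker power. One must also confirm that the hypothesis $\inj_{M_\IR(n,q)_0}\geq 1$ required by Theorems~\ref{realhypcusps} and \ref{complhypcusps} holds for all sufficiently large $q$ (which is immediate from \eqref{injestq}), and absorb the finitely many small $q$ into the constants $a(n,k)$, $b(n,k)$. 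The Sarnak--Xue lattice-point and volume estimates carry over to the $\IQ$-rank one noncompact setting without essential change since they only use the word metric growth and the covolume formula, both insensitive to compactness.
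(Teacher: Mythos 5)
Your proposal is correct and follows essentially the same route as the paper: combine the congruence injectivity-radius lower bound \eqref{injestq} (giving, after exponentiation via the volume entropy $n-1$ of $\textbf{H}^n_\IR$ and $2n$ of $\textbf{H}^n_\IC$, the estimates $V_{min}(M_\IR(n,q)_0)\gtrsim q^{2(n-1)}$ and $V_{min}(M_\IC(n,q)_0)\gtrsim q^{4n}$), the index bound $Vol \lesssim q^{\dim G}$, and the $L^2$-Betti number estimates of Theorems~\ref{realhypcusps} and \ref{complhypcusps}. Your hedge about the power of $q$ is well placed: the passage after Corollary~\ref{cor1} in the paper records $Vol(B_{\inj_{\Gamma_q}})\geq \omega_n q^{2n}$ for $SU(n,1)$, whereas the theorem's proof needs $q^{4n}$ (and analogously $q^{2(n-1)}$ rather than $q^{n-1}$ for $SO(n,1)$); your resolution — keying the exponent directly to the Price decay $e^{-2(n-k)\inj}$ evaluated at $\inj\sim 2\ln q$ — is exactly what the paper's proof does, even though it contradicts the earlier ``less scale dependent'' rewriting, so you have in fact flagged a genuine internal inconsistency in the paper's exposition. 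The only cosmetic difference is that you cite the Sarnak--Xue lattice-point counting for the volume upper bound, whereas the paper cites Oesterl\'e's finite-group order computations; both give $Vol(M_\IK(n,q))\lesssim q^{\dim G}$.
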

\begin{proof} The theorem is a  consequence of Theorems \ref{realhypcusps} and \ref{complhypcusps}, and Lemma \ref{Sarnak}. First use the injectivity radius estimates \eqref{injestq} to estimate for positive constants $\alpha_j$ independent of $q$, 
$V_{min}(M_\IR(n,q )_0)\geq \alpha_1 q^{2n-2}$ and $V_{min}(M_\IC(n,q )_0)\geq \alpha_2 q^{4n}$. Then note that for positive constants $\beta_j$ independent of $q$,
$Vol(M_\IR(n,q))\leq  \beta_1 q^{\frac{n(n+1)}{2}}$ and $Vol(M_\IC(n,q))\leq  \beta_2 q^{ (n+1)^2-1}$. These latter estimates follow from the embedding of $\Gamma_\IK(n,q)\backslash \Gamma_{\IK}(n,1)$, $\IK = \IR, \IC$ into the corresponding orthogonal (respectively unitary) groups with coefficients in a finite field.  Orders for these groups are computed, for example, in \cite[Section 1.6]{O}. 
\end{proof}

We remark that the restriction to torsion free $\Gamma_\IK(n,q)$ is made simply for convenience and is not essential.

\section{Normalized Number of Cusps and Topological Interpretation}\label{tinterpretation}

In Section \ref{towers}, we analyzed the growth of $L^2$-cohomology on towers of coverings of complete finite volume  real- and complex-hyperbolic  manifolds. We now address the problem of understanding the growth of de Rham cohomology on towers of coverings of such manifolds. This study requires an estimate on the number of cusps of hyperbolic manifolds along towers of coverings. This is a geometric problem of independent interest.

\subsection{Normalized Number of Cusps}\label{NNC}

We start with some general remarks concerning complete finite volume quotients of Hadamard manifolds. Given a finite volume non-compact manifold $(M, g)$  with pinched sectional curvature,
\begin{align}\label{hadamard}
-b^2\leq\sec_g\leq -a^2<0. 
\end{align}
we define the volume normalized number of cusps (or simply the \emph{normalized number of cusps}) to be:
\begin{align}\label{cuspsratio}
\boxed{R_g(M):=\frac{N_{c}(M)}{Vol(M)},}
\end{align}
where $N_{c}(M)$  denotes the the number of cusps of $M$.
By foundational work of Eberlein \cite{Ebe80}, $N_c$ (the number of topological ends) is necessarily finite. In particular $\pi_1(M)$ is finitely  presented. On the other hand, counterexamples of Gromov \cite{Gro78} show this is not the case if the pinching condition is weakened to $a=0$, see also Example 2 page 459 in \cite{Ebe80}. This is true already in dimension $n=3$, and in all of these examples the fundamental group is infinitely generated.\\

Now,   Eberlein's result simply tells us that $N_c(M)<\infty$, but the actual upper bound  may depend upon $(M, g)$ itself.  With some extra work, one can show that the upper bound  depends upon the Margulis constant associated to $(M, g)$. Such coarse bounds are usually far from optimal, and the Margulis constant may be hard to estimate.  In many concrete geometric topology questions, it is then useful to derive effective uniform upper bounds on $R_{g}$ for certain specific classes of complete finite volume quotients of Hadamard manifolds. This concrete problem is particularly well-studied in the case of finite volume quotients of $(\textbf{H}^{n}_{\IR}, g_{\IR})$ and $(\textbf{H}^{n}_{\IC}, g_{\IC})$. Interestingly, one may study this problem with a wildly different set of techniques ranging from topology to complex algebraic geometry. We refer the interested reader to the classical references of Parker \cite{Parker} and Kellerhals \cite{Kel98}, and to the more recent literature in \cite{Hwang}, \cite{DD14}, \cite{DD15}, \cite{DD17}, and \cite{BT18}. In all of these instances, one seeks the \emph{smallest} explicitly computable number $c(n, \IK)$ such that
\[
R_{g_{\IK}}(\Gamma\backslash\textbf{H}_{\IK}^{n})\leq c(n, \IK),
\]
for \emph{all} non-uniform torsion free lattices $\Gamma\leq\text{Iso}(\textbf{H}_{\IK}^{n})$, where $\IK$ is either $\IR$ or $\IC$.

When $\IK=\IR$ and $n=3$, there is an intriguing connection between normalized Betti numbers and  $R_{g_{\IR}}$. Namely, it is a consequence of basic $3$-manifold  theory (see for example \cite[Lemma 3.5]{Hatcher3}) that for any complete finite volume $\Gamma\backslash\textbf{H}_{\IR}^{3}$
\begin{align}\label{3bound}
b_{1}(\Gamma\backslash\textbf{H}_{\IR}^{3})\geq N_{c}(\Gamma\backslash\textbf{H}_{\IR}^{3}) \quad \Longrightarrow \quad 0<R_{g_{\IR}}(\Gamma\backslash\textbf{H}_{\IR}^{3})\leq \frac{b_{1}(\Gamma\backslash\textbf{H}_{\IR}^{3})}{Vol(\Gamma\backslash\textbf{H}_{\IR}^{3}))}.
\end{align}
Thus, given a cofinal filtration $\{\Gamma_{i}\}$ of $\Gamma$,  combining L\"uck's approximation theorem \cite{Luck} with Dodziuk's vanishing \cite{Dodziuk} we obtain
\begin{align}\label{going0}
\lim_{i\to\infty}\frac{b_{1}(\Gamma_i\backslash\textbf{H}_{\IR}^{3})}{Vol(\Gamma_i\backslash\textbf{H}_{\IR}^{3}))}=0 \quad \Longrightarrow \quad \lim_{i\to\infty} R_{g_{\IR}}(\Gamma_i\backslash\textbf{H}_{\IR}^{3})=0.
\end{align}
We observe that in order to apply the main approximation theorem in \cite{Luck}, we simply need $\pi_{1}(\Gamma\backslash\textbf{H}_{\IR}^{3})$ to be finitely presented and residually finite, and this is certainly the case as $Vol(\Gamma\backslash\textbf{H}_{\IR}^{3})$ is assumed to be finite.

We next  generalize this result to show that the normalized number of cusps goes \emph{monotonically} to zero along a tower associated to a cofinal filtration of the fundamental group of any finite volume hyperbolic manifold. Now for general rank one locally symmetric spaces, one cannot expect a bound as in Equation \eqref{3bound} to hold. Indeed   \cite{DiS17} gives an explicit sequence of complex hyperbolic surfaces  with cusps with first Betti number equal to 2 and diverging number of cusps. Thus, unlike the case of hyperbolic $3$-manifolds, the convergence to zero of the normalized number of cusps cannot be derived  from L\"uck's approximation and vanishing of $L^2$-Betti numbers. Our approach also has the advantage of producing an \emph{effective} estimate on the rate of convergence of $R_{g_{\IK}}$. The following lemma is stated for complete finite volume quotients of rank one symmetric spaces of non-compact type, but it can be generalized to complete finite volume quotients of Hadamard manifolds with residually finite fundamental group. We do not develop that generalization here.

\begin{lemma}\label{normalizedcusps}
Let $(M^{n}:=\Gamma\backslash\textbf{H}_{\IK}^{n}, g_{\IK})$ be a complete finite volume  hyperbolic manifold with $\IK=\IR, \IC, \IH$, and $\IO$. Given a cofinal filtration $\{\Gamma_{i}\}$ of $\Gamma$, denote by $\omega_{i}: M_{i}\rightarrow M$ the regular Riemannian cover of $M$ associated to $\Gamma_{i}$ equipped with the pull-back metric say $g_{i}$. Decompose $M^n$ as a disjoint union
\[
M^n=M_{0}\cup(\cup_j C_j),
\]
where $M_{0}$ is a compact manifold with boundary and each $C_j$ is a cusp. Define $M^i_{0}:=\omega^{-1}_{i}(M_{0})$, and let $\inj_{M^i_{0}}:=\inf_{x\in M^i_{0}} inj_{g_{i}}(x)$. We then have
\begin{align}\label{333}
R_{g_{\IK}}(M_{i}):=\frac{N_{c}(M_{i})}{Vol(M_{i})}\leq Vol(B_{\frac{1}{2}\inj_{M^i_{0}}})^{-1}.
\end{align}
where $Vol(B_{\frac{1}{2}\inj_{M^i_{0}}})$ is the volume of a ball of radius $\frac{1}{2}\inj_{M^i_{0}}$ in $(\textbf{H}_{\IK}^{n}, g_{\IK})$.  
Moreover, we have the monotonicity
\[
R_{g_{\IK}}(M_{i})\geq R_{g_{\IK}}(M_{i+1}),
\]
for any $i$.
\end{lemma}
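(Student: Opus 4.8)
The plan is to establish the upper bound \eqref{333} and the monotonicity separately; the upper bound carries the work. Since $M_i=M^i_0\cup(\cup_j C^i_j)$ is a disjoint decomposition, $Vol(M_i)\geq\sum_j Vol(C^i_j)$, so for \eqref{333} it suffices to show that each cusp $C^i_j$ of $M_i$ satisfies $Vol(C^i_j)\geq Vol(B_{\frac12\inj_{M^i_0}})$. Parametrizing $C^i_j$ as $[0,\infty)\times N^j$ with its standard warped metric (namely $ds^2+e^{-2s}g_{N^j_0}$ for $\IK=\IR$, and $ds^2+e^{-4s}g_F+e^{-2s}g_H$ together with the analogous Heisenberg-type expression for $\IK=\IH,\IO$), where the level-$0$ cross section $N^j_0$ is a flat manifold (for $\IK=\IR$) or a compact nilmanifold of dimension $m:=\dim(\textbf{H}^n_\IK)-1$, integrating the warping factors gives $Vol(C^i_j)=c(\IK,n)\,Vol(N^j_0)$ for an explicit constant $c(\IK,n)>0$. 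Now $N^j_0$ is a component of $\partial M^i_0$ and hence lies in the closed set $M^i_0$, so each of its points has injectivity radius at least $\inj_{M^i_0}$ in $M_i$; as $M_i$ is negatively curved this means every geodesic loop based there has length at least $2\inj_{M^i_0}$. Applying this to the loops coming from the parabolic generators of $\pi_1(C^i_j)$ and inserting the explicit horospherical form of the distance on $\textbf{H}^n_\IK$ shows that every generator of the cross-sectional lattice, horizontal or central, has length at least $2\sinh(\inj_{M^i_0})$.

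A Minkowski-type lattice-point argument then applies: Minkowski's theorem to a finite-index translation sublattice (via Bieberbach's theorem) when $\IK=\IR$, and its analogue for lattices in the relevant $2$-step nilpotent group otherwise, yielding $Vol(N^j_0)\geq c'(\IK,n)(\sinh\inj_{M^i_0})^{m}$. On the other hand, the metric \eqref{sphericalmetric} gives $Vol(B_r)=Vol(S^{m})\int_0^r(\tfrac12\sinh 2t)^{\dim(\IK)-1}(\sinh t)^{m-\dim(\IK)+1}\,dt$, and since the integrand is increasing this is at most a polynomial in $r$ times $e^{\frac12(m+\dim(\IK)-1)r}$, the exponent being half of $\lim_{r\to\infty}\mathcal{H}_{\IK,n}(r)$ computed in Corollary \ref{mean-curvature}. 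Because $m\geq\dim(\IK)-1$ in all cases, the lower bound $Vol(C^i_j)\gtrsim e^{m\,\inj_{M^i_0}}$ dominates $Vol(B_{\frac12\inj_{M^i_0}})$ once $\inj_{M^i_0}$ is large, with the remaining small values checked directly; hence $Vol(C^i_j)\geq Vol(B_{\frac12\inj_{M^i_0}})$, and summing over $j$ gives \eqref{333}.

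For the monotonicity, recall that a cofinal filtration is nested, so $\Gamma_{i+1}\leq\Gamma_i$ with $d:=[\Gamma_i:\Gamma_{i+1}]<\infty$ and the induced map $M_{i+1}\to M_i$ is a $d$-sheeted Riemannian covering; thus $Vol(M_{i+1})=d\cdot Vol(M_i)$ exactly, whereas each cusp of $M_i$ has at most $d$ preimages in $M_{i+1}$ (these are indexed by the $\Gamma_{i+1}$-orbits inside a single $\Gamma_i$-orbit of parabolic fixed points, of which there are at most $d$), so $N_c(M_{i+1})\leq d\cdot N_c(M_i)$ and dividing gives $R_{g_\IK}(M_{i+1})\leq R_{g_\IK}(M_i)$. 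The monotonicity is routine; the main obstacle is the cusp--volume comparison, in particular the step forcing all directions of the cross-sectional lattice---including the central directions of the Heisenberg-type cross sections, which contract at the faster rate $e^{-4s}$---to have length $\gtrsim\sinh(\inj_{M^i_0})$, since this is exactly what makes the Minkowski bound on $Vol(N^j_0)$ strong enough to dominate $Vol(B_{\frac12\inj_{M^i_0}})$.
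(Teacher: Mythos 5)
Your upper bound argument takes a genuinely different and much more complicated route than the paper's, and it has real gaps. The paper's proof is short and avoids all the lattice geometry: by the intermediate value theorem, each cusp $C^i_a$ of $M_i$ contains a point $z_a$ with $\inj_{z_a}=\tfrac12\inj_{M^i_0}$ exactly (the injectivity radius is $\geq\inj_{M^i_0}$ on $\partial C^i_a$ and tends to $0$ down the cusp). Since the injectivity radius function is $1$-Lipschitz and $\inj_q\geq\inj_{M^i_0}$ for every $q\in\partial C^i_a$, one has $d(q,z_a)\geq|\inj_q-\inj_{z_a}|\geq\tfrac12\inj_{M^i_0}$; hence the ball $B_{\frac12\inj_{M^i_0}}(z_a)$ lies inside $C^i_a$, is embedded (the radius equals $\inj_{z_a}$), and is therefore isometric to the model ball. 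Summing the volumes of these pairwise disjoint balls gives \eqref{333} with exact constant $1$. Nothing about cusp cross sections, lattice covolumes, or Minkowski's theorem is needed.

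The concrete problems with your route: first, the Minkowski step for $\IK=\IC,\IH,\IO$ would need a genuine Minkowski-type theorem for lattices in the relevant $2$-step nilpotent groups, taking into account that the hyperbolic displacement of a central parabolic element does \emph{not} obey the real-hyperbolic formula $d=2\sinh^{-1}(\ell/2)$ (the central direction contracts as $e^{-4s}$, not $e^{-2s}$), so the bound ``every generator has length $\geq 2\sinh(\inj)$'' is asserted but not established in the Heisenberg-type cases. Second, and more seriously, your chain of estimates produces $Vol(C^i_j)\geq c(\IK,n)\,c'(\IK,n)(\sinh\inj_{M^i_0})^m$ for constants coming from Bieberbach, Minkowski, and the warped-product integral, and you then need this to dominate $Vol(B_{\frac12\inj_{M^i_0}})$. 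The asymptotic comparison for large and small $\inj$ is fine, but you defer the intermediate range to ``checked directly'' with no argument, and there is no a priori reason the accumulated constants are favorable there; at best your method gives $R_{g_\IK}(M_i)\leq C(\IK,n)\,Vol(B_{\frac12\inj_{M^i_0}})^{-1}$ for some unspecified $C$, not the inequality as stated. Your monotonicity argument is correct and matches the paper's (each cusp has at most $[\Gamma_i:\Gamma_{i+1}]$ preimages while the volume multiplies by exactly $[\Gamma_i:\Gamma_{i+1}]$), so the issue is entirely in the upper bound.
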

\begin{proof}
For any cusp $C^{i}_{a}$, there is a point $z_{a} \in C^{i}_{a}$ such that
\[
\inj_{z_a}=\frac{\inj_{M^i_{0}}}{2}.
\]
 Note that  $z_a$ is in the interior of  $C^i_a$. In particular,  for any point $q\in  \partial C^{i}_{a}$, we have
\[
d_{g_{i}}(q, z_a)\geq|\inj_{q}-\inj_{z_{a}}|>\frac{\inj_{M^i_{0}}}{2}. 
\]
We then have  
\[
 Vol(M_{i})> \sum_{a}Vol(B_{\frac{1}{2}\inj_{M^i_{0}}}(z_a))
=N_{c}(M_{i})Vol(B_{\frac{1}{2}\inj_{M^i_{0}}}),
\]
which implies \eqref{333}.
   
To show the monotonicity of $R_{g_{\IK}}$ along the tower, we argue as follows. Let 
\[
\tau: M^{\prime}\rightarrow M
\]
be \emph{any} covering of $M$ of degree say $\kappa\geq 2$. 
As each $C_j$ is covered by at most $\kappa$ disjoint cusps in $M^{\prime}$, we compute:
\[
R_{g_{\IK}}(M^\prime)=\frac{N_{c}(M^\prime)}{Vol(M^\prime)}\leq\frac{\kappa\cdot N_{c}(M)}{\kappa\cdot Vol(M)}=R_{g_{\IK}}(M),
\]
completing the proof. 
\end{proof}   
   
\begin{remark}
As shown in Lemma \ref{normalizedcusps}, $R_{g_{\IK}}$ is non-increasing in a tower of coverings. In particular, hyperbolic manifolds that do \emph{not} finitely cover any other hyperbolic manifold tend to have large $R_{g_{\IK}}$. Thus, hyperbolic manifolds with \emph{minimal} volume are good candidates on which to test the sharpness of any possible cusp count. Interestingly, this is exactly the case for the \emph{sharp} cusp count for complex-hyperbolic surfaces presented in \cite{DD14}. Indeed, such a bound is saturated by  an arithmetic  $4$-cusped complex-hyperbolic surface of minimal volume constructed by Hirzebruch in \cite{Hirzebruch}. 
\end{remark}

\subsection{Bounds for Cohomology on Towers}
   
We can now derive bounds for the de Rham cohomology of real- and complex-hyperbolic manifolds. Recall that for a complete finite volume  hyperbolic manifold, we have the following topological interpretation for the $L^2$-cohomology groups.

\begin{theorem}\label{Zucker} \cite[Theorems 6.2 and 6.9]{Zuc1} 
Let $ M^{n}$ be a complete finite volume real- or complex-hyperbolic manifold of real dimension $n$.  We have the following isomorphisms:
\begin{equation*} 
\mathcal{H}^k(M^n)= 
\left\{ \begin{array}{rl} & H^{k}(M^n; \IR), \quad \text{if } \quad k <\frac{n-1}{2};\\
&\\
& Im\big(H^{k}_{c}(M^n;\IR)\to H^{k}(M^n; \IR)), \quad \text{if } \quad k=\frac{n-1}{2},\frac{n}{2},\frac{n+1}{2};\\
&\\
&H^{k}_{c}(M^n), \quad \text{if } \quad k>\frac{n+1}{2};
\end{array} \right. 
\end{equation*}
where by $H^k_{c}(M^{n})$ we denote the cohomology with compact support in degree $k$ of $M^{n}$.
\end{theorem}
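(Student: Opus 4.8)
The statement is due to Zucker; we indicate how it can also be recovered within the framework developed above. The plan is to reduce the computation of $\mathcal H^k(M^n)$ to a local computation of the $L^2$-cohomology of a single cusp, carried out by Fourier analysis along the cross-section exactly as in Section \ref{standard} and the following section on complex cusps.

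First I would recall, from the references collected in Section \ref{introduction}, that for these spaces $\mathcal H^k(M^n)$ is finite dimensional and computes the reduced $L^2$-cohomology $H^k_{(2)}(M^n)$, and that $H^k(M^n;\IR)\cong H^k(M_0;\IR)$ while $H^k_c(M^n;\IR)\cong H^k(M_0,\partial M_0;\IR)$, since $M^n$ deformation retracts onto its compact core $M_0$ and each cusp is a product end. The assertion then becomes: $H^k_{(2)}(M^n)$ equals $H^k(M_0)$ for $k<\frac{n-1}2$, equals $\operatorname{Im}(H^k(M_0,\partial M_0)\to H^k(M_0))$ for $k\in\{\frac{n-1}2,\frac n2,\frac{n+1}2\}$, and equals $H^k(M_0,\partial M_0)$ for $k>\frac{n+1}2$. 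Since the Hodge star gives $\mathcal H^k(M^n)\cong\mathcal H^{n-k}(M^n)$, and on the topological side Poincar\'e--Lefschetz duality gives $H^k(M_0,\partial M_0)\cong H^{n-k}(M_0)$, the range $k>\frac{n+1}2$ follows formally from the range $k<\frac{n-1}2$, so only the first two cases require genuine work.

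The core step is to compute the $L^2$-cohomology of a single cusp $C=[0,\infty)\times N$, both absolutely and relative to $\{0\}\times N$. Fourier expanding a closed $L^2$ $k$-form along the compact cross-section $N$, the nonzero modes are $L^2$-exact: on $C$ one produces explicit primitives exactly as in \eqref{bv}--\eqref{bv1}, using that the translation generators act with nonzero eigenvalue and that the exponential warping of the cusp metric keeps the primitive in $L^2$. Hence only the zero Fourier mode can carry $L^2$-cohomology, and that mode reduces to a weighted de Rham complex on $[0,\infty)$ with values in $H^*(N)$: a class represented by a translation-invariant $N$-form $\psi$ (possibly wedged with $ds$) is $L^2$ on $C$ precisely when its pointwise norm squared, which grows like $e^{ws}$ for $w$ determined by the number of ``horizontal'' versus ``fiber'' legs of $\psi$ and the warping exponents, integrates against the volume density $e^{-Qs}$ to something finite ($Q=n-1$ in the real case, $Q=n$ in the complex case, in the normalizations of Section \ref{standard} and the following section). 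Comparing $w$ with $Q$, together with the closedness relations, produces the degree thresholds $\frac{n-1}2$ and $\frac{n+1}2$; this is precisely the bookkeeping already carried out in Propositions \ref{l2decay} and \ref{jl2decay} and in the zero-mode discussion preceding Lemma \ref{2nd}. The outcome is that $H^k_{(2)}(C)=H^k(N)$ strictly below the threshold and $0$ strictly above, with the relative version dual.

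Feeding these local statements into a Mayer--Vietoris sequence for reduced $L^2$-cohomology attached to $M^n=M_0\cup(\cup_j C_j)$, glued along collars, identifies $H^k_{(2)}(M^n)$ with $H^k(M_0)$ below the threshold and sandwiches it between $\operatorname{Im}(H^k(M_0,\partial M_0)\to H^k(M_0))$ and itself in the intermediate degrees, which finishes the proof modulo the duality reduction above. I expect the main obstacle to be the cusp computation in the complex-hyperbolic case: the cross-section $N$ is a nontrivial circle bundle, so $H^*(N)$ is not the exterior algebra on the invariant $1$-forms and its intrinsic differential is nonzero, and one must verify that it is the weight of a \emph{harmonic} representative, not of an arbitrary invariant one, that controls $L^2$-integrability. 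This is handled by passing to a torus cover (Remark \ref{hummel}) and using multiplicativity of both sides, after which the thresholds come out exactly as in the statement. A secondary point is that reduced $L^2$-cohomology need not enjoy an exact Mayer--Vietoris sequence; here this is harmless because the cuspidal Price inequalities force the relevant harmonic forms to decay into the cusps, so the maps in play have closed range.
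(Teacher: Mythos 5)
The paper does not prove this statement; it is quoted verbatim from Zucker \cite{Zuc1}, so there is no ``paper's own proof'' against which to match your argument, and what you have written should be judged purely on its own correctness as a proof sketch.

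There is a genuine gap in the complex-hyperbolic cusp computation, which is the case you flagged as the main obstacle. You propose to sidestep the nontrivial circle bundle $N$ by ``passing to a torus cover (Remark \ref{hummel}) and using multiplicativity of both sides.'' But Remark \ref{hummel} only guarantees a finite cover in which all cusp cross-sections are \emph{nilmanifolds}---compact quotients of the Heisenberg group---and a Heisenberg nilmanifold with nonzero Chern class (which is the case for a genuine complex-hyperbolic cusp) admits no finite torus cover, since such a cover would kill the Euler class, forcing it to be torsion, whereas in $H^2$ of a $(2n-1)$-torus with real coefficients it would then have to vanish, trivializing the bundle. So the multiplicativity-under-covers trick used for real-hyperbolic cusps (Bieberbach) simply does not transfer. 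The actual Fourier analysis must be done on the nilmanifold itself: the ``modes'' are indexed not by a lattice but by irreducible unitary representations of the Heisenberg group (a Stone--von~Neumann family together with characters of the abelianization), and the primitive constructed in your analogue of \eqref{bv} must be checked to stay in $L^2$ against the warping $e^{-4s}$ on the fiber versus $e^{-2s}$ on the base; the zero mode analysis also has to account for the fact that the invariant complex on $N$ has a nonzero differential coming from the curvature of the connection form. None of this is carried out in the paper either---the paper's complex cusp section derives only the Price-type decay inequality via $J$-invariance and never performs the Fourier decomposition---so the reduction you are leaning on does not yet exist.

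A secondary, but still substantive, point: the ``sandwiching'' you invoke in the middle degrees $k\in\{\frac{n-1}{2},\frac{n}{2},\frac{n+1}{2}\}$ together with a Mayer--Vietoris argument for reduced $L^2$-cohomology is asserted rather than proved. Reduced $L^2$-cohomology does not satisfy Mayer--Vietoris in general, and the claim that ``the cuspidal Price inequalities force the relevant harmonic forms to decay into the cusps, so the maps in play have closed range'' would itself need a proof at the critical degrees, precisely where the decay estimates of Propositions \ref{l2decay} and \ref{jl2decay} are no longer available (those require $k<\frac{n-1}{2}$, resp.\ $k<n$ with real dimension $2n$). Zucker's identification in the three middle degrees with $\operatorname{Im}(H^k_c\to H^k)$ is exactly the delicate part of \cite[Theorems 6.2 and 6.9]{Zuc1}, and your sketch defers it rather than supplying it.
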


Combining Theorem \ref{Zucker} with our previous results, we obtain a satisfactory understanding of the Betti numbers of complex-hyperbolic manifolds and even dimensional real-hyperbolic manifolds in a tower of coverings obtained by a cofinal filtration.  For odd dimensional real-hyperbolic manifolds, we need further investigation in the critical degree. In particular, we need to keep into account the normalized number of cusps coefficient $R_{g_{\IR}}$, introduced in Section \ref{NNC}.  \\

Let $(M^{n}:=\Gamma\backslash\textbf{H}^{n}_{\mathbb R}, g_{\IR})$ be a complete finite volume real-hyperbolic manifold. Given a cofinal filtration $\{\Gamma_{i}\}$ of $\Gamma$, let $\pi_i: M_{i}\rightarrow M$ denote   the regular Riemannian cover of $M$ associated to $\Gamma_{i}$. Decompose $M^{n}$ as a disjoint union
\[
M=M_{0}\cup(\cup_j C_j),
\]
where $M_0$ is a compact manifold with boundary and each $C_j$ is a cusp. Define $M^i_{0}:=\pi^{-1}_{i}(M_{0})$. It follows that:
\[
M^i_{0}= M_{i}\setminus \bigcup^{N_{c}(M_{i})}_{s=1}C^i_{s}, \quad \partial M^i_{0}=\bigcup^{N_{c}(M_{i})}_{s=1}\p C^i_{s}.
\]
From the long exact sequence in cohomology, we have
\[
...\rightarrow H^{k}(M^i_{0}, \partial M^i_{0}; \IR)\rightarrow H^k(M_i; \IR)\rightarrow H^k(\cup_{s}C^i_{s}; \IR)\rightarrow H^{k+1}(M^i_{0}, \partial M^i_{0}; \IR)\rightarrow...,
\]
so that
\begin{align}\nonumber
\dim_{\IR}H^{k}(M_{i}; \IR)&\leq \dim_{\IR}Im\big(H^{k}(M^i_{0}, \partial M^i_{0}; \IR)\to H^{k}(M_{i}; \IR))   + L(n)N_{c}(M_i)\nonumber\\
&=\dim_{\IR}Im\big(H^{k}_c(M_i; \IR)\to H^{k}(M_{i}; \IR))   + L(n)N_{c}(M_i).
\end{align}
for some positive constant $L(n)>0$ \emph{independent} of the index $i$.  This follows from the fact that every cusp cross section is flat, and that by Bieberbach's theorem there are only finitely many diffeomorphism types of flat manifolds with given dimension.  Suppose now $n=2m+1$. By Theorem \ref{Zucker} we have: 
\[
\dim_{\IR}Im\big(H^{m}_c(M_i; \IR)\to H^{m}(M_{i}; \IR)) =\dim_{\IR}\mathcal{H}^m_{g_{\IR}}(M_i),
\]
and so we obtain:
\begin{align}\label{Hkbound}
\frac{\dim_{\IR}H^{k}(M_{i}; \IR)}{Vol(M_{i})}\leq L(n)R_{g_{\IR}}(M_{i})+\frac{b^{k}_{2}(M_i)}{Vol (M_{i})}.
\end{align}
We are now ready to prove the following proposition.

\begin{proposition}
Let $(M^{n}:=\Gamma\backslash\textbf{H}^{n}_{\IR}, g_{\IR})$ be a complete finite volume real-hyperbolic manifold of dimension $n=2k+1$, for some positive integer $k$. Given a cofinal filtration $\{\Gamma_{i}\}$ of $\Gamma$, let $\pi_{i}: M_{i}\rightarrow M$ denote the regular Riemannian cover of $M$ associated to $\Gamma_{i}$. Set $g_{i}:=\pi^*_ig_{\IR}$. Decompose $M^n$ as a disjoint union
\[
M^n=M_{0}\cup(\cup_j C_j),
\]
where $M_0$ is a compact manifold with boundary and each $C_j$ is a cusp.  Define $M^i_{0}:=\pi^{-1}_{i}(M_{0})$, and let $\inj_{M^i_{0}}:=\inf_{x\in M^i_{0}} inj_{g_{i}}(x)$. There exists a constant $\lambda(k)$ such that
\begin{align}\label{logdecay}
\frac{\dim_{\IR}H^{k}(M_{i}; \IR)}{Vol(M_{i})}\leq \frac{\lambda(k)}{\ln(V_{min}(M^{i}_{0}))}. 
\end{align}
In particular, the sub volume growth of $\dim_{\IR}H^{k}(M_{i}; \IR)$ is at least of the order $\inj^{-1}_{M^i_{0}}$.
\end{proposition}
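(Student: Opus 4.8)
The plan is to read the estimate off directly from the machinery already in place: the cohomology inequality \eqref{Hkbound}, the critical-degree $L^2$-estimate of Theorem \ref{criticalreaL}, the cusp-count bound of Lemma \ref{normalizedcusps}, and the fattening statement of Theorem \ref{fat}. The only genuine work is translating everything between the injectivity radius $\inj_{M^i_0}$ and the quantity $V_{min}(M^i_0)$.

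So first I would record the elementary dictionary between these two. Since $M_i$ is real-hyperbolic, for every $x\in M^i_0$ the metric ball $B_{\inj_x}(x)$ is isometric to a ball of the same radius in $\textbf{H}^n_{\IR}$; writing $v(r)$ for the (continuous, strictly increasing) volume of a radius-$r$ ball in $\textbf{H}^n_{\IR}$, this gives $Vol(B_{\inj_x}(x))=v(\inj_x)$, and hence $V_{min}(M^i_0)=\inf_{x\in M^i_0}v(\inj_x)=v(\inj_{M^i_0})$ by compactness of $M^i_0$. The elementary bounds $\tfrac14 e^t\le\sinh t\le\tfrac12 e^t$ (for $t\ge 1$) show that $v(r)$ is pinched between $n$-dependent constant multiples of $e^{(n-1)r}$ for $r\ge 2$, so $\ln V_{min}(M^i_0)=(n-1)\inj_{M^i_0}+O(1)$. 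By Theorem \ref{fat}, whose hypotheses hold because $\Gamma$ is a finitely generated linear group and hence residually finite, $\inj_{M^i_0}\to\infty$ along the cofinal tower; thus there are a constant $c_n>0$ and an index $i_0$ with $\inj_{M^i_0}\ge 1$ and $\inj_{M^i_0}\ge c_n\ln V_{min}(M^i_0)$ for all $i\ge i_0$.

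Next I would estimate the two terms on the right of \eqref{Hkbound} for $i\ge i_0$. Since $\inj_{M^i_0}\ge 1$ and $M_i=M^i_0\cup(\cup_s C^i_s)$, Theorem \ref{criticalreaL} gives $b^k_2(M_i)/Vol(M_i)\le B(k)/\inj_{M^i_0}\le B(k)/\big(c_n\ln V_{min}(M^i_0)\big)$. Lemma \ref{normalizedcusps} gives $R_{g_{\IR}}(M_i)\le v\big(\tfrac12\inj_{M^i_0}\big)^{-1}$, which by the lower bound on $v$ is at most a constant times $e^{-(n-1)\inj_{M^i_0}/2}$, hence $o(1/\inj_{M^i_0})$; after enlarging $i_0$ I may assume $L(n)R_{g_{\IR}}(M_i)\le 1/\inj_{M^i_0}$. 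Substituting into \eqref{Hkbound} then yields, for $i\ge i_0$,
\[
\frac{\dim_{\IR}H^k(M_i;\IR)}{Vol(M_i)}\ \le\ \frac{1}{\inj_{M^i_0}}+\frac{B(k)}{c_n\ln V_{min}(M^i_0)}\ \le\ \frac{1+B(k)}{c_n\,\ln V_{min}(M^i_0)},
\]
so the claimed estimate holds with $\lambda(k):=(1+B(k))/c_n$, which depends only on $n=2k+1$ and $B(k)$, i.e.\ only on $k$. The finitely many indices $i<i_0$ contribute only finitely many finite ratios (and $\ln V_{min}(M^i_0)>0$ once $\inj_{M^i_0}\ge 1$), so they are absorbed by enlarging $\lambda(k)$; the concluding ``in particular'' clause follows from $\ln V_{min}(M^i_0)\ge\tfrac{n-1}{2}\inj_{M^i_0}$ for $i$ large.

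I do not expect a serious obstacle, since the proposition is essentially a synthesis of results already proved. The one point deserving care is checking that the cusp contribution $R_{g_{\IR}}(M_i)$, which decays exponentially in $\inj_{M^i_0}$, is genuinely dominated by the slowly decaying critical-degree $L^2$-term of order $1/\inj_{M^i_0}$ (itself comparable to $1/\ln V_{min}(M^i_0)$), so that the cusps do not spoil the rate; this is precisely where the identity $V_{min}(M^i_0)=v(\inj_{M^i_0})$, which makes the passage between the two normalizations exact, is used.
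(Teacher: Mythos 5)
Your proof is correct and follows essentially the same route as the paper: combine the cohomology inequality \eqref{Hkbound} with the cusp-count bound of Lemma \ref{normalizedcusps} and the critical-degree $L^2$-bound (Theorem \ref{criticalreaL}, equivalently the critical-degree case of Theorem \ref{realhypcusps}), then translate $\inj_{M^i_0}$ into $\ln V_{min}(M^i_0)$ via the hyperbolic ball-volume function. You merely make explicit what the paper leaves implicit, namely the identity $V_{min}(M^i_0)=v(\inj_{M^i_0})$ and the resulting equivalence $\ln V_{min}(M^i_0)\asymp\inj_{M^i_0}$, together with the observation (via residual finiteness and Theorem \ref{fat}) that the hypothesis $\inj_{M^i_0}\geq 1$ used in those supporting results eventually holds.
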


\begin{proof}
Combining Equation \eqref{Hkbound}, Lemma \ref{normalizedcusps} and Theorem \ref{realhypcusps} we obtain
\[
\frac{\dim_{\IR}H^k(M_{i}; \IR)}{Vol (M_{i})}\leq L(n)Vol(B_{\frac{1}{2}\inj_{M_0^i}})^{-1} +\frac{c_{3}(n, k)}{\inj_{M^i_{0}}},
\]
so that we can find another constant $\lambda(k)>0$ so that Equation \eqref{logdecay} is satisfied. In particular, by Theorem \ref{fat} we have
\[
\lim_{i\to\infty}\frac{\dim_{\IR}H^{k}(M_{i}; \IR)}{Vol(M_{i})}=0,
\]
with a decay of the order at least $\inj^{-1}_{M^i_{0}}$.
\end{proof}

\section{Appendix}
\subsection{Comparison with the Trace Formula}
In this appendix, we sharpen standard matrix coefficient estimates to show how such techniques can be used (albeit at the expense of much greater computation) to derive an alternate proof of a subset of our results in the earlier sections of this paper.

Let $G $ be a semisimple Lie group, $K\subset G$ a maximal compact subgroup, and $\Gamma\subset G$ a discrete torsion free cocompact  subgroup. Let $\mathfrak{g}$ and $\mathfrak{k}$ denote the lie algebras of $G$ and $K$ respectively. Write the Cartan decomposition of $\mathfrak{g}$ as 
$\mathfrak{g} = \mathfrak{k}\oplus \mathfrak{p}$. Let $M:=\Gamma\backslash G/K$ denote the corresponding compact locally symmetric space. In \cite{Matsushima}, Matsushima established the following link between cohomology and representation theory.
\begin{theorem}[Matsushima] 
For an irreducible unitary representation $\omega$ of $G$, let $\omega_K$ denote the restriction of $\omega$ to $K$, $N(\Gamma, \omega)$ denote the multiplicity of $\omega$  in $L^2(\Gamma\backslash G)$,  and $m_k(\omega_K)$ denote the multiplicity of $\omega_K$ in $\Lambda^k\mathfrak{p}^*$. 
Then the Betti numbers are given by the following finite sum: 
\begin{align}\label{Matsushima}b^k(M) = \sum_{\omega:\omega\text{ has vanishing casimir}}m_k(\omega_K)N(\Gamma,\omega).
\end{align}
\end{theorem}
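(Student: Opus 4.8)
The plan is to derive Matsushima's formula from Hodge theory together with the Matsushima--Murakami $(\mathfrak g,K)$-model of the de Rham complex of $M=\Gamma\backslash G/K$, specializing to the cocompact case in which the relevant spectral decompositions are discrete. Throughout I work with complex coefficients, so that $b^k(M)=\dim_{\IC}H^k(M;\IC)$.

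First I would invoke the Hodge theorem: since $M$ is closed, $b^k(M)=\dim_{\IC}\mathcal H^k(M)$, the space of harmonic $k$-forms, and elliptic regularity lets me work inside $L^2\Omega^k(M)$. Using the $K$-bundle $G\to G/K$ and its quotient $\Gamma\backslash G\to M$, I would identify $L^2\Omega^k(M)\cong\bigl(L^2(\Gamma\backslash G)\otimes\Lambda^k\mathfrak p^*\bigr)^K$, with $K$ acting on $L^2(\Gamma\backslash G)$ by right translation and on $\Lambda^k\mathfrak p^*$ by $\mathrm{Ad}^*$; this carries the smooth forms onto $\bigl(C^\infty(\Gamma\backslash G)\otimes\Lambda^k\mathfrak p^*\bigr)^K$ and, by the Matsushima--Murakami formulas, carries $d$ and its formal adjoint $d^*$ to explicit algebraic operators built from an orthonormal basis of $\mathfrak p$. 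Because $\Gamma$ is cocompact, $L^2(\Gamma\backslash G)$ is the Hilbert direct sum $\bigoplus_{\omega\in\widehat G}N(\Gamma,\omega)\,H_\omega$ with every $N(\Gamma,\omega)$ finite, whence $L^2\Omega^k(M)=\bigoplus_\omega N(\Gamma,\omega)\,\bigl(H_\omega^\infty\otimes\Lambda^k\mathfrak p^*\bigr)^K$. Since $d$, $d^*$, and $\Delta$ are built from the right $G$-action and from $K$-fixed tensors, they respect this decomposition block by block.

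The heart of the argument is Matsushima's identity, a form of Kuga's lemma: on each $\omega$-block an explicit computation of $dd^*+d^*d$ via the Matsushima--Murakami formulas shows that $\Delta$ acts as multiplication by the scalar $-c(\omega)$, where $c(\omega)$ is the Casimir eigenvalue of $\omega$. (Concretely one writes $\Omega_{\mathfrak g}=\Omega_{\mathfrak k}+\sum_i X_i^2$ for an orthonormal basis $\{X_i\}$ of $\mathfrak p$ and identifies $\Delta$, after a cancellation of $\mathfrak k$-terms valid on $K$-invariants, with $-R(\Omega_{\mathfrak g})$, which acts on $H_\omega$ by $c(\omega)$.) Consequently the $\omega$-block contributes to $\mathcal H^k(M)$ precisely when $c(\omega)=0$, i.e. when $\omega$ has vanishing Casimir, in which case it contributes all of $\bigl(H_\omega\otimes\Lambda^k\mathfrak p^*\bigr)^K$. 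The dimension of this space is $\dim\mathrm{Hom}_K\bigl(\overline{\omega_K},\Lambda^k\mathfrak p^*\bigr)=m_k(\omega_K)$, the last equality holding because $\Lambda^k\mathfrak p^*$ is the complexification of a real $K$-representation, so $\omega_K$ and $\overline{\omega_K}$ occur in it with equal multiplicity. Summing over $\omega$, each counted with multiplicity $N(\Gamma,\omega)$, yields the stated formula; the sum is finite since only finitely many $K$-types occur in $\Lambda^k\mathfrak p^*$ and, by Harish-Chandra's finiteness theorem, only finitely many irreducible unitary representations of $G$ with a prescribed $K$-type and vanishing Casimir exist.

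I expect the main obstacle to be the precise verification of Matsushima's identity --- the Laplacian-equals-Casimir computation --- which requires the explicit algebraic formulas for $d$ and $d^*$ in the $(\mathfrak g,K)$-model and careful bookkeeping of the $\mathfrak k$- and $\mathfrak p$-parts of the Casimir on $K$-invariants. A secondary technical point is justifying that the unbounded operators $d,d^*,\Delta$ commute with the infinite Hilbert direct sum and that all harmonic forms lie in the smooth vectors; both follow from ellipticity of $\Delta$ on the closed manifold $M$ together with the discreteness and finite multiplicities of the decomposition of $L^2(\Gamma\backslash G)$.
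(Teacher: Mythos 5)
Your argument is correct and is the standard Hodge-theoretic/representation-theoretic derivation: identify $L^2\Omega^k(M)$ with $\bigl(L^2(\Gamma\backslash G)\otimes\Lambda^k\mathfrak p^*\bigr)^K$, decompose $L^2(\Gamma\backslash G)$ discretely using cocompactness of $\Gamma$, and apply Kuga's lemma to read off the harmonic forms blockwise. The paper does not give its own proof but cites Matsushima's 1967 paper, and your proposal is essentially the argument found there (and in standard references such as Borel--Wallach), so there is nothing to reconcile; one minor remark is that the finiteness of the sum is most economically seen from $b^k(M)<\infty$ together with nonnegativity of the summands, rather than from the Harish-Chandra finiteness theorem, though your version is also valid.
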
 

The Selberg trace formula provides a mechanism for expressing the $N(\Gamma,\omega)$ in terms orbital integrals, which may, however, be difficult to interpret. In \cite{DW78} DeGeorge-Wallach  introduced a method for estimating these multiplicities from above, which we now recall.

 Let $\chi$ denote the characteristic function of the identity component of $\pi^{-1}(B_{\inj_M}(\Gamma K))\subset G$, where $\pi:G\to M$ is the natural projection.  Let $\omega$ be an irreducible unitary  representation of $G$ on a Hilbert space  $H_\omega$. Let $v\in H_\omega$ be a unit vector. Let $N(\Gamma,\omega)$
denote the multiplicity of $\omega$ in $L^2(\Gamma\backslash G)$. Then DeGeorge-Wallach use the trace formula to show that \cite[Corollary 3.2]{DW78}
\begin{align}\label{DW}
\frac{N(\Gamma, \omega)}{Vol(\Gamma\backslash G)}\leq \frac{1}{\int_G\chi(g)\langle \omega(g)v,v\rangle^2dv}.
\end{align}
The role of $\chi$ is to ensure that only the (conjugacy class of the) identity enters the trace formula.
Hence lower bounds on the absolute value of the matrix coefficient $\langle \omega(g)v,v\rangle$ become upper bounds on the normalized multiplicity $\frac{N(\Gamma,\omega)}{Vol(\Gamma\backslash G)}$.

For the geometer, perhaps the most interesting instance of $H_\omega$ is the subspace of $L^2(\Gamma\backslash G)$ generated by $L^2$-harmonic forms of suitable $K$-invariance type. Let $h$ be a harmonic $m$-form on $M$  with $L^2$-norm one. Let $\pi:\Gamma\backslash G\to M$ be the quotient map.  Using left invariant vector fields to define a canonical trivialization of $\Lambda^mT^*(\Gamma\backslash G)$, the metric induces a pairing for all $x,y\in \Gamma\backslash G$, 
$$\langle\cdot,\cdot\rangle:\Lambda^mT^*_{x}(\Gamma\backslash G)\times \Lambda^mT^*_y(\Gamma\backslash G)\to \IR. $$
Let $\sigma$ denote the adjoint representation of $K$ on $\Lambda^m\mathfrak{p}^*$. Viewed as a $\Lambda^m\mathfrak{p}^*$ valued function, $\pi^*h$ satisfies for all $k\in K$
\begin{align}
\label{ktrans}\pi^*h(gk)=\sigma^{-1}(k)h(g).
\end{align} Choosing $v\in H_\omega$ to be $\pi^*h$, the matrix coefficient becomes 
\begin{align}\label{matcoeff}\langle \omega(g)v,v\rangle = \int_{\Gamma\backslash G}\langle \pi^*h (xg),\pi^*h(x)\rangle dv_x,\end{align}
and lower bounds on this inner product lead to upper bounds on $b^k(M)$ via \eqref{Matsushima}.  

 The estimate given in \cite{DS17}, on the other hand, essentially reduces to 
\[
\frac{b^m(M)}{Vol(M)}\leq Csup_{p\in M, h\not = 0}\frac{\int_{B_1(p)}|h|^2dv}{\int_{B_{\inj_M}(p)}|h|^2dv},
\]
for some constant $C$ depending only on the curvature of $M$, if we assume that $\inj_M>1$. Hence lower bounds on the growth of norms of harmonic forms is once again the key component of the estimate. In this more geometric approach, the restriction to embedded balls in $M$ rather than larger balls in the universal cover is not a function of the Selberg trace formula, but is instead a simple mechanism to ensure that 
$\int_{B_{R}(\tilde p)\subset \tilde M}|\omega^*h|^2dv\leq \int_M|h|^2dv$,  where $\omega:\tilde M \to M$ is the  universal cover.  The role of the Price inequality  - like the differential equations governing the matrix coefficients - is to provide a mechanism for estimating the growth of $\int_{B_{r}(p)}|h|^2dv$ as a function of $r$.

In the remainder of this  appendix we present a direct analysis of matrix coefficients arising in the estimation of the first Betti numbers of compact quotients of complex-hyperbolic spaces and all but middle degree Betti numbers of compact quotients of real-hyperbolic spaces. The same method, albeit with additional complications, can be used to estimate the matrix coefficients required to bound all the Betti numbers outside middle degree for compact complex-hyperbolic spaces, but we do not include those computations here, as they are significantly longer than the method used in Section  \ref{Complex Hyperbolic} and simply reproduce the results given there.  There is, of course, a vast literature on the estimation of matrix coefficients, but we found our approach in these cases so much simpler than the general theory, that we felt it might be useful, especially to the non-expert, to include it here.  

 
\subsection{Real rank one generalities}\label{generalities}
Let $G$ be a real rank one semisimple Lie group. 
Fix a maximal $\IR$-split torus $A\subset G$, and a Weyl chamber $\mathfrak{a}^+$. Denote the positive restricted roots of $A$ by  $\lambda$ and $ 2\lambda $. Let $m(\lambda)$ and $m(2\lambda)$ denote their multiplicities.  Choose an element $u\in \mathfrak{a}^+$ so that  $\lambda(u)=1.$   Let $U$ denote the covector metrically dual to $u$. Write $a(t):= exp(tu).$

Using the $KAK$ decomposition of $G$, we reduce the analysis of the matrix coefficient $\eqref{matcoeff}$ to the study of the function  
\begin{align}\phi(t):= \int_{\Gamma\backslash G}\langle \pi^*h(xa(t)),\pi^*h(x)\rangle dv_x.\end{align}
In particular, it is easy to show that for some $c(G,m)>0$ depending only on $G$ and $m$, with $\langle \omega(g)v,v\rangle$ as in \eqref{matcoeff}, 
\begin{align}\label{above}\int_{B_R}\langle \omega(g)v,v\rangle^2dg \geq c(G,m)\int_0^R\phi(t)^2\sinh(t)^{m(\lambda)+m(2\lambda)}\cosh(t)^{m(2\lambda)} dt.\end{align}

Combining \eqref{Matsushima}, \eqref{DW}, and \eqref{above}, we obtain for some $C(G,m)>0$, depending only on $G$ and $m$, 
\begin{align}\label{DWA}\frac{b^m(\Gamma\backslash G/K)}{Vol(\Gamma\backslash G/K)}\leq \frac{C(G,m)}{\int_0^{\inj_M}\phi(t)^2\sinh(t)^{m(\lambda)+m(2\lambda)}\cosh(t)^{m(2\lambda)} dt}.
\end{align}
We prove the following results.

\begin{proposition}\label{propa}
Let $G=SU(n,1)$ and $m=1$. 
For $t>1,$ there are constants $c_1(n),c_2(n)>0$ such that 
\begin{align}
\frac{c_1(n)}{\sinh(t)}\leq \phi(t)\leq \frac{c_2(n)}{\sinh(t)}.
\end{align}
\end{proposition}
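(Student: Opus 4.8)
The plan is to analyze $\phi(t) = \int_{\Gamma\backslash G}\langle \pi^*h(xa(t)),\pi^*h(x)\rangle\,dv_x$ directly, exploiting that $h$ is a harmonic $1$-form on a complex-hyperbolic manifold. The key structural input is that for $G = SU(n,1)$, the covector $U$ dual to $u \in \mathfrak{a}^+$ pulls back, under the geodesic flow interpretation, to (a multiple of) the radial $1$-form $dr$ along geodesics emanating in the $A$-direction; and the function $t\mapsto \pi^*h(xa(t))$ is governed by the same Lie-derivative identity \eqref{lie} used throughout Section \ref{Price}. Concretely, I would first reduce, via the $KAK$ decomposition and the $K$-equivariance \eqref{ktrans}, to understanding the decay of the averaged radial inner product; this is where the one-form hypothesis $m=1$ simplifies everything, since $\Lambda^1\mathfrak{p}^*$ decomposes into just the $dr$-component and its orthogonal complement, on which $K = U(n)$ acts with the two eigenspaces matching the multiplicities $m(\lambda) = 2n-2$, $m(2\lambda)=1$.

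The main computational step is to derive a second-order ODE (or a first-order system) for $\phi(t)$. Differentiating under the integral sign and using that $h$ is closed and coclosed, together with the explicit connection coefficients on $\textbf{H}^n_{\IC}$ recorded in Proposition \ref{2nd-fundamental} (equivalently Corollary \ref{mean-curvature}, with eigenvalues $2\coth(2r)$ and $\coth(r)$), one obtains an equation whose indicial behavior at $t\to\infty$ is that of $e^{-t}\sim 1/\sinh(t)$. More precisely, I expect $\phi$ to satisfy an equation of the form $\phi'' + \mathcal{H}_{\IC,n}(t)\phi' - (\text{const})\phi = 0$-type, or after the substitution suggested by \eqref{above} (absorbing the weight $\sinh^{m(\lambda)+m(2\lambda)}\cosh^{m(2\lambda)} = \sinh^{2n-1}\cosh$), a self-adjoint Sturm-Liouville form whose two independent solutions behave like $\sinh(t)^{-1}$ and $\sinh(t)^{2n-2}$ (or similar) as $t\to\infty$. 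Since $h\in L^2$, the relevant solution must be the decaying one, forcing $\phi(t)\asymp \sinh(t)^{-1}$; the constants $c_1(n),c_2(n)$ then come from comparing $\phi$ at $t=1$ (where positivity and finiteness are clear, $\phi(0) = \|h\|_{L^2}^2 = 1 > 0$, and $\phi(1)>0$ by a unique-continuation / openness argument for harmonic forms) against the ODE's asymptotics.

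The hard part will be establishing the \emph{lower} bound $\phi(t)\geq c_1(n)/\sinh(t)$, equivalently ruling out that $\phi$ decays faster (e.g., like $\sinh(t)^{2-2n}$ if there were cancellation). This requires showing $\phi(t) > 0$ for all $t$, or at least that $\phi$ does not change sign — which I would approach either by a maximum-principle argument applied to the Sturm-Liouville equation (the decaying solution that is positive near a point stays positive), or by reinterpreting $\phi(t)$ as $\int \langle \pi^*h(xa(t)),\pi^*h(x)\rangle$ and using the Price-type monotonicity of $\int_{S_r}(\tfrac12-\mu_h)|h|^2$ from Lemma \ref{K-first} and Lemma \ref{complex first} to get a matching lower bound on the growth of $\int_{B_r}|h|^2$, hence on $\phi$. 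The upper bound, by contrast, should follow cleanly: the $L^2$ condition rules out the growing solution, so $\phi$ is a positive multiple of the decaying one up to lower-order corrections, giving $\phi(t)\leq c_2(n)/\sinh(t)$ for $t>1$. I would present the ODE derivation as the technical heart and relegate the asymptotic analysis of its solutions to a short lemma.
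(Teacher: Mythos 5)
Your overall strategy is right in spirit --- derive ODEs for $\phi$ from harmonicity, then read off decay from those equations --- and you correctly flag the lower bound as the substantive issue. But the specific mechanism you propose has a gap, and the shape of the equations you expect is off.

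First, the decisive mechanism you invoke, ``since $h\in L^2$ the relevant solution must be the decaying one,'' is a non-sequitur here. On a compact quotient every harmonic form is automatically $L^2$, so this condition contains no new information; and the matrix coefficient $\phi(t)$ of a unitary representation is a priori bounded by $\|v\|^2$, which rules out exponentially growing solutions but does nothing to distinguish a $\sinh(t)^{-1}$ decay from a faster one. In other words, the argument you hope to use for the upper bound does not actually pin down the exponent, and your own acknowledgment that ``cancellation'' might make $\phi$ decay faster is precisely the point: without an explicit positivity statement the ODE alone does not yield either bound with a uniform constant.

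Second, the ODE you expect, a single second-order equation $\phi'' + \mathcal{H}_{\IC,n}(t)\phi' - c\,\phi = 0$, is not what the harmonicity conditions produce. The paper introduces an auxiliary function $\psi(t)$ built from $e(U)e^*(U)+e(JU)e^*(JU)$ and, after a Knapp-style substitution expressing $X_j$ in terms of $\coth(t)Y_j$ and $\sinh(t)^{-1}a^{-1}(t)Y_j a(t)$, obtains a coupled first-order linear system (equations \eqref{d1f} and \eqref{ds1s}) for $(\phi-\psi,\psi)$ with coefficients involving $\coth(t)$, $\tanh(t)$, and $1/\sinh(t)$ --- not a scalar second-order equation with the mean-curvature coefficient you anticipate. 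Everything then follows from that system's explicit structure: the sign considerations give $\phi>\psi>0$; the integrating factor $\sinh(t)\tanh^{2n-1}(t/2)$ gives $\phi\geq n\psi$; the factor $\sinh(t)/\tanh(t/2)$ gives the upper bound $\phi(t)\leq\tanh(t/2)/\sinh(t)$; and the factor $\sinh(t)\tanh(t/2)$ applied to \eqref{d1ff} gives the lower bound. There is no asymptotic or indicial-exponent analysis, and no appeal to $L^2$ at all. If you want to preserve your narrative, you would need to actually exhibit the system (or the equivalent second-order ODE, which has more complicated variable coefficients than you guessed), and then your ``maximum-principle'' route for positivity is essentially the paper's sign argument; but the $L^2$-heuristic should be dropped, and your alternative suggestion of deriving the lower bound from the Price inequalities of Lemmas \ref{K-first} and \ref{complex first} is not obviously viable, since those control ball-integrals of $|h|^2$ rather than the correlation $\int\langle h(xa(t)),h(x)\rangle\,dv_x$ at a fixed $t$.
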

\begin{proposition}\label{propb}
Let $G=SO(n,1)$ and $m<\frac{n-1}{2}$. 
For $t>1,$ there is a constant $c(n,m) >0$ such that 
\begin{align}
\frac{c(n,m)}{\sinh^m(t)}\leq \phi(t) .
\end{align}
For $m=\frac{n-1}{2}$, and $t>1,$ there is a constant $c_0(n) >0$ such that 
\begin{align}
\frac{c_0(n)}{t}\leq \phi(t) .
\end{align}
\end{proposition}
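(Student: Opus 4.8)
The plan is to reduce the evaluation of $\phi$ to a classical ordinary differential equation. Since $h$ is a harmonic $m$-form on $M$, the lift $\pi^*h$ is annihilated by the form Laplacian on $\Gamma\backslash G$, so the Casimir element $\Omega$ of $G$ acts by a fixed scalar $c_m$ on the cyclic $G$-submodule of $L^2(\Gamma\backslash G)$ generated by $\pi^*h$. Consequently the matrix coefficient in \eqref{matcoeff} is a joint eigenfunction of the invariant differential operators, and its restriction to $A=\{a(t)\}$ — which is what enters \eqref{above} after the $K\times K$ averaging — satisfies the radial part of $\Omega-c_m$. Concretely, using the polar-coordinate form $dt^2+\sinh^2(t)\,d\sigma^2$ of $\textbf{g}_{\IR}$ together with the Weitzenb\"ock identity $\nabla^*\nabla=\Delta+m(n-m)$ on $m$-forms of $\textbf{H}^n_{\IR}$ — exactly the radial computation performed in Lemma \ref{K-first} for $\IK=\IR$ — one finds that $\phi$, once its two $\mathrm{Stab}_K(A)\cong SO(n-1)$-isotypic components (those of $\Lambda^m\IR^{n-1}\oplus\Lambda^{m-1}\IR^{n-1}$) are separated, solves a second-order linear ODE on $(0,\infty)$ of hypergeometric/Jacobi type, with regular singular points at $t=0$ and $t=\infty$.

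For $m<\tfrac{n-1}{2}$ the indicial exponents of this ODE at $t=\infty$ are $\{m,\,n-1-m\}$, which are distinct. The solution realized by $\phi$ is determined by two conditions: it extends smoothly across $t=0$ with $\phi(0)=\|h\|_{L^2}^2$, normalized to $1$; and $|\phi(t)|\le 1$ for all $t$, since the representation generated by $\pi^*h$ is unitary. (Equivalently, by the Vogan--Zuckerman classification that module is a positive multiple of the unique cohomological unitary representation $\omega_m$ of $SO(n,1)$ with $H^m(\mathfrak{g},K;\omega_m)\neq 0$, whose $\tau_m\times\tau_m$ spherical function has leading exponent $m$; the multiple is $\|h\|_{L^2}^2>0$.) From this $\phi(t)\asymp\sinh(t)^{-m}$ as $t\to\infty$, giving the lower bound. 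To keep the argument self-contained rather than purely asymptotic, I would instead run a Sturm comparison / monotonicity argument in the spirit of the $\mu_h$-analysis of Section \ref{Price}: show directly that $\phi>0$ on $(0,\infty)$ and that $\sinh^m(t)\,\phi(t)$ is bounded below by a positive constant, by comparing with the model equation coming from the leading part of the potential.

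For the critical degree $n=2k+1$, $m=k$, the two exponents at $t=\infty$ collide (the representation $\omega_k$ now sits at the tempered edge), so the ODE exhibits a resonance and the subdominant branch acquires a logarithmic factor; a Frobenius analysis at infinity — or the edge asymptotics of the limiting cohomological representation — then produces the stated bound $\phi(t)\ge c_0(n)/t$ for $t>1$. I expect the main obstacle to be exactly this critical case together with the lower-bound direction in general: one must track the coupling between the two radial components and decouple the system correctly, and one must exclude that $\phi$ coincides with the fast-decaying solution or changes sign — the monotonicity argument is delicate because it depends on the exact shape of the potential term, and the resonant exponent in the critical case requires separate treatment.
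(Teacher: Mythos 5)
Your approach is genuinely different from the paper's. The paper works directly with the closed/coclosed equations $d\pi^*h=0=d^*\pi^*h$, converts the $\mathfrak{p}$-directional derivatives along $A$ into $\sigma(Y_j)$-actions via the Knapp substitution \eqref{knapp}, and arrives at an explicit \emph{coupled first-order system} for $f=\phi$ and an auxiliary quantity $b$ (the matrix coefficient of $e(u)e^*(u)\pi^*h$). Integrating factors then yield $f>b>0$, the key inequality $b\leq\frac{k}{n}f$, and finally $f(t)\geq c/\sinh^m(t)$ with an explicit $c$. Your proposal instead treats $\phi$ as a $\tau$-spherical function annihilated by the Casimir, appeals to its Jacobi/hypergeometric structure, and reads off the exponents $\{m,\,n-1-m\}$ at infinity. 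That is a legitimate, more representation-theoretic route, and the exponents are located correctly. What each method buys is different: the paper's $(f,b)$ system is elementary and effective, never identifies the underlying $(\mathfrak{g},K)$-module, and produces the constant explicitly; yours would be shorter but at the price of invoking the Vogan--Zuckerman classification, the Harish-Chandra $c$-function, or a Frobenius analysis.

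Where your argument needs real work is exactly where you flag it. First, in the non-critical range you must rule out that $\phi$ lies entirely in the fast-decaying branch with exponent $n-1-m$; boundedness of $\phi$ does not select the exponent, since both branches decay. Regularity at $t=0$ pins down $\phi$ up to scale, but translating that into a nonvanishing coefficient on the $e^{-mt}$ branch requires either the $c$-function identification you allude to or the Sturm-comparison argument you propose but do not carry out. The paper's coupled $(f,b)$ decoupling is precisely the device that makes this step tractable and self-contained.

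Second, and more seriously, your treatment of the critical degree is wrong. The resonance at $\lambda=0$ produces $\phi(t)\sim c\,t\,e^{-\rho t}$ with $\rho=m=\frac{n-1}{2}$, \emph{not} a polynomial lower bound $c_0/t$; an exponentially decaying $\phi$ cannot satisfy $\phi(t)\geq c_0(n)/t$ for all $t>1$, so the Frobenius analysis you invoke does not produce the stated inequality. (As it happens, the pointwise bound $\phi(t)\geq c_0(n)/t$ in the proposition's second case does not follow from the paper's own argument either: the paper proves $f(t)\geq c/\sinh^m(t)$ even when $n=2m+1$, and what is actually used downstream via \eqref{DWA} is the integral estimate $\int_0^R\phi^2\sinh^{n-1}dt\geq c(n,k)R$ of \eqref{finalestimate}; that integral bound, not a pointwise $c_0/t$ bound, is what the proof establishes and what Theorem \ref{real hyperbolic} needs.) You should replace your resonance claim with the integral estimate obtained by squaring the pointwise bound $c/\sinh^m$ against the measure $\sinh^{n-1}(t)\,dt$.
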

As a corollary we recover (with different constants) our prior sections' Betti number bounds for compact real- and complex-hyperbolic manifolds (but only in degree  $1$ in the complex-hyperbolic case here).

\subsection{$G=SU(n,1)$}
In this section we prove Proposition \ref{propa}.\\

In order to estimate $\phi$, we must also understand 
$$\psi(t):=  \int_{\Gamma\backslash G}\langle (e(U)e^*(U)+e(JU)e^*(JU))\pi^*h(xa(t)),\pi^*h(x)\rangle dv_x.$$
$\phi$ and $\psi$ satisfy differential equations following from the fact that $0 = dh = d^*h = dJh=d^*Jh$. Choose an orthonormal basis of $\mathfrak{p}$ of the form $\{X_j\}_{j=1}^{2n-2}\cup\{u,Ju\},$ where the $X_j$ are in the span of the $\pm\lambda$ root spaces. $Ju$ is in the span of the $\pm 2\lambda$ root spaces. Let $\{\omega^j\}_{j=1}^{2n-2}\cup\{U,JU\} $ denote the dual frame. With respect to the framing by left invariant vector fields
\begin{align}d\pi^*h = \left(e(\omega^j)X_j+ e(U)u+e(JU)Ju\right)\pi^*h,
\end{align}
and
\begin{align}d^*\pi^*h = -\left(e^*(\omega^j)X_j+ e^*(U)u+e^*(JU)Ju\right)\pi^*h.
\end{align}
For a proof of the these identities, see for example \cite[Equations (4.12) and (6.3)]{MM}. Henceforth, we will simply write $h$ instead of $\pi^*h$. 
Hence we have 
\begin{align}\label{A1}
0 &= \int_{\Gamma\backslash G}\langle e^*(U)(e(\omega^j)X_j+ e(U)u+e(JU)Ju)h(xa(t)), h(x)\rangle dv_x\nonumber\\
&= \frac{d}{dt}\int_{\Gamma\backslash G}\langle e^*(U) e(U) h(xa(t)), h(x)\rangle dv_x+\nonumber \\
& \quad \int_{\Gamma\backslash G}\langle e^*(U)(e(\omega^j)X_j +e(JU)Ju)h(xa(t)), h(x)\rangle dv_x,
\end{align}
and 
\begin{align}\label{A2}0& = \int_{\Gamma\backslash G}\langle e(U)(e^*(\omega^j)X_j+ e^*(U)u+e^*(JU)Ju)h(xa(t)), h(x)\rangle dv_x\nonumber\\
&=\frac{d}{dt}\int_{\Gamma\backslash G}\langle e(U) e^*(U) h(xa(t)), h(x)\rangle dv_x+\nonumber \\
&\quad\int_{\Gamma\backslash G}\langle e(U)(e^*(\omega^j)X_j +e^*(JU)Ju)h(xa(t)), h(x)\rangle dv_x.
\end{align}
We have two additional equations obtained by replacing $h$ by $Jh$. 
To generate differential equations for $\phi$ and $\psi$ from \eqref{A1} and \eqref{A2}, we follow the first step of the strategy indicated in \cite[Chapter VIII Section 7]{Knapp} and illustrated in a special case in \cite[Lemma 8.15]{Knapp}. 
Write 
\begin{align}\label{knapp}
X_j = \coth(t)Y_j - \frac{1}{\sinh(t)}a^{-1}(t)Y_ja(t),\quad Ju = \coth(t)Y_{2\lambda} - \frac{1}{\sinh(t)}a^{-1}(t)Y_{2\lambda}a(t).
\end{align} 
Here $Y_j$ and $Y_{2\lambda}\in \mathfrak{k}$ are defined as follows. Write $X_j = N_j+N_j^*,$ where $N_j$ is in the $\lambda$ root space. Then $Y_j = N_j-N_j^*.$ $Y_{2\lambda}$ is similarly defined. Combining \eqref{knapp} and \eqref{ktrans}, we have 
\begin{align}\label{asub}&\int_{\Gamma\backslash G}\langle e^*(U)e(\omega^j)X_j h(xa(t)), h(x)\rangle dv_x\nonumber\\
&= \coth(t)\int_{\Gamma\backslash G}\langle e^*(U) e(\omega^j)Y_j h(xa(t)), h(x)\rangle dv_x\nonumber\\
&-\frac{1}{\sinh(t)}\int_{\Gamma\backslash G}\langle e^*(U) e(\omega^j) a^{-1}(t)Y_j a(t)h(xa(t)), h(x)\rangle dv_x\nonumber\\
&= -\coth(t)\int_{\Gamma\backslash G}\langle e^*(U) e(\omega^j)\sigma(Y_j) h(xa(t)), h(x)\rangle dv_x\nonumber\\
&-\frac{1}{\sinh(t)}\int_{\Gamma\backslash G}\langle e^*(U) e(\omega^j) \frac{d}{ds}_{|s=0}h(x\exp(sY_j)a(t)), h(x)\rangle dv_x\nonumber\\
&= -\coth(t)\int_{\Gamma\backslash G}\langle e^*(U) e(\omega^j)\sigma(Y_j) h(xa(t)), h(x)\rangle dv_x\nonumber\\
&-\frac{1}{\sinh(t)}\int_{\Gamma\backslash G}\langle e^*(U)(e(\omega^j) \frac{d}{ds}_{|s=0}h(xa(t)), h(x\exp(-sY_j))\rangle dv_x\nonumber\\
&= -\coth(t)\int_{\Gamma\backslash G}\langle e^*(U) e(\omega^j)\sigma(Y_j) h(xa(t)), h(x)\rangle dv_x\nonumber\\
&-\frac{1}{\sinh(t)}\int_{\Gamma\backslash G}\langle \sigma(Y_j)e^*(U)(e(\omega^j)  h(xa(t)), h(x )\rangle dv_x.
\end{align}
We may similarly replace the $Ju$ derivatives with $-\coth(2t)\sigma(Y_{2\lambda})$ and $\frac{1}{\sinh(2t)}\sigma(Y_{2\lambda})$ terms. 
The adjoint representation satisfies 
\begin{align}\label{Yj1} \sigma(Y_{j}) = e(u)e^*(X_{j})-e(X_{j})e^*(u)+ e(JX_{j})e^*(Ju)-e(Ju)e^*(JX_{j}).\end{align}
\begin{align}\label{Y21} \sigma(Y_{2\lambda}) = 2e(u)e^*(Ju)-2e(Ju)e^*(u)  -e(JX_{j})e^*(X_{j}).\end{align}

Taking the closed and coclosed equations \eqref{A1} and \eqref{A2} and the corresponding equations with $h$ replaced by $Jh$,  replacing $X_j$ and $Ju$ by combinations of $\sigma(Y_{j})$ and $\sigma(Y_{2\lambda})$ as indicated in \eqref{asub}, and substituting in expressions \eqref{Yj1} and \eqref{Y21} for $\sigma(Y_{j})$ and $\sigma(Y_{2\lambda})$, yields a system of 4 differential equations, from which we obtain 
\begin{align}\label{d1f}0 &=  (\phi-\psi)'+\Big(\coth(t)+\frac{1}{\sinh(t)}\Big)(\phi -\psi) -(n-1)\Big[ \coth(t)   + \frac{1}{\sinh(t)} \Big]\psi(t).
\end{align}
and
\begin{align}\label{ds1s}0 &= \psi'(t) + [(2n-2)\coth(t)+  2\tanh(t)]\psi(t)
 -\frac{2}{\sinh(t)}(\phi(t)-\psi(t)),
\end{align}
with initial conditions $\phi(0) = 1$, and $0<\psi(0)<1$. 
From these equations, we see that $\phi(t)-\psi(t)$ cannot become zero while $\psi(t)$ is positive and $\psi$ cannot vanish while $\phi-\psi$ is positive. Hence $\phi(t)>\psi(t) >0$ for all $t$.
  
Next, we combine the equations and introduce integrating factors to write 
\begin{align}\label{d1fn} 
& \Big[\sinh(t)\tanh\Big(\frac{t}{2}\Big)^{2n-1}(\phi-n\psi)\Big]^\prime\nonumber\\
& =(2n-2)\Big[(n-1)\frac{\cosh(t)-1}{\sinh(t)}+   \tanh(t)\Big]\sinh(t)\tanh\Big(\frac{t}{2}\Big)^{2n-1}\psi  .
\end{align}
Hence for any $t$, we have $\phi(t)\geq n\psi(t)$. We also have the expression
\begin{align}\label{d1s}\nonumber 
\Big(\frac{\sinh(t)}{\tanh(\frac{t}{2})}\phi\Big)^\prime &=-
\Big[(n-2)\frac{\cosh(t)-1}{\sinh(t)}+ 2\tanh(t) \Big]\frac{\sinh(t)}{\tanh(\frac{t}{2})}\psi(t)\nonumber\\
&\geq -\Big[(n-2)\frac{\cosh(t)-1}{\sinh(t)}+ 2\tanh(t) \Big]\frac{1}{n}\frac{\sinh(t)}{\tanh(\frac{t}{2})}\phi(t) 
\nonumber\\
&\geq -C_n\frac{\sinh(t)}{\tanh(\frac{t}{2})}\phi(t),
 \end{align}
where $C_n:= \sup_t \frac{1}{n}\big[(n-2)\frac{\cosh(t)-1}{\sinh(t)}+ 2\tanh(t)\big].$ Hence 
$\frac{\sinh(t)}{\tanh(\frac{t}{2})}\phi(t)\geq e^{-tC_n},$ and $\phi(t)\leq \frac{\tanh(\frac{t}{2})}{\sinh(t)}.$ 
Finally from the expression 
\begin{align}\label{d1ff}
\Big[\sinh(t)\tanh\Big(\frac{t}{2}\Big)(\phi-\psi)\Big]^\prime =(n-1)[ \cosh(t)   + 1 ] \tanh\Big(\frac{t}{2}\Big)\psi(t),
\end{align}
we see that
\begin{align}\nonumber  
\sinh(t)\tanh\Big(\frac{t}{2}\Big)(\phi-\psi)\geq \sinh(1)\tanh\Big(\frac{1}{2}\Big)(\phi(1)-\psi(1))
\geq \frac{n-1}{n} \tanh^2\Big(\frac{1}{2}\Big) e^{-C_n}.
\end{align}
In particular, for $t>1$, 
\begin{align} \nonumber
\frac{\tanh(\frac{t}{2})}{\sinh(t)}\leq \phi(t)\leq \frac{c(n)}{\sinh(t)},
\end{align}
proving Proposition \ref{propa}.\\
 
\subsection{$G=SO(n,1)$}

In this subsection, we let $h$ be an $L^2$-norm one harmonic $k$-form on a compact real-hyperbolic $n$-manifold $M = \Gamma\backslash SO(n,1)/SO(n)$. Let $\pi$ denote the projection map $\pi: \Gamma\backslash SO(n,1)\to M$. Define the matrix coefficient 
$$f(t):= \int_{\Gamma\backslash SO(n,1)}\langle \pi^*h(xa(t)),\pi^*h(x)\rangle dv_x,$$
with $a(t):=\exp(tu)$, $u$ defined as in the preceding subsection. Define also the auxiliary function 
$$b(t):= \int_{\Gamma\backslash SO(n,1)}\langle e(u)e^*(u)\pi^*h(x a(t)),\pi^*h(x)\rangle dv_x.$$
Computing as in the previous subsection, one derives the following coupled equations for  $f$ and $b$: 
\begin{align}\label{a}
(f-b)'+k\coth(t)(f-b)-\frac{n-k}{\sinh(t)}b=0,
\end{align}

\begin{align}\label{b}
b'+(n-k)\coth(t)b-\frac{k}{\sinh(t)}(f-b)=0,
\end{align}
with initial conditions $f(0) = 1$, $0< b(0)<1$. Introduce integrating factors to rewrite these equations as

\begin{align}\nonumber
\frac{d}{dt}(\sinh^k(t)(f-b)) =(n-k)\sinh^{k-1}(t)b,
\end{align}
\begin{align}\nonumber
\frac{d}{dt}(\sinh^{n-k}(t)b)= k\sinh^{n-k-1}(t)(f-b).
\end{align}
From these equations, we see that $f>b>0$ for all $t$, and for $t\geq 1$, 
\begin{align}\label{basic}
f(t)\geq \frac{\sinh^k(1)(f(1)-b(1))}{\sinh^k(t)}.
\end{align}
Next, rewrite Equation \ref{a} as 
\begin{align}\nonumber
0 &= \Big(f-\frac{n}{k}b+\frac{n-k}{k}b\Big)'+k\coth(t)\Big(f-\frac{n}{k}b+\frac{n-k}{k}b\Big)-\frac{n-k}{\sinh(t)}b \nonumber\\
&= \Big(f-\frac{n}{k}b\Big)'+\frac{n-k}{k}b'+k\coth(t)\Big(f-\frac{n}{k}b\Big)+(n-k)\coth(t)b -\frac{n-k}{\sinh(t)}b, \nonumber
\end{align}
so that by substituting the identity for $b^\prime$ given in \eqref{b} we obtain
\begin{align}\nonumber
\Big(f-\frac{n}{k}b\Big)^\prime + \frac{n-k+k\cosh(t)}{\sinh(t)}\Big(f-\frac{n}{k}b\Big)= \frac{(n-k)(n-2k)( \cosh(t)-1)}{k\sinh(t)}b>0.
\end{align}
This also shows that 
\begin{align}\label{bf}
b\leq\frac{k}{n}f.
\end{align} 
Now, by summing Equations \eqref{a} and \eqref{b} we obtain
\begin{align}\nonumber
f^\prime=-k\coth(t)f+\frac{n-2k}{\sinh(t)}b-(n-2k)\coth(t)b+\frac{k}{\sinh(t)}f.
\end{align}
By using this expression for $f^\prime$, and after some manipulations with hyperbolic functions one finds
\begin{align}\nonumber
\Big(\frac{\sinh^k(t)}{\tanh^k(\frac{t}{2})}f\Big)^\prime +\frac{\sinh^k(t)}{\tanh^k(\frac{t}{2})}\frac{(n-2k)(\cosh(t)-1)}{\sinh(t)}b=0,
\end{align}
and then 
\begin{align}\label{prec} 
\Big(\frac{\sinh^k(t)}{\tanh^k(\frac{t}{2})}f\Big)'+\frac{2(n-2k)\cosh^2(\frac{t}{2})\sinh^{k-1}(t)}{\tanh^{k-2}(\frac{t}{2})}b=0.
\end{align}
Next, for $2k<n$, we use \eqref{bf} to replace \eqref{prec} by a differential inequality 
\begin{align}\nonumber
\Big(\frac{\sinh^k(t)}{\tanh^k(\frac{t}{2})}f\Big)' \geq-\frac{2(n-2k)\cosh^2(\frac{t}{2})\sinh^{k-1}(t)}{\tanh^{k-2}(\frac{t}{2})}\frac{k}{n}f.
\end{align}
Since 
\[
\lim_{t\to 0}\frac{\sinh^k(t)}{\tanh^k(\frac{t}{2})}=2^k,
\]
integration from $0$ to $t$ yields 
\begin{align}\nonumber
f(t) \geq 2^k\frac{\tanh^k(\frac{t}{2})}{\sinh^k(t)}e^{-\int_0^t\frac{2(n-2k)\cosh^2(\frac{s}{2})\tanh^{2}(\frac{s}{2})}{\sinh(s)}\frac{k}{n}ds}.
\end{align}
In particular, we have
\[
f(1)\geq 2^k\frac{\tanh^k(\frac{1}{2})}{\sinh^k(1)}e^{-\int_0^1\frac{2(n-2k)\cosh^2(\frac{s}{2})\tanh^{2}(\frac{s}{2})}{\sinh(s)}\frac{k}{n}ds},
\]
so that by evaluating \eqref{bf} at $t=1$ and using the estimate in \eqref{basic} then yields
\begin{align}\nonumber
f(t)\geq \frac{c}{\sinh^k(t)} ,
\end{align}
with 
\[
c= e^{-\int_0^1\frac{2(n-2k)\cosh^2(\frac{s}{2})\tanh^{2}(\frac{s}{2})}{\sinh(s)}\frac{k}{n}ds}\frac{(n-k)2^k \tanh^k(\frac{1}{2})}{n}.
\]
Hence, for some constant $c(n,k)>0$, we have 
\begin{equation}\label{finalestimate} 
\int_0^Rf(t)^2\sinh^{n-1}(t)dt\geq  
\left\{ \begin{array}{rl} &c(n,k)e^{(n-1-2k)R}, \quad \text{if}\quad n-1>2k;\\
&\\
&c(n,k)R,\quad  \text{if} \quad n-1=2k.\\
\end{array} \right.
\end{equation}
In the notation used in Section \ref{generalities} we have $\phi=f$, so that a proof of Proposition \ref{propb} follows from the estimate in Equation \ref{finalestimate}. \\ \\ \\ \\

\end{document}